\documentclass{amsart}
\usepackage[utf8]{inputenc}
\usepackage{amsmath, amssymb, bbm}
\usepackage{amsthm}
\usepackage{mathtools}
\usepackage{bbm}
\usepackage{blkarray}
\usepackage[matrix,arrow,curve]{xy}
\usepackage{tikz, tikz-cd}
\usepackage{graphicx}
\usepackage{color}
\usepackage{enumerate}
\definecolor{darkblue}{rgb}{0,0,0.6}
\usepackage[ocgcolorlinks,colorlinks=true, citecolor=darkblue, filecolor=darkblue, linkcolor=darkblue, urlcolor=darkblue]{hyperref}
\usepackage[capitalize]{cleveref}
%\usepackage{comment}

%,noabbrev,nameinlink

%\usepackage{showkeys}

%adding comments
%\newcounter{commentcounter}
%\newcommand{\commentm}[1]
%{\stepcounter{commentcounter}
%   \textbf{Comment \arabic{commentcounter} (by Mark)}:
%\textcolor{red}{#1} }
%
%\newcommand{\commentma}[1]
%{\stepcounter{commentcounter}
%   \textbf{Comment \arabic{commentcounter} (by Markus)}:
%\textcolor{green}{#1} }
%
%\newcommand{\commentd}[1]
%{\stepcounter{commentcounter}
%   \textbf{Comment \arabic{commentcounter} (by Daniel)}:
%\textcolor{blue}{#1} }
%
%\newcommand{\commentp}[1]
%{\stepcounter{commentcounter}
%   \textbf{Comment \arabic{commentcounter} (by Peter)}:
%\textcolor{orange}{#1} }

\newcommand{\ignore}[1]{}
\renewcommand{\epsilon}{\varepsilon}
\renewcommand{\phi}{\varphi}
\newcommand{\bbZ}{\mathbb{Z}}
\newcommand{\bbL}{\mathbb{L}}
\newcommand{\Z}{\mathbb{Z}}

\newcommand{\R}{\mathbb{R}}
\newcommand{\CP}{\mathbb{CP}}

\newcommand{\bbN}{\mathbb{N}}
\newcommand{\toiso}{\xrightarrow{\cong}}
\newcommand{\wt}{\widetilde}
\newcommand{\ol}{\overline}
\newcommand{\Sq}{\mathrm{Sq}}
\newcommand{\red}{\mathrm{red}}
\newcommand{\Ext}{\mathrm{Ext}}
\newcommand{\Tor}{\mathrm{Tor}}
\newcommand{\sm}{\setminus}
\newcommand{\pt}{\mathrm{pt}}

\renewcommand{\Im}{\mathrm{Im}}
\DeclareMathOperator{\Aut}{Aut}

\DeclareMathOperator{\Out}{Out}
\DeclareMathOperator{\Hom}{Hom}
\DeclareMathOperator{\End}{End}
\DeclareMathOperator{\id}{Id}
\DeclareMathOperator{\im}{im}
\DeclareMathOperator{\pr}{pr}
\DeclareMathOperator{\inc}{inc}
\DeclareMathOperator{\coker}{coker}
\DeclareMathOperator{\Id}{Id}
\DeclareMathOperator{\Th}{Th}
\DeclareMathOperator{\cl}{cl}
\DeclareMathOperator*{\colim}{colim}
\DeclareMathOperator{\Arf}{Arf}
\DeclareMathOperator{\GL}{GL}
\DeclareMathOperator{\SL}{SL}
\DeclareMathOperator{\MSTOP}{MSTOP}

\numberwithin{equation}{section}
\newtheorem{thm}[equation]{Theorem}
\newtheorem{theorem}[equation]{Theorem}
\newtheorem{prop}[equation]{Proposition}

\newtheorem{cor}[equation]{Corollary}
\newtheorem{lemma}[equation]{Lemma}
\theoremstyle{definition}

\newtheorem{Conventions}[equation]{Conventions}

\newtheorem{example}[equation]{Example}
\newtheorem{defi}[equation]{Definition}
\newtheorem{definition}[equation]{Definition}
\newtheorem{rem}[equation]{Remark}
\newtheorem{remark}[equation]{Remark}
\newtheorem{nota}[equation]{Notation}

\newtheorem*{claim}{Claim}

\begin{document}

\title[$4$-manifolds with $3$-manifold fundamental groups]{Stable classification of $4$-manifolds with $3$-manifold fundamental groups}

\author[D.~Kasprowski]{Daniel Kasprowski}
\address{Max Planck Institut f\"{u}r Mathematik, Vivatsgasse 7, 53111 Bonn, Germany}
\email{kasprowski@mpim-bonn.mpg.de}

\author[M.~Land]{Markus Land}
\address{University of Regensburg, NWF I - Mathematik, 93049 Regensburg, Germany}
\email{markus.land@mathematik.uni-regensburg.de}

\author[M.~Powell]{Mark Powell}
\address{Universit\'e du Qu\'ebec \`a Montr\'eal, Montr\'eal, QC, Canada}
\email{mark@cirget.ca}

\author[P.~Teichner]{Peter Teichner}
\address{Max Planck Institut f\"{u}r Mathematik, Vivatsgasse 7, 53111 Bonn, Germany}
\email{teichner@mpim-bonn.mpg.de}

\def\subjclassname{\textup{2010} Mathematics Subject Classification}
\expandafter\let\csname subjclassname@1991\endcsname=\subjclassname
\expandafter\let\csname subjclassname@2000\endcsname=\subjclassname
\subjclass{%
% 57M25, % Knots and links in $S^3$
% 57M27, % Invariants of knots and 3-manifolds
 57N13, % Topology of 4-manifolds
 57N70. % Cobordism and concordance (in low dimension)
%  57Q60; % Cobordism and concordance (in high dimension)
%  57Y07, % Topological methods in group theory
}
\keywords{Stable diffeomorphism, $4$-manifold, $\tau$-invariant}

\crefname{lemma}{Lemma}{Lemmas}
\crefname{section}{Section}{Sections}
\crefname{convention}{Convention}{Conventions}
\crefname{Conventions}{Conventions}{Conventions}
\crefname{definition}{Definition}{Definitions}
\crefname{defi}{Definition}{Definitions}
\crefname{prop}{Proposition}{Propositions}

\maketitle

\begin{abstract}
We study closed, oriented $4$-manifolds whose fundamental group is that of a closed, oriented, aspherical 3-manifold.  We show that two such $4$-manifolds are stably diffeomorphic if and only if they have the same $w_2$-type and their equivariant intersection forms are stably isometric.  We also find explicit algebraic invariants that determine the stable classification for spin manifolds in this class.
\end{abstract}

\section{Introduction}

Two smooth 4-manifolds $M,N$ are called {\em stably diffeomorphic} if there exist integers $m,n \in \Z$ such that stabilising $M,N$ with copies of $S^2 \times S^2$ yields diffeomorphic manifolds:
\[M \#m (S^2 \times S^2) \cong N \#n (S^2 \times S^2).
\]

In this paper we study the stable diffeomorphism classification of closed, oriented 4-manifolds whose fundamental group is that of some closed, oriented, aspherical $3$-manifold.  We give explicit, algebraically defined invariants of $4$-manifolds that detect the place of a $4$-manifold in the classification, working with orientation preserving diffeomorphisms.

We will also indicate the results for topological manifolds up to stable homeomorphism.

Special cases of the stable diffeomorphism classification have been investigated in A.~Cavicchioli, F.~Hegenbarth and D.~Repov\v{s} \cite{CAR-95}, F.~Spaggiari \cite{Spaggiari-03} and J.~Davis \cite{Davis-05}. The stable classification for manifolds with finite fundamental group was intensively studied by I.~Hambleton and M.~Kreck in \cite{Ham-kreck-finite}, as well as in the PhD thesis of the fourth author~\cite{teichnerthesis}.  The case of geometrically 2-dimensional groups was solved by Hambleton, Kreck and the last author in~\cite{HKT}.

\subsection*{Conventions}
All manifolds are assumed to be smooth, closed, connected and oriented if not otherwise stated. All diffeomorphisms are orientation preserving.

We call a group $\pi$ a \emph{COAT group} if it is the fundamental group of some Closed Oriented Aspherical Three-manifold.  Note that an irreducible $3$-manifold with infinite fundamental group is aspherical by the Sphere theorem and the Hurewicz theorem.  For a space $X$ with a homomorphism $\pi_1(X) \to \pi$, usually an isomorphism, we will denote any continuous map that induces the homomorphism by $c \colon X \to B\pi$.  \\

\subsection{Stable classification of 4-manifolds with COAT fundamental group}

The \emph{normal $1$-type} of a manifold $M$ is a $2$-coconnected fibration  $\xi \colon B \to BSO$
which admits a $2$-connected lift $\wt{\nu}_M \colon M \to B$, a \emph{normal $1$-smoothing}, of the stable normal bundle $\nu_M \colon M \to BSO$. See \cref{sec:basics} for more details.
The following fundamental result is a straightforward consequence of M.~Kreck's modified surgery theory, in particular \cite[Theorem C]{surgeryandduality}. See  \cite[p.~4]{teichnerthesis} for the discussion of the action of $\Aut(\xi)$. The last author also observed a direct proof in terms of handle structures~\cite[End~of~Section~4]{surgeryandduality}.

\smallskip
\begin{theorem}\label{cor:stablediffeoclasses}
The stable diffeomorphism classes of $4$-manifolds with normal $1$-type $\xi$ are in one-to-one correspondence with $\Omega_4(\xi)/\Aut(\xi)$.
\end{theorem}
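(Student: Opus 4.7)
The plan is to define a map $\Psi$ from stable diffeomorphism classes of $4$-manifolds with normal $1$-type $\xi$ to $\Omega_4(\xi)/\Aut(\xi)$ by sending $M$ to the $\Aut(\xi)$-orbit of the bordism class $[M,\wt{\nu}_M]$ for any chosen normal $1$-smoothing $\wt{\nu}_M\colon M\to B$, and to verify that $\Psi$ is a well-defined bijection.

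For well-definedness, any two normal $1$-smoothings of a fixed $M$ are $2$-connected lifts of $\nu_M$ along the $2$-coconnected fibration $\xi$; standard obstruction theory shows they differ by a fibre homotopy self-equivalence of $\xi$, i.e., an element of $\Aut(\xi)$, so they lie in the same orbit in $\Omega_4(\xi)$. To see that $\Psi(M)=\Psi(M\#(S^2\times S^2))$, I would exhibit a $B$-bordism using the standard null-cobordism $S^2\times D^3$ of $S^2\times S^2$, boundary-connect-summed with $M\times I$; the required extension of the normal $1$-smoothing exists because $S^2\times D^3$ is $1$-connected with stably trivial tangent bundle, so any lift on $M$ extends across the trace.

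For surjectivity, starting from an arbitrary representative $(N,f)\in \Omega_4(\xi)$, I would perform surgery on $N$ in dimensions $\le 2$ to kill the kernel of $f_\ast$ on $\pi_i$ for $i\le 2$. This can be carried out within the $B$-bordism class precisely because $\xi$ is $2$-coconnected, so the $B$-structure extends over each surgery trace; the resulting manifold $N'$ then admits a $2$-connected lift of its normal bundle, and hence has normal $1$-type $\xi$.

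Injectivity is the heart of the matter and is essentially Kreck's Theorem C from \cite{surgeryandduality}. Given $M_0,M_1$ with $\Psi([M_0]) = \Psi([M_1])$, pick $\phi\in \Aut(\xi)$ witnessing the equality and replace $\wt{\nu}_{M_1}$ by $\phi\circ \wt{\nu}_{M_1}$; this is again a normal $1$-smoothing of $M_1$, and produces a genuine $B$-bordism from $(M_0,\wt{\nu}_{M_0})$ to $(M_1,\phi\circ \wt{\nu}_{M_1})$. Kreck's theorem now allows successive surgeries on this bordism: those below the middle dimension cause no trouble, thanks again to the normal $1$-type hypothesis; the middle-dimensional surgeries can then be performed after first stabilising both ends by sufficiently many copies of $S^2\times S^2$, delivering the desired stable diffeomorphism. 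The main obstacle is precisely this middle-dimensional step --- the technical heart of Kreck's theorem --- which we invoke as a black box rather than reprove, together with the observation from \cite[p.~4]{teichnerthesis} that the ambiguity in the choice of $\wt{\nu}_M$ is absorbed by the $\Aut(\xi)$-action.
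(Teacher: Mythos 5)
Your proposal is correct and follows essentially the same route as the paper: both reduce injectivity to Kreck's Theorem~C invoked as a black box, handle the ambiguity in the choice of normal $1$-smoothing via the transitive $\Aut(\xi)$-action (the point credited to \cite[p.~4]{teichnerthesis}), and treat well-definedness and surjectivity by standard bordism and surgery-below-the-middle-dimension arguments. The only hypothesis of Theorem~C you gloss over is the equality of Euler characteristics, which the paper discharges by observing that $B$-bordant $4$-manifolds share their signature, hence have second Betti numbers of the same parity, so the Euler characteristics can be equalised by stabilising one side --- your ``stabilise both ends'' remark absorbs this, since the definition of stable diffeomorphism permits different numbers of stabilisations on the two sides.
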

\smallskip

The stable diffeomorphism classification programme therefore begins by determining the possible normal $1$-types~$\xi$. Since normal $1$-smoothings are $2$-connected,~$\xi$ determines the fundamental group. For a fixed fundamental group~$\pi$, the normal $1$-types are represented by elements
\[
w \in H^2(B\pi;\Z/2) \cup \{\infty\}.
\]
An oriented manifold $M$ is said to be \emph{totally non-spin} if $w_2(\wt{M}) \neq 0$; in this case we set $w=\infty$. Otherwise, there is a unique element $w \in H^2(B\pi;\Z/2)$ such that $c^*(w)=w_2(M)$ (see \cref{lem:w}). A $4$-manifold~$M$ is \emph{spin} if $w=0$ and we call $M$ \emph{almost spin} if $w\notin \{0,\infty\}$.  We remark that some authors use the terminology almost spin for the case $w \neq \infty$, but since the behaviour of the stable diffeomorphism classification differs when $w=0$, we chose nomenclature that differentiates this case.

We will use the fact (see for example~\cite{teichnerthesis}), that isomorphism classes of such pairs $(\pi,w)$ are in one-to-one correspondence with the fibre homotopy types of normal 1-smoothings.

The totally non-spin case $w=\infty$ corresponds to $\xi = \pr_2\colon B=B\pi \times BSO \to BSO$, where $\pr_2$ denotes the projection onto the second factor.  The spin case $w=0$ corresponds to $\xi \colon B=B\pi \times BSpin \to BSpin \to BSO$, the projection followed by the canonical map $BSpin \to BSO$.
The almost spin cases are twisted versions of the latter.

The main work in the stable classification comprises the computation of the bordism group~$\Omega_4(\xi)$ with the action of the automorphism group $\Aut(\xi)$, for each normal $1$-type~$\xi$. Finally, one tries to determine stable diffeomorphism invariants that detect all the possibilities.
The signature is an example of such an invariant; in the case of totally non-spin $4$-manifolds with fundamental group $\pi$ having \mbox{$H_4(B\pi)=0$,} such as COAT groups, the signature is a complete invariant.

The next theorem results from our successful application of all the above steps for COAT fundamental groups. The resulting invariants are explained in detail in \cref{sec:stablediffeoclasses}. They take values in a finite set, except for the signature, which in all three cases determines the image of a $4$-manifold in the various subgroups of $\Z$.

\smallskip
\begin{theorem}\label{thm:A}
For a COAT group $\pi$ and $w \in H^2(B\pi;\Z/2) \cup \{\infty\}$, stable diffeomorphism classes of closed oriented 4-manifolds with normal $1$-type isomorphic to $(\pi,w)$ are in bijection with the following sets:

\smallskip \begin{enumerate}[(1)]
\item\label{item:A1} the set of integers $\bbZ$ in the totally non-spin case $w = \infty$;
\item\label{item:A2} the set $16\cdot\Z \times \left(H_2(B\pi;\Z/2)/\Out(\pi) \cup \{ odd \}\right)$ in the spin case $w=0$; and
\item\label{item:A3} for almost spin, that is $w\notin\{0,\infty\}$, the set
%\begin{align*}
%\ker \big(16\cdot \Z \times (H_2(B\pi;\Z/2)/\Out(\pi)_w) &\to \Z/2 \big) \\
%(n,\phi) &\mapsto n/8 + \langle w, \phi \rangle\end{align*}
\[ \big\{(n,\phi) \in 8\cdot \Z \times (H_2(B\pi;\Z/2)/\Out(\pi)_w) \,\,\big|\,\,  n/8 \equiv \langle w, \phi \rangle \mod{2} \big \}, \]
%$$8\cdot\Z \times \left( \ker(\langle w, - \rangle)/\Out(\pi)_w\right),$$
where $\langle w, - \rangle$ denotes the evaluation on $H_2(B\pi;\Z/2)$, and $\Out(\pi)_w$ denotes the set of outer automorphisms of $\pi$ whose induced action on $H^2(B\pi;\Z/2)$ fixes~$w$.
\end{enumerate}
\end{theorem}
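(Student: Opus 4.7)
The plan is to apply Theorem~\ref{cor:stablediffeoclasses} to each of the three normal 1-types $\xi$ determined by $(\pi,w)$: compute $\Omega_4(\xi)$ via the Atiyah--Hirzebruch spectral sequence (AHSS), identify explicit invariants of stable diffeomorphism, and determine the induced $\Aut(\xi)$-action, which factors through $\Out(\pi)$. For the totally non-spin case $w=\infty$, $\xi=\pr_2\colon B\pi\times BSO\to BSO$, so $\Omega_4(\xi)=\Omega_4^{SO}(B\pi)$. Since $B\pi$ is $3$-dimensional and $\Omega_i^{SO}=0$ for $i=1,2,3$, the AHSS has a single nonzero entry in total degree $4$, namely $H_0(B\pi;\Omega_4^{SO})=\Z$, detected by the signature; the $\Aut(\xi)$-action preserves this, giving (1).

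For the spin case $w=0$, $\Omega_4(\xi)=\Omega_4^{Spin}(B\pi)$, and the AHSS with $\Omega_*^{Spin}$-coefficients $(\Z,\Z/2,\Z/2,0,\Z,\ldots)$ has relevant entries $H_0(B\pi;\Z)=\Z$, $H_2(B\pi;\Z/2)$, $H_3(B\pi;\Z/2)\cong\Z/2$ (by Poincar\'e duality on the closed oriented $3$-manifold $B\pi$), and $H_4(B\pi)=0$. The only potentially nontrivial differential in this range, $d_2\colon H_3(B\pi;\Z/2)\to H_1(B\pi;\Z/2)$ dual to $\Sq^2$, vanishes because $\Sq^2$ annihilates $H^1$. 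I would then resolve the two extension problems in the filtration. Geometrically, the signature divided by $16$ (Rokhlin) detects the $\Z$-stratum; a $\tau$-type secondary invariant with values in $H_2(B\pi;\Z/2)$ detects the middle stratum; and the $H_3$-class, realized by $N\times S^1\to N\to B\pi$ for a closed $3$-manifold $N$ with $\pi_1(N)=\pi$ and $S^1$ carrying its non-bounding spin structure, detects the top stratum. The induced $\Aut(\xi)$-action factors through $\Out(\pi)$ on $H_*(B\pi;\Z/2)$, and---crucially---the action on the $H_3$-nontrivial sector must be shown to be affine (not linear), collapsing the $H_2$-coordinate to a single class, which produces the distinguished ``odd'' element adjoined to $H_2(B\pi;\Z/2)/\Out(\pi)$ in (2).

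For the almost spin case $w\notin\{0,\infty\}$, $\xi_w$ is the pullback of $BSpin\to BSO$ along the map $B\pi\to K(\Z/2,2)$ classified by $w$, so $\Omega_4(\xi_w)$ is a twisted spin bordism group. The analogous AHSS has identical filtration layers, but the twist restricts the $H_2$-contribution to $\ker(\langle w,-\rangle)\subset H_2(B\pi;\Z/2)$, since classes pairing nontrivially with $w$ obstruct the required twisted spin lift. The Wu formula together with $c^*w=w_2(M)$ implies $\sigma(M)\in 8\Z$, with no further divisibility. The relevant action is by $\Out(\pi)_w$, giving (3).

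The main obstacle will be, in the spin case, rigorously establishing the extension structure in the AHSS filtration together with the precise affine action of $\Aut(\xi)$ that collapses the $H_3$-nontrivial stratum to a single orbit. In all three cases one must also exhibit explicit $4$-manifold generators realizing each target value (for instance $K3$-surfaces for the spin signature stratum, $N\times S^1$ for the odd class, and twisted $S^2$-bundles over surfaces pulled back via $w$ in the almost spin case) to establish surjectivity of the invariants map onto the stated target sets. Once these steps are in place, Theorem~\ref{cor:stablediffeoclasses} assembles the three bijections.
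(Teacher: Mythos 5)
Your overall strategy (Kreck's \cref{cor:stablediffeoclasses} plus a spectral-sequence computation of $\Omega_4(\xi)$ and of the $\Aut(\xi)$-action) is the same as the paper's, and you correctly identify the shape of the answer and the key phenomenon in the spin case (an affine action collapsing the ``odd'' sector). But there are genuine gaps. In the almost spin case your mechanism is wrong on two counts. First, the term $E^2_{3,1}=H_3(B\pi;\Z/2)\cong\Z/2$ does not survive: the twisted differential $d_2$ dual to $x\mapsto \Sq^2x+x\cup w$ is nonzero by Poincar\'e duality since $w\neq 0$, and this is what removes the $H_3$-stratum; you do not address this term at all, and without killing it the answer would acquire an extra factor. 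Second, the full group $H_2(B\pi;\Z/2)$ \emph{does} survive as $E^\infty_{2,2}$ --- no class ``obstructs the twisted spin lift'' --- and the restriction to $\ker\langle w,-\rangle$ together with the $8\Z$ signature are two faces of one phenomenon: the extension $0\to\Omega_4^{Spin}\to\Omega_4(\xi)\to H_2(B\pi;\Z/2)\to 0$ is non-split (seen by comparing with the James spectral sequence of $BSpin\to BSO\to K(\Z/2,2)$), so abstractly $\Omega_4(\xi)\cong\Z\oplus\ker\langle w,-\rangle$ with the $\Z$ detected by $\sigma/8$. Your appeal to the Wu formula for $\sigma\in 8\Z$ cannot work as a formal argument: the ordinary intersection form of an almost spin manifold with COAT group need not be even (\cref{rem:almost-spin-even}), and for other fundamental groups there are almost spin $4$-manifolds of signature $4$, so the divisibility is a consequence of the bordism computation, not of $c^*w=w_2(M)$ alone. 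You also give no reason why the $H^1(B\pi;\Z/2)$ factor of $\Aut(\xi)$ acts trivially here (it does, because the relevant inverse-image surfaces map to points in $B\pi^{(2)}$), which is needed before reducing to $\Out(\pi)_w$.

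In the spin case the two hard steps are flagged but not supplied, and one of them contradicts an earlier assertion. The splitting of $0\to H_2(B\pi;\Z/2)\to\wt\Omega_4^{Spin}(B\pi)\to H_3(B\pi;\Z/2)\to 0$ is not automatic; the paper obtains it either from the stable splitting of the top cell of the aspherical $3$-manifold (via triviality of its Spivak fibration, \cref{lemma:SpivakFibrationtrivial}) or from the explicit Arf-invariant splitting $\Phi$ of \cref{lemma:split}, and the latter is then essential for computing the action. More importantly, the affine collapse you need cannot come from an action that ``factors through $\Out(\pi)$'': $\Out(\pi)$ acts linearly by functoriality, and it is the other factor of $\Aut(\xi)\cong H^1(B\pi;\Z/2)\times\Out(\pi)$ --- change of spin structure on the reference $3$-manifold --- that acts by $m\cdot(\phi,\epsilon)=(\epsilon m+\phi,\epsilon)$ (\cref{thm:action-spin-case}), via the effect of spin-structure changes on the Arf invariants of inverse-image surfaces (\cref{lem:spinaction}). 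As written, your first paragraph rules out exactly the source of the affine action your second paragraph requires.
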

\smallskip

Here the set $\{ odd\}$ consists of a single element. The nomenclature arises from some clairvoyance: as described in detail in the next section, for fixed signature, this element is represented by a (unique stable diffeomorphism class of a) spin 4-manifold~$M$ with odd {\em equivariant} intersection form~$\lambda_M$ on~$\pi_2(M)$. This notion differs significantly from saying that the ordinary intersection form on $H_2(M;\Z)$ is odd, which would mean that~$M$ is totally non-spin.

In the topological case the result is almost identical.  The deduction of \cref{thm:top-classification} from \cref{thm:A} can be found in \cref{stable-homeo-classification}.

\smallskip
\begin{theorem}\label{thm:top-classification}
For a COAT group $\pi$ and $w \in H^2(B\pi;\Z/2) \cup \{\infty\}$, stable homeomorphism classes of closed oriented 4-manifolds with normal $1$-type isomorphic to $(\pi,w)$ are in bijection with the following sets:
\smallskip \begin{enumerate}
\item  the set $\Z\times \Z/2$ in the totally non-spin case, where $\Z/2$ is the Kirby-Siebenmann invariant in $\Z/2$ because of the existence of the sister projective space $*\CP^2$.
\item the set $8\cdot\Z \times \left(H_2(B\pi;\Z/2)/\Out(\pi) \cup \{ odd \}\right)$ in the spin case $w=0$, because the $E_8$ manifold is a topological spin manifold with signature~$8$. The Kirby-Siebenmann invariant is the signature divided by 8.
\item In the almost spin case $w\notin\{0,\infty\}$, we have the set
\[
8\cdot\Z \times H_2(B\pi;\Z/2)/\Out(\pi)_w,
\]
on which the Kirby-Siebenmann invariant is given by the signature divided by 8 plus evaluation of $w$ on the element of $H_2(B\pi;\Z/2)$.
\end{enumerate}
\end{theorem}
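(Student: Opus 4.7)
The plan is to parallel the proof of \cref{thm:A}, replacing the smooth normal $1$-type $\xi \colon B \to BSO$ with its topological analogue $\xi^{TOP} \colon B^{TOP} \to BSTOP$ and the oriented smooth bordism group $\Omega_4(\xi)$ with the oriented topological bordism group $\Omega_4^{STOP}(\xi^{TOP})$. The topological version of \cref{cor:stablediffeoclasses} follows from Kreck's modified surgery theory in the topological category (the stabilisation by copies of $S^2 \times S^2$ eliminates any surgery obstructions that would depend on the good-ness of $\pi$), giving a bijection between stable homeomorphism classes of $4$-manifolds with topological normal $1$-type $\xi^{TOP}$ and $\Omega_4^{STOP}(\xi^{TOP})/\Aut(\xi^{TOP})$. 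The enumeration of topological normal $1$-types by pairs $(\pi, w)$ is identical to the smooth case, since $w_2$ and the fibre homotopy type classification are insensitive to the replacement of $BSO$ by $BSTOP$.

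The key coefficient-level changes are $\Omega_4^{STOP}(\pt) = \Z \oplus \Z/2$, with the $\Z/2$-summand generated by the Chern manifold $*\CP^2$ and detected by the Kirby-Siebenmann invariant, and $\Omega_4^{TopSpin}(\pt) = \Z$ generated by the topological $E_8$-manifold of signature~$8$, in place of the smooth $\Omega_4^{Spin}(\pt) = \Z$ generated by the K3 surface of signature~$16$. In degrees $\leq 3$ the smooth and topological oriented (resp. spin) bordism groups coincide. Running the James spectral sequence for each of the three normal $1$-types, with these substitutions in the column $E^2_{0,4}$, reproduces the same differentials, extensions and $\Aut(\xi^{TOP})$-action as the smooth calculations of \cref{sec:stablediffeoclasses}. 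Cases~(1) and~(2) then follow immediately: the extra $\Z/2$-summand gives the free Kirby-Siebenmann factor in the totally non-spin case, while in the spin case $16\cdot\Z$ is replaced by $8\cdot\Z$ and the Kirby-Siebenmann invariant is recovered as signature divided by~$8$ modulo~$2$.

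The main obstacle is case~(3). One must prove that in the almost spin case the Kirby-Siebenmann invariant is not an independent $\Z/2$-factor but satisfies
\[
\mathrm{ks}(M) \equiv \sigma(M)/8 + \langle w, h(M) \rangle \pmod 2,
\]
where $h(M) \in H_2(B\pi;\Z/2)$ is the $w$-twisted bordism invariant appearing in \cref{thm:A}~(3). The plan is to verify this identity on a set of bordism generators of $\Omega_4^{STOP}(\xi^{TOP})$: the $\sigma/8$ term is witnessed by the topological $E_8$-manifold with trivial reference map to $B\pi$, while the $\langle w, h(M)\rangle$ term arises when comparing the almost spin lift with a genuine topological spin lift, the discrepancy being precisely $w$ and contributing a Kirby-Siebenmann defect equal to its pairing with $h(M)$. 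Once this identity is in place, the triple (signature, $h$-class in $H_2(B\pi;\Z/2)/\Out(\pi)_w$, Kirby-Siebenmann) lives in $8\Z \times H_2(B\pi;\Z/2)/\Out(\pi)_w \times \Z/2$ subject only to this single linear relation, so the classification set reduces to $8\Z \times H_2(B\pi;\Z/2)/\Out(\pi)_w$. In particular the kernel restriction present in \cref{thm:A}~(3) is lifted in the topological category, because any class with $\langle w, h\rangle = 1$ can now be realised by trading the evaluation off against a Kirby-Siebenmann contribution, for instance by connected sum with $*\CP^2$.
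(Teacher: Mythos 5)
Your treatment of cases (1) and (2) matches the paper's: identify the topological normal $1$-types, feed the coefficients $\Omega_*^{STOP}$ and $\Omega_*^{TOPSpin}$ into the James spectral sequence, and use the topological Rochlin formula $ks(M)\equiv\sigma(M)/8\pmod 2$ in the spin case. The problem is case (3), and it starts with your assertion that the spectral sequence "reproduces the same differentials, extensions and $\Aut(\xi^{TOP})$-action as the smooth calculations". That cannot be right for the almost spin type: in the smooth case the extension
\[
0 \to \Omega_4^{Spin} \to \Omega_4(\xi) \to H_2(B\pi;\Z/2) \to 0
\]
does \emph{not} split and $\Omega_4(\xi)\cong\Z\oplus\ker\langle w,-\rangle$, whereas the statement you are proving requires the topological group to be $8\cdot\Z\oplus H_2(B\pi;\Z/2)$, i.e.\ the extension \emph{must} split topologically and all of $H_2(B\pi;\Z/2)$ -- not just $\ker\langle w,-\rangle$ -- must be realised by almost spin manifolds. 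This splitting is precisely the new content of case (3), and your proposal never establishes it. The paper does so by comparing the smooth and topological James spectral sequences: the coefficient map $\Omega_4^{Spin}\to\Omega_4^{TOPSpin}$ is multiplication by $2$, and chasing this through the two extensions forces the topological one to split.

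Your fallback realisation argument does not repair this. Connected sum with $*\CP^2$ changes the normal $1$-type from almost spin to totally non-spin (its intersection form is $\langle 1\rangle$, so the universal cover of the connected sum is no longer spin), so it cannot be used to realise classes with $\langle w,h\rangle=1$ within the fixed type $(\pi,w)$. Moreover, the Kirby--Siebenmann formula $ks=\sigma/8+\langle w,h\rangle$ is logically downstream of the bordism computation (the paper quotes it from \cite[Theorem~6.11]{HKT}): knowing that $ks$ satisfies a linear relation in $\sigma$ and $h$ tells you nothing about which pairs $(\sigma,h)$ actually occur, so your final counting step presupposes exactly the surjectivity onto $H_2(B\pi;\Z/2)$ that is at issue. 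You either need the spectral-sequence comparison argument or an explicit construction of almost spin topological $4$-manifolds realising classes outside $\ker\langle w,-\rangle$.
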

\smallskip

%==============================
In each normal $1$-type, the smooth classification occurs as the kernel of the Kirby-Siebenmann invariant.

\subsection{Explicit invariants for spin 4-manifolds}

Next, in the case of spin 4-manifolds with COAT fundamental group~$\pi$, we describe a complete set of invariants that are defined independently of a normal 1-smoothing.  The first invariant besides the signature is the parity of the equivariant intersection form
\[
\lambda_M \colon \pi_2(M) \times \pi_2(M) \to \Z\pi.
\]
This is a sesquilinear, hermitian form.
An intersection form $\lambda_M \colon \pi_2(M) \to \pi_2(M)^*$ is called {\em even} if there exists a $\Z\pi$-linear map $q:\pi_2(M) \to \pi_2(M)^*$ such that $\lambda_M = q + q^*$.  If no such $q$ exists then we say that $\lambda_M$ is \emph{odd}.  We refer to $\lambda$ being even or odd as its {\em parity}.

It turns out that the $\Z\pi$-module $\pi_2(M)$ is stably isomorphic to $I\pi\oplus \Z\pi^k$ for some $k \in \mathbb{N}$, where $I\pi$ denotes the augmentation ideal of $\Z\pi$. Since $\lambda_M$ admits a quadratic refinement (see \cref{defn:quadratic-refinement}), in order to determine the parity, it suffices to restrict to the (non-free) $I\pi$ summand; this assertion is explained in \cref{lem:quadratic=even-in-free}. If $\lambda_M$ is odd then the signature determines the stable diffeomorphism type of such spin manifolds.

If the  intersection form $\lambda_M$ is even, we can arrange its restriction to $I\pi$ to vanish, after some stabilisation and a change of basis. In this case, the self-intersection number $\mu_M$ also vanishes on $I\pi$ and one can compute the {\em first order intersection number} $\tau_1$ on $I\pi$, which takes values in a quotient of $\Z[\pi \times\pi]$; compare~\cite{schneiderman-teichner-tau}.

In this paper we only need a $\Z/2$-valued version of $\tau_1$, defined on spherically characteristic classes in $\pi_2(M)$. This invariant first appeared in R.~Kirby and M.~Freedman~\cite[p.~93]{Kirby-Freedman} and Y.~Matsumoto~\cite{Matsumoto}, and a similar invariant was later used by M. Freedman and F. Quinn \cite[Definition~10.8]{Freedman-Quinn}. A detailed definition is given in \cref{sec:tau}, but here is a rough outline.

Let $[S]\in\pi_2(M)$ be a spherically characteristic element (see \cref{defn:spherically-characteristic}) with $\mu_M(S)=0$.  Pair up the self-intersection points of the immersed 2-sphere $S$ with framed Whitney discs, and count the intersection points of the Whitney discs with $S$, modulo~$2$.   This defines~$\tau(S)\in\Z/2$.
We show that this descends to an invariant
\[
\tau_M\in\Hom(H^2(B\pi;\Z/2),\Z/2)\cong H_2(B\pi;\Z/2),
\]
as discussed in \cref{lem:tauinv}.  Then we obtain the following theorem, giving the promised classification for spin $4$-manifolds in terms of explicit invariants.

\smallskip
\begin{theorem}\label{thm:B}
Closed spin $4$-manifolds $M$ and $M'$ with fundamental groups isomorphic to a COAT group $\pi$ are stably homeomorphic if and only if
\smallskip \begin{enumerate}[(1)]
\item\label{item:B1} their signatures agree: $\sigma(M) = \sigma(M')$,
\item\label{item:B2} their equivariant intersection forms $\lambda_M$ and $\lambda_{M'}$ have the same parity, and
\item\label{item:B3} for even parity, their first order intersection invariants agree:
\[
[\tau_M]=[\tau_{M'}]\in H_2(B\pi;\Z/2)/\Out(\pi).
\]
\end{enumerate}
The smooth result is exactly the same.  In the smooth case, the signature lies in $16\cdot\Z$, whereas in the topological case the signature is divisible by $8$.
\end{theorem}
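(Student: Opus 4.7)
The plan is to deduce Theorem~\ref{thm:B} from Theorem~\ref{thm:top-classification}(2) (topologically) and Theorem~\ref{thm:A}\eqref{item:A2} (smoothly), both of which parametrise stable classes of spin $4$-manifolds with COAT fundamental group $\pi$ by $k\Z \times \bigl(H_2(B\pi;\Z/2)/\Out(\pi)\cup\{odd\}\bigr)$, with $k=8$ topologically and $k=16$ smoothly. The aim is to show that the map
\[
\Phi(M) = \bigl(\sigma(M),\,\epsilon(M)\bigr), \qquad \epsilon(M) = \begin{cases}\{odd\} & \text{if }\lambda_M \text{ is odd},\\ [\tau_M] & \text{if }\lambda_M \text{ is even},\end{cases}
\]
realises this bijection. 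I would first verify that $\Phi$ is well-defined on stable classes, simultaneously establishing necessity of \eqref{item:B1}--\eqref{item:B3}: additivity of $\sigma$ together with $\sigma(S^2\times S^2)=0$ gives \eqref{item:B1}; the equivariant form of $M\#(S^2\times S^2)$ is the orthogonal sum $\lambda_M\perp H(\Z\pi)$ with $H(\Z\pi)$ hyperbolic and hence even, so parity is preserved, giving \eqref{item:B2}; and a spherically characteristic class $[S]\in \pi_2(M)$ with $\mu_M(S)=0$ remains spherically characteristic after stabilisation, with Whitney discs chosen disjoint from the added $S^2\times S^2$, so $\tau(S)$ is unchanged, giving \eqref{item:B3} once combined with the independence statements of \cref{lem:tauinv}.

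For sufficiency, I would observe that by Theorem~\ref{thm:A}\eqref{item:A2} the target of $\Phi$ decomposes over signature into finite fibres of cardinality $|H_2(B\pi;\Z/2)/\Out(\pi)|+1$, matching the cardinality of each signature-fibre of the source; hence $\Phi$ is a bijection if and only if it is surjective. All of $k\Z$ is realised by connected sums with copies of a signature-$k$ spin manifold (the K3 surface smoothly, the topological $E_8$ manifold in the homeomorphism case, combined with orientation reversal to attain negative multiples). The $\{odd\}$ coordinate is realised because the corresponding element in Theorem~\ref{thm:A}\eqref{item:A2} is non-empty, and by \cref{lem:quadratic=even-in-free} oddness of $\lambda_M$ is detected on the $I\pi$-summand of $\pi_2(M)$ after stabilisation.

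The principal obstacle is to show that, for even-parity spin manifolds, $[\tau_M]$ realises every element of $H_2(B\pi;\Z/2)/\Out(\pi)$ and matches the coordinate from Theorem~\ref{thm:A}\eqref{item:A2}. The plan is to analyse the Atiyah--Hirzebruch spectral sequence for $\Omega_4^{\mathrm{Spin}}(B\pi)$: since $B\pi$ is a $3$-dimensional aspherical complex, the relevant filtration quotient in degree $4$ arising from $E^2_{2,2}=H_2(B\pi;\Omega_2^{\mathrm{Spin}})=H_2(B\pi;\Z/2)$ supplies precisely the summand appearing in Theorem~\ref{thm:A}\eqref{item:A2}. The delicate point is to match the resulting projection geometrically with $\tau_M$: pairing against a class $w\in H^2(B\pi;\Z/2)$, the Wu formula relates $\langle w,c_*[S]\rangle$ to the intersection of $S$ with a characteristic surface, which in turn should equal the Whitney-disc count defining $\tau(S)$ in \cref{sec:tau}. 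This compatibility is the technically demanding step; once it is in place, surjectivity of the edge map from $\Omega_4^{\mathrm{Spin}}(B\pi)$ onto $H_2(B\pi;\Z/2)/\Out(\pi)$ yields surjectivity of $\Phi$, and the smooth/topological dichotomy reduces to the difference in the allowed signature lattice $k\Z$.
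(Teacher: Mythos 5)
Your overall architecture is the same as the paper's: establish that $\sigma$, parity, and $[\tau_M]$ are stable invariants, then identify them with the coordinates of the classification set from \cref{thm:A}~(\ref{item:A2}) (equivalently, with the bordism coordinates of \cref{thm:bordism-group-spin-case} modulo the action of \cref{thm:action-spin-case}). The well-definedness part is fine, and the counting reduction of injectivity to surjectivity is legitimate since the fibres over a fixed signature are finite of known cardinality. The problem is that the two substantive identifications are exactly the hard content of the paper, and your proposal either assumes them or proposes a mechanism that cannot work.

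First, the mechanism you propose for matching $\tau_M$ with the $H_2(B\pi;\Z/2)$ coordinate fails at the start: for any immersed sphere $S$ in $M$, the composite $c\circ S\colon S^2\to B\pi$ is null-homotopic because $B\pi$ is aspherical, so $c_*[S]=0$ and the pairing $\langle w,c_*[S]\rangle$ is identically zero. More fundamentally, $\tau$ is a secondary (order one) invariant defined only when $\lambda$ and $\mu$ vanish on $[S]$, so it cannot be recovered from any homological pairing against $[S]$; the Wu formula gives no access to it. The paper's identification (\cref{lem:tauinv}) instead computes $\tau$ directly in the model manifolds of \cref{subsec:ex1} and \cref{subsec:ex2}, where the twisted versus untwisted surgery framings simultaneously control the Arf-invariant/bordism coordinate of \cref{lem:arf} and the framings of the Whitney discs that pass over the surgery discs; there is no shortcut via characteristic surfaces in $M$. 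Second, your realisation of the $\{odd\}$ element begs the question: that element of the classification set is merely a label for the $H_3(B\pi;\Z/2)\cong\Z/2$ filtration quotient, and the assertion that manifolds representing it have odd equivariant intersection form (while all others have even form) is \cref{thm:parity-detects-sec}, which again is proved only by exhibiting the models $M_1$ and $P$ and computing their forms on the $I\pi$ summand. Without these two inputs your map $\Phi$ is neither known to land in the stated target correctly nor known to be surjective, so the counting argument does not close.
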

\smallskip

Note that the intersection and self-intersection numbers $\lambda_M, \mu_M$ are considered as lying at order zero, whereas $\tau_M$ is of order one. There are invariants of all orders (with large indeterminacies in their target groups), defined in an inductive way, as described by R.~Schneiderman and the fourth author~\cite[Definition~9]{schneiderman-teichner-ki}.
The idea is as follows: if some algebraic count of intersections vanishes, pair these intersections up by Whitney discs, and count how these new Whitney discs intersect the previous surfaces. The order of the invariant is the number of layers of Whitney discs present.

However only order one intersections are relevant in the stable setting.  It follows directly from~\cite[Theorem~2]{schneiderman-teichner-tau} that an element $[S]\in\pi_2(M)$ is represented by an embedding $S \colon S^2 \hookrightarrow M\#k(S^2\times S^2)$ for some $k$ if and only if $\mu(S)=0$ and $\tau_1(S)=0$. Here $\tau_1$ is the {\em full} first order intersection invariant of~\cite{schneiderman-teichner-tau}.

\subsection{The stable $HAN_1$-type}

We consider the following data for a closed oriented 4-manifold $M$ which we shall refer to as its Hermitian Augmented Normal $1$-type:
\[
HAN_1(M)=(\pi_1(M),w_M, \pi_2(M),\lambda_M).
\]
Here $\pi_2(M)$ is considered as a $\Z[\pi_1(M)]$-module, $\lambda_M$ is the equivariant intersection form on $\pi_2(M)$ and $w_M \in H^2(B\pi_1(M);\Z/2)\cup \{\infty\}$ gives the normal 1-type $(\pi_1(M),w_M)$ of $M$.

Connected sum with copies of $S^2 \times S^2$ leaves the normal 1-type unchanged and induces stabilisation of $(\pi_2(M),\lambda_M)$ by hyperbolic forms. There is a notion of stable isomorphism, denoted $\cong^{s}$, given by a pair of maps between fundamental groups and their modules, preserving $w$ and $\lambda$.  See \cref{section-tau-versus-int-form} for the details.

Note that in previous discussions of similar {\em quadratic} $2$-types, one needed to add an invariant $k_M\in H^3(B\pi_1(M);\pi_2(M))$, the $k$-invariant classifying the second Postnikov section of $M$. Our result implies indirectly that this $k$-invariant is determined stably by the other invariants.

\smallskip
\begin{theorem}\label{thm:diffeo-iff-int-forms-intro}
For closed oriented $4$-manifolds $M$ and $M'$ with COAT fundamental group, any stable isomorphism $HAN_1(M) \cong^s HAN_1(M')$ is realised by a stable diffeomorphism.
\end{theorem}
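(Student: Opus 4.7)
The plan is to reduce this to Theorem~\ref{thm:A} (and Theorem~\ref{thm:B} in the spin case) by checking that each invariant listed there is determined by the $HAN_1$ data, up to the relevant outer-automorphism action. A stable isomorphism $HAN_1(M) \cong^s HAN_1(M')$ matches $(\pi_1, w)$ by definition, so $M$ and $M'$ share a normal $1$-type~$\xi$ and sit in the same case of Theorem~\ref{thm:A}. The signature $\sigma(M)$ is determined by the equivariant intersection form (via the signature of $\lambda_M \otimes_{\Z\pi}\Z$, which for COAT fundamental groups faithfully captures $\sigma(M)$ because $H_4(B\pi)=0$), so $\sigma(M)=\sigma(M')$; this already finishes the totally non-spin case~\eqref{item:A1}. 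Since evenness is preserved under isometries of sesquilinear forms, parity also matches; combined with the signature this settles the odd-parity piece of the spin case in~\eqref{item:A2}.

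The essential remaining content is the spin even case and the almost spin case, where by Theorem~\ref{thm:B}\eqref{item:B3} the missing invariant is $[\tau_M]\in H_2(B\pi;\Z/2)/\Out(\pi)$ (respectively its analogue in $\ker(\langle w,-\rangle)/\Out(\pi)_w$). After sufficient stabilisation the stable isomorphism is realised by an honest isometry $\Phi\colon (\pi_2(M),\lambda_M)\to(\pi_2(M'),\lambda_{M'})$ covering a group isomorphism $f\colon \pi_1(M)\to\pi_1(M')$ preserving~$w$. Using Lemma~\ref{lem:quadratic=even-in-free} together with a stable change of basis one arranges $\lambda$ to vanish on the $I\pi$ summand on both sides, so that a spherically characteristic class $[S]\in\pi_2(M)$ with $\mu_M(S)=0$ exists; its image $\Phi([S])$ is spherically characteristic in $\pi_2(M')$ with identical algebraic data. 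Naturality of the $\tau$-construction, together with the fact that the Whitney-disc indeterminacy is absorbed into the $\Out(\pi)$-quotient in Lemma~\ref{lem:tauinv}, then yields $[\tau_M] = f_*[\tau_{M'}]$.

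The hardest step is this last identification, since $\tau$ is defined geometrically via Whitney discs whereas a stable isomorphism is purely algebraic. The cleanest route I would pursue is to use the James spectral sequence for $\Omega_4(\xi)$, which simplifies drastically since $H_4(B\pi)=0$ for a COAT group, to identify the $H_2(B\pi;\Z/2)$-summand of the $E^\infty$-page both with $\tau_M$ and with an intrinsic quantity extracted from $(\pi_2(M),\lambda_M)$ by evaluating a fixed quadratic refinement on a spherically characteristic class. Once every invariant of Theorem~\ref{thm:A} has been identified as a function of $HAN_1$, the stable isomorphism forces agreement of the classes in $\Omega_4(\xi)/\Aut(\xi)$, and Theorem~\ref{cor:stablediffeoclasses} delivers the desired stable diffeomorphism.
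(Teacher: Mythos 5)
Your reduction of the totally non-spin case and of the signature/parity invariants is fine and matches the paper. The gap is in the step you yourself identify as the hardest one: showing that the $H_2(B\pi;\Z/2)$-part of the classification (the class $[\tau_M]$, resp.\ the $F_{2,2}$-part of $\Omega_4(\xi)$ in the almost spin case) is determined by the stable isometry class of $\lambda_M$. You assert that ``naturality of the $\tau$-construction'' yields $[\tau_M]=f_*[\tau_{M'}]$, but that is precisely the statement to be proved, not an available input: $\tau_M$ is a \emph{secondary} invariant defined by counting intersections of Whitney discs inside the specific manifold $M$, and an abstract algebraic isometry $\Phi\colon\pi_2(M)\to\pi_2(M')$ does not transport Whitney discs. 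Your fallback --- extracting the invariant from the $E^\infty$-page by ``evaluating a fixed quadratic refinement on a spherically characteristic class'' --- cannot work either, because after the stabilisation of \cref{lem:tauiso} the form $\lambda$ (hence $\mu$) vanishes identically on the $I\pi$-summand; that vanishing is exactly the hypothesis under which $\tau$ is defined, so the quadratic refinement carries no information there. Note also that the analogous statement is \emph{false} for finite fundamental groups (the paper cites homotopy equivalent almost spin $4$-manifolds that are not stably diffeomorphic, distinguished by the $H_2(B\pi;\Z/2)$-part), so no argument that uses only formal naturality can succeed; something specific to COAT groups must enter.

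The paper's proof supplies that missing ingredient via $L$-theory: it builds the diagram relating $\bbL\langle 1\rangle_4(N)$ (identified with normal invariants of a null-bordant model $N$), the subgroup $F_{2,2}\subset\Omega_4(B_w)$, and $L_4(\Z\pi)$, where the map $\widehat{\Lambda}$ sends a manifold to the Witt class of its stabilised intersection form. The decisive input is that the assembly map $\bbL\langle 1\rangle_4(B\pi)\to L_4(\Z\pi)$ is an isomorphism for COAT groups (Bartels--Farrell--L\"uck), which together with surjectivity of $c_*$ and a counting argument ($\Im(\wt\Theta)\subset H_2(B\pi;\Z/2)\cong \wt L_4(\Z\pi)$) forces $\widehat{\Lambda}$ to be injective --- i.e.\ the intersection form detects the remaining bordism invariant. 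To complete your proof you would need to import this (or an equivalent) argument; as written, the proposal restates the theorem's hardest claim rather than establishing it.
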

\smallskip

The spin case of this theorem says that the $H_2(B\pi;\Z/2)$ part of the spin classification above, which we identified with a $\tau_M$-invariant, can also be seen from the equivariant intersection form.  This is extremely surprising and reveals a new feature of stable classification that has not been previously observed.

As a corollary of \cref{thm:diffeo-iff-int-forms-intro}, we recover the following special case of J.~Davis' theorem~\cite{Davis-05} that homotopy equivalent 4-manifolds with torsion-free fundamental group satisfying the strong Farrell-Jones conjecture are stably diffeomorphic.

\smallskip
\begin{cor}
Let $M,M'$ be closed oriented 4-manifolds with COAT fundamental groups that are homotopy equivalent.  Then $M$ and $M'$ are stably diffeomorphic.
\end{cor}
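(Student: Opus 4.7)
The plan is to apply Theorem~\ref{thm:diffeo-iff-int-forms-intro} directly: a homotopy equivalence between two such $4$-manifolds induces an honest (hence \emph{a fortiori} stable) isomorphism of their Hermitian Augmented Normal $1$-types, so stable diffeomorphism follows from that theorem. Thus the task is simply to check that every piece of data in $HAN_1(M) = (\pi_1(M), w_M, \pi_2(M), \lambda_M)$ is a homotopy invariant of closed, oriented $4$-manifolds (in a manner respecting the identifications given by the homotopy equivalence).

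First I would let $f \colon M \to M'$ be the given homotopy equivalence. Standard manipulations allow me to reduce to the orientation-preserving case, by replacing $M'$ with $\overline{M'}$ if necessary; this does not affect stable diffeomorphism type once one verifies that the same $HAN_1$-datum is obtained. Then $f$ induces an isomorphism $f_\ast \colon \pi_1(M) \xrightarrow{\cong} \pi_1(M')$, identifying both groups with the ambient COAT group $\pi$, and an isomorphism $f_\ast \colon \pi_2(M) \xrightarrow{\cong} \pi_2(M')$ of $\Z\pi$-modules (after conjugating by $f_\ast$ on $\pi_1$).

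Next I would observe that the equivariant intersection form $\lambda_M$ is expressible purely in terms of the cup product on $\wt{M}$ together with Poincar\'{e} duality using the fundamental class $[M]$, and is therefore preserved under orientation-preserving homotopy equivalences: $f_\ast$ is an isometry $\lambda_M \xrightarrow{\cong} \lambda_{M'}$. Finally, the class $w_M$ is determined by $w_2(M)$ (or is $\infty$ when $w_2(\wt{M})\neq 0$), and $w_2$ is homotopy invariant for closed oriented $4$-manifolds by Wu's formula, which expresses $w_2(M)\cup x = x\cup x$ on $H^2(M;\Z/2)$ in terms of the cup product structure alone; consequently $f^\ast w_{M'} = w_M$ under the identification of fundamental groups.

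Packaging these four compatibilities together yields an isomorphism $HAN_1(M) \cong HAN_1(M')$, in particular a stable isomorphism $HAN_1(M) \cong^s HAN_1(M')$, so Theorem~\ref{thm:diffeo-iff-int-forms-intro} promotes $f$ to a stable diffeomorphism. There is no serious obstacle; the only point that merits care is the orientation bookkeeping and the correct identification of the class $w_M$ across the homotopy equivalence via Wu's formula. The striking content is not in this corollary itself but in the fact that it recovers (a special case of) Davis' theorem without any appeal to $L$-theory or the Farrell--Jones conjecture, as these ingredients are hidden inside Theorem~\ref{thm:diffeo-iff-int-forms-intro}.
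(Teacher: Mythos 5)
Your proposal is correct and is essentially the paper's own (implicit) argument: the paper states this corollary as an immediate consequence of \cref{thm:diffeo-iff-int-forms-intro}, the point being exactly what you verify, namely that each entry of $HAN_1(M)=(\pi_1(M),w_M,\pi_2(M),\lambda_M)$ is a homotopy invariant --- $\pi_1$ and $\pi_2$ as a $\Z\pi$-module tautologically, $\lambda_M$ via Poincar\'e duality with the fundamental class, and $w_M$ via Wu's formula together with the uniqueness statement of \cref{lem:w}. The one step I would not let stand as written is the claimed reduction to the orientation-preserving case: replacing $M'$ by $\overline{M'}$ changes $\lambda_{M'}$ to $-\lambda_{M'}$ (in particular it negates the signature), so $\overline{M'}$ does \emph{not} carry the same $HAN_1$-datum and need not be stably diffeomorphic to $M'$; an orientation-reversing homotopy equivalence would only give $M\cong_s\overline{M'}$. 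Under the paper's standing conventions (all manifolds oriented, all diffeomorphisms orientation-preserving) the homotopy equivalence is to be taken orientation-preserving, and with that reading the rest of your argument goes through exactly as in the paper.
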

\smallskip

In particular, it follows that the $\tau_M$-invariant as in \cref{thm:B} is a homotopy invariant.
We remark that this contrasts with the case of finite groups, where there are homotopy equivalent (almost spin) 4-manifolds that are not stably diffeomorphic, as detected by the $H_2(B\pi;\Z/2)$ part of the bordism group~\cite[Example 5.2.4]{teichnerthesis}.

In \cref{theorem:realisation-of-forms} we shall describe precisely which stable isomorphism classes of forms are realised as the intersection forms of $4$-manifolds with a COAT fundamental group.

\subsection{Concluding remarks}

The case of COAT groups is particularly attractive because models for all stable spin diffeomorphism classes can be constructed, starting from the given $3$-manifold. All invariants can be computed explicitly, leading to simple algebraic results.

The main new aspect of our classification, not discussed in any previously known examples, is how the order one intersection invariant $\tau_M$ enters into the picture, determining most of the finite part of the classification.

Even though \cref{thm:diffeo-iff-int-forms-intro} tells us that~$\tau_M$ is determined by the intersection form~$\lambda_M$, this can turn out to be a red herring.
For example, one does not need to know the equivariant intersection form on the entire $\Z[\pi_1(M)]$-module $\pi_2(M)$, which can be huge, in order to decide whether two $4$-manifolds are stably diffeomorphic. Instead, the $\tau_M$-invariants can be computed on the much smaller vector space $H^2(B\pi;\Z/2)$.

\subsection*{Organisation of the paper}

In \cref{sec:basics} we recall the basic definitions needed for the theory.
\cref{sec:stablediffeoclasses} computes the bordism groups and the action of the automorphisms of the normal $1$-types on the bordism groups, from which the proof of \cref{thm:A} is derived.
\cref{stable-homeo-classification} briefly describes how to adapt the computations of \cref{sec:stablediffeoclasses} to the stable homeomorphism classification of topological manifolds with COAT fundamental group.
\cref{sec:almost-spin-classificiation} uses the topological bordism groups to complete the computation of the stable classification of almost spin $4$-manifolds, first computing in the topological category, then deducing the smooth result from the topological result.
In \cref{sec:some-examples} we present some examples, computing the set of stable diffeomorphism classes of spin manifolds whose fundamental group $\pi$ is a central extension of $\Z^2$ by $\Z$.
\cref{sec:exmanifolds,sec:tau} give the proof of \cref{thm:B}.  In \cref{sec:exmanifolds} we discuss the parity of the equivariant intersection form of a spin manifold and show that it detects a $\Z/2$ invariant in the bordism group corresponding to the element $\{odd\}$ in \cref{thm:A}(\ref{item:A2}).
In \cref{sec:tau} we introduce the $\tau$-invariant of a spin manifold and show that it detects the invariants in the bordism group that come from $H_2(\pi;\Z/2)$.  We show that these same invariants from the bordism group can also be seen in the equivariant intersection form in \cref{section-tau-versus-int-form}.

\subsection*{Acknowledgements}
We are indebted to Diarmuid Crowley, Ian Hambleton and the referee for many extremely helpful comments and suggestions.  In particular Diarmuid pointed out some errors, in a previous version, in our computations of the action of the automorphisms of the normal $1$-types.
The first, third and fourth authors were supported by the Max Planck Institute for Mathematics.  The second author was supported by the GRK 1150 ``Cohomology and Homotopy'' and Wolfgang L\"uck.  The third author was also supported by an NSERC Discovery Grant.

\section{Normal $1$ type and the James spectral sequence}
\label{sec:basics}
\begin{nota}
A map has the same degree of connectedness as its homotopy cofibre and the same degree of coconnectedness as its homotopy fibre.  Concretely, a map of spaces is called $k$-connected if it induces an isomorphism of homotopy groups~$\pi_i$ for $i < k$ and a surjection on $\pi_k$. A map is called $k$-coconnected if it induces an isomorphism on $\pi_i$ for $i > k$ and is injective on $\pi_k$.
\end{nota}

\smallskip
\begin{definition}
Let $M$ be a closed manifold of dimension $n$. A \emph{normal $k$-type} for~$M$ is a fibration over $BSO$, denoted by $\xi \colon B \to BSO$ through which a map representing the stable normal bundle $\nu_M \colon M \to BSO$ factors as follows
\[\xymatrix@R=.5cm{ & B \ar[d]^\xi \\ M \ar@/^.7pc/[ur]^-{\wt{\nu}_M} \ar[r]_-{\nu_M} & BSO }\]
with $\wt{\nu}_M$ a $(k+1)$-connected map and $\xi$ a $(k+1)$-coconnected map. A choice of $\wt{\nu}_M$ is called a \emph{normal k-smoothing} of $M$.
\end{definition}
\smallskip

All the normal $k$-types of $M$ are fibre-homotopy equivalent to one another.  Frequently we only specify the normal $k$-type up to fibre-homotopy equivalence.  For example, $B= BSpin \times B\pi \to BSO$ is not a fixed space until one chooses models for the classifying spaces $B\pi$, $BSpin$ and $BSO$.
Note that the fibre-homotopy class of a normal $1$-type is an invariant of stable diffeomorphism since $S^2 \times S^2$ has trivial stable normal bundle.
\smallskip
\begin{rem}
If $M$ is a closed $n$-dimensional manifold, and $\xi\colon B \to BSO$ is its normal $k$-type, then the automorphisms $\Aut(\xi)$ of this fibration act transitively on the set of homotopy classes of normal $k$-smoothings of~$M$.
\end{rem}

\smallskip
\begin{thm}[{{\cite[Theorem C]{surgeryandduality}}}]
	\label{thm:stablediffeoclasses}
Two closed $2q$-dimensional manifolds with the same Euler
characteristic and the same normal $(q-1)$-type, admitting bordant normal $(q-1)$-smoothings, are diffeomorphic after connected sum with $r$ copies of $S^q\times S^q$ for some $r$.
\end{thm}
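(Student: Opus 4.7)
The plan is to follow Kreck's modified surgery strategy. By hypothesis, there exists a normal $(q-1)$-bordism $(W^{2q+1},\wt{\nu}_W)$ between the given normal $(q-1)$-smoothings $\wt{\nu}_M$ and $\wt{\nu}_{M'}$. The goal is to improve $W$, via interior surgeries combined with $S^q \times S^q$-stabilizations of the boundary components, until it becomes a trivial $h$-cobordism.

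First I would perform surgeries in the interior of $W$ on classes in the kernel of $\pi_i(W) \to \pi_i(B)$ for $i \leq q-1$, to promote $\wt{\nu}_W$ to a $q$-equivalence relative to the boundary. Because $\xi$ is $q$-coconnected and both $\wt{\nu}_M, \wt{\nu}_{M'}$ are already $q$-connected, the relevant classes are represented by embedded framed spheres of dimension $\leq q$ (by general position in dimension $2q+1$), and the $q$-coconnectedness of $\xi$ guarantees that one can extend the normal smoothing across each surgery trace. These surgeries do not touch the boundary and yield a new bordism, still called $W$, that is $q$-connected relative to $M$ (and to $M'$).

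Next, view $W$ as built from $M \times I$ by attaching only $q$- and $(q+1)$-handles, which is possible by turning the Morse function associated with Step 1 upside down. Each $q$-handle is attached along a nullhomotopic framed $(q-1)$-sphere in the upper boundary; attached with the trivial normal framing, such a handle changes the boundary by connected sum with $S^q \times S^q$, while other framings can be absorbed by introducing further stabilizations. After $r$ such $q$-handles, the intermediate level is $M \# r(S^q \times S^q)$, and the remaining handles are $(q+1)$-handles building up to $M'$. The assumption that $M$ and $M'$ have the same Euler characteristic forces the number of $(q+1)$-handles to equal the number of $q$-handles, so algebraic cancellation is in principle possible.

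The main obstacle is middle-dimensional surgery: one must realise the algebraic cancellation of the $(q+1)$-handles geometrically by finding embedded framed $q$-spheres with geometric duals in $M \# r(S^q \times S^q)$. This is exactly the step that can fail for an unstabilised $W$, where the obstruction lives in an $L$-group. The crucial point is that every additional $S^q \times S^q$ summand supplies a hyperbolic pair of embedded $q$-spheres, which one can use as on-demand geometric duals for the attaching spheres of the $(q+1)$-handles. After further stabilisation by some $S^q \times S^q$ summands to supply the requisite duals, one performs Whitney moves to pair up intersections, cancels all $(q+1)$-handles against $q$-handles, and the bordism collapses to a product. Since $2q+1 \geq 5$, the smooth $h$-cobordism theorem (absorbing any remaining Whitehead torsion by further $S^q \times S^q$ stabilisations) then provides a diffeomorphism $M \# r(S^q\times S^q) \cong M' \# r(S^q \times S^q)$ for some $r$, which is the desired conclusion.
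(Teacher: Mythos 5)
The paper does not actually prove this statement; it cites it as Theorem~C of Kreck's \emph{Surgery and duality} and alludes to a ``direct proof in terms of handle structures''. Measured against that intended argument, the first half of your proposal is on track: surgery below the middle dimension in the interior of $W$ (using the $q$-coconnectedness of $\xi$ to extend the normal structure), a handle decomposition of $W$ relative to $M$ with handles of index $q$ and $q+1$ only, and the observation that each $q$-handle is attached along a nullhomotopic, hence unknotted, framed $(q-1)$-sphere and so turns the level surface into a connected sum with $S^q\times S^q$. But the second half --- cancelling all the $(q+1)$-handles against the $q$-handles by Whitney moves after further stabilisation, so that $W$ ``collapses to a product'' --- is a genuine gap. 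Having hyperbolic pairs of spheres available in extra $S^q\times S^q$ summands does not let you cancel handles; you must first bring the algebraic intersection data between attaching spheres and belt spheres into elementary form, and that is exactly Kreck's obstruction in the monoid $\ell_{2q+1}$, which does \emph{not} vanish after stabilising the boundary (bordant manifolds need not become $h$-cobordant). Worse, any argument routed through the Whitney trick in the middle dimension fails outright for $q=2$, which is the only case this paper uses: the whole point of the stable classification in dimension $4$ is that no middle-dimensional embedded surgery is required.

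The step you are missing is to turn the cobordism upside down instead of cancelling. After your surgeries, $(W,M')$ is also $(q-1)$-connected, so the $(q+1)$-handles, read from the top, are $q$-handles attached to $M'$ along nullhomotopic framed $(q-1)$-spheres. Hence the single middle level of the handle decomposition is simultaneously $M\# r(S^q\times S^q)$ and $M'\# s(S^q\times S^q)$, and the hypothesis $\chi(M)=\chi(M')$ forces $r=s$ (each summand adds $2$ to the Euler characteristic). This yields the desired diffeomorphism directly, with no handle cancellation, no Whitney trick, and no appeal to the $h$-cobordism theorem. (Your aside that residual framings and Whitehead torsion can be ``absorbed by further stabilisations'' would also need justification, but it becomes moot once the argument is finished at the middle level.)
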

\smallskip

Note that two closed orientable connected 4-manifolds with the same fundamental groups have the same Betti numbers $\beta_i$ for $i \neq 2$.  A necessary condition for stable diffeomorphism of two 4-manifolds is that they have the same signature, since connect summing $S^2 \times S^2$ just adds a hyperbolic summand to the intersection form.  Two 4-manifolds with the same signature have second Betti numbers differing by a multiple of two.  It is then easy to see that the Euler characteristics can be made to coincide by stabilising one of the 4-manifolds.
\cref{cor:stablediffeoclasses} from the introduction follows from this observation,  \cref{thm:stablediffeoclasses}, and the fact that stably diffeomorphic $4$-manifolds are bordant over their normal $1$-types (see~\cite[Lemma~2.3(ii)]{CS11} for a proof).
Recall that \cref{cor:stablediffeoclasses} states that stable diffeomorphism classes of $4$-manifolds with normal $1$-type $\xi$ are in one-to-one correspondence with $\Omega_4(\xi)/\Aut(\xi)$.

Next we want to recall how to compute the bordism groups $\Omega_4(\xi)$.
For a vector bundle $E \colon Y \to BSO(n)$, let $\Th(E)$ be the Thom space given by the unit disc bundle modulo the unit sphere bundle.  Given a stable vector bundle $\eta\colon Y \to BSO$, let $M\eta$ be the Thom spectrum.  For the convenience of the reader  we recall the construction of the Thom spectrum.
Let $Y_n$ be given by the following pullback diagram
\[\xymatrix{Y_n \ar[r]^-{\eta_n} \ar[d] & BSO(n) \ar[d] \\ Y \ar[r]_-{\eta} & BSO}\]
Then the $n^{th}$ space in the spectrum $M\eta$ is given by
\[ (M\eta)_n = \Th(\eta_n) \]
It follows immediately from the definition that $\eta_{n+1}|_{Y_{n}} = \eta_{n}\oplus \underline{\R}$ and hence we obtain canonical structure maps
\[ (M\eta)_n\wedge S^1 = \Th(\eta_n) \wedge \Th(\underline{\R}) = \Th(\eta_n \oplus \underline{\R}) \to \Th(\eta_{n+1}) = (M\eta)_{n+1}.\]

 Normal $1$-types of a 4-manifold $N$ with COAT fundamental group $\pi$ are given by
\[ B = \begin{cases} BSO \times B\pi \xrightarrow{\pr_1} BSO &\text{ in the totally non-spin case,} \\ BSpin \times B\pi \xrightarrow{\gamma\circ \pr_1} BSO & \text{ in the spin case, and} \\ BSpin \times B\pi \xrightarrow{\gamma\times E} BSO & \text{ in the almost spin case} \end{cases}\]
where $E$ is a certain complex line bundle over $B\pi$ and $\gamma$ is the tautological stable vector bundle over $BSpin$, see \cref{lem:normal-totally-non-spin-case,lem:normal-spin-case,lemma:almostspin}.
%, \cref{lem:normal-spin-case}, and \cref{lemma:almostspin}.

Recall from above that a stable bundle over a space gives rise to a Thom spectrum, and in the case of the normal $1$-types as just described, this gives the Thom spectrum $M\xi$, where we obtain (from the construction of that spectrum) that
\[M\xi = \begin{cases} MSO \wedge B\pi_+ & \text{ in the totally non-spin case,} \\
        MSpin \wedge B\pi_+ & \text{ in the spin case, and } \\
        \Sigma^{-2}(MSpin \wedge \Th(E)) & \text{ in the almost spin case.}
        \end{cases}\]
Note that in the almost spin case there is a shift by two in the indexing of the spectrum corresponding to the dimension of the vector bundle~$E$.
The Pontrjagin-Thom construction $\Omega_4(\xi) \cong \pi_4(M\xi)$ yields isomorphisms:
\[\Omega_4(\xi) \cong \begin{cases} \pi_4(MSO\wedge B\pi_+) & \text{ in the totally non-spin case,} \\ \pi_4(MSpin\wedge B\pi_+) & \text{ in the spin case, and} \\ \pi_6(MSpin\wedge \Th(E)) &\text{ in the almost spin case.}\end{cases}\]

Recall that the homotopy groups of a spectrum $\mathbb{E}$ are defined by $\pi_n(\mathbb{E}) = \colim \pi_{n+k}(\mathbb{E}_k)$.  Note that $B\pi_+= B\pi \sqcup \{\ast\}$ is the Thom space of the canonical rank $0$ bundle over $B\pi$, and thus the spin case can be viewed as a special case of the almost spin case (in which the bundle $E$ may be chosen to be the rank $0$ bundle).

To compute the bordism group $\Omega_4(\xi)$, we apply the James spectral sequence \cite[Theorem 3.1.1.]{teichnerthesis} with homology theory being stable homotopy theory $\pi_*^s$, to the diagram
\[\xymatrix@R=.5cm{F \ar[r] & B \ar[r] \ar[d]_{\xi} & B\pi \\ & BSO & }\]
where $B$ is the normal $1$-type and $F$ is thus either $BSO$ or $BSpin$.

The $E^2$-page of the James spectral sequence reads as
\[ E^2_{p,q} = H_p(B\pi;\pi_q(M\xi|_F)) \Longrightarrow \pi_{p+q}(M\xi).\]
A priori this is to be interpreted with twisted coefficients. However it turns out that, since the fibration $F \to B \to B\pi$ is trivial (i.e. $B = F\times B\pi$), the spectral sequence is not twisted. Furthermore $M\xi|_F$ is either $MSO$ or $MSpin$.

The homotopy groups $\pi_*(M\xi)$ can also be computed by a standard Atiyah-Hirzebruch spectral sequence (for $MSpin$ or $MSO$). It turns out that the James spectral sequence is the same as the Atiyah-Hirzebruch spectral sequence in the first two cases above (totally non-spin and spin case). In the almost spin case the $E^2$-pages of the James spectral sequence and the Atiyah-Hirzebruch spectral sequence are isomorphic using the Thom isomorphism
\[ \wt{H}_{p+2}(\Th(E);A) \cong H_p(B\pi;A)\]
for all abelian groups $A$.

Denote the filtration on the abutment of an Atiyah-Hirzebruch spectral sequence by
\[ 0 \subset F_{0,n} \subset F_{1,n-1} \subset \dots \subset F_{n-q,q} \subset \dots \subset F_{n,0} = \Omega_n(\xi).\]
Recall that $F_{n-q,q}/F_{n-q-1,q+1} \cong E^{\infty}_{n-q,q}$.

Denote the restriction of the fibration $f \colon B \to B\pi$ to the $p$-skeleton of $B\pi$ by $B|_p$, and let $\xi|_p \colon B|_p \to BSO$ be the restriction of $\xi$ to $B|_p$. An element of $\Omega_n(\xi)$ lies in $F_{p,n-p}$ if and only if it is in the image of the map $\Omega_n(\xi|_p) \to \Omega_n(\xi)$.
This follows from the naturality of the spectral sequence applied to the map of fibrations induced by the inclusion of $B\pi^{(p)} \to B\pi$.
%In our situation this means that a $4$-manifold is $B$-bordant to a $4$-manifold that admits a map into the $p$-skeleton of $B\pi$.

%The advantage of the identification made above, of the special cases of the James spectral sequence that we consider with certain Atiyah-Hirzebruch spectral sequences, is

The following key lemma allows us to interpret the $E^2$ page in terms of tranverse inverse images.

Let $X$ be a CW-complex and let $X^{(p)}$ be its $p$-skeleton. Let $E\colon X\to BSO$ be a stable vector bundle and let $\xi=(E,\gamma)\colon X\times BSpin\to BSO$.  For a subset $Y$ of $X$, let $\xi|_Y$ denote the restriction of $\xi$ to $Y\times BSpin$. Denote the barycentres of the $p$-cells $\{e_i^p\}$ of $X$ by $\{b_i^p\}_{i \in I}$.
Given an element $[f\colon M \to X^{(p)}\times BSpin] \in \Omega_n^{Spin}(\xi|_{X^{(p)}})$, denote the regular preimages of the barycentre $\{b_i^p\} \in X^{(p)}$ under $\pr_1\circ f$ by $N_i \subset M$. Note that $[N_i] \in \Omega_{n-p}^{Spin}$, since the normal bundle of $N_i$ in $M$ is trivial and so is $(\pr_2\circ f)^*E$ restricted to $N_i$, and hence $N_i$ inherits a spin structure from $f\colon M\to X^{(p)}\times BSpin$.
For spin $4$-manifolds, we will use the case of the following theorem when $E$ is the trivial bundle.  For almost spin manifolds, $E$ will be a non-trivial bundle depending on the second Stiefel-Whitney class.

\smallskip
\begin{lemma}\label{lem:arf}
The canonical map $\Omega_n(\xi|_{X^{(p)}}) \to H_p(X^{(p)};\Omega_{n-p}^{Spin})$ that comes from the spectral sequence coincides with the map
\[\begin{array}{rcl} \Omega_n(\xi|_{X^{(p)}}) & \to &  H_p(X^{(p)};\Omega_{n-p}^{Spin}) \\
\lbrack M \to X^{(p)}\rbrack  &\mapsto&  \Big[\sum\limits_{i \in I} [N_i]\cdot e_i^p \Big]. \end{array}\]
\end{lemma}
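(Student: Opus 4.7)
The plan is to unwind the construction of the Atiyah--Hirzebruch spectral sequence via the skeletal filtration of $X$ and identify the stated edge-type homomorphism as a composition of natural maps, then recognise the geometric regular-preimage assignment as exactly this composition through Pontryagin--Thom.

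First, I would recall that the AHSS arises from the exact couple of the skeletal filtration, so the $E^1$-page is
\[
E^1_{p,q} = \Omega_{p+q}^{Spin}(X^{(p)}/X^{(p-1)}) \cong \bigoplus_i \wt{\Omega}_{p+q}^{Spin}(S^p_i) \cong C^{cell}_p(X;\Omega_q^{Spin}),
\]
one summand per $p$-cell $e_i^p$, with $d^1$ agreeing with the cellular boundary. The edge homomorphism to $H_p(X;\Omega_{n-p}^{Spin})$ is then the composite
\[
\Omega_n^{Spin}(X^{(p)}) \xrightarrow{q_\ast} \Omega_n^{Spin}(X^{(p)}/X^{(p-1)}) \twoheadrightarrow H_p(X;\Omega_{n-p}^{Spin}),
\]
where $q_\ast$ lands in cellular cycles by exactness of the long exact sequence of the pair (hence $d^1 q_\ast = 0$). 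Second, I would invoke Pontryagin--Thom to identify $\wt{\Omega}_n^{Spin}(S^p) \cong \Omega_{n-p}^{Spin}$ by assigning to a class $[Y \to S^p]$ the bordism class of the regular preimage of a non-basepoint, equipped with the spin structure induced from $Y$ via the canonical framing of its trivialised normal bundle.

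Given $[M \xrightarrow{f} X^{(p)}]$, for each $p$-cell $e_i^p$ the projection $\pi_i\colon X^{(p)}\to X^{(p)}/X^{(p-1)}\twoheadrightarrow S^p_i$ has the property that the regular preimage of a non-basepoint of $S^p_i$ under $\pi_i \circ f$ is precisely $N_i = f^{-1}(b_i^p)$, framed by the local chart on $e_i^p$. Hence under Pontryagin--Thom the $i$-th component of $q_\ast[M \to X^{(p)}]$ is $[N_i]\in\Omega_{n-p}^{Spin}$, producing the cellular chain $\sum_i [N_i] \cdot e_i^p$, and the lemma follows by projecting to $H_p$. The main obstacle is the bookkeeping of spin structures: one must verify that the spin structure on $N_i$ inherited from $M$ through its trivialised normal bundle (using the orientation of $e_i^p$) agrees with the one produced by the Pontryagin--Thom identification on $\wt{\Omega}_n^{Spin}(S^p)$. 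This reduces to a local computation in a tubular neighbourhood of $N_i$, but is precisely where orientation-convention errors could sneak in; once this is settled, the remainder is naturality of the spectral sequence combined with the exactness of the pair sequence.
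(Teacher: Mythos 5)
Your proposal is correct and follows essentially the same route as the paper: both reduce via the collapse $X^{(p)} \to X^{(p)}/X^{(p-1)} \cong \bigvee_i S^p_i$ and projection to a single sphere, and then identify the resulting map $\wt{\Omega}_n^{Spin}(S^p) \to \Omega_{n-p}^{Spin}$ as being given by transverse inverse images. The only cosmetic difference is that you invoke Pontryagin--Thom for this last identification, whereas the paper pins it down as the suspension isomorphism (by naturality in the homology theory) and then describes that isomorphism geometrically as a Mayer--Vietoris boundary map, citing Br\"ocker--tom Dieck --- which is precisely the justification your flagged ``local computation'' of spin structures requires.
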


The map $\Omega_n(\xi|_{X^{(p)}}) \to H_p(X^{(p)};\Omega_{n-p}^{Spin})$, which is sometimes called an edge homomorphism, arises as follows.  The abutment of the James spectral sequence $\Omega_n(\xi|_{X^{(p)}}) = F_{n,0}$ maps to its quotient by the first filtration step $F_{p,n-p}$ that differs from $F_{n,0}$.  This term is indeed $F_{p,n-p}$, since the homology of $X^{(p)}$ vanishes in degrees greater than $p$, therefore $E^2_{s,t} = E^{\infty}_{s,t} =0$ for all $s>p$ and all $r \geq 3$.   We have $F_{n,0}/F_{p,n-p} \cong E^{\infty}_{p,n-p}$.  The target, $H_p(X^{(p)};\Omega_{n-p}^{Spin})$ in the left-most column, is the $E^2_{p,n-p}$ term of the spectral sequence.  Since no differentials have image in $E^2_{p,n-p}$, we have that $E^{\infty}_{p,n-p} \subseteq E^{2}_{p,n-p} = H_p(X^{(p)};\Omega_{n-p}^{Spin})$, and so the composition
\[\Omega_n(\xi|_{X^{(p)}}) = F_{n,0} \to F_{n,0}/F_{p,n-p} \xrightarrow{\simeq} E^{\infty}_{p,n-p} \to E^{2}_{p,n-p} = H_p(X^{(p)};\Omega_{n-p}^{Spin})\]
gives the desired map.

\begin{proof}[Proof of \cref{lem:arf}]
The case where $p=0$ is trivial and therefore we can assume $p\geq 1$ and consider reduced homology instead. The case that $n<p$ is also trivial, so we assume for the rest of the proof that $n \geq p$.

Consider the following diagram, which is induced by the maps of pairs
\[ (X^{(p)},\emptyset) \to (X^{(p)},X^{(p)}\setminus \mathring{D}^p_i) \leftarrow (D_i^p,\partial D_i^{p}), \]
where the first map picks out one single $p$-cell $D^p_i$ and note that  an element in $H_p(X^{(p)};\Omega_{n-p}^{Spin})$ is determined by its image in $\Omega_{n-p}^{Spin}$, ranging over all $p$-cells.
\[\xymatrix@C-1pc{\Omega_n(\xi|_{X^{(p)}}) \ar[r] \ar[d] & \Omega_n(\xi|_{X^{(p)}},\xi|_{X^{(p)}\setminus \mathring{D}_i^p}) \ar[d] & \Omega_n(\xi|_{D_i^p},\xi|_{\partial D_i^p}) \ar[l]_-\cong\ar[d]^\cong\ar[r]^-\cong&\wt\Omega_n^{Spin}(S^p) \ar[d]^\cong\\
H_p(X^{(p)};\Omega_{n-p}^{Spin}) \ar[r] & H_p(X^{(p)},X^{(p)}\setminus \mathring{D}_i^p;\Omega_{n-p}^{Spin})  & H_p(D_i^p,\partial D_i^p;\Omega_{n-p}^{Spin})\ar[l]_-\cong \ar[r]^-\cong&\wt H_p(S^p;\Omega_{n-p}^{Spin})\ar[d]^\cong \\
& & & \Omega_{n-p}^{Spin}}\]
An element of the relative bordism group $\Omega_n(\xi|_{X^{(p)}},\xi|_{X^{(p)}\setminus \mathring{D}_i^p})$ is represented by an $n$-dimensional manifold with boundary $M$, together with a diagram
\[\xymatrix @R-0.3cm @C-0.3cm{\partial M \ar[r] \ar[d] & (X^{(p)}\setminus \mathring{D}_i^p) \times BSpin \ar[d]\\
M \ar[r] \ar[dr]_{\nu_M} & X^{(p)} \times BSpin \ar[d]^{\xi|_{X^{(p)}}} \\ & BSO.}\]
The addition is, as ever, disjoint union, and we quotient by bordisms respecting the bundle structure.
This is just the unreduced homology theory arising from the reduced theory corresponding to the spectrum $MSpin \wedge \Th(E)$ discussed above.

Now we explain the maps in the diagram above.
The first and second horizontal maps are from the long exact sequences of the appropriate pairs.  The second horizontal maps are isomorphisms by excision.  The third isomorphism follows since $\partial D^p \to D^p$ is a cofibration, so homology of a pair is isomorphic to the corresponding reduced homology of the quotient $S^p$.  The vertical maps (not including the bottom-right vertical map) are edge homomorphisms that arise in the James spectral sequence, analogous to the map in the statement of the theorem (which is the left-most vertical map).   The diagram commutes by naturality of the James spectral sequence.

By commutativity, it suffices to check that the right-then-down composition is determined by inverse images as described above the statement of the lemma. Note that the right vertical composite is the (inverse of the) suspension isomorphism in spin bordism (because suspension isomorphisms are natural in the homology theory, and the bottom right vertical map is, by definition, the suspension isomorphism in singular homology, after identifying $\Omega_{n-p}^{Spin} \cong \wt{H}_0(S^0;\Omega_{n-p}^{Spin})$). But suspension isomorphisms in bordism theories are are given by transverse inverse images.
This follows from the description of the suspension isomorphism $\wt{\Omega}_n(S^q) \cong \wt{\Omega}_{n-1}(S^{q-1})$ as the boundary map of the Mayer-Vietoris sequence in reduced bordism theory associated to the decomposition $S^q= \Sigma S^{q-1} = D^q \cup_{S^{q-1}} D^q$.  A proof that this boundary map can be described in terms of inverse images may be found in~\cite[Section~II.3]{brocker-tom-dieck}.
This completes the proof of the lemma.
\end{proof}
%For the almost spin case we need another version of Lemma~\label{lem:arf} that takes into account the twisting.

\section{Stable diffeomorphism classification from bordism groups}\label{sec:stablediffeoclasses}

Throughout this section, all results will hold for $\pi$ a COAT group.  With potential future use in mind, for many lemmas we will try to give the most general hypotheses under which the given proof holds.
All manifolds called either $M$ or $X$ will be smooth, oriented and have fundamental group $\pi$.

Recall that a manifold $M$ is called \emph{totally non-spin} if its universal cover $\widetilde{M}$ is not spin, and $M$ is called \emph{almost spin} if $M$ is not spin but its universal cover is.  The normal 1-type of $M$ is determined by $w_2(M)$ and $w_2(\wt{M})$.  We investigate the totally non-spin case in \cref{sec:totallynonspin}, the spin case in \cref{sec:spincase} (and also in \cref{sec:exmanifolds,sec:tau}), and the almost spin case in \cref{sec:almostspin}.  In each case we compute the bordism group of the relevant vector bundle $\xi$, and the action of $\Aut(\xi)$ on the bordism group.

\subsection{Totally non-spin 4-manifolds}
\label{sec:totallynonspin}

\smallskip \begin{lemma}\label{lem:normal-totally-non-spin-case}
Let $\pi$ be a finitely presented group.  The normal $1$-type of a totally non-spin manifold with fundamental group $\pi$ is given by
\[\xymatrix{\xi \colon B\pi \times BSO \ar[r]^-{pr_2} & BSO,}\]
where the map is given by the projection onto $BSO$.
\end{lemma}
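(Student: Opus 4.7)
The plan is to exhibit $\xi = \pr_2\colon B\pi \times BSO \to BSO$ together with an explicit lift and verify the two defining properties of a normal $1$-type: that $\xi$ is $2$-coconnected and that $\nu_M$ admits a $2$-connected lift through $\xi$. Uniqueness of the normal $1$-type up to fibre homotopy equivalence then identifies this model as the claimed one.

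First I would check the coconnectivity of $\xi$ using the homotopy fibre sequence $B\pi \to B\pi \times BSO \to BSO$. Since $\pi$ is discrete, $\pi_i(B\pi)=0$ for $i \geq 2$, so the long exact sequence of homotopy groups immediately yields that $\xi_*$ is an isomorphism on $\pi_i$ for $i \geq 3$ and injective on $\pi_2$, which is exactly the definition of $2$-coconnectedness.

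For the lift, I would take $\wt{\nu}_M := (c,\nu_M)\colon M \to B\pi \times BSO$, where $c\colon M \to B\pi$ classifies the universal cover (and so induces the identity on $\pi_1=\pi$). By construction $\xi \circ \wt{\nu}_M = \nu_M$, and $\wt{\nu}_M$ is an isomorphism on $\pi_1$ because the first coordinate is and $BSO$ is simply connected.

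The main obstacle is surjectivity on $\pi_2$, which is where the totally non-spin hypothesis is used. Here $\pi_2(B\pi \times BSO) \cong \pi_2(BSO) = \Z/2$, and the induced map $\pi_2(M)\to \Z/2$ sends a class $[f\colon S^2 \to M]$ to the pullback $f^*w_2(M) \in H^2(S^2;\Z/2)=\Z/2$. Using the natural isomorphism $\pi_2(M)\cong \pi_2(\wt{M})$, this factors as the pairing $[f]\mapsto \langle w_2(\wt{M}), h([f])\rangle$ of the Hurewicz image with $w_2$ of the universal cover. Since $\wt{M}$ is simply connected, $H_1(\wt{M})=0$, so by Hurewicz the map $\pi_2(\wt{M})\to H_2(\wt{M};\Z)$ is an isomorphism and by the universal coefficient theorem $H^2(\wt{M};\Z/2) \cong \Hom(H_2(\wt{M};\Z/2),\Z/2)$. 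The totally non-spin hypothesis $w_2(\wt{M})\neq 0$ therefore produces a spherical homology class pairing non-trivially with $w_2(\wt{M})$, giving the required surjectivity. Once $\xi$ is replaced by a fibration (which does not change the argument), this completes the identification of the normal $1$-type.
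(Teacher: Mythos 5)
Your proposal is correct and follows essentially the same route as the paper: the same model $\pr_2\colon B\pi\times BSO\to BSO$ with lift $c\times\nu_M$, $2$-coconnectedness from the vanishing of the higher homotopy of $B\pi$, and surjectivity on $\pi_2$ deduced from $w_2(\wt{M})\neq 0$ via the Hurewicz isomorphism and universal coefficients for the simply connected cover (the paper merely makes the intermediate factorisation through $K(\Z/2,2)$ explicit). No gaps.
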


\begin{proof}
Since $M$ has fundamental group $\pi$ there is a canonical map $c\colon M \to B\pi$ classifying the universal cover of $M$. The orientation of $M$ gives a factorisation
\[\xymatrix{M \ar[r]^-{c\times \nu_M} & B\pi \times BSO \ar[r]^-{pr_2} & BSO}.\]
The map $B\pi \times BSO \to BSO$ is $2$-coconnected since $B\pi$ has no higher homotopy groups. Moreover the map $M \to B\pi \times BSO$ induces an isomorphism on fundamental groups. It remains for us to verify that the map $M \to BSO$ induces a surjection on~$\pi_2$. For this we note that $w_2\colon BSO \to K(\Z/2,2)$ induces an isomorphism on~$\pi_2$, so it suffices to see that $\pi_2(M) \to \pi_2(K(\Z/2,2))$ is surjective.

The composition $\wt{M} \to BSO \to K(\Z/2,2)$ determines a cohomology class equal to $w_2(\wt{M})$ in $H^2(\wt{M};\Z/2) \cong \Hom(H_2(\wt{M};\Z),\Z/2)$. The isomorphism here is given by the universal coefficient theorem, and uses that $H_1(\wt{M};\Z) =0$.  Consider the following diagram.
\[\xymatrix{\pi_2(M) \ar[r]^-{\cong}  & \pi_2(\wt{M}) \ar[r] \ar[d]^-{\cong} & \pi_2(K(\Z/2,2)) \ar[r]^-{\cong} \ar[d]^-{\cong} & \Z/2 \\
& H_2(\wt{M};\Z) \ar[r] & H_2(K(\Z/2,2);\Z) & }\]
The right-up-right composition starting at $H_2(\wt{M};\Z)$ is $w_2(\wt{M})$, according to the identification
\[[\wt{M},K(\Z/2,2)] \cong H^2(\wt{M};\Z/2)\]
and the universal coefficient theorem.  The vertical maps are isomorphisms by the Hurewicz theorem and the diagram commutes because the Hurewicz homomorphism is a natural transformation.  Since $\wt{M}$ is not spin, $w_2(\wt{M})\neq 0$, from which it follows that $\pi_2(M) \to \pi_2(K(\Z/2,2))$ is surjective.
\end{proof}

\smallskip
\begin{lemma}
Let $\pi$ be a finitely presented group and let $\xi \colon B\pi \times BSO \xrightarrow{pr_2} BSO$.  Then the automorphisms of $\xi$ are given by $\Aut(\xi) \cong \Out(\pi)$.
\end{lemma}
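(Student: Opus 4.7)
The plan is to identify $\Aut(\xi)$ with fibre homotopy classes of self-equivalences of the product fibration $\mathrm{pr}_2 \colon B\pi \times BSO \to BSO$, and to show these are classified by $\Out(\pi)$.

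First, I would observe that any self-map $f$ of the total space $B\pi \times BSO$ that commutes with $\mathrm{pr}_2$ (up to homotopy over $BSO$) is determined up to fibre homotopy by its first coordinate, i.e.\ by a map $g \colon B\pi \times BSO \to B\pi$. Indeed, the second coordinate of $f$ is forced to be $\mathrm{pr}_2$ (up to fibre homotopy), and fibre homotopy classes of such $f$ correspond bijectively to homotopy classes of $g$.

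Next, I would use asphericity to compute $[B\pi \times BSO, B\pi]$. Since $B\pi$ is aspherical, there is a natural bijection
\[
[B\pi \times BSO, B\pi] \;\cong\; \Hom(\pi_1(B\pi \times BSO), \pi)\big/\mathrm{Inn}(\pi).
\]
Because $BSO$ is simply connected, $\pi_1(B\pi \times BSO) = \pi$, so the above set is $\Hom(\pi,\pi)/\mathrm{Inn}(\pi)$. The map $f$ is a fibre homotopy equivalence precisely when the induced endomorphism of $\pi$ is an automorphism, so restricting to equivalences gives $\Aut(\pi)/\mathrm{Inn}(\pi) = \Out(\pi)$.

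Finally, I would check that this bijection is a group homomorphism: composition of self-equivalences corresponds to composition of the induced automorphisms of $\pi$, by functoriality of $\pi_1$. This yields $\Aut(\xi) \cong \Out(\pi)$. The only mildly delicate point, which I expect to be the main bookkeeping obstacle rather than a genuine difficulty, is being careful with the definition of $\Aut(\xi)$ (fibre homotopy equivalences modulo fibre homotopy, as opposed to strict equalities of compositions with $\xi$), and confirming that the factorisation through $\pi_1$ is compatible with the group structure.
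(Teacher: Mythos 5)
Your argument is correct and follows essentially the same route as the paper: an automorphism of $\xi$ is determined by its $B\pi$-coordinate, and $[B\pi\times BSO,B\pi]\cong[B\pi,B\pi]\cong\Out(\pi)$ using that $BSO$ is simply connected and that basepoint-free homotopy classes of maps into an aspherical space correspond to homomorphisms modulo inner automorphisms. Your additional care in restricting from $\Hom(\pi,\pi)/\mathrm{Inn}(\pi)$ to actual automorphisms is a point the paper glosses over, but it is the same proof.
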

\smallskip

\begin{proof}
An automorphism of $\xi$ is given by a map $B\pi\times BSO\to B\pi$.
Since $BSO$ is simply connected, we have
\[[B\pi\times BSO,B\pi]\cong [B\pi,B\pi].\]
Restrict to the homotopy equivalences $B\pi \to B\pi$, the (unbased) homotopy classes of which are in one to one correspondence with the outer automorphisms $\Out(\pi)$ of $\pi$.
This is because inner automorphisms correspond to base point changes and an element of $[B\pi, B\pi]$ is independent of base points.
\end{proof}

\smallskip
\begin{thm}
\label{thm:3.3}
Let $\pi$ be a COAT group.  For $\xi\colon B\pi \times BSO \to BSO$ as above we have
\[ \Omega_4(\xi) \cong \Z \]
detected by the signature.  Moreover the action of $\Out(\pi)$ on $\Omega_4(\xi)$ is trivial.
\end{thm}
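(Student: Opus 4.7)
The plan is to compute $\Omega_4(\xi) = \pi_4(MSO \wedge B\pi_+) = \Omega_4^{SO}(B\pi)$ via the Atiyah–Hirzebruch spectral sequence
\[
E^2_{p,q} = H_p(B\pi;\Omega_q^{SO}) \Longrightarrow \Omega_{p+q}^{SO}(B\pi),
\]
which by the discussion in \cref{sec:basics} agrees with the James spectral sequence in this totally non-spin setting. The key inputs are the low-dimensional oriented bordism groups $\Omega_0^{SO} = \Z$, $\Omega_1^{SO} = \Omega_2^{SO} = \Omega_3^{SO} = 0$, and $\Omega_4^{SO} \cong \Z$ detected by the signature, together with the fact that a COAT group $\pi = \pi_1(Y)$ has $B\pi \simeq Y$, a closed oriented aspherical $3$-manifold, so that $H_i(B\pi;\Z) = 0$ for $i \geq 4$.

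First I would read off the $(p+q=4)$-diagonal: the entries $E^2_{1,3}$, $E^2_{2,2}$ and $E^2_{3,1}$ vanish because the corresponding coefficient groups $\Omega_q^{SO}$ are zero, while $E^2_{4,0} = H_4(B\pi;\Z) = 0$ by dimension. The only surviving entry is $E^2_{0,4} = \Omega_4^{SO} \cong \Z$, and since no differentials can hit or leave it the spectral sequence collapses on this diagonal and $\Omega_4(\xi) \cong \Z$. To identify this with the signature, I would use naturality of the spectral sequence with respect to the map $B\pi \to \mathrm{pt}$: the induced edge homomorphism $\Omega_4^{SO}(B\pi) \to \Omega_4^{SO}$ is an isomorphism, and composing with $\mathrm{sign}\colon \Omega_4^{SO} \to \Z$ gives the signature. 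Equivalently, $[\CP^2 \to B\pi] \in \Omega_4^{SO}(B\pi)$ (with any constant classifying map) generates, and it has signature one.

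For the second assertion, I would use naturality again. An element $\varphi \in \Out(\pi)$ is realised (up to homotopy) by a self-map $B\varphi\colon B\pi \to B\pi$, giving an automorphism of $\xi = \mathrm{pr}_2$ whose action on $\Omega_4(\xi) = \Omega_4^{SO}(B\pi)$ is $(B\varphi)_*$. The commutative diagram
\[
\xymatrix@R=.4cm{
\Omega_4^{SO}(B\pi) \ar[r]^-{(B\varphi)_*} \ar[dr]_-{\mathrm{sign}} & \Omega_4^{SO}(B\pi) \ar[d]^-{\mathrm{sign}} \\
& \Z
}
\]
commutes because the collapse map $B\pi \to \mathrm{pt}$ factors any endomorphism, and the signature is computed via this collapse. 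Since the signature is an isomorphism from the first step, $(B\varphi)_*$ is the identity on $\Omega_4(\xi)$, so $\Out(\pi)$ acts trivially.

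The main obstacle is essentially bookkeeping: once one knows that $B\pi$ is a closed oriented $3$-manifold, all entries on the relevant diagonal except the one detected by the signature are forced to vanish, and no extension or differential issues arise. The invariance under $\Out(\pi)$ is then an immediate consequence of the naturality of the signature under the trivial map $B\pi \to \mathrm{pt}$.
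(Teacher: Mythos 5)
Your proposal is correct and follows essentially the same route as the paper: the paper also invokes the James spectral sequence (which in this case is the Atiyah--Hirzebruch spectral sequence for $\Omega_*^{SO}(B\pi)$), kills the $4$-diagonal using $\Omega_q^{SO}=0$ for $q=1,2,3$ together with $H_4(B\pi;\Z)=0$, and dismisses the $\Out(\pi)$-action as straightforward via invariance of the signature. Your write-up merely spells out the naturality arguments the paper leaves implicit.
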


\begin{proof}
Since the oriented bordism groups $\Omega_q^{SO}$ are trivial for $q=1,2,3$, this follows from the James spectral sequence for the fibration $BSO \to B\pi \times BSO \to B\pi$, so that $\Omega_q(\xi|_F) = \Omega_q^{SO}$ is oriented bordism. The result is true for all groups $\pi$ with $H_4(B\pi;\Z)=0$, in particular for aspherical $3$-manifold groups.  The assertion that the action of $\Out(\pi)$ is trivial is straightforward.
\end{proof}

\noindent From this we obtain the following corollary, which is \cref{thm:A}~(\ref{item:A1}).

\smallskip
\begin{cor}\label{cor:stable-class-non-spin}
Two oriented, totally non-spin $4$-manifolds with COAT fundamental group $\pi$ are stably diffeomorphic if and only if their signatures are equal.
\end{cor}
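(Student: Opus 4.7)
The plan is to deduce this statement directly by combining the identification of stable diffeomorphism classes with the bordism set from \cref{cor:stablediffeoclasses} and the computation of that bordism set in \cref{thm:3.3}. Since signature is already known to be a stable diffeomorphism invariant (connect-summing with $S^2\times S^2$ adds a hyperbolic summand to the intersection form and thus does not alter the signature), the ``only if'' direction is free and the task reduces to proving the converse.

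First I would observe that by \cref{lem:normal-totally-non-spin-case}, any two totally non-spin $4$-manifolds with fundamental group $\pi$ have isomorphic normal $1$-type, namely the projection $\xi\colon B\pi\times BSO\to BSO$. Hence both manifolds, once equipped with (a choice of) normal $1$-smoothing, determine elements of $\Omega_4(\xi)$, and \cref{cor:stablediffeoclasses} says they are stably diffeomorphic if and only if these elements lie in the same orbit of the $\Aut(\xi)$-action.

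Next I would invoke \cref{thm:3.3}, which computes $\Omega_4(\xi)\cong\Z$ via the signature and asserts that the $\Out(\pi)$-action (identified with $\Aut(\xi)$ in the preceding lemma) on this $\Z$ is trivial. Consequently the orbit set $\Omega_4(\xi)/\Aut(\xi)$ is itself $\Z$, with the signature as its defining bijection. Two totally non-spin $4$-manifolds therefore represent the same orbit if and only if they have equal signatures, giving the desired equivalence.

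There is essentially no obstacle to overcome: the content is already present in \cref{thm:3.3} and \cref{cor:stablediffeoclasses}, and the only point worth spelling out is that the choice of normal $1$-smoothing (which is not canonical) is precisely what is quotiented away by the $\Aut(\xi)$-action, so that the signature indeed yields a well-defined invariant of the underlying stable diffeomorphism class.
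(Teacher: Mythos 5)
Your argument is correct and is exactly the paper's route: the corollary is deduced immediately from \cref{cor:stablediffeoclasses} together with the computation $\Omega_4(\xi)\cong\Z$ (detected by the signature, with trivial $\Out(\pi)$-action) in \cref{thm:3.3}. No further comment is needed.
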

\smallskip

Thus the signature of the ordinary intersection form is a complete invariant for totally non-spin 4-manifolds. Note that we do not need to look at equivariant intersection forms in this case.

\subsection{Spin $4$-manifolds}
\label{sec:spincase}

\smallskip \begin{lemma}\label{lem:normal-spin-case}
Let $\pi$ be a finitely presented group. A normal $1$-type of a spin manifold $M$ with fundamental group $\pi$ is given by
\[\xymatrix{B\pi\times BSpin\ar[r]^-{\gamma \circ pr_2}&BSO},\]
where $pr_2$ is the projection onto $BSpin$ and $\gamma$ is the canonical map $BSpin\to BSO$.
\end{lemma}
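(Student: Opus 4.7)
The proof is entirely analogous in structure to that of \cref{lem:normal-totally-non-spin-case}, but simpler at the crucial step. The plan is to exhibit a 2-connected lift of $\nu_M$ along the given fibration and to verify that the fibration itself is 2-coconnected.

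First I would produce the lift. Since $M$ is spin we have $w_2(\nu_M) = 0$, so the classifying map $\nu_M \colon M \to BSO$ admits a lift $\nu_M^{Spin} \colon M \to BSpin$ along $\gamma$. Combining this with the classifying map $c \colon M \to B\pi$ of the universal cover gives a map
\[
\wt{\nu}_M := (c, \nu_M^{Spin}) \colon M \longrightarrow B\pi \times BSpin
\]
and by construction $(\gamma \circ pr_2) \circ \wt{\nu}_M = \nu_M$, so $\wt{\nu}_M$ is indeed a lift.

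Next I would check the connectivity and coconnectivity conditions. For coconnectivity: the projection $pr_2 \colon B\pi \times BSpin \to BSpin$ is 2-coconnected because $\pi_i(B\pi) = 0$ for $i \geq 2$, and $\gamma \colon BSpin \to BSO$ is 2-coconnected because $BSpin$ is 2-connected while $\gamma$ is an isomorphism on $\pi_i$ for $i \geq 3$. The composite $\gamma \circ pr_2$ is therefore 2-coconnected. For 2-connectivity of $\wt{\nu}_M$: the map $c$ is a $\pi_1$-isomorphism by construction, and since $\pi_2(B\pi \times BSpin) = \pi_2(BSpin) = 0$, the condition of being surjective on $\pi_2$ is automatic. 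Hence $\wt{\nu}_M$ is 2-connected. (In contrast to \cref{lem:normal-totally-non-spin-case}, here there is no need for the $w_2$-argument on $\pi_2$ of the universal cover, because the target has vanishing $\pi_2$.)

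The only mild subtlety, and so the main point worth recording in the proof, is that there is no obstruction to fitting these two pieces of data together into a single map $M \to B\pi \times BSpin$ — unlike in the spin/fundamental group mixing that will appear in the almost spin case — because $B\pi \times BSpin$ is literally the product and the two factor maps exist independently. The rest is purely a check of homotopy group calculations, so the proof should be a few lines.
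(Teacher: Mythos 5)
Your proof is correct and follows essentially the same route as the paper's: lift $\nu_M$ to $BSpin$ using the spin condition, pair with $c$, note that $2$-coconnectedness follows from $\pi_{\geq 2}(B\pi)=0$ together with $\gamma$ being $2$-coconnected, and that $2$-connectedness of the lift is automatic since $\pi_2(B\pi\times BSpin)=0$. No substantive differences.
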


\begin{proof}
The map $\gamma \circ \pr_2$ is $2$-coconnected since $B\pi$ has trivial higher homotopy groups $\pi_i(B\pi) =0$ for $i \geq 2$ and $BSpin \to BSO$ is 2-coconnected.

Since $M$ has fundamental group $\pi$ there is a canonical map $c\colon M \to B\pi$ classifying the universal cover of $M$. Let $\wt\nu_M$ be the lift of $\nu_M\colon M\to BSO$ to $BSpin$ given by the spin structure on $M$. Then a normal $1$-smoothing of $M$ is given by
\[\xymatrix{M\ar[r]^-{c\times \wt\nu_M}&B\pi\times BSpin}.\]
By definition of $c$ the map $c\times \wt\nu_M$ is an isomorphism on $\pi_1$ and since we have $\pi_2(B\pi\times BSpin)=0$, it is therefore $2$-connected.
\end{proof}

\smallskip \begin{lemma}
Let $\pi$ be a finitely presented group and let $\xi\colon B\to BSO$ be $\gamma\circ pr_2\colon B\pi\times BSpin\to BSO$. Then
\[\Aut(\xi)\cong H^1(B\pi;\bbZ/2)\rtimes \Out(\pi),\]
where the action of $\Out(\pi)$ on $H^1(B\pi;\Z/2)$ in the definition of the multiplication in the semi-direct product is the canonical one, obtained as follows.   An element of $\Out(\pi)$ determines a homotopy class of maps $B\pi \to B\pi$.  An element of $H^1(B\pi,\Z/2)$ determines a homotopy class of maps $B\pi \to K(\Z/2,1)$.  Then $[B\pi,B\pi]$ acts on $[B\pi,K(\Z/2,1)]$ by precomposition.
\end{lemma}

\begin{proof}
Any automorphism of $\xi$ gives a map $B\pi\times BSpin\to B\pi$ and a lift of $B\pi\times BSpin\to BSO$ to $BSpin$.
Let us first consider the map $[B\pi\times BSpin ,B\pi]$.
Since $BSpin$ is simply connected, we have
\[[B\pi\times BSpin,B\pi]\cong [B\pi,B\pi].\]
The homotopy classes of homotopy equivalences in $[B\pi,B\pi]$ are in bijective correspondence with $\Out(\pi)$.  This determines a map $\Aut(\xi) \to \Out(\pi)$.

Since a possible lift of $\gamma\circ pr_2 \colon B\pi\times BSpin\to BSO$ to $BSpin$ is given by the projection to the second factor, any other lift is determined by a map the homotopy fibre of $\gamma\colon BSpin\to BSO$, which is a $K(\Z/2,1)$. Thus a lift corresponds to an element of
\[H^1(B\pi\times BSpin,\Z/2)\cong H^1(B\pi;\Z/2).\]
Thus the kernel of the map $\Aut(\xi) \to \Out(\pi)$ is identified with $H^1(B\pi;\Z/2)$ and so we have a short exact sequence
\[1 \to H^1(B\pi;\Z/2) \to \Aut(\xi)\to \Out(\pi)\to 1.\]
It remains to to prove that $\Aut(\xi)$ is a semi-direct product as claimed.
First, that the sequence splits is straightforward. This can be seen as follows:  an outer automorphism $\rho \in \Out(\pi)$ determines a homotopy class of maps $\rho\colon B\pi \to B\pi$, by a slight abuse of notation, and so gives rise to a homotopy class of maps $(\rho,\id_{BSpin})\colon B\pi \times BSpin \to B\pi\times BSpin$, and thus produces an element of $\Aut(\xi)$.  It is not too hard to see that this map is a group homomorphism. Thus the sequence splits and $\Aut(\xi)$ is indeed a semi-direct product.

Finally, we argue that the action, in the group law of the semi-direct product, of $\rho \in \Out(\pi)$ on $H^1(B\pi;\Z/2) \cong [B\pi , B \Z/2]$, is that given by precomposition with the map in $[B\pi,B\pi]$ determined by $\rho$.  To see this, consider the following diagram, where $m_1$ and $m_2$ are maps $B\pi\to BSpin$ such that $\gamma\circ m_i\colon B\pi\to BSO$ is null homotopic, corresponding to elements of $H^1(B\pi;\Z/2)$.
\[\begin{tikzcd}[row sep = small]
B\pi\ar[d, phantom, "\times"]\ar[r, "\rho_1"]\ar[rd, "m_1" description]&B\pi\ar[d, phantom, "\times"]\ar[r, "\rho_2"]\ar[rd, "m_2" description]&B\pi\ar[d, phantom, "\times"]\\
BSpin\ar[r, "\id"']&BSpin\ar[r, "\id"']&BSpin
\end{tikzcd}\]
The composition is precisely the claimed product on $\Aut(\xi)$.
\end{proof}

\smallskip
\begin{thm}\label{thm:bordism-group-spin-case}
Let $\pi$ be a COAT group and let $\xi\colon B\to BSO$ be $p\circ pr_2\colon B\pi\times BSpin\to BSO$. Then
\[\Omega_4(\xi)\cong H_0(B\pi;\Omega_4^{Spin})\oplus H_2(B\pi;\Omega_2^{Spin})\oplus H_3(B\pi;\Omega_1^{Spin})\cong 16\cdot\Z \oplus \Hom(\pi;\Z/2)\oplus \Z/2.\]
Here, the $16 \cdot \Z$-factor is given by the signature.
\end{thm}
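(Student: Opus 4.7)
The plan is to apply the James spectral sequence which, as noted in the paragraph preceding the theorem, coincides in the spin case with the Atiyah--Hirzebruch spectral sequence for $\pi_\ast(MSpin\wedge B\pi_+)$, so that
\[E^2_{p,q}=H_p(B\pi;\Omega_q^{Spin})\Longrightarrow \Omega_{p+q}(\xi).\]
I would proceed in three stages: compute the entries on the antidiagonal $p+q=4$, kill all its differentials, and then split the resulting filtration using explicit invariants and geometric representatives.

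Using $\Omega_0^{Spin}=\Z$, $\Omega_1^{Spin}=\Omega_2^{Spin}=\Z/2$, $\Omega_3^{Spin}=0$ and $\Omega_4^{Spin}=\Z$ (detected by $\sigma/16$), together with $B\pi\simeq N$ a closed aspherical $3$-manifold (so $H_i(B\pi)=0$ for $i\geq 4$), the nonzero entries on the antidiagonal are
\[E^2_{0,4}=\Z,\qquad E^2_{2,2}=H_2(B\pi;\Z/2),\qquad E^2_{3,1}=H_3(B\pi;\Z/2)\cong \Z/2,\]
with all others vanishing (using $\Omega_3^{Spin}=0$ for $E^2_{1,3}$ and $\dim B\pi=3$ for $E^2_{p,q}$ with $p\geq 4$). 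Poincar\'e duality on $N$ and universal coefficients identify $H_2(B\pi;\Z/2)\cong H^1(N;\Z/2)\cong \Hom(\pi,\Z/2)$.

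The only pair of nonzero entries on the antidiagonal connected by a $d_r$ is $d_2\colon E^2_{3,1}\to E^2_{1,2}$. In the $MSpin$-Atiyah--Hirzebruch spectral sequence this differential is Poincar\'e dual to $Sq^2\colon H^1(N;\Z/2)\to H^3(N;\Z/2)$; by Wu's formula $Sq^2(x)=v_2\cup x=w_2(N)\cup x$ for $x\in H^1(N;\Z/2)$, and since every closed orientable $3$-manifold is parallelisable, $w_2(N)=0$ and hence $d_2=0$. All remaining candidate differentials either target a zero group (by $\Omega_3^{Spin}=0$ or by $\dim B\pi=3$) or are maps out of a $2$-torsion group into $\Z$---in particular the potentially troublesome $d_3\colon E^2_{3,2}\to E^2_{0,4}$---and so must vanish. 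Thus $E^\infty_{p,4-p}=E^2_{p,4-p}$.

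To split the filtration $0\subset F_{0,4}\subset F_{2,2}\subset F_{3,1}=\Omega_4(\xi)$ I would use two ingredients. First, by Rokhlin's theorem $\sigma/16$ descends to a homomorphism $\Omega_4(\xi)\to \Z$ that restricts to the identity on $F_{0,4}$, splitting off the $\Z$ summand. Second, the spin $4$-manifold $N\times S^1$, equipped with the product spin structure (using the Lie-group spin structure on $S^1$) and with reference map given by projection to $N\simeq B\pi$, has, by \cref{lem:arf}, image in $E^\infty_{3,1}$ equal to $[N]\otimes [S^1]$, which is the generator of $H_3(B\pi;\Z/2)$; since $[S^1]\in \Omega_1^{Spin}$ has order $2$, the element $[N\times S^1]$ has order dividing $2$ in $\Omega_4(\xi)$, and so realises a set-theoretic splitting of the top quotient onto an order-$2$ subgroup. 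Together these two splittings produce the asserted direct sum decomposition. The main technical hurdle is the identification of $d_2$ with (the dual of) $Sq^2$ in the $MSpin$ spectral sequence, after which the Wu-formula argument and the explicit geometric representative $N\times S^1$ are immediate.
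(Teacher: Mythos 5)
Your computation of the $E^2$-page and the vanishing of the differentials matches the paper's argument (the paper kills $d_2$ by the cheaper observation that $\Sq^2$ vanishes on $H^1$ of \emph{any} space by instability, rather than via Wu's formula, but your route is equally valid on a closed orientable $3$-manifold), and the identification of $d_2$ with the dual of $\Sq^2$ is indeed taken from \cite[Theorem~3.1.3]{teichnerthesis} in both cases. Where you genuinely diverge is in splitting the filtration. For the $\Z$-summand the paper restricts the fibration to a basepoint and uses that the resulting retract splits off the $y$-axis by naturality of the James spectral sequence; your use of $\sigma/16$ as a retraction onto $F_{0,4}$ (via Rokhlin) is an equally good and more invariant-theoretic splitting. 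For the remaining extension $0 \to H_2(B\pi;\Z/2) \to \widetilde{\Omega}_4(\xi) \to H_3(B\pi;\Z/2) \to 0$, the paper splits off the \emph{sub}group $H_2(B\pi;\Z/2)$ by showing that the top cell of the $3$-manifold model $X$ splits off stably (\cref{lemma:SpivakFibrationtrivial}, using triviality of the Spivak fibration) and invoking naturality of the Atiyah--Hirzebruch spectral sequence, whereas you split the \emph{quotient} $H_3(B\pi;\Z/2)\cong\Z/2$ by exhibiting the explicit order-two class $[N\times S^1]$ (Lie spin structure on the circle) that hits the generator of $E^\infty_{3,1}$ by \cref{lem:arf}. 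Your argument is shorter and more concrete --- and this very representative reappears in \cref{subsec:ex1} as the starting point for the model manifolds $M_\sigma$ --- while the paper's top-cell argument is more structural and, together with the later \cref{lemma:split}, produces an explicit splitting homomorphism onto $H_2(B\pi;\Z/2)$ via Arf invariants, which is what is actually needed to compute the $\Aut(\xi)$-action in \cref{thm:action-spin-case}. Both proofs are complete and correct.
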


\begin{proof}
Consider the James spectral sequence associated to the fibration
\begin{equation}\label{fibration}
\xymatrix{BSpin \ar[r] & B\pi \times BSpin \ar[r] & B\pi,}
\end{equation}
with generalised homology theory $h_*=\pi^s_*$, the stable homotopy groups.  The $E^2$ page consists of the groups $H_p(B\pi;\Omega_q^{Spin})$.  There are nontrivial terms with $p+q=4$ for $p=0,2,3$, namely $H_0(B\pi;\Omega_4^{Spin}) \cong \Z$, $H_3(B\pi;\Omega_1^{Spin}) \cong \Z/2$ and
\[H_2(B\pi;\Omega_2^{Spin}) \cong H_2(B\pi;\Z/2) \cong H^1(B\pi;\Z/2) \cong \Hom(\pi,\Z/2),\]
with the latter two isomorphisms given by Poincar\'{e} duality and universal coefficients.

There can be at most two non-zero differentials that contribute to the $4$-line, i.e.\ the terms $E^{\infty}_{p,q}$ with $p+q=4$.  All other possible differentials start or end at $0$ since (i) $H_p(B\pi;A) = 0$ for $p >3$, for any choice of coefficient group $A$, (ii) $\Omega_3^{Spin} =0$, and (iii) it is a first quadrant spectral sequence.  One of the possibly  nontrivial differentials is
\[ d_2\colon H_3(B\pi;\Z/2) \to H_1(B\pi;\Z/2).\]
However, this differential is dual to $\Sq^2$, according to~\cite[Theorem 3.1.3]{teichnerthesis}, and hence vanishes since $\Sq^q \colon H^n \to H^{n+q}$ is zero whenever $n <q$.
The other potentially nontrivial differential is
\[d_3 \colon H_3(B\pi;\Omega_2^{Spin}) \to H_0(B\pi;\Omega_4^{Spin}).\]
However $H_3(B\pi;\Omega_2^{Spin}) \cong \Z/2$ and $H_0(B\pi;\Omega_4^{Spin}) \cong 16\cdot \Z$, so there can be no nontrivial homomorphism.
(The vanishing of this differential is also a consequence of the claim below.)
Thus all of the 4-line on $E^2$ page survives to the $E^{\infty}$ page, and we obtain a filtration
\[0 \subset F_1 \subset F_2 \subset F_3 = \Omega_4(\xi)\]
with $F_1 \cong 16\cdot \Z$, $F_2/F_1 \cong H_2(B\pi;\Z/2)$ and $F_3/F_2 \cong \Z/2$.
\smallskip
\begin{claim}
The subset $F_1 \cong 16\cdot \Z$ is a direct summand of $\Omega_4(\xi)$.
\end{claim}
\smallskip

To prove the claim we argue as follows.
We can restrict the fibration~(\ref{fibration}) to a basepoint in $B\pi$. The resulting fibration is a retract of~(\ref{fibration}) which commutes with the maps to $BSO$, and hence the naturality of the James spectral sequence implies that in the James spectral sequence for~(\ref{fibration}), the $y$-axis splits as a direct summand of $\Omega_*(\xi)$.
This completes the proof of the claim.

The intersection of the $4$-line and the $y$-axis is precisely $\Omega_4^{Spin}$, which is isomorphic to $16\cdot \Z$ by taking the signature.
In particular, as noted above, the claim implies that all differentials with image in $H_0(B\pi;\Omega_4^{Spin})$ are trivial.

It remains to argue why $F_2$ is also a direct summand in $\Omega_4(\xi)$.  This will follow from the next claim.
Denote the quotient $\Omega_4(\xi)/F_1$ by $\widetilde{\Omega}_4(\xi)$; this is sometimes called the reduced bordism group.

\smallskip
\begin{claim}
The subset $F_2/F_1\cong H_2(B\pi;\Z/2)$ is a direct summand of $\widetilde{\Omega}_4(\xi)$.
\end{claim}
\smallskip

We have a short exact sequence
\begin{equation}\label{split-exact-sequence}
\xymatrix{0 \ar[r] & H_2(B\pi;\Z/2) \ar[r] & \widetilde{\Omega}_4(\xi) \ar[r] & H_3(B\pi;\Z/2) \ar[r] & 0. }
\end{equation}
We will construct a splitting of this sequence in \cref{lemma:split}, but one can also abstractly see that this sequence must split, which will prove the claim. We have seen that $\Omega_*(\xi) \cong \Omega_*^{Spin}(B\pi)$, so the bordism group we want to compute is the ordinary spin bordism of $B\pi$.
Since $B\pi$ has a model which is a closed orientable $3$-manifold $X$, and orientable $3$-manifolds are parallelisable, it follows that the stable normal bundle is trivial. In particular the Spivak fibration of $X$ is trivial.
Also from the fact that $X$ is a manifold, it follows that $X$ has a CW structure with a unique $3$-cell.
From \cref{lemma:SpivakFibrationtrivial} below, it follows that the top (3-dimensional) cell of $X$, in a CW-structure on $X$ with only one 3-cell, splits stably, by which we mean that
 the attaching map $S^2 \to X^{(2)}$ is stably null homotopic.  Here stably means after suspending the attaching map sufficiently many times.
The naturality of the Atiyah-Hirzebruch spectral sequence for spin bordism thus implies that the contribution of the $3$-cell of $X$ is a direct summand because reduced homology theories (such as $\widetilde{\Omega}_*^{Spin}$) satisfy $\widetilde{\Omega}_i^{Spin}(Y) \cong \widetilde{\Omega}_{i+1}^{Spin}(\Sigma Y)$ and send wedges of spaces to direct sums of abelian groups.  This completes the proof of the claim that  $F_2/F_1$ is a direct summand of $\widetilde{\Omega}_4(\xi)$.  Since $F_2/F_1$ is identified with $\Hom(\pi,\Z/2)$, this completes the proof of \cref{thm:bordism-group-spin-case}.
\end{proof}

The next lemma may be of some independent interest.

\smallskip \begin{lemma}\label{lemma:SpivakFibrationtrivial}
Suppose $X$ is an $n$-dimensional Poincar\'e complex with a CW structure that has precisely one $n$-dimensional cell. Let $\varphi\colon S^{n-1} \to X^{(n-1)}$ be the attaching map of this cell.
Then $\varphi$ is stably null homotopic if and only if the Spivak normal fibration of $X$ is trivial.
%Then the Spivak normal fibration of $X$ is trivial if and only if $\varphi$ is stably null homotopic.
\end{lemma}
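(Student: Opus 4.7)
The plan is to interpret stable triviality of the Spivak fibration $\nu_X$ as the existence of a stable map $\tau \colon S^n \to \Sigma^\infty X_+$ whose stable Hurewicz image equals the fundamental class $[X]\in H_n(X;\Z)$, and then to recognise this condition as a stable splitting of the cofibre sequence coming from the top cell, which in turn is equivalent to stable null-homotopy of $\varphi$.

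First I would set up the cofibre sequence coming from the one-cell CW structure,
\[
X^{(n-1)} \hookrightarrow X \twoheadrightarrow X/X^{(n-1)} = S^n,
\]
whose connecting map is $\Sigma\varphi$. Applying $\Sigma^\infty(-)_+$ yields a stable cofibre sequence $\Sigma^\infty X^{(n-1)}_+ \to \Sigma^\infty X_+ \xrightarrow{q} \Sigma^\infty S^n$, and this splits stably if and only if $\varphi$ is stably null-homotopic. Since $X^{(n-1)}$ has dimension at most $n-1$, the collapse $q$ induces an isomorphism $H_n(X;\Z) \cong H_n(S^n;\Z) \cong \Z$ sending $[X]$ to a generator. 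Combined with the stable Hurewicz isomorphism $\pi_n^s(S^n) \cong H_n(S^n;\Z) \cong \Z$, this shows that a stable map $\tau \in \pi_n^s(X_+)$ is a section of $q$ if and only if its Hurewicz image equals $[X]$.

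The key input is Spivak's uniqueness theorem, which characterises $\nu_X$ as follows: the Spivak fibration together with its fundamental-class collapse $\alpha_X \colon S^{n+k} \to \Th(\nu_X^k)$ is determined, up to stable fibre-homotopy equivalence, by the normal condition $h(\alpha_X) \cap U_{\nu_X} = [X]$. If $\nu_X$ is stably trivial, then any stable trivialisation gives an equivalence $M\nu_X \simeq \Sigma^\infty X_+$ under which $\alpha_X$ becomes a stable map $\tau \colon S^n \to \Sigma^\infty X_+$ with $h(\tau) = [X]$; hence $\tau$ is a section of $q$ and the cofibre sequence stably splits. Conversely, a stable splitting of $q$ produces such a $\tau$, and then the trivial stable spherical fibration together with $\tau$ satisfies the Spivak normal condition, so by uniqueness $\nu_X$ is stably trivial.

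The hard part will be invoking Spivak's uniqueness theorem in exactly the right form: identifying trivialisations of $\nu_X$ with classes $\tau \in \pi_n^s(X_+)$ whose Hurewicz image is $[X]$. Once this identification is in hand, the rest of the argument is a formal manipulation of the cofibre sequence associated to the single top $n$-cell.
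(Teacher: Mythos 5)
Your proposal is correct and follows essentially the same route as the paper: both directions hinge on Spivak's uniqueness theorem, which identifies triviality of $SF(X)$ with the existence of a stable map $S^{n}\to \Sigma^\infty X$ whose composite with the top-cell collapse has degree $\pm 1$, and both then translate this into a stable splitting of the cofibre sequence of the single $n$-cell, equivalently stable null-homotopy of $\varphi$. The only cosmetic difference is that you phrase the converse via sections of the collapse map and the stable Hurewicz image, whereas the paper chases the suspended Puppe sequence explicitly; the content is identical.
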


\begin{proof}
Denote the Spivak normal fibration of $X$ by $SF(X)$. From the uniqueness property of the Spivak normal fibration of an $n$-dimensional Poincar\'e complex \cite{Spivak} and \cite[Definition 3.57 and Theorem 3.59]{luecksurgery}, it follows that $SF(X)$ is trivial if and only if there exists a $k \geq 0$ and a map
$e\colon S^{k+n} \to S^k \wedge X_+$ such that the composite
\[\xymatrix@C=1.7cm{S^{k+n} \ar[r]^-{e} & S^k \wedge X_+ \ar[r]^-{S^k\wedge \mathrm{collapse}} & S^{k+n}}\]
has degree one. Here $\mathrm{collapse}$ denotes the map that collapses the $(n-1)$-skeleton of $X$.

%If $\varphi$ is stably null homotopic then the stable splitting of the top cell provides such a map~
%\[e \colon S^{k+n} \to S^k \wedge X \simeq S^{n+k} \vee (S^k \wedge X^{(n-1)}) \to S^{n+k},\] and hence %$SF(X)$ is trivial.

%On the other hand
Assume that the Spivak normal fibration $SF(X)$ is trivial.
Observe that we have a factorisation
\[S^k \wedge X_+ \to S^k \wedge X \xrightarrow{S^k\wedge \mathrm{collapse}}  S^{k+n},\]
where the first map is the quotient by $S^k \times \{\ast_X\}$, with $\ast_X$ the basepoint of~$X$.  Therefore triviality of the Spivak normal fibration implies the existence of a map $e' \colon S^{k+n} \to S^k \wedge X$ that yields a degree one map when composed with the collapse map $S^k \wedge X \to S^{k+n}$.
%\commentm{I don't think we should remove this paragraph. It had an extra mistaken $+$ in it before which is what made it look like repetition.}

Recall that having precisely one $n$-cell in~$X$ amounts to the fact that there is a cofibration sequence
\[\xymatrix@C=1.2cm{S^{n-1} \ar[r]^-{\varphi} & X^{(n-1)} \ar[r] & X \ar[r]^-{\mathrm{collapse}} & S^n }\]
which fits (after suspending $k$~times) into a diagram
\[\xymatrix{S^{k+n-1} \ar[r]^-{S^k\wedge\varphi} & S^k \wedge X^{(n-1)} \ar[r] & S^k \wedge X \ar[r] & S^{k+n} \ar[r]^-{S^{k+1} \wedge \varphi} & S^{k+1} \wedge X^{(n-1)} \\ & & S^{k+n} \ar[u]_-{e'} \ar@/_1pc/[ur]_{\deg \pm 1} & & }\]
in which the composition of any two horizontal maps  is null homotopic.  In particular the composition
\[\xymatrix{S^{k+n} \ar[r]^-{e'} & S^k \wedge X \ar[r] & S^{k+n} \ar[r]^-{S^{k+1} \wedge \varphi} & S^{k+1} \wedge X^{(n-1)} }\]
is null homotopic. Since the composition of the first two maps has degree one, it follows that $S^{k+1}\wedge \varphi$ is null homotopic as claimed.

For the converse, suppose now that $\varphi$ is stably null homotopic. Then there is a $k \geq 0$ such that $S^k \wedge X$ is homotopy equivalent to $S^k \wedge X^{(n-1)} \vee S^{n+k}$. We thus obtain a map $S^{n+k} \to S^k \wedge X$ whose composite with the suspended collapse map
\[S^k\wedge X \to S^k \wedge X^{(n-1)} \vee S^{n+k} \to  S^{n+k}\] has degree one, possibly after precomposing with a degree $-1$ map $S^{n+k} \to S^{n+k}$ to arrange that the degree be positive. Now observe that there is a homotopy equivalence
\[ S^k\wedge X_+ \simeq (S^k \wedge X) \vee S^k.\]
Use this equivalence to obtain a map \[e \colon S^{k+n} \to S^k \wedge X \to (S^k \wedge X) \vee S^k \to S^{k} \wedge X_+\]
%Use this equivalence to obtain a map $e \colon S^{k'+n} \to S^{k'} \wedge X_+$
%(for some $k'$ which is potentially larger than $k$)
 as desired.
%The above stable equivalence is induced from the stable equivalence
%\[ \Sigma^\infty X_+ \simeq \Sigma^\infty X \vee \mathbb{S}.\]
\end{proof}

\smallskip
\begin{rem}
Sometimes it is written in the literature that the top cell of a framed manifold splits off stably. This lemma tells us that this is true, but the proof does not require the full tangential structure of a framed manifold. Really only the underlying Poincar\'e complex is relevant.
\end{rem}
\smallskip

For the rest of this subsection we restrict our attention to COAT fundamental groups.
We can say more than asserting the existence of an abstract direct sum decomposition of $\wt{\Omega}_4(\xi)$.  A better understanding of the invariants representing the $H_2(B\pi;\Z/2)$ summand will be crucial for computing the action of $\Aut(\xi)$.
The Kronecker evaluation map $\kappa\colon H_2(B\pi;\Z/2) \to \Hom(H^2(B\pi;\Z),\Z/2)$
is an isomorphism since $H^3(B\pi;\Z)\cong \Z$ is free.  Next we will define a map \[\Phi\colon \wt{\Omega}_4^{Spin}(B\pi) \to \Hom(H^2(B\pi;\Z),\Z/2).\]

Let $X$ be an aspherical $3$-manifold such that $X \simeq B\pi$. In fact, by JSJ decomposition and the geometrization theorem, any two such manifolds are diffeomorphic, but we do not need this fact.
Let $\lbrack M \xrightarrow{c} X \rbrack$ be an element of $\wt{\Omega}_4^{Spin}(B\pi)$ and let $\sigma$ be a spin structure on $X$.
We define a map $\psi_{c,\sigma}\colon H_1(X;\bbZ)\to \bbZ/2$ in the following way. Represent $x\in H_1(X;\bbZ)$ by an embedding $S=\coprod S^1\to X$ and consider the spin structure on $S$ that makes  each connected component of $S$ a spin null-bordant surface.  Let $F\subseteq M$ be a regular preimage of $S$ under $c$. The spin structures on $X$ and $S$ induce a spin structure on the normal bundle of~$S$ in~$X$, and this pulls back to a spin structure on the normal bundle of~$F$ in~$M$. Together with the spin structure on~$M$ this determines a spin structure on~$F$, so we can view $\lbrack F_x \rbrack \in \Omega_2^{Spin}(\ast)$.
A spin structure on a surface $F$ determines a quadratic refinement $\mu \colon H_1(F;\Z/2) \to \Z/2$ of the intersection form on $H_1(F;\Z/2)$ (see \cref{defn:quadratic-refinement}).  The Arf invariant of a quadratic form is an element of $\Z/2$.  See R.~Kirby \cite[Appendix]{Kirby-4-manifold-book} for a concise treatment.
We define $\psi_{c,\sigma}(x) := \Arf(\lbrack F\rbrack)\in \bbZ/2$.

\smallskip
\begin{lemma}\label{lem:split-homomorphism}
The map $\psi_{c,\sigma}\colon H_1(X;\bbZ)\to \bbZ/2$ is a well-defined homomorphism.
\end{lemma}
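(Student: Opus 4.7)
The plan is to verify three things: (a) well-definedness with respect to the choice of transverse perturbation of $c$, (b) well-definedness with respect to the chosen embedded representative $S$ of the class $x\in H_1(X;\Z)$, and (c) additivity, $\psi_{c,\sigma}(x_1+x_2)=\psi_{c,\sigma}(x_1)+\psi_{c,\sigma}(x_2)$. Since $\Arf\colon \Omega_2^{Spin}(\pt)\xrightarrow{\cong}\Z/2$, in each case it suffices to exhibit a spin bordism between the relevant spin surfaces $F, F'$ in $M$.

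First I would dispatch additivity. Given $x_1, x_2 \in H_1(X;\Z)$, represent each by an embedded $1$-manifold $S_i \subset X$; by general position (since $\dim X = 3$) we may take $S_1$ and $S_2$ disjoint, so that $S_1 \sqcup S_2$ represents $x_1+x_2$. The regular preimage then splits as a disjoint union $F_1\sqcup F_2$ of spin surfaces with the evident disjoint-union spin structure, and $\Arf$ is additive on disjoint unions.

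For (a), two transverse perturbations of $c$ can be joined by a homotopy $H\colon M\times I\to X$ which may itself be taken transverse to $S$. Then $H^{-1}(S) \subset M\times I$ is a $3$-manifold cobordism between the two preimages, and combining the spin structure on $M\times I$ with the spin structure on the normal bundle (pulled back from $\nu_S^X$ via $H$ and extended trivially in the $I$-direction) yields a spin bordism. For (b), two disjoint-union-of-circle representatives $S, S'$ of the same class cobound an embedded oriented surface $W \subset X\times I$, by general position applied to a $2$-chain realising $[S']-[S]=0$. Perturb $c\times \id_I$ to be transverse to $W$ and set $V := (c\times\id_I)^{-1}(W)\subset M\times I$, a compact $3$-manifold with $\partial V = F'\sqcup -F$. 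To equip $V$ with a spin structure restricting to the prescribed ones on $F$ and $F'$, I would pick a spin structure on $W$ extending the bounding spin structures on the components of $\partial W$, transfer this to $\nu_W^{X\times I}$ using the spin structure on $X\times I$, pull back to $\nu_V^{M\times I}$, and combine with the spin structure on $M\times I$.

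The main obstacle is verifying that the bounding spin structures on the circles of $\partial W$ extend over $W$. The relevant criterion is that, for a compact orientable surface, a spin structure on the boundary extends to the interior iff, within each connected component of the surface, the Arf invariants of the boundary circles sum to zero in $\Z/2$. Since every circle of $\partial W$ carries the bounding, Arf-zero spin structure, these sums all vanish and the extension exists; if needed one may first arrange $W$ to be connected by tubing components together in $X\times I$, which does not affect $\partial W$. Once the extension is fixed, the remainder of the argument is routine transversality combined with the bordism invariance of $\Arf$.
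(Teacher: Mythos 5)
Your proof is correct and follows essentially the same route as the paper's: each of the three checks is reduced to exhibiting a spin bordism between preimage surfaces in $M\times I$ (obtained as transverse preimages of a spin cobordism embedded in $X\times I$) and then invoking the bordism invariance and additivity of the Arf invariant, with your explicit verification that the bounding spin structures on $\partial W$ extend over $W$ filling in a step the paper merely asserts. Two minor remarks: the paper proves the slightly stronger statement that $\psi_{c,\sigma}$ depends only on the spin bordism class of $[M\xrightarrow{c}X]$ (which is what is needed later to define $\Phi$ on $\wt{\Omega}_4^{Spin}(B\pi)$), whereas your step (a) covers only what the lemma literally requires; and the existence of the embedded cobounding surface $W$ should be justified by the Seifert-surface/Poincar\'e-duality construction rather than bare general position, since a generic $2$-chain in the $4$-manifold $X\times I$ is a priori only immersed.
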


\begin{proof}
First we will show that $\psi_{c,\sigma}$ only depends on the bordism class of \mbox{$\lbrack M \xrightarrow{c} X \rbrack$.} For any spin bordism $g\colon W\to X$, the regular preimage of an embedding $S\to X$ is a spin bordism between the regular preimages in the two boundaries of the cobordism, and the Arf invariant is an isomorphism from two-dimensional spin bordism to $\Z/2$. %\commentd{Remove, replaced by sentence before: Now the independence of the choice of $c$ follows from the fact that the Arf invariant is a bordism invariant.}

To see that $\psi$ is well defined, we also have to check that $\psi_{c,\sigma}(x)$ does not depend on the choice of the embedding $S \hookrightarrow X$. Any two choices $S_0, S_1$ are bordant, since they represent the same homology class in a 3-manifold,
 and the component-wise null bordant spin structure on both ends can be extended over the cobordism. Embed the cobordism in $X\times[0,1]$ and take a regular preimage in $M\times[0,1]$ under $c\times \id_{[0,1]}$, to yield a spin cobordism between the preimages $F_0$ and $F_1$ of $S_0$ and $S_1$. Therefore, $\Arf(\lbrack F_0\rbrack)=\Arf(\lbrack F_1\rbrack)$ and $\psi_{c,\sigma}(x)$ is well defined.

It remains to check that $\psi_{c,\sigma}$ is a homomorphism. A class $x+y\in H_1(X;\bbZ)$ can be represented by the union of disjoint embeddings $S_x\to X$ and $S_y\to X$ which represent $x$ and $y$ respectively. Taking null bordant spin structures on $S_x$ and $S_y$ also gives a null bordant spin structure on the union. Let $F_x$ and $F_y$ be the preimages of $S_x$ and $S_y$ respectively.  By the additivity of the Arf invariant we obtain
\[\psi_{c,\sigma}(x+y)=\Arf(\lbrack F_x+F_y\rbrack)=\Arf(\lbrack F_x\rbrack)+\Arf(\lbrack F_y\rbrack)=\psi_{c,\sigma}(x)+\psi_{c,\sigma}(y).\]
This completes the proof of \cref{lem:split-homomorphism}.
\end{proof}

Now we can define
\[ \begin{array}{rcl} \widetilde{\Omega}_4^{Spin}(B\pi) & \xrightarrow{\Phi} &  \Hom(H^2(B\pi;\Z),\Z/2) \\
  \lbrack c \colon M \to X \rbrack &\mapsto& \psi_{c,\sigma} \circ PD \end{array}\]
where $PD$ denotes Poincar\'{e} duality.  In the next lemma we show that the map $\Phi$ gives us the desired splitting.

\smallskip
\begin{rem}
The construction of $\Phi$ depends on the choice of a spin structure on $X$.  We remark that set of the spin structures on $X$ are in bijective correspondence with the possible splittings of the sequence under consideration, since both sets are (non-canonically) isomorphic to $H^1(X;\Z/2)$.  We conjecture that the $\Phi$ construction gives rise to an explicit such correspondence.
\end{rem}

\smallskip
\begin{lemma}\label{lemma:split}
The map
\[\Phi \colon \widetilde{\Omega}_4^{Spin}(B\pi) \to \Hom(H^2(B\pi;\Z),\Z/2)\]
splits the short exact sequence (\ref{split-exact-sequence}), where we identify \[H_2(B\pi;\Z/2) \toiso \Hom(H^2(B\pi;\Z);\Z/2)\] via the Kronecker evaluation map~$\kappa$.
\end{lemma}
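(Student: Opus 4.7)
The plan is to show that for every $\alpha\in H_2(B\pi;\Z/2)$, viewed as an element of $\widetilde{\Omega}_4^{Spin}(B\pi)$ via the filtration summand $F_2/F_1$ identified in the proof of \cref{thm:bordism-group-spin-case}, one has $\Phi(\alpha)=\kappa(\alpha)$ in $\Hom(H^2(B\pi;\Z),\Z/2)$. Since $\kappa$ is an isomorphism, this immediately exhibits $\Phi$ as a retraction of the inclusion in \eqref{split-exact-sequence}, splitting the sequence.

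Fix a CW structure on an aspherical 3-manifold $X\simeq B\pi$. By \cref{lem:arf}, represent $\alpha$ by a map $c\colon M\to X^{(2)}\hookrightarrow X$ whose regular preimages $N_i := c^{-1}(b_i)$ of the barycentres of the 2-cells $\{e_i^2\}$ satisfy $\alpha=\sum_i \Arf(N_i)\cdot [e_i^2]$ in $H_2(X;\Z/2)$. Given $x\in H_1(X;\Z)$, represent $x$ by an embedded oriented 1-manifold $S\subset X$ in general position with respect to the CW structure, so $S$ avoids $X^{(1)}$ and meets each 2-cell $e_i^2$ transversally in $k_i$ interior points $\{p_{i,j}\}_{j=1}^{k_i}$. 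Because the image of $c$ lies in $X^{(2)}$, we have $F := c^{-1}(S) = \bigsqcup_{i,j}c^{-1}(p_{i,j})$, a disjoint union of 2-manifolds. For each $(i,j)$, a smooth path $\gamma_{ij}\subset e_i^2$ from $p_{i,j}$ to $b_i$ pulls back through $c$ to a 3-dimensional spin cobordism in $M$ between $c^{-1}(p_{i,j})$ and $N_i$, so $\Arf(c^{-1}(p_{i,j}))=\Arf(N_i)$. Summing via additivity of the Arf invariant gives
\[
\Arf(F) \;\equiv\; \sum_i k_i\cdot\Arf(N_i) \pmod 2.
\]
Since $k_i\equiv [S]\cdot[e_i^2]\pmod 2$ is the mod-2 intersection number in the oriented 3-manifold $X$, which equals $\langle PD([S]),[e_i^2]\rangle$, summing over $i$ yields $\psi_{c,\sigma}(x) = \Arf(F)=\langle PD(x),\alpha\rangle=\kappa(\alpha)(PD(x))$, as required.

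The principal technical point will be verifying carefully that the spin structures line up in the cobordism argument: the spin structure on $c^{-1}(p_{i,j})$ induced as a component of $F$ from the null-bordant spin structure on $S$ together with the ambient spin structures on $M$ and $X$ must agree up to spin cobordism with the spin structure on $N_i$ as the regular preimage of $b_i$. This requires tracking how the chosen trivializations of the normal bundles of $S\subset X$, of $\gamma_{ij}\subset X^{(2)}$, and of the endpoints $p_{i,j}$, $b_i$ inside $e_i^2$ restrict and glue along the 3-dimensional cobordism $c^{-1}(\gamma_{ij})\subset M$, so that both boundary spin structures arise from the same ambient spin data. Once this compatibility is in place, the computation above gives the desired identification of $\Phi|_{F_2/F_1}$ with $\kappa$.
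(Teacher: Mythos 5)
Your argument is correct and is essentially the paper's own proof: the paper likewise invokes \cref{lem:arf}, lifts classes to $\wt{\Omega}_4^{Spin}(B\pi^{(2)})$, and identifies the regular preimage of a transverse circle with the regular preimages of points in the $2$-cells (using one dual circle $\alpha_k$ per cell rather than a general-position representative $S$, which is only a cosmetic difference). The spin-structure compatibility you defer at the end is also dispatched in a single sentence in the paper, so your level of detail matches the original.
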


\begin{proof}
We have a diagram
\[\xymatrix{0 \ar[r] & H_2(B\pi;\Z/2) \ar[d]_{\cong}^{\kappa} \ar[r]^-j & \wt{\Omega}_4^{Spin}(B\pi) \ar[r] \ar[dl]^{\Phi} & H_3(B\pi;\Z/2) \ar[r] & 0 \\ & \Hom(H^2(B\pi;\Z);\Z/2) & \wt{\Omega}_4^{Spin}(B\pi^{(2)}) \ar[u]_-{i_*} \ar@{->>}[l]_-{p} }\]

The map $p$ is described via \cref{lem:arf} as follows. In a CW structure on $B\pi$ as a 3-complex with only one $3$-cell, the differential in the cellular cochain complex
\[C_{cell}^2(B\pi) \xrightarrow{\delta_2} C_{cell}^3(B\pi) \cong \Z\] is trivial.
A $2$-cell determines a 2-cochain,  $e_k^* \in C_{cell}^2(B\pi) = \Hom_{\Z}(C_2^{cell}(B\pi),\Z)$ by $e_k^*(e^2_j) = \delta_{kj}$.
Since the coboundary map $\delta_2=0$, every $2$-cell $e^2_k$ determines an element $[e_k^*]$ in $H^2(B\pi;\Z)$ and the inclusion $B\pi^{(2)} \subset B\pi$ induces an isomorphism on second cohomology.

The map $p$ sends a class $\lbrack M \xrightarrow{c} B\pi^{(2)} \rbrack$ to the map in $\Hom_{\Z/2}(H^2(B\pi;\Z/2),\Z/2)$ that sends $\lbrack e^*_k \rbrack$ to the Arf invariant $\Arf(c^{-1}(b_k^2))$, where $b_k^2 \in e_k^2$ denotes the barycentre of the $k^{th}$ $2$-cell (which we can assume after a small homotopy of $c$ to be a regular point).
Since $p$ is surjective the lemma follows if we can show the following

\smallskip
\begin{claim}
We have
\[ p = \Phi \circ i_* \colon \wt{\Omega}_4^{Spin}(B\pi) \to \Hom(H^2(B\pi;\Z),\Z/2).\]
\end{claim}
\smallskip

For each cell $e^2_k$ there is an embedding $\alpha_k\colon S^1\to X$ that intersects the 2-skeleton only in $b^2_k$ and there only once. To see this, join up two of the intersection points of the boundary of the 3-cell with $b^2_k$ using a path in the 3-cell.  Thus, $PD^{-1}([\alpha_k])=[e_k^*]$ and $c^{-1}(\alpha_k(S^1))=c^{-1}(b_k^2)$. Furthermore, the spin structure on the normal bundle of the (equal) preimages agree and we have:
\begin{align*}p(\lbrack M \xrightarrow{c} B\pi^{(2)} \rbrack)([e_k^*])&=\Arf(c^{-1}(b_k^2))=\Arf(c^{-1}(\alpha_k(S^1)))=\psi_{c,\sigma}([\alpha_k])\\& = (\psi_{c,\sigma}\circ PD)(e^*_k) =\Phi(i_*\lbrack M \xrightarrow{c} B\pi^{(2)} \rbrack)([e_k^*]).\end{align*}
This completes the proof of \cref{lemma:split}.
\end{proof}

%\smallskip

To describe the action of $\Aut(\xi)$ on $\Omega_4(\xi)$ we need the following lemma.

\smallskip
\begin{lemma}
\label{lem:spinaction}
Given a surface $F$ with a spin structure and a map $f\colon F\to S^1$, the map $f$ induces a spin structure on a regular preimage $f^{-1}(\ast)$. We denote the spin bordism class of $f^{-1}(\ast)$ by $\mu(f^{-1}(\ast))\in\Omega_1^{Spin}\cong \bbZ/2$. Let $0\neq x\in H^1(S^1;\bbZ/2)$.  Then
\[\Arf(F)+\Arf(f^*(x)\cdot F)=\mu(f^{-1}(\ast))\in \bbZ/2,\]
where $f^*(x)\cdot F$ denotes the surface $F$ with the spin structure changed by $f^*(x)$.
\end{lemma}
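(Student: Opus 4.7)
The plan is to translate the statement into a purely algebraic identity about quadratic refinements of the mod-$2$ intersection form on $H_1(F;\bbZ/2)$ and then re-interpret each side geometrically. First I would recall the standard dictionary: a spin structure on a closed surface $F$ determines a quadratic refinement $q\colon H_1(F;\bbZ/2)\to\bbZ/2$ of the mod-$2$ intersection form, with $\Arf(q)=\Arf(F)$. The set of spin structures on $F$ is a torsor for $H^1(F;\bbZ/2)$, and changing the spin structure by $\alpha\in H^1(F;\bbZ/2)$ changes the refinement to $q_\alpha(a)=q(a)+\langle\alpha,a\rangle$.

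Next I would establish the purely algebraic identity
\[
\Arf(q) + \Arf(q_\alpha) \;=\; q(\PD(\alpha))\ \in\ \bbZ/2,
\]
where $\PD(\alpha)\in H_1(F;\bbZ/2)$ is the Poincar\'e dual of $\alpha$. Pick a symplectic basis $\{e_i,f_i\}$ of $H_1(F;\bbZ/2)$ and write $\PD(\alpha)=\sum c_i e_i+d_i f_i$, so that $\langle\alpha,e_i\rangle=d_i$ and $\langle\alpha,f_i\rangle=c_i$. Expanding
\[
\Arf(q_\alpha)=\sum_i (q(e_i)+d_i)(q(f_i)+c_i)
\]
and computing $q(\PD(\alpha))$ by the quadratic property $q(u+v)=q(u)+q(v)+u\cdot v$ (using that $e_i\cdot e_j=f_i\cdot f_j=0$ and $e_i\cdot f_j=\delta_{ij}$), both are equal to $\Arf(q)+\sum_i\bigl(d_i\, q(f_i)+c_i\, q(e_i)+c_i d_i\bigr)$, yielding the identity.

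Finally I would specialise to $\alpha=f^*(x)$. By naturality of Poincar\'e duality applied at a regular value $\ast\in S^1$, one has $\PD(f^*(x))=[f^{-1}(\ast)]\in H_1(F;\bbZ/2)$. It then remains to identify $q([f^{-1}(\ast)])$ with $\mu(f^{-1}(\ast))\in\Omega_1^{Spin}\cong\bbZ/2$ as defined in the statement. This uses the well-known geometric description of $q$: for an embedded $1$-manifold $C\subset F$ equipped with a trivialisation of its normal bundle, $q([C])$ equals the spin bordism class of $C$ carrying the spin structure determined by that trivialisation together with the spin structure of $F$. For $C=f^{-1}(\ast)$, the map $f$ itself canonically trivialises the normal bundle via the orientation of $T_\ast S^1$, and this is precisely the trivialisation used to define the induced spin structure on $f^{-1}(\ast)$ in the statement.

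The main obstacle is this last compatibility step: verifying that the two a priori distinct spin structures on $f^{-1}(\ast)$ — the one obtained from the geometric reading of the quadratic refinement via a normal framing of the embedded curve, and the one obtained by pullback along $f$ — actually coincide. This is essentially a careful bookkeeping of orientation conventions at the regular value~$\ast$, after which combining the three steps gives the claimed formula $\Arf(F)+\Arf(f^*(x)\cdot F)=\mu(f^{-1}(\ast))$.
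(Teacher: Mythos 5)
Your proof is correct and follows essentially the same route as the paper's: both rest on the quadratic-refinement dictionary for spin structures, the formula $q_\alpha = q + \langle\alpha,-\rangle$ for the change of spin structure, the identity $PD(f^*(x))=[f^{-1}(\ast)]$, and the geometric reading of $q$ on an embedded framed curve as its spin bordism class. The only difference is cosmetic: you prove the uniform identity $\Arf(q)+\Arf(q_\alpha)=q(PD(\alpha))$ in an arbitrary symplectic basis, whereas the paper splits into the cases $f_*=0$ and $f_*\neq 0$ and, in the latter, adapts the symplectic basis so that $PD(f^*(x))$ is a basis vector.
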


\begin{proof}
First note that $[f^{-1}(\ast)]=PD(f^*(x))\in H_1(F;\bbZ/2)$.\\
\\
\textbf{Case 1:} The map $f_*\colon H_1(F;\bbZ/2)\to H_1(S^1;\bbZ/2)$ is trivial.
Then for any $y\in H_1(F;\bbZ/2)$ we have
\[f^*(x) \cap y= x \cap f_*y = 0 \in H_0(F;\Z/2) =\Z/2\] and thus $f^*(x)=0$ and $[f^{-1}(\ast)]=PD(f^*(x))=0$. This implies that \[\mu(f^{-1}(\ast))=0=\Arf(F)+\Arf(F)=\Arf(F)+\Arf(f^*(x)\cdot F).\]
~\\
\textbf{Case 2:} The map $f_*\colon H_1(F;\bbZ/2)\to H_1(S^1;\bbZ/2)$ is nontrivial.
Let $\alpha:=PD(f^*(x))\in H_1(F;\bbZ/2)$ and choose $\beta\in H_1(F;\bbZ/2)$ with $f_*(\beta)\neq 0$. Then, again identifying $H_0(F;\Z/2)$ with $\Z/2$, we have
\[\lambda(\alpha,\beta)= f^*(x) \cap \beta= x \cap f_*\beta = 1 \in\bbZ/2.\]
We can extend $\{\alpha,\beta\}$ to a basis $\{\alpha,\beta,\gamma_1,\delta_1,\dots,\gamma_{g-1},\delta_{g-1}\}$ with
\[\lambda(\gamma_i,\delta_j)=\begin{cases}1&i=j\\0&\text{else}\end{cases}\]
and all other intersections being zero. With these choices we have see that the action of~$f^*(x)$ on the spin structure gives $\mu(f^*(x)\cdot \beta)=1+\mu(\beta)\in\Omega_1^{Spin}$, and~$f^*(x)$ does not change the spin bordism classes of the other basis elements. Therefore,
\begin{align*}
\Arf(f^*(x)\cdot F)&=\mu(f^*(x)\cdot \alpha)\mu(f^*(x)\cdot \beta)+\sum_i\mu(f^*(x)\cdot \gamma_i)\mu(f^*(x)\cdot \delta_i)\\&=\mu(\alpha)+\mu(\alpha)\mu(\beta)+\sum_i\mu(\gamma_i)\mu(\delta_i)=\mu(\alpha)+\Arf(F).
\end{align*}
The proof is completed by noting that  $\mu(\alpha)=\mu(f^{-1}(\ast))$ by definition.
\end{proof}

Next we use our understanding of the splitting map $\Phi$ to compute the action of $\Aut(\xi)$ on $\Omega_4(\xi)$.  Let $\sigma\colon X\to BSpin$ denote the spin structure on $X$ used for the definition of the splitting $\Phi$. For an element $\rho\in\Out(\pi)$, view $\rho$ as a homotopy equivalence $X\to X$, and denote the difference between the spin structures $\sigma$ and $\sigma\circ\rho$ by $m_\rho\in H^1(X;\Z/2)$.

\smallskip \begin{thm}\label{thm:action-spin-case}
The action of $\Aut(\xi)$ on $\Omega_4(\xi)$ is given in the following way. Let $(z,\phi,\epsilon)\in 16\cdot\Z\oplus \Hom(\pi;\bbZ/2)\oplus\bbZ/2$ be given.
\smallskip
\begin{enumerate}[(i)]
\item An element $m\in H^1(B\pi;\bbZ/2)\cong \Hom(\pi;\bbZ/2)$ acts on $(z,\phi,\epsilon)$ by
\[m\cdot (z,\phi,\epsilon)=(z,\phi+\epsilon m,\epsilon).\]
%and acts trivially on the $\Z$ summand of $\Omega_4(\xi)$.
\item An outer automorphism $\rho \in \Out(\pi)$ acts by
\[\rho\cdot(z,\phi,\epsilon)=(z,\phi\circ \rho^{-1}+\epsilon m_\rho,\epsilon).\]
%pre-composition with $\rho^{-1}$ on $\Hom(\pi;\Z/2)$, and trivially on the other summands of $\Omega_4(\xi)$.
\end{enumerate}
\end{thm}

\begin{proof}
The elements in the $16\cdot \Z$ summand can be represented by connected sums of $K3$ surfaces. On these the action of $\Aut(\xi)$ is trivial, since they have a unique spin structure and the map to $B\pi$ factors through a point up to homotopy.
%First, the signature is independent of the map to $B\pi\times BSpin$, hence the action on the $\Z$ summand is trivial.
\smallskip
\begin{enumerate}[(i)]
\item \label{item-i-action-aut-xi-spin}
%Again, the signature is independent of the map to $B\pi$.
Recall that $X$ denotes a $3$-manifold model for $B\pi$, and recall that the splitting \[\Phi \colon \widetilde{\Omega}_4^{Spin}(B\pi) \to \Hom(H^2(B\pi;\Z),\Z/2)\] from \cref{lemma:split} is given in the following way.
Consider a diagram
\[\xymatrix{M\ar[r]^c & X\\F\ar[u]^j\ar[r]^f&S^1\ar[u]^i}\]
where $i\colon S^1\to X$ is an embedding, $F_i$ is its regular preimage under $c$, and $f = c|_{F_i}$.  The embedding $i \colon S^1 \to X$ represents an element of $H_1(X;\Z) \cong H^2(X;\Z)$.  Then $\Phi(\lbrack M \xrightarrow{c} B\pi \rbrack)$ sends $[i\colon S^1\to X]$ to $\Arf(F_i)$.

Changing the spin structure of $M$ by $c^*(m)\in H^1(M;\Z/2)$ changes the induced spin structure on $F_i$ by $(c|_{F_i})^*(m)=j^*c^*(m)$. On the other hand, changing the spin structure $\sigma$ of $X$ by $m$ changes the spin structure on the normal bundle of $S^1\subseteq X$ by $m|_{S^1}=i^*(m) \in H^1(S^1;\Z/2)$ and hence this change also alters the induced spin structure on $F_i$ by $f^*i^*(m)=j^*c^*(m)$. Therefore, the action of $m$ on the bordism group can be described by letting it act on the spin structure of $X$.
%\commentd{The above paragraph is new. I hope it explains well enough why we let $m$ act by changing the spin structure of $X$.}

%Changing the spin structure on $X$ by $m\in H^1(B\pi;\bbZ/2) = H^1(X;\Z/2)$ changes the induced spin structure on $F_i$ by $j^*c^*(m)=f^*i^*(m)$.
By \cref{lem:spinaction}, this action of $m \in H^1(X;\Z/2)$ on the spin structure of $X$ changes the Arf invariant by $[f^{-1}(\ast)] \in \Omega_1^{Spin} \cong \Z/2$ if $i^*(m)\neq 0$. By \cref{lem:arf}, we have that $\epsilon=[f^{-1}(\ast)]\in \Omega_1^{Spin}$, and thus if $\epsilon=0$ the element $m\in H^1(B\pi;\bbZ/2)$ acts trivally. On the other hand if $\epsilon=1$, then $m$ changes the Arf invariant associated to the element $[i\colon S^1\to X]\in H_1(X;\bbZ)$ if and only if $m(i)
=i^*(m) \neq 0$.
\item
An automorphism of $\pi$ induces an automorphism of $H_3(B\pi;\Z/2) \cong \Z/2$. However there is only one automorphism of the group~$\Z/2$, hence $\epsilon$ is unchanged by $\rho$.

As in (\ref{item-i-action-aut-xi-spin}), the element in $\Hom(\pi;\Z/2)$ associated to $M$ is computed by considering the Arf invariants of surfaces $F_i=c^{-1}(i(S^1))$.  For an element $g\in \pi$, represent $g$ by an embedding $i\colon S^1\to X$, and compute $\Arf(F_i)$. When applying $\rho$, for an embedding $i\colon S^1\to X$, we have to compute the Arf invariant of the surface $(\rho\circ c)^{-1}(i(S^1))=c^{-1}((\rho^{-1}\circ i)(S^1))=F_{\rho^{-1}\circ i}$. Hence one might suspect that $\rho$ acts by sending $\phi$ to $\phi\circ \rho^{-1}$. But applying $\rho$ also changes, by $m_\rho$, the spin structure on $X$ that is used to compute the Arf invariant of $F_{\rho^{-1}\circ i}$.
Therefore, the argument of (\ref{item-i-action-aut-xi-spin}) applies, with $m=m_{\rho}$, to show that we have an extra summand $\epsilon m_\rho$.
%
%
%The automorphisms of $\pi$ act by functoriality on $H_2(B\pi;\bbZ/2)$. We have identified $H_2(B\pi;\bbZ/2)$ with $\Hom(\pi;\bbZ/2)$ using Poincar\'e duality and the Kronecker evaluation $\kappa$, from which is follows that for any $x \in H_2(B\pi;\Z/2)$, we have $\kappa (\rho_*(x))=\kappa(PD^{-1}(x))\circ\rho^{-1}$.
%In the case where $\epsilon$ is non-trivial, the splitting $\Phi$ depends on the spin structure of $X$. When acting
\end{enumerate}
\end{proof}

From the results of this section we obtain the following corollary, which is \cref{thm:A}~(\ref{item:A2}).
Before stating the corollary we collect the notation that will appear in the statement.
As above, let $X$ be a closed oriented aspherical $3$-manifold with fundamental group $\pi$.
For a $4$-manifold $M$, an isomorphism $\pi_1(M) \to \pi$ determines, up to homotopy, a map $c \colon M \to X$.  The following two inverse image constructions, together with the signature, will be used to state the spin classification in \cref{cor:stable-class-spin}.

The inverse image of a regular point $c^{-1}(\pt) \in M$ determines an element  $S \in \Omega_1^{Spin} \cong \Z/2$.
Now choose a spin structure $\sigma$ on $X$.
The map $\Phi\colon H_1(X;\bbZ)\to \bbZ/2$, defined above using the  Arf invariants of certain inverse images, determines an element of $H_2(B\pi,\Z/2)$ by universal coefficients and Poincar\'{e} duality.

\smallskip \begin{cor}\label{cor:stable-class-spin}
The stable diffeomorphism classes of spin $4$-manifolds with COAT fundamental group $\pi$ are in one-to-one correspondence with
$$16\cdot \Z \times \left(H_2(B\pi;\Z/2)/\Out(\pi) \cup \{ \ast\}\right).$$
The $16\cdot\Z$ entry is detected by the signature.
The extra element $\{ \ast\}$ corresponds to the case that $S=1 \in \Omega_1^{Spin}$.
If $S=0$, the element in $H_2(B\pi;\Z/2)/\Out(\pi)$ is determined by the Arf invariants via the map $\Phi$.
\end{cor}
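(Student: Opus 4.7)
The plan is to assemble this corollary from the pieces already established in the section: the general Kreck correspondence (\cref{cor:stablediffeoclasses}) identifies stable diffeomorphism classes of spin $4$-manifolds with normal $1$-type $\xi$ with the orbit set $\Omega_4(\xi)/\Aut(\xi)$, and we have computed both sides. By \cref{thm:bordism-group-spin-case} we have
\[
\Omega_4(\xi) \cong \Z \oplus \Hom(\pi,\Z/2) \oplus \Z/2,
\]
where the $\Z$ factor is the signature divided by $16$, the middle factor is identified with $H_2(B\pi;\Z/2)$ via Poincar\'e duality and the Kronecker pairing (as described right after the proof of \cref{thm:bordism-group-spin-case}), and the last $\Z/2$ factor is the $\Omega_1^{Spin}$-valued invariant $S=[c^{-1}(\mathrm{pt})]$ coming from the $H_3(B\pi;\Omega_1^{Spin})$ piece of the James spectral sequence (by \cref{lem:arf}). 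Moreover, by \cref{thm:action-spin-case}, the automorphism group $\Aut(\xi) \cong H^1(B\pi;\Z/2)\times \Out(\pi)$ acts trivially on the signature summand, the $\Out(\pi)$ factor acts on $\Hom(\pi,\Z/2)$ by precomposition, and an element $m\in H^1(B\pi;\Z/2)\cong\Hom(\pi,\Z/2)$ sends $(\phi,\epsilon)$ to $(\phi+\epsilon m,\epsilon)$.

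First I would separate into cases according to $\epsilon = S \in \Z/2$. If $\epsilon=1$, then for any fixed signature the $H^1(B\pi;\Z/2)$-action on the $\Hom(\pi,\Z/2)$-coordinate is transitive (it is free-and-transitive translation), so the entire stratum $\{\epsilon=1\}$ collapses to a single orbit; this orbit provides the distinguished element $\{\ast\}$. If $\epsilon=0$, then the $H^1(B\pi;\Z/2)$-action is trivial on the middle factor, so the remaining quotient is by $\Out(\pi)$ acting on $\Hom(\pi,\Z/2)\cong H_2(B\pi;\Z/2)$, giving the orbit set $H_2(B\pi;\Z/2)/\Out(\pi)$. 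Combining the two strata with the independent signature factor yields the claimed bijection with $\Z\times \bigl(H_2(B\pi;\Z/2)/\Out(\pi)\cup\{\ast\}\bigr)$.

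Finally I would match the $S=0$ invariant with the explicit construction $\psi_{c,\sigma}$ from \cref{lem:split-homomorphism}. The splitting map $\Phi$ of \cref{lemma:split} identifies the image of $[M\xrightarrow{c}X]$ in the middle summand of $\widetilde{\Omega}_4^{Spin}(B\pi)$ with $\psi_{c,\sigma}\circ PD\in \Hom(H^2(B\pi;\Z),\Z/2)\cong H_2(B\pi;\Z/2)$, so the middle coordinate of the class $[M]$ is precisely the element of $H_2(B\pi;\Z/2)$ represented by $\psi_{c,\sigma}$. Passing to the $\Out(\pi)$-quotient gives the invariant appearing in the statement.

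The only subtle step is handling the $\epsilon=1$ stratum: one must check that the $H^1(B\pi;\Z/2)$-action is indeed transitive on $\Hom(\pi,\Z/2)$, which is immediate from the formula $m\cdot(\phi,1)=(\phi+m,1)$ in \cref{thm:action-spin-case}(2), and that this stratum is nonempty and compatible with the $\Out(\pi)$-action (which preserves $\epsilon$ and merely permutes the already-collapsed orbit). The rest is bookkeeping of the spectral sequence filtration against the explicit description of $\Phi$.
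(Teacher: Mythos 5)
Your proposal is correct and follows essentially the same route the paper intends: the corollary is deduced directly from \cref{cor:stablediffeoclasses}, the computation $\Omega_4(\xi)\cong\Z\oplus\Hom(\pi,\Z/2)\oplus\Z/2$ of \cref{thm:bordism-group-spin-case}, and the action described in \cref{thm:action-spin-case}, with the $\epsilon=1$ stratum collapsing to $\{\ast\}$ under the transitive translation action of $H^1(B\pi;\Z/2)$ and the $\epsilon=0$ stratum yielding $H_2(B\pi;\Z/2)/\Out(\pi)$, identified via $\Phi$ and $\psi_{c,\sigma}$. Nothing is missing.
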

\smallskip

\noindent The element $\{\ast\}$ was called $\{odd\}$ in \cref{thm:A} in the introduction.

\smallskip

\begin{example}\label{example:Z-hoch-drei}
For $\pi\cong\Z^3$ two elements $(n,\phi,\epsilon),(n',\phi',\epsilon')\in\Omega_4(\xi)\cong 16\cdot\Z\oplus \Hom(\Z^3;\Z/2)\oplus \Z/2$ with the same signature, i.e.\ $n=n'$, correspond to stably diffeomorphic 4-manifolds if and only if
\[\begin{cases}\epsilon=\epsilon'=1&\text{or}\\\epsilon=\epsilon'=0,~\phi=\phi'=0&\text{or}\\\epsilon=\epsilon'=0,~\phi\neq0\neq \phi'.&\end{cases}\]
We used the fact that the canonical map $\GL_n(\Z) \to \GL_n(\Z/2)$ is surjective, in particular for $n=3$.
\end{example}

\subsection{Almost spin $4$-manifolds}\label{sec:almostspin}

We begin our investigation of almost spin 4-manifolds by producing a unique lift $w \in H^2(B\pi;\Z/2)$ of $w_2(M)$.  The first part of this section applies for a larger class of groups than 3-manifold groups; we will point out when we restrict to COAT groups.

\smallskip \begin{lemma}\label{lem:w}
Let $\pi$ be a group, let $M$ be an almost spin $4$-manifold and let $c\colon M \to B\pi$ induce an isomorphism on fundamental groups. Then there exists a unique element $w \in H^2(B\pi;\Z/2)$ such that $c^*(w) = w_2(M)$. If $\pi$ is such that $H^3(B\pi;\Z)$ is $2$-torsion free, then $w = w_2(E)$ for some complex line bundle $E$ over $B\pi$.
\end{lemma}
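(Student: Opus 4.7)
The plan is to analyse the map $c$ using the Serre spectral sequence of the (homotopy) fibration
\[\wt{M} \to M \xrightarrow{c} B\pi,\]
with $\Z/2$ coefficients, where the fibre $\wt{M}$ denotes the universal cover of $M$. The key simple observation is that $\wt{M}$ is simply connected, so $H^1(\wt{M};\Z/2)=0$. I expect the main content to lie in identifying which filtration piece $w_2(M)$ sits in and comparing edge terms; both the existence and uniqueness statements will follow essentially for free once the spectral sequence is written down.

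For uniqueness, I would show that $c^{\ast}\colon H^2(B\pi;\Z/2) \to H^2(M;\Z/2)$ is injective. The map factors through the edge homomorphism $H^2(B\pi;\Z/2) = E_2^{2,0} \twoheadrightarrow E_\infty^{2,0} \hookrightarrow H^2(M;\Z/2)$, and the only incoming differential that could have a non-trivial effect on $E_2^{2,0}$ is $d_2\colon E_2^{0,1} = H^1(\wt{M};\Z/2) \to E_2^{2,0}$, which is zero since its source vanishes. Hence $E_\infty^{2,0} = H^2(B\pi;\Z/2)$ and $c^{\ast}$ is injective.

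For existence, I want to prove $w_2(M)$ lies in the image of $c^{\ast}$, i.e.\ in the filtration piece $F^2H^2(M;\Z/2)$. The restriction of $w_2(M)$ to $\wt{M}$ is $w_2(\wt{M}) = 0$ by the almost spin hypothesis, and this restriction is exactly the edge homomorphism $H^2(M;\Z/2) \twoheadrightarrow E_\infty^{0,2} \hookrightarrow H^2(\wt{M};\Z/2)$. Therefore $w_2(M)$ lies in $F^1H^2(M;\Z/2)$. To upgrade to $F^2$, I need $E_\infty^{1,1}=0$; this is automatic because already $E_2^{1,1} = H^1(B\pi;H^1(\wt{M};\Z/2)) = 0$. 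The step I expect to be most delicate is being careful about the local-coefficient action of $\pi$ on $H^{\ast}(\wt{M};\Z/2)$, but this does not matter for the cells that concern us since both offending terms vanish before taking any coefficients.

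For the final clause, I would apply the Bockstein long exact sequence associated to $0 \to \Z \xrightarrow{2} \Z \to \Z/2 \to 0$:
\[H^2(B\pi;\Z) \xrightarrow{r} H^2(B\pi;\Z/2) \xrightarrow{\beta} H^3(B\pi;\Z) \xrightarrow{2} H^3(B\pi;\Z).\]
The image of $\beta$ is the $2$-torsion subgroup of $H^3(B\pi;\Z)$, which vanishes by hypothesis; hence the mod-$2$ reduction $r$ is surjective. Pick any lift $c \in H^2(B\pi;\Z)$ of $w$ and let $E \to B\pi$ be the complex line bundle with first Chern class $c_1(E)=c$, using the identification $H^2(B\pi;\Z) \cong [B\pi,BU(1)]$. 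Then $w_2(E) = r(c_1(E)) = w$, as required.
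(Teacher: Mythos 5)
Your proposal is correct and follows essentially the same route as the paper: the Serre spectral sequence of $\wt{M}\to M\xrightarrow{c} B\pi$ with $\Z/2$-coefficients, using $H^1(\wt{M};\Z/2)=0$ to kill the relevant differentials and $E^{1,1}$-term and $w_2(\wt{M})=0$ to place $w_2(M)$ in the image of $c^*$, followed by the identical Bockstein argument for the line bundle. The paper merely packages the spectral-sequence analysis as the exact sequence $0\to H^2(B\pi;\Z/2)\to H^2(M;\Z/2)\to H^2(\wt{M};\Z/2)^\pi$, which is the same content as your filtration argument.
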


\begin{proof}
The first part follows if we can establish the following exact sequence
\[\xymatrix{0 \ar[r] & H^2(B\pi;\Z/2) \ar[r]^-{c^*} & H^2(M;\Z/2) \ar[r]^-{p^*} & H^2(\widetilde{M};\Z/2)^{\pi}}\]
where the superscript $\pi$ denotes the fixed point set of the $\pi$-action. This is because by assumption $0 = w_2(\wt{M}) = p^*(w_2(M))$ since $p^*(TM) = T\wt{M}$.

To see why this sequence is exact, consider the Serre spectral sequence applied to the fibration
\[\xymatrix{\widetilde{M} \xrightarrow{p} M \xrightarrow{c} B\pi.}\]
Its $E^2$-term is
\[H^p(B\pi;H^q(\wt{M};\Z/2)) \Longrightarrow H^{p+q}(M;\Z/2)\]
where $H^q(\wt{M};\Z/2)$ is to be understood as module over $\pi$.
On the 2-line the non-vanishing terms are  $H^2(B\pi;\Z/2)$ and $H^0(B\pi;H^2(\wt{M};\Z/2)) \cong H^2(\wt{M};\Z/2)^\pi$.
Since $H^1(\wt{M};\Z/2) =0$, the only potentially nonzero differential which can affect the $E^{\infty}$-page is $d^3 \colon H^2(\wt{M};\Z/2)^\pi \to H^3(B\pi;H^0(\wt{M};\Z/2))$. Thus the exact sequence exists as claimed.

From the Bockstein sequence associated to $0 \to \Z \xrightarrow{2} \Z \to \Z/2 \to 0$, we see that the map
\[\xymatrix{H^2(B\pi;\Z) \ar[r]^-{\red_2} & H^2(B\pi;\Z/2)}\]
is surjective, because multiplication by two on coefficients induces an injection on $H^3(B\pi;\Z)$ by assumption.

To prove the second statement of the lemma, choose a complex line bundle $E \to B\pi$ whose first Chern class $c_1(E)$ is a lift of $w$ to $H^2(B\pi;\Z)$ (recall that complex line bundles are classified by their first Chern class). Furthermore the second Stiefel-Whitney class of the underlying 2-dimensional real vector bundle is the reduction of the first Chern class: $w_2(E) = \red_2(c_1(E))$.
\end{proof}

Now let $\pi$ be a group for which $H^3(B\pi;\Z)$ is 2-torsion free, fix a choice of complex line bundle $E$ provided by \cref{lem:w} and consider it as a 2-dimensional real vector bundle.

\smallskip \begin{lemma}\label{lemma:almostspin}
%Let $\pi$ be a group for which $H^3(B\pi;\Z)$ is 2-torsion free, fix a choice of complex line bundle $E$ provided by \cref{lem:w} and consider it as a 2-dimensional real vector bundle.
The normal $1$-type of an almost spin manifold $M$ with fundamental group $\pi$ is given by
\[\xymatrix{\xi\colon B\pi \times BSpin \ar[r]^-{E\times p} & BSO\times BSO \ar[r]^-{\oplus} & BSO }\]
where $E \to B\pi$ is a stable vector bundle such that $c^*(w_2(E)) = w_2(M)$ and $\oplus$ refers to the $H$-space structure on $BSO$ that comes from the Whitney sum of stable vector bundles.
\end{lemma}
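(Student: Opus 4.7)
\medskip

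\noindent\textbf{Proof plan.} I would verify directly that the map $\xi\colon B\pi\times BSpin\to BSO$ given by $(E\times p)$ followed by Whitney sum satisfies the two defining properties of a normal $1$-type for $M$, namely that $\xi$ is $2$-coconnected and that $\nu_M\colon M\to BSO$ admits a $2$-connected lift through $\xi$.

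For the first property, I would argue that $B\pi\times BSpin$ has $\pi_i=0$ for $i\geq 2$: the factor $B\pi$ is aspherical, while $\pi_2(BSpin)=\pi_1(Spin)=0$. Since $BSO$ has $\pi_1=0$ and $\pi_2=\Z/2$, the target already has no homotopy in degrees $\geq 3$ relative to the source beyond that of $BSO$, so one only needs to observe that $\xi$ is injective on $\pi_2$. This reduces to checking that $\gamma\colon BSpin\to BSO$ is $2$-coconnected, which is standard, together with the fact that the $E$-factor contributes $0$ to $\pi_2(BSO)$ being hit (as $\pi_2(B\pi)=0$).

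For the existence of a $2$-connected lift, the main idea is to subtract $c^*E$ from $\nu_M$ and then lift through $BSpin$. Explicitly, set $\eta:=\nu_M\oplus c^*(-E)\colon M\to BSO$, where $-E$ denotes a stable inverse of $E$. Then
\[
w_2(\eta)=w_2(\nu_M)+c^*w_2(-E)=w_2(M)+c^*w_2(E)=w_2(M)+w_2(M)=0,
\]
using $w_2(-E)=w_2(E)$ stably and the defining property of $E$ from \cref{lem:w}. Consequently $\eta$ admits a lift $\widetilde{\eta}\colon M\to BSpin$. I would then define
\[
\widetilde{\nu}_M:=(c,\widetilde{\eta})\colon M\longrightarrow B\pi\times BSpin,
\]
and observe that $\xi\circ\widetilde{\nu}_M$ classifies $c^*E\oplus \widetilde{\eta}\simeq c^*E\oplus(\nu_M\oplus c^*(-E))\simeq \nu_M$, so that $\widetilde{\nu}_M$ is a lift of $\nu_M$ up to homotopy.

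Finally, I would verify that $\widetilde{\nu}_M$ is $2$-connected: on $\pi_1$ it agrees with $c$, which is an isomorphism by assumption; on $\pi_2$, the target $B\pi\times BSpin$ has vanishing $\pi_2$ as noted above, so the condition is automatic. The only subtle point in the whole argument is the check that $w_2(E)$ behaves stably as claimed and that a $BSpin$-lift of a stable bundle with vanishing $w_2$ always exists, both of which are standard; I do not anticipate any genuine obstacle since $E$ was chosen precisely to make the $w_2$ calculation cancel.
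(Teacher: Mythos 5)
Your proposal is correct and follows essentially the same route as the paper: twist $\nu_M$ by $c^*(-E)$ to kill $w_2$, lift to $BSpin$, and check $2$-coconnectedness of $\xi$ by reducing to $\gamma\colon BSpin\to BSO$ and $2$-connectedness of the smoothing via $\pi_2(B\pi\times BSpin)=0$. One small slip: $B\pi\times BSpin$ does \emph{not} have $\pi_i=0$ for all $i\geq 2$ (e.g.\ $\pi_4(BSpin)\cong\Z$) — only $\pi_2$ and $\pi_3$ vanish — but this does not affect your argument, since the coconnectedness really rests on $\gamma$ being $2$-coconnected and the $E$-factor contributing nothing on $\pi_i$ for $i\geq 2$, exactly as you say.
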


\begin{proof}
To see that the map $\xi$ is $2$-coconnected note that since $B\pi$ has vanishing higher homotopy groups, $\pi_i(B\pi \times BSpin) \cong \pi_i(BSpin)$ for $i >1$, and $\xi$ restricted to $BSpin$ is the canonical map, which is $2$-coconnected.

For simplicity denote the bundle over $M$ given by $\nu_M \oplus c^*(-E)$ by $\nu(E)$. Here $-E$ is the stable inverse bundle to $E$.
The bundle $\nu(E)$ has a spin structure as, by design, $w_2(\nu(E)) = 0$. Denote some choice of lift of the classifying map $\nu(E)\colon M \to BSO$ to $BSpin$ by $\wt{\nu}(E) \colon M \to BSpin$. Now consider the following diagram
\[\xymatrix{ & B\pi \times BSpin \ar[d]^{\xi} \\
    M \ar[r]_-{\nu_M} \ar@/^1pc/[ur]^-{c\times\wt{\nu}(E)} & BSO}\]
This diagram commutes because it commutes up to homotopy (the composition $\xi \circ (c \times \wt{\nu}(E))$ classifies the bundle $\nu_M \oplus c^*(-E) \oplus c^*(E) \cong \nu_M$). Since $\xi$ is a fibration we can use the homotopy lifting property to change the map $c \times \wt{\nu}(E)$ in its homotopy class to make the diagram commute strictly.
The map $M \to B\pi \times BSpin$ described is $2$-connected since $c$ induces an isomorphism on $\pi_1$ and $\pi_2(B\pi \times BSpin) =0$. This completes the proof of the lemma.
\end{proof}

\smallskip

\begin{defi}\label{defn:out-pi-w}
Let $\Out(\pi)_w$ be the subgroup of $\Out(\pi)$ given by those elements $f \in \Out(\pi)$ such that $f^*(w) = w \in H^2(\pi;\Z/2)$, where $w$ is as in \cref{lem:w}.
\end{defi}

\smallskip \begin{lemma}
Let $\xi\colon B\pi \times BSpin \to BSO$ be as in \cref{lemma:almostspin}. We have a short exact sequence
\[0 \to H^1(B\pi;\bbZ/2) \to \Aut(\xi)\to \Out(\pi)_w \to 1.\]
\end{lemma}

\begin{proof}
Consider an automorphism $\Phi\in \Aut(\xi)$ which is, in particular, a pair of maps $(\varphi,\psi):= (p_1 \circ \Phi, p_2\circ \Phi)$, where $p_1$ and $p_2$ are the projections, making the diagram
\[\xymatrix@C=1.5cm{B\pi\times BSpin \ar[r]^-{(\varphi,\psi)} \ar[d]_-{E\times \gamma} & B\pi\times BSpin \ar[d]^-{E\times \gamma} \\ BSO \ar@{=}[r] & BSO }\]
commute up to homotopy, where $E$ is the 2-dimensional real vector bundle associated to the complex line bundle from \cref{lem:w} and $\gamma$ denotes the tautological oriented bundle over $BSpin$. We again denote $w_2(E)$ by $w$.
Since $BSpin$ is simply connected, we can factor $\varphi$ as follows
\[\xymatrix{B\pi\times BSpin \ar[r]^-\varphi \ar[d]_-{p_1} & B\pi \\ B\pi \ar@/_1pc/[ur]_-{\widehat{\varphi}} & }\]
The commutativity of the above two diagrams give rise to the following isomorphisms of \emph{stable} bundles.  The first diagram above gives the first isomorphism in the sequence below, while the second diagram gives the translation between the second and third isomorphisms.
\begin{align}\label{sequence-vector-bundle-isos}
& E\times \gamma  \cong (\varphi,\psi)^*(E\times \gamma) \\
& \Leftrightarrow p_1^*(E)\oplus p_2^*(\gamma) \cong \varphi^*(E)\oplus \psi^*(\gamma) \nonumber\\
& \Leftrightarrow p_1^*(E-\widehat{\varphi}^*(E)) \oplus p_2^*(\gamma) \cong \psi^*(\gamma)\nonumber \\
& \Leftrightarrow \left( E - \widehat{\varphi}^*(E) \right) \times \gamma \cong \psi^*(\gamma) \nonumber
\end{align}
This just says that $\psi$ is a spin structure on the stable vector bundle $\left(E - \widehat{\varphi}^*(E) \right) \times \gamma$ over $B\pi\times BSpin$.  That is, we have a commutative triangle
\[\xymatrix @C+1cm { & BSpin \ar[d]^-{\gamma} \\
B\pi \times BSpin \ar@/^1pc/[ur]^-{\psi} \ar[r]_-{\left(E-\widehat{\varphi}^*(E)\right) \times \gamma } & BSO. }\]
In particular it follows that
\begin{align*}
0  = w_2(\left(E - \widehat{\varphi}^*(E) \right) \times \gamma)  = w_2(\left(E - \widehat{\varphi}^*(E) \right))\times 1  = ( w-\widehat{\varphi}^*(w) )\times 1,
\end{align*}
which precisely means that $\widehat{\varphi} \in \Out(\pi)_w$.

The map $\Aut(\xi)\to \Out(\pi)_w$ given by $(\varphi,\psi)\mapsto \widehat\varphi:=\varphi\circ p_1$ is a group homomorphism. It is surjective by the following argument.

Starting with $\widehat\varphi\in \Out(\pi)_w$, choose a spin structure $m\colon B\pi\to BSpin$ on $E-\widehat\varphi^*(E)$.
The maps \[\varphi=\widehat\varphi\circ p_1\colon B\pi\times BSpin\to B\pi\] and \[\psi\colon B\pi\times BSpin\xrightarrow{(m,\id)} BSpin\times BSpin\xrightarrow{\oplus}BSpin\]
define an element $(\varphi,\psi)\in \Aut(\xi)$, which is a pre-image of $\widehat\varphi$.

The kernel of the above homomorphism $\Aut(\xi)\to \Out(\pi)_w$ can be identified with $H^1(B\pi;\Z/2)$ as follows. By the argument at the beginning of the proof, an element in $\Aut(\xi)$ is determined by an element $\widehat\varphi\in \Out(\pi)_w$ and a spin structure $\psi$ on $(E-\widehat\varphi^*(E))\times \gamma$. When $\widehat\varphi$ is the identity, $(E-\widehat\varphi^*(E))$ is the trivial bundle and the projection $p_2\colon B\pi\times BSpin\to BSpin$ is a spin structure on $(E-\widehat\varphi^*(E))\times \gamma$. Hence we can identify the kernel of $\Aut(\xi)\to \Out(\pi)_w$ with $H^1(B\pi;\Z/2)$ by comparing the spin structure $\psi$ to $p_2$.
\end{proof}

\noindent From now on in this section $\pi$ will be a COAT group.

\smallskip \begin{thm}
\label{thm:almostspinbord}
Let $\pi$ be a COAT group and let $\xi\colon B\pi \times BSpin \to BSO$ be as in \cref{lemma:almostspin}. Then we have a non split short exact sequence
\[0 \to 16\cdot \Z \to \Omega_4(\xi) \to H_2(B\pi;\Z/2) \to 0.\]
%\cong\bbZ\oplus \ker(\langle w,\_\;\rangle),\]
%where $\ker(\langle w,\_\rangle) \leq H_2(B\pi;\bbZ/2)$ is the kernel of the evaluation on $w\in H^2(B\pi;\bbZ/2)$. The $\Z$-summand is detected by the signature divided by~$8$.
\end{thm}

\begin{proof}
Consider the following morphism of fibrations
\[\xymatrix{BSpin \ar[r] \ar@{=}[d] & BSpin\times B\pi \ar[r]^-p \ar[d]^-{\xi} & B\pi \ar[d]^w \\
    BSpin \ar[r] & BSO \ar[r] & K(\Z/2,2). }\]
\begin{claim}
The bordism group $\Omega_4(\xi)$ sits in a short exact sequence
\[\xymatrix{0 \ar[r] & \Omega_4^{Spin} \ar[r] & \Omega_4(\xi) \ar[r] & H_2(B\pi;\Z/2) \ar[r] & 0.}\]
\end{claim}
To prove the claim apply the James spectral sequence to the upper fibration.  We need to see that the surviving terms in the $E^{\infty}$ page of the $4$-line are $\Omega_4^{Spin}$ and $H_2(B\pi;\Z/2)$.  First, all differentials with $\Omega_4^{Spin}$ as target have a torsion group as domain.  Moreover there is a differential
\[\xymatrix{H_3(B\pi;\Z/2) \ar[r]^-{d_2} & H_1(B\pi;\Z/2),}\]
which according to \cite[Theorem 3.1.3]{teichnerthesis} is dual to the map
\[\begin{array}{rcl} H^1(B\pi;\Z/2) &\xrightarrow{\Sq^2_w}&  H^3(B\pi;\Z/2) \\
 x &\mapsto & \Sq^2(x) + x \cup w. \end{array}\]
The $\Sq^2$ summand vanishes, since as in the previous section $\Sq^n$ is trivial on $H^m$ for $m<n$.  Then as $0 \neq w \in H^2(B\pi;\Z/2)$, it follows from Poincar\'e duality on $B\pi$ that this differential is not trivial. Hence the $E^\infty$-terms in the spectral sequence on the $4$-line are exactly as claimed.

\smallskip
\begin{claim} The short exact sequence from the previous claim does not split.
\end{claim}
\smallskip

This is an immediate consequence of \cite[Main Theorem (3)]{teichnersignatures}, but for the convenience of the reader we give a proof here.
For this we see that the above sequence of fibrations induces a map of James spectral sequences.
Then we observe that the James spectral sequence for the lower fibration  $BSpin \to BSO \to K(\Z/2,2)$ has $E^2$-page
\[H_p(K(\Z/2,2);\Omega_q^{Spin}) \Longrightarrow \Omega^{SO}_{p+q} \]
Looking at the $4$-line of the spectral sequence, we obtain a short exact sequence
\[\xymatrix{0 \ar[r] & \Omega_4^{Spin} \ar[r] & F_2 \ar[r] & H_2(K(\Z/2,2);\Z/2) \cong \Z/2 = E^\infty_{2,2} \ar[r] & 0}\]
Since $F_2 \subseteq \Z \cong \Omega_4^{SO}$, and is nontrivial, $F_2$ is therefore itself isomorphic to $\Z$.
Thus this short exact sequence does not split.
From the morphism of spectral sequences we obtain a morphism of sequences
\[\xymatrix{0 \ar[r] & \Omega_4^{Spin} \ar[r] \ar@{=}[d] & \Omega_4(\xi) \ar[r] \ar[d] & H_2(B\pi;\Z/2) \ar[r] \ar@{->>}[d]^{w_*} & 0 \\
0 \ar[r] & \Omega_4^{Spin} \ar[r] & F_2 \ar[r] & H_2(K(\Z/2,2);\Z/2) \ar[r] & 0 }\]
The morphism $w_*$ from \cref{lem:w} is surjective because $0 \neq w \in H^2(B\pi;\Z/2)$. This prevents the upper sequence from splitting, since a choice of lift of $w_*$ and a splitting of the upper sequence would induce a splitting of the lower sequence. This completes the proof of the claim.

From this diagram it also follows that, as an abstract abelian group, we have a decomposition
$\Omega_4(\xi) \cong 8\cdot \Z \oplus \ker(\langle w,- \rangle)$
and the map $\Omega_4^{Spin} \to \Omega_4(\xi)$ is identified with multiplication by $2$ on the $\Z$ summand and is zero on the other summand. Hence the $8\cdot \Z$ summand in $\Omega_4(\xi)$ is given by the signature.  Note however that the splitting of $\Omega_4(\xi)$ into the direct sum is not canonical.
\end{proof}

\noindent In particular we obtain the following corollary.

\smallskip
\begin{cor}\label{cor:sign-div-eight}
An almost spin $4$-manifold $M$ with COAT fundamental group $\pi$ has signature divisible by $8$.
\end{cor}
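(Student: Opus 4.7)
The plan is to deduce this immediately from \cref{thm:almostspinbord}, which has already done all of the work. An almost spin $4$-manifold $M$ with fundamental group $\pi$ admits a normal $1$-smoothing with respect to the fibration $\xi\colon B\pi \times BSpin \to BSO$ of \cref{lemma:almostspin}, and hence defines a bordism class $[M,\wt{\nu}_M] \in \Omega_4(\xi)$. Since the signature is a bordism invariant (it is even an oriented bordism invariant), we obtain a well-defined homomorphism $\sigma \colon \Omega_4(\xi) \to \Z$.

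By \cref{thm:almostspinbord} we have a decomposition
\[
\Omega_4(\xi) \cong \Z \oplus \ker(\langle w,-\rangle),
\]
and moreover the first $\Z$-summand is identified with the image of $\sigma/8$. In other words, $\sigma(\Omega_4(\xi)) \subseteq 8\Z$. Applying this to the class of $M$ yields $\sigma(M) \in 8\Z$, which is the desired claim.

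There is essentially no obstacle: the entire content is contained in the identification of the $\Z$-summand of $\Omega_4(\xi)$ as the signature divided by $8$, established in the final paragraph of the proof of \cref{thm:almostspinbord} via the comparison with the James spectral sequence of $BSpin \to BSO \to K(\Z/2,2)$ and the nonsplitting of the short exact sequence there. One could alternatively phrase this proof in a more classical way: since $M$ is almost spin, its universal cover $\wt{M}$ is spin, so $\sigma(\wt{M})$ is divisible by $16$ by Rokhlin's theorem; but $M$ itself need not be a spin manifold, and Rokhlin alone only gives divisibility by $8$ through a direct argument using an almost spin structure. The bordism-theoretic route above is the one consistent with the framework of the paper and requires no additional input beyond \cref{thm:almostspinbord}.
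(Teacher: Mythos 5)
Your proof is correct and is essentially the paper's own argument: the corollary is stated there as an immediate consequence of \cref{thm:almostspinbord}, exactly as you deduce it. The only minor point worth noting is that one should also observe that the complement $\ker(\langle w,-\rangle)$ is a torsion group, so the signature of any class is entirely carried by the $\Z$-summand, which your argument implicitly uses and which is clear from the theorem.
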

\smallskip

\begin{rem}\label{rem:almost-spin-even}
For a manifold with $H_1(M;\Z)$ 2-primary torsion free, for example when $\pi \cong \Z^3$, this is rather interesting.  An orientable 4-manifold $M$ has even intersection form if and only if $w_2$ maps to zero in $\Hom(H_2(M,\Z),\Z/2)$, i.e.\ if it lies in $\Ext^1_{\Z}(H_1(M,\Z),\Z/2)$ \cite[p.~754, part (4)]{teichnersignatures}.
But if $H_1(M;\Z) \cong H_1(B\pi;\Z)$ contains no 2-primary torsion, then this $\Ext$-group vanishes, so the intersection form cannot be even in the case of almost spin manifolds (where $w_2 \neq 0$).  So this is ruled out as an explanation for the divisibility of the signature.
Contrast \cref{cor:sign-div-eight} with the existence of almost spin 4-manifolds with fundamental group $\Z/2 \times \Z/2$ with signature~$4$ (see \cite{teichnersignatures}) which arise as a quotient of an Enriques surface by a free antiholomorphic involution. Certainly such a manifold is almost spin (its universal cover is a $K3$ surface) and has signature $4$ (because $4\cdot 2\cdot 2 = 16 = \mathrm{sign}(K3)$).
\end{rem}
\smallskip

We postpone the discussion of the action of the automorphisms on the normal $1$-type on the bordism set until after the treatment of the stable homeomorphism question in the next section, since we make use of the action in the topological case to understand the action in the smooth case.

%%=========================================
%% Move this part to a later section.

\section{Stable homeomorphism classification}\label{stable-homeo-classification}

The topological classification runs along similar lines to the smooth classification.  First we need to identify the possible normal $1$-types of closed topological $4$-manifolds with fundamental group $\pi$ and then calculate their respective automorphism and bordism groups, together with the action of the automorphisms on the bordism group.

\smallskip \begin{prop}\label{prop:top-1-types}
Let $M$ be a closed oriented topological $4$-manifold with fundamental group~$\pi$.
\smallskip
\begin{enumerate}
\item[(1)] If $M$ is totally non-spin, then its normal $1$-type is given by
\[\xymatrix{B\pi\times BSTOP \ar[r] & BSTOP}\]
where the map projects onto the second factor.
\item[(2)] If $M$ is spin, then its normal $1$-type is given by
\[\xymatrix{B\pi \times BTOPSpin \ar[r] & BSTOP}\]
where the map is given by projecting $BTOPSpin$ to $BSTOP$.
\item[(3)] If $M$ is almost spin and $H^3(B\pi;\Z)$ is $2$-torsion free, then its normal
$1$-type is given by
\[\xymatrix{B\pi \times BTOPSpin \ar[r]^-{E\times p} & BSTOP\times BSTOP \ar[r]^-\oplus & BSTOP}\]
Here again $\oplus$ refers to the $H$-space structure on $BSTOP$ that corresponds to the Whitney sum of $TOP$-bundles and $E$ is again a complex line bundle with $c^*(w_2(E)) = w_2(M)$.
\end{enumerate}
\end{prop}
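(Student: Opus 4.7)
The plan is to adapt the proofs of Lemmas~3.1, 3.6, and 3.13 almost verbatim, replacing $BSO$ with $BSTOP$ and $BSpin$ with $BTOPSpin$ throughout. The three algebraic facts that drove the smooth arguments all persist in the topological category: $BSTOP$ is simply connected with $\pi_2(BSTOP) \cong \Z/2$ detected by $w_2$; the canonical map $BTOPSpin \to BSTOP$ is $2$-coconnected (its homotopy fibre is a $K(\Z/2,1)$); and stable topological $\R^2$-bundles, in particular those associated to complex line bundles, have Stiefel-Whitney classes with the expected behaviour. Moreover, \cref{lem:w} is a purely homotopy-theoretic statement about $M$, $B\pi$, and the cohomology class $w_2$, so its conclusions apply equally well to a topological manifold $M$ once we observe that $w_2(TM) = w_2(T\wt M)$ pulls back correctly through $p^*$ in the topological category.

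For part~(1), I would take the normal $1$-smoothing to be $c \times \nu_M \colon M \to B\pi \times BSTOP$. The target projects to $BSTOP$ by a $2$-coconnected map (since $\pi_{\geq 2}(B\pi) = 0$), and $c \times \nu_M$ is an isomorphism on $\pi_1$. Surjectivity on $\pi_2$ reduces, via the composite to $K(\Z/2,2)$ classifying $w_2$, to the statement that $w_2(\wt M) \neq 0$; the diagram-chase with the Hurewicz theorem in the smooth proof does not use smoothness and carries over.

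For part~(2), the lift $\wt\nu_M \colon M \to BTOPSpin$ exists because $M$ is (topologically) spin; the $2$-smoothing $c \times \wt\nu_M$ is $2$-connected because $\pi_2(B\pi \times BTOPSpin) = 0$ and $c$ is an isomorphism on $\pi_1$; and the composite to $BSTOP$ is $2$-coconnected for the same reasons as in \cref{lem:normal-spin-case}.

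For part~(3), the hypothesis $H^3(B\pi;\Z)$ is $2$-torsion free lets us invoke \cref{lem:w} to produce a complex line bundle $E \to B\pi$ with $c^*w_2(E) = w_2(M)$; then $\nu_M \oplus c^*(-E)$ has trivial $w_2$ and admits a topological spin lift $\wt\nu(E)\colon M \to BTOPSpin$. The pair $(c, \wt\nu(E))$ then gives, after adjusting within its homotopy class via the homotopy lifting property for the fibration $\xi$, a strict lift of $\nu_M$; $2$-connectedness on $M$ and $2$-coconnectedness of $\xi$ are verified exactly as in \cref{lemma:almostspin}.

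The only non-routine point is checking that $\xi$ is genuinely $2$-coconnected in each case; this reduces to the known fact that $\pi_i(BSTOP)$ and $\pi_i(BTOPSpin)$ agree with $\pi_i(BSO)$ and $\pi_i(BSpin)$ respectively in the range $i \leq 2$, which I would cite from the standard references on topological microbundles rather than reprove. No step presents a serious obstacle; the whole proposition is essentially a translation exercise.
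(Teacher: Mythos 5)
Your proposal is correct and follows essentially the same route as the paper: the authors likewise observe that the smooth arguments of \cref{lem:normal-totally-non-spin-case}, \cref{lem:normal-spin-case}, and \cref{lemma:almostspin} carry over once one notes that $\pi_2(BSTOP)\cong\Z/2$ is detected by $w_2$, that $BTOPSpin$ is $2$-connected (so the smoothings in the spin and almost spin cases are automatically surjective on $\pi_2$), and that the line bundle $E$ from \cref{lem:w} may simply be regarded as a $TOP$-bundle. No further comment is needed.
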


\begin{proof}
Mainly all the arguments of the smooth case go through in the topological case. The following points need to be observed.
\smallskip
\begin{enumerate}
\item[(1)] We have $\pi_2(BSTOP) \cong \Z/2$, and the map $M \to BSTOP$ that classifies the normal bundle induces a surjection on $\pi_2$, since $M$ is assumed to be totally non-spin. This is because $H^2(BSTOP;\Z/2) = \Z/2\langle w_2\rangle$ and $w_2$ detects the non-zero element in $\pi_2(BSTOP)$, as in the smooth case.
\item[(2)] The classifying space $BTOPSpin$ is $2$-connected, hence in the latter two cases the $1$-smoothing of $M$ is automatically surjective on $\pi_2$.
\item[(3)] The proof of the existence of the bundle $E$ is the same as in the smooth case; we just consider the complex line bundle as a $TOP$-bundle. %\qedhere
\end{enumerate}
\end{proof}

We can therefore compute the bordism groups relevant for the stable homeomorphism classification.  This will prove \cref{thm:top-classification} in the totally non-spin and spin cases.  We will deal with the almost spin case in the next section.

\smallskip \begin{prop}\label{top-bordism-groups}
Let $\pi$ be a COAT group. The bordism groups $\Omega_4(\xi)$ are given as follows.
\smallskip
\begin{enumerate}[(1)]
\item\label{item:prop-4.2-1} For totally non-spin, $\Omega_4^{STOP}(B\pi) \cong \Z\oplus \Z/2$, where the $\Z$ factor is given by the signature and the $\Z/2$ factor is given by the Kirby-Siebenmann invariant.
\item\label{item:prop-4.2-2} For spin, $\Omega_4^{TOPSpin}(B\pi) \cong 8\cdot\Z \oplus H_2(B\pi;\Z/2)\oplus H_3(B\pi;\Z/2)$.  The Kirby-Siebenmann invariant is given by the signature divided by $8$.
\item\label{item:prop-4.2-3} For almost spin, $\Omega_4(\xi) \cong 8\cdot\Z\oplus H_2(B\pi;\Z/2)$. The Kirby-Siebenmann invariant is given by the signature divided by $8$ plus evaluation of $w$ on the element of $H_2(B\pi;\Z/2)$.
\end{enumerate}
\end{prop}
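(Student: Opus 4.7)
The plan is to parallel the computations of \cref{sec:stablediffeoclasses} in the topological category, replacing $BSO$ by $BSTOP$ and $BSpin$ by $BTOPSpin$ throughout, and using the normal $1$-types from \cref{prop:top-1-types}. The only inputs that change are the low-dimensional bordism groups of the structure involved:
\[
\Omega^{STOP}_i = 0 \text{ for } i=1,2,3, \qquad \Omega^{STOP}_4 \cong \Z \oplus \Z/2,
\]
\[
\Omega^{TOPSpin}_i \cong \Omega^{Spin}_i \text{ for } i \le 3, \qquad \Omega^{TOPSpin}_4 \cong \Z,
\]
the last group being generated by the topological $E_8$-manifold of signature $8$ and Kirby--Siebenmann invariant $1$. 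The differentials and structural arguments used in \cref{sec:stablediffeoclasses} depend only on these inputs together with the homology of $B\pi$, so most of the earlier work transfers directly.

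For part (1), the $E^2$-page of the James spectral sequence applied to $BSTOP \to B\pi \times BSTOP \to B\pi$ vanishes on the $4$-line away from $H_0(B\pi;\Omega^{STOP}_4) \cong \Z \oplus \Z/2$, using $\Omega^{STOP}_q = 0$ for $q=1,2,3$ and $H_4(B\pi;\Z) = 0$ for COAT groups. This gives the stated answer, with signature and $\mathrm{KS}$ detecting the two summands. For part (2), the arguments of \cref{thm:bordism-group-spin-case} apply verbatim: the two candidate nonzero differentials on the $4$-line (the $d_2$ dual to $\Sq^2$, and the $d_3$ with torsion domain and torsion-free target) both vanish, the $y$-axis of the spectral sequence splits off via the basepoint retraction, and the top $3$-cell of an aspherical $3$-manifold model $X \simeq B\pi$ splits off stably by \cref{lemma:SpivakFibrationtrivial}. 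This yields the claimed direct sum decomposition, with the $\Z$-summand identified with $8\Z$ via signature and $\mathrm{KS}$ equal to $\mathrm{sig}/8 \pmod 2$ as witnessed by the $[E_8]$ generator.

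For part (3), rerunning the proof of \cref{thm:almostspinbord} still yields an extension
\[
0 \to \Omega^{TOPSpin}_4 \to \Omega_4(\xi) \to H_2(B\pi;\Z/2) \to 0.
\]
In contrast to the smooth case, this sequence now \emph{splits}: the basepoint inclusion $\pt \hookrightarrow B\pi$ upgrades $[E_8] \in \Omega^{TOPSpin}_4$ to a class in $\Omega_4(\xi)$ of signature $8$, so the composite $\Omega^{TOPSpin}_4 \cong \Z \to \Omega_4(\xi) \xrightarrow{\mathrm{sig}/8} \Z$ is the identity, providing a retraction. This gives $\Omega_4(\xi) \cong 8\Z \oplus H_2(B\pi;\Z/2)$. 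For the Kirby--Siebenmann formula I compare with the smooth case: \cref{thm:almostspinbord} identifies $H_2(B\pi;\Z/2)$ with the cokernel of $\Omega^{Spin}_4 \hookrightarrow \Omega_4^{\mathrm{sm}}(\xi)$ in such a way that its surjection onto $\Z/2$ is $\langle w,-\rangle$; equivalently, any smooth lift of $x \in H_2(B\pi;\Z/2)$ has $\mathrm{KS} = 0$ and signature $\equiv 8\langle w,x\rangle \pmod{16}$. Subtracting $\langle w,x\rangle$ copies of $[E_8]$ produces a signature-zero topological lift whose Kirby--Siebenmann invariant is $\langle w,x\rangle$, and combining with the contribution from the $[E_8]$-summand gives $\mathrm{KS} = \mathrm{sig}/8 + \langle w,x\rangle$.

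The main obstacle is part (3): proving that the extension splits (which fails smoothly, and requires the extra topological class $[E_8]$) and extracting the Kirby--Siebenmann formula via the smooth-to-topological comparison, which implicitly uses the Rokhlin-type congruence for smooth almost spin lifts encoded in \cref{thm:almostspinbord}.
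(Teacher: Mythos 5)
Parts (1) and (2) are correct and follow the paper's route exactly: the same James spectral sequences with $\Omega^{STOP}_*$ and $\Omega^{TOPSpin}_*$ as inputs, and the splitting arguments of \cref{thm:bordism-group-spin-case} (basepoint retraction plus stable splitting of the top cell of $X$) carry over verbatim to the topological bordism theories.

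In part (3), however, your splitting argument is circular. To say that $\mathrm{sig}/8$ is a retraction of $\Omega_4^{TOPSpin}\hookrightarrow\Omega_4(\xi)$ you need $\mathrm{sig}/8$ to be a well-defined $\Z$-valued homomorphism on all of $\Omega_4(\xi)$, i.e.\ that every class in the topological almost spin bordism group has signature divisible by $8$. From the extension $0\to\Z\to\Omega_4(\xi)\to H_2(B\pi;\Z/2)\to 0$ one only gets divisibility by $4$ for free, and divisibility by $8$ is exactly equivalent to the splitting you are trying to prove: if the sequence were non-split there would be a class of signature $4$ whose double is $[E_8]$. (Compare the smooth case, where $\mathrm{sig}/8$ is well defined on $\Omega_4(\xi)$ yet restricts to multiplication by $2$ on $\Omega_4^{Spin}$ and the sequence does \emph{not} split.) The paper instead deduces the splitting from the comparison map of James spectral sequences from the smooth to the topological case: since $\Omega_4^{Spin}\to\Omega_4^{TOPSpin}$ is multiplication by $2$ and the smooth sequence is non-split, a diagram chase forces the topological sequence to split. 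Your own Kirby--Siebenmann paragraph already contains the needed input: by \cref{thm:almostspinbord} every $x\in H_2(B\pi;\Z/2)$ has a smooth lift of signature $\equiv 8\langle w,x\rangle \pmod{16}$, so subtracting the appropriate number of copies of $[E_8]$ produces a signature-zero topological lift of $x$; such a lift has order $2$ because its double lies in $\Omega_4^{TOPSpin}$, on which the signature is injective, and choosing such lifts on a basis splits the sequence. Promote that to the main argument and part (3) is complete; the Kirby--Siebenmann formula then follows as you state (the paper instead quotes \cite[Theorem 6.11]{HKT}).
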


\begin{proof}
The James spectral sequence also exists in the topological case. The relevant bordism theories are no longer
$\Omega^{SO}$ and $\Omega^{Spin}$, but $\Omega^{STOP}$ and $\Omega^{TOPSpin}$ respectively. We have that
\[ \Omega^{STOP}_i \cong \begin{cases}
    \Z & i=0 \\
    0 & i \in \{1,2,3\} \\
    \Z\oplus \Z/2 & i=4
    \end{cases} \]
and the $\Z \oplus \Z/2$ in degree $4$ is given by the signature and the Kirby-Siebenmann invariant.
Furthermore we have
\[ \Omega_i^{Spin} \cong \Omega_i^{TOPSpin} \text{ for } i < 4 \]
and the forgetful map $16\cdot \Z \cong \Omega_4^{Spin} \to \Omega_4^{TOPSpin} \cong 8\cdot \Z$ is the canonical inclusion.
The Kirby-Siebenmann invariant does not enter as a separate $\Z/2$ summand in $\Omega_4^{TOPSpin}$, as Kirby and Siebenmann~\cite[p.~325,~Theorem~13.1]{KirbySiebenmann} have proven the formula
\[ ks(M) = \tfrac{\mathrm{sign}(M)}{8} \;\mathrm{mod}\; 2. \]

Since the signature is always divisible by $8$ in the smooth case, in the topological case the signature is still divisible by $8$ by \cite[Main~Theorem~(9)]{teichnersignatures}.
Therefore, the signature provides a splitting of the extension
\[0 \to 8\cdot \Z \to \Omega_4(\xi) \to H_2(B\pi;\Z/2) \to 0\]
that occurs in the spectral sequence for the topological almost spin case. To see this note that the map $8\cdot \Z \to \Omega_4(\xi)$ sends $8\cdot m$ to the bordism class of $m$ copies of $E_8$.

%\commentd{Remove: To see that the extension that occurs in the spectral sequence in the almost spin case splits in the topologial case, we observe that there exists a natural comparison map between the James spectral sequences for the almost spin case, from the smooth version to the topological version. Since $\Z \cong \Omega^{Spin}_4 \to \Omega_4^{TOPSpin} \cong \Z$ is given by multiplication by $2$, this forces the topological sequence to split because the smooth one does not split.
%To see this, consider the following diagram induced from the spectral sequence comparison map, where $V, V'$ are  $\Z/2$ vector spaces,  $H_2(B\pi;\Z/2) \cong \Z/2 \oplus V$, $\Omega_4(\xi) \cong \Z \oplus V$ and $\Omega_4(\xi^{TOP}) \cong \Z \oplus V'$.
%\[\xymatrix{ 0 \ar[r] & \Z \ar[r]^-{(\cdot 2) \oplus 0} \ar[d]^-{\cdot 2} & \Z \oplus %V \ar[r] \ar[d] & \Z/2 \oplus V \ar[r] \ar[d]^-{=} & 0 \\
%0 \ar[r] & \Z \ar[r] & \Z \oplus V' \ar[r] & \Z/2 \oplus V \ar[r] & 0
%}\]
%In order for this diagram to commute we must have that the map $\Z \to \Z \oplus V'$ maps a generator to a generator of the $\Z$ summand and $V' \cong \Z/2 \oplus V$.}

The statement about the Kirby-Siebenmann invariant in (\ref{item:prop-4.2-2}) is Rochlin's theorem and in (\ref{item:prop-4.2-3}) it follows from \cite[Theorem 6.11]{HKT}. Note that this theorem also holds if the intersection form $\lambda_M$ is not even.
\end{proof}

The stable homeomorphism classifications of 4-manifolds with COAT fundamental group differ in the totally non-spin and spin cases from the smooth case as follows.

\smallskip \begin{enumerate}
\item  In the totally non-spin case, the topological classification is altered from the smooth classification by the introduction of the $\Z/2$ Kirby-Siebenmann invariant.
\item In the spin case, the signature can be any multiple of 8 in the topological case, instead of a multiple of 16 in the smooth case.  The rest of the classification is unchanged.  In particular the material of \cref{sec:exmanifolds,sec:tau} is independent of categories.
%\item  For almost spin 4-manifolds, the signature is unchanged.  Now we have all of $H_2(B\pi;\Z/2)/\Out(\pi)_w$ in the topological case, instead of the smaller group $\ker \langle w,-\rangle/\Out(\pi)_w$, that occurred in the smooth case.
\end{enumerate}
\smallskip

The almost spin classification, involving the action of the automorphisms $\Aut(\xi)$ on the bordism group $\Omega_4(\xi)$, will be considered, in both the smooth and topological cases, in the next section.

\section{The almost spin classification}\label{sec:almost-spin-classificiation}
%\[\begin{tikzcd}
%\Z\ar[rd, "\id"]\ar[rr]\ar[dd, "\cdot 2"]&&\Omega^{Smooth}_4(\xi)\ar[rr]\ar[dd]\ar[dr]& & H_2(\pi;\Z/2)\ar[dd, "\id"]\ar[rd, "\omega"]&\\
%&\Z\ar[dd, "\cdot 2"] \ar[rr, "\cdot 2"] & & \Z  \ar[rr] \ar[dd, "{(1,1)}"]& & \Z/2 \ar[dd, "\id"]\\
%\Z\ar[dd]\ar[rr, crossing over] \ar[dr, "\id"] & & \Omega^{TOP}_4(\xi) \ar[rr, crossing over]\ar[dr]\ar[dd]&&H_2(\pi;\Z/2)\ar[rd, "\omega"]&\\
%&\Z\ar[rr, "{(1,0)}"]\ar[dd]&&\Z\oplus \Z/2\ar[rr]\ar[dd]&&\Z/2\\
%\Z/2\ar[rr, crossing over, "\id"]\ar[rd, "\id"]&&\Z/2\ar[rd, "\id"]&&&\\
%&\Z/2\ar[rr, "\id"]&&\Z/2&&
%\end{tikzcd}\]

Recall that we have short exact sequences, in both the smooth case
\[0 \to 16\cdot \Z \to \Omega_4(\xi) \to H_2(B\pi;\Z/2) \to 0\]
and in the topological case
\[0 \to 8\cdot \Z \to \Omega_4(\xi) \to H_2(B\pi;\Z/2) \to 0.\]
In both case we have the exact sequence
\[0 \to H^1(B\pi;\bbZ/2) \to \Aut(\xi)\to \Out(\pi)_w \to 1.\]
Moreover, in the previous section we saw that in the topological case
\[\Omega_4^{TOP}(\xi) \cong 8\cdot\Z\oplus H_2(B\pi;\Z/2),\]
whereas in the smooth case the sequence does not split.
%The action of $\Aut(\xi)$ does not affect the signature of the underlying $4$-manifold, so we need to compute the action of $\Aut(\xi)$ on $H_2(B\pi;\Z/2)$.
We look at the topological case first, since this will be easier.

\smallskip
\begin{thm}
Let $\pi$ be a COAT group and let $\xi$ be as in \cref{lemma:almostspin}, an almost spin normal $1$-type.
The action of $\Aut(\xi)$ on $\Omega_4^{TOP}(\xi)$ is given as follows.
\smallskip
\begin{enumerate}
\item The action of $H^1(B\pi;\bbZ/2)$ on $\Omega_4^{TOP}(\xi)$ is trivial, so the action factors through the map $\Aut(\xi) \to \Out(\pi)_w$.
\item An element $\rho$ in
the subgroup $\Out(\pi)_w$ of the outer automorphisms acts on $(z,\phi)\in 8\cdot\Z \oplus H_2(B\pi;\Z/2)$ by
\[\rho \cdot (z,\phi) \mapsto (z,\rho\cdot \phi),\]
where $\Out(\pi)_w$ acts by functoriality on $H_2(B\pi;\Z/2)$
%$\ker(\langle w,\_\;\rangle)$
%and trivially on the $\Z$ summand of $\Omega_4(\xi)$.
\end{enumerate}
\end{thm}

\begin{proof}
%\commentd{Remove these sentences: The automorphisms of $\xi$ do not change the signature of the manifold. Therefore, they do not change the entry in the $\bbZ$ summand.}
%That $\Out(\pi)_w$ acts by functoriality follows from the naturality of the spectral sequence.
First we prove that the action of $H^1(B\pi;\bbZ/2)$ is trivial. Recall from the James spectral sequence, that every class $[M\xrightarrow{c} B\pi] \in \Omega_4(\xi)$ is represented by a map $c$ which factors through the $2$-skeleton of $B\pi$. First assume that $M$ is smooth.  By \cref{lem:arf}, the preimage in $H_2(B\pi^{(2)};\bbZ\pi)$ is given by $\sum_i\mu(F_i)[e_i]$, where $e_i$ ranges over the $2$-cells of $B\pi$, $F_i$ is a regular preimage of the midpoint of $e_i$ and $\mu(F_i) = \Arf(F_i)$ denotes the class of $F_i$ in $\Omega_2^{Spin}$. The action of $x\in H^1(B\pi;\bbZ/2)$ on $\mu(F_i)$ is given by pulling the element $x$ back to $H^1(F_i;\bbZ/2)$ using $F_i \to M \xrightarrow{c} B\pi$, and changing the spin structure with the resulting element of $H^1(F_i;\Z/2)$. But since the map $F_i\to B\pi^{(2)}$ factors through a point, $x$ pulls back to $0\in H^1(F_i;\bbZ\pi)$.  Therefore the action of $H^1(B\pi;\bbZ/2)$ on $[M\xrightarrow{c} B\pi]$ is trivial.

The bordism class represented by the $E_8$ manifold is also invariant under the action of $H^1(B\pi;\bbZ/2)$ since the map $E_8\to B\pi$ is null-homotopic. Every element in the topological bordism group can be represented by a smooth manifold or a smooth manifold connect summed with $E_8$, therefore the action of $H^1(B\pi;\bbZ/2)$ is trivial.

%%%%%%%%%%%%%%%%%%%%%%%%%%%%%%%

It now follows that the action of $\Aut(\xi)$ on $H_2(B\pi;\Z/2)$ factors through the map $\Aut(\xi) \to \Out(\pi)_w$.
Since the entry in the $8\cdot \Z$-summand can be changed by connected sums with the $E_8$ manifold together with the trivial map to $B\pi$, it follows that the action of $\Out(\pi)_w$ on the $8\cdot \Z$-summand is trivial.

%%%%%%%%%%%%%%%%%%%%%%%%%%%%%%%

We now compute the action of $\rho\in \Out(\pi)_w$ on the $H_2(B\pi;\Z/2)$ summand.
Taking connected sum with $E_8$ if necessary, we can again assume that $M$ is smooth. As above, the entry in the $H_2(B\pi;\Z/2)$ summand is given by Arf invariants of point preimages. The action of $\rho$ on $c\colon M\to B\pi$ only permutes these preimages. Thus the action of $\Out(\pi)_w$ is the canonical action of $\Out(\pi)$ on $H_2(B\pi;\Z/2)$.

 %\commentd{Delete, replaced by previous paragraph: The Arf invariants are of elements of $\Omega_2^{Spin}$. The spin structures on these surfaces are canonical, so the action of $\Out(\pi)_w$ does not affect the spin structures.  Thus the action of $\Out(\pi)_w$ is the canonical action of $\Out(\pi)$ on $H_2(B\pi;\Z/2)$.}
\end{proof}

We have proved the following corollary, which is \cref{thm:top-classification} (\ref{item:A3}).

\smallskip \begin{cor}
The stable homeomorphism classes of almost spin 4-manifolds with COAT fundamental group $\pi$ are in one to one correspondence with
%$$\Z \times \big(\ker(\langle w,\_\;\rangle)/\Out(\pi)_w\big).$$
$$8\cdot \Z \times \big(H_2(B\pi;\Z/2)/\Out(\pi)_w\big).$$
The $8\cdot \Z$ is detected by the signature and the second part is detected by $\Arf$ invariants computed using \cref{lem:arf}.
\end{cor}
\smallskip

%For the definition of the Arf invariants, note that every almost spin $4$-manifold $M$ with COAT fundamental group stably admits a map to the 2-skeleton of $B\pi$ that is an isomorphism on $\pi_1$, since $E^{\infty}_{3,1}=0$ in the almost spin case.

Now we turn to the stable diffeomorphism classification of almost spin manifolds with COAT fundamental group.  We describe the set of stable diffeomorphism classes as the kernel of the Kirby-Siebenmann invariant.

\smallskip \begin{cor}
The stable diffeomorphism classes of almost spin 4-manifolds with COAT fundamental group $\pi$ are in one to one correspondence with
%$$\Z \times \big(\ker(\langle w,\_\;\rangle)/\Out(\pi)_w\big).$$
\begin{align*}
\ker\big(KS \colon 8\cdot\Z \times \big(H_2(B\pi;\Z/2)/\Out(\pi)_w\big) &\to \Z/2 \big) \\
(n,\phi) & \mapsto \frac{n}{8} + w(\phi),\end{align*}
The $8\cdot \Z$ is detected by the signature and the second part is detected by $\Arf$ invariants computed using \cref{lem:arf}.
\end{cor}
\smallskip

\section{Some Examples}\label{sec:some-examples}

In this section we calculate the stable classification for the class of $3$-manifold groups $\pi$ arising as a central extension
\[\xymatrix{1 \ar[r] & \Z \ar[r] & \pi \ar[r] & \Z^2 \ar[r] & 1. }\]
Such extensions are classified by an element $z \in H^2(\Z^2;\Z) \cong \Z$. Geometrically these arise as the fundamental groups of the total spaces of the principal $S^1$-bundles over $T^2$ with first Chern class $z \in H^2(T^2;\Z)$. It follows from the long exact sequence in homotopy groups that these total spaces are aspherical, since $S^1$ and $T^2$ are aspherical.  In particular the groups we consider are aspherical $3$-manifold groups.

\smallskip \begin{lemma}\label{lem:center}
If $z \neq 0$ then we have that $\Z = Z(\pi)$, the centre of $\pi$. In particular, every automorphism of $\pi$ descends to an automorphism of $\Z^2$.  This defines a map $(\widehat{-})\colon \Aut(\pi) \to \GL_2(\Z)$.
\end{lemma}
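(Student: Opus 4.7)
The plan is to show $Z(\pi) = \Z$ directly by working with a presentation of $\pi$; once this is established, the second assertion is formal, since any automorphism must preserve the characteristic subgroup $Z(\pi)$ and therefore induce an automorphism on the quotient $\pi/Z(\pi) \cong \Z^2$.

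First I would fix a presentation of $\pi$ adapted to the extension. Choose lifts $a, b \in \pi$ of generators of $\Z^2$ and a generator $t$ of the central $\Z$. Since the extension is classified by $z \in H^2(\Z^2;\Z) \cong \Z$, the group $\pi$ admits the presentation
\[
\pi = \langle a, b, t \mid [a,t]=[b,t]=1,\ [a,b]=t^z \rangle.
\]
Using centrality of $t$ and the commutator relation, an induction on $i, j$ gives the identity $a^i b^j = b^j a^i t^{ijz}$, which shows that every element has a unique normal form $a^i b^j t^k$ with $i, j, k \in \Z$.

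Next I would compute commutators of a general element $g = a^i b^j t^k$ with the generators $a$ and $b$. Using the identity above, a short calculation yields
\[
[g, a] = t^{-jz}, \qquad [g, b] = t^{\,iz}.
\]
If $g$ is central, then both commutators are trivial, so since $z \neq 0$ we conclude $j = 0$ and $i = 0$. Hence $g = t^k \in \Z$, proving $Z(\pi) \subseteq \Z$. The reverse inclusion $\Z \subseteq Z(\pi)$ is immediate from the definition of a central extension.

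With $Z(\pi) = \Z$ identified, the final step is purely formal: the centre is a characteristic subgroup, so every $\varphi \in \Aut(\pi)$ restricts to an automorphism of $\Z$ and descends to an automorphism of $\pi/Z(\pi) = \Z^2$, yielding the desired map $(\widehat{-}) \colon \Aut(\pi) \to \GL_2(\Z)$. I do not expect a real obstacle here; the only mild care needed is in the commutator computations, which are routine once the normal form is established.
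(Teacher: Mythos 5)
Your proof is correct and follows essentially the same route as the paper, which simply exhibits the presentation $\langle a,x,y \mid [x,a],[y,a],[x,y]a^{-z}\rangle$ and leaves the normal-form and commutator computations to the reader. Your write-up just makes those routine verifications explicit.
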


\begin{proof}
This follows from the fact that $\pi$ has the following presentation
\[\mathcal{P} = \langle a,x,y \;\vline\;\; xax^{-1}a^{-1}, yay^{-1}a^{-1}, xyx^{-1}y^{-1}a^{-z} \rangle.\]
\end{proof}

\smallskip \begin{lemma}
The map $\Aut(\pi) \to \GL_2(\Z)$ defined by \cref{lem:center} is surjective.
\end{lemma}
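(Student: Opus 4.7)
The plan is to construct an explicit lift of each matrix $A \in \GL_2(\Z)$ using the presentation $\mathcal{P} = \langle a,x,y \mid [x,a],\, [y,a],\, [x,y]a^{-z}\rangle$ supplied by \cref{lem:center}. Writing $A = \begin{pmatrix} p & q \\ r & s \end{pmatrix}$, the natural candidate map $\phi_A \colon \pi \to \pi$ is defined on generators by
\[
\phi_A(a) = a^{\det A},\qquad \phi_A(x) = x^p y^r,\qquad \phi_A(y) = x^q y^s.
\]

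To see that $\phi_A$ is well defined, I would verify the three defining relations. Since $a$ is central, both $x^p y^r$ and $x^q y^s$ automatically commute with $a^{\det A}$, so the relations $[x,a]$ and $[y,a]$ are preserved. The only computation of substance is the commutator relation, and here the key tool is the Heisenberg-style braiding identity
\[
y^n x^m = a^{-zmn}\, x^m y^n \quad \text{in } \pi,
\]
which follows by induction from $yx = a^{-z}xy$ together with centrality of $a$. Expanding $(x^p y^r)(x^q y^s)$ and $(x^q y^s)(x^p y^r)$ via this identity yields
\[
[\phi_A(x),\phi_A(y)] = a^{z(ps-qr)} = a^{z\det A} = \phi_A(a^z),
\]
so $\phi_A$ descends to the quotient by the normal closure of the relations.

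It remains to observe that $\phi_A$ is a bijection and induces $A$ in the quotient $\pi/Z(\pi) = \Z^2$. The latter is immediate: $\phi_A(x)$ and $\phi_A(y)$ project to the columns $(p,r)$ and $(q,s)$, so $\widehat{\phi_A} = A$ in the column-vector convention. Bijectivity follows by applying the same construction to $A^{-1}$ and checking, by evaluating on generators and again invoking the multiplicativity of $\det$, that $\phi_{A^{-1}} \circ \phi_A$ and $\phi_A \circ \phi_{A^{-1}}$ are the identity on $\{a,x,y\}$.

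The proof involves no deep ingredient; the only real obstacle is bookkeeping. The heart of the argument is the identity $y^n x^m = a^{-zmn}x^m y^n$ and the resulting formula $[x^p y^r, x^q y^s] = a^{z\det A}$, after which everything else is formal. One should also take care to fix matrix conventions consistently (whether $\GL_2(\Z)$ acts on row or column vectors, and whether $[x,y]$ denotes $xyx^{-1}y^{-1}$ or its inverse), but this does not affect surjectivity.
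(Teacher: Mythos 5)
Your construction is correct in its essentials but takes a genuinely different route from the paper. The paper reduces to $\SL_2(\Z)$ plus the single matrix $\begin{pmatrix}0&1\\1&0\end{pmatrix}$: for $\varphi\in\SL_2(\Z)$ it pulls back the central extension along $\varphi$, notes that $\varphi^*(z)=z$ so the pulled-back extension is isomorphic to the original one, and composes to get an automorphism over $\varphi$; the determinant $-1$ case is handled by the explicit map $a\mapsto a^{-1}$, $x\mapsto y$, $y\mapsto x$. Your argument instead writes down a uniform explicit lift $\phi_A(a)=a^{\det A}$, $\phi_A(x)=x^py^r$, $\phi_A(y)=x^qy^s$ for all of $\GL_2(\Z)$ and verifies the relations directly via $y^nx^m=a^{-zmn}x^my^n$; the computation $[x^py^r,x^qy^s]=a^{z(ps-qr)}=\phi_A(a)^z$ is correct and is the real content. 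Your approach is more elementary and avoids the cohomological classification of extensions; the paper's approach avoids the normal-form bookkeeping and generalises more readily.

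One step as stated is not quite right: $\phi_{A^{-1}}\circ\phi_A$ is in general \emph{not} the identity on the generators. For example with $A=\begin{pmatrix}2&1\\1&1\end{pmatrix}$ one finds $\phi_{A^{-1}}(\phi_A(x))=a^{-z}x$. The composite fixes $a$ and sends $x\mapsto a^{k_1}x$, $y\mapsto a^{k_2}y$ for some integers $k_1,k_2$ (which may be nonzero when $z\neq 0$), because powers of products like $(x^my^n)^p$ pick up central correction terms $a^{-zmn\binom{p}{2}}$. This does not damage the conclusion: any endomorphism of that form is visibly bijective (it is the identity on normal forms $a^jx^my^n$ up to a shift in the $a$-exponent), so $\phi_{A^{-1}}\circ\phi_A$ and $\phi_A\circ\phi_{A^{-1}}$ are automorphisms, whence $\phi_A$ is injective and surjective. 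You should either make this observation explicit or verify bijectivity of $\phi_A$ directly on normal forms, rather than asserting that the composites are the identity.
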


\begin{proof}
We claim that we can lift elements of $\SL_2(\Z)$ to $\Aut(\pi)$ and that there exists an automorphism of $\pi$ that is sent to the matrix $A = \begin{pmatrix}0 & 1 \\ 1 & 0\end{pmatrix}$. Since any element $\varphi \in \GL_2(\Z)$ has the property that either $\varphi$ or $A\cdot \varphi$ is in $\SL_2(\Z)$, the lemma follows once we establish the above claims.
So let $\varphi \in \SL_2(\Z)$. Consider the following diagram
\[\xymatrix{1 \ar[r] & \Z \ar[d]_{=} \ar[r] & \pi^\prime \ar[d]^{\psi}_\cong \ar[r] & \Z^2 \ar[d]^\varphi_\cong \ar[r] & 1  \\
    1 \ar[r] & \Z \ar[r] & \pi \ar[r] & \Z^2 \ar[r] & 1}\]
where $\pi^\prime$ is by definition the pullback of $\pi$ along $\varphi$. The upper row is again a central extension with invariant $\varphi^*(z) \in H^2(\Z^2;\Z)$. Since $\varphi \in \SL_2(\Z)$ it follows that $\varphi^*(z) = z$ and hence there is an isomorphism of extensions $\Theta$ as indicated in the following diagram
\[\xymatrix{1 \ar[r] & \Z \ar[r] \ar[d]_{=} & \pi \ar[r] \ar[d]^-\Theta_\cong& \Z^2 \ar[r] \ar[d]^{=} & 1 \\
    1 \ar[r] & \Z \ar[d]_{=} \ar[r] & \pi^\prime \ar[d]^{\psi}_\cong \ar[r] & \Z^2 \ar[d]^\varphi_\cong \ar[r] & 1  \\
    1 \ar[r] & \Z \ar[r] & \pi \ar[r] & \Z^2 \ar[r] & 1}\]

By construction the composite
\[\xymatrix{\pi \ar[r]^\Theta & \pi^\prime \ar[r]^\psi & \pi}\]
is an automorphism of $\pi$ over $\varphi$.

The presentation $\pi$ given in the proof of \cref{lem:center} shows that there is a well-defined automorphism $\pi \to \pi$ given by $a \mapsto a^{-1}$, $x \mapsto y$ and $y \mapsto x$, which induces $\begin{pmatrix} 0&1\\1&0\end{pmatrix} \in \GL_2(\Z)$.
\end{proof}

\smallskip \begin{lemma}
For $z \neq 0$ the cohomology of $\pi$ is given by
\[H^n(\pi;\Z) \cong
\begin{cases}
    \Z & \text{ if } n \in \{ 0,3 \}, \\
    \Z^2 & \text{ if } n = 1, \\
    \Z^2\oplus \Z/z & \text{ if } n = 2.
\end{cases}\]
\end{lemma}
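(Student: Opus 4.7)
The plan is to apply the Lyndon--Hochschild--Serre cohomological spectral sequence to the central extension $1 \to \Z \to \pi \to \Z^2 \to 1$. Since the extension is central, the action of $\Z^2$ on $H^*(\Z;\Z)$ is trivial, so
\[
E_2^{p,q} = H^p(\Z^2; H^q(\Z;\Z)) \Longrightarrow H^{p+q}(\pi;\Z).
\]
Using $H^q(\Z;\Z) = \Z$ for $q=0,1$ and zero otherwise, together with the Betti numbers $1,2,1,0$ of $\Z^2$, only the rows $q = 0, 1$ of the $E_2$-page are non-trivial and each is a copy of the exterior algebra $H^*(\Z^2;\Z)$.

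Next I would analyse the differentials. The only potentially non-trivial differential affecting total degrees $\le 3$ is $d_2\colon E_2^{p,1} \to E_2^{p+2,0}$, and this map is classically identified with cup product by the classifying class $z \in H^2(\Z^2;\Z)$ of the extension, using the canonical isomorphism $H^1(\Z;\Z) \cong \Z$. Under the assumption $z \neq 0$, the differential
\[
d_2\colon E_2^{0,1} = \Z \longrightarrow E_2^{2,0} = \Z
\]
is multiplication by $z$, hence injective with cokernel $\Z/z$. The only other $d_2$, from $E_2^{1,1} = \Z^2$ to $E_2^{3,0} = 0$, is automatically zero. The spectral sequence therefore collapses at $E_3 = E_\infty$, and in total degree $\le 3$ the non-zero terms are $E_\infty^{0,0} = \Z$, $E_\infty^{1,0} = \Z^2$, $E_\infty^{2,0} = \Z/z$, $E_\infty^{1,1} = \Z^2$, and $E_\infty^{2,1} = \Z$.

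Finally, I would read off the cohomology and resolve the one remaining extension problem. The total degrees $0$, $1$, and $3$ give $\Z$, $\Z^2$, and $\Z$ respectively. For $H^2(\pi;\Z)$ the filtration yields a short exact sequence
\[
0 \to \Z/z \to H^2(\pi;\Z) \to \Z^2 \to 0,
\]
which splits because $\Z^2$ is free, producing $H^2(\pi;\Z) \cong \Z^2 \oplus \Z/z$. The only non-routine ingredient is the identification of $d_2$ with cup product by $z$, which is a standard feature of the LHS spectral sequence for central extensions. As a consistency check one may compare with Poincar\'e duality applied to the closed orientable aspherical $3$-manifold modelling $B\pi$: abelianising the presentation $\mathcal{P}$ of \cref{lem:center} gives $H_1(\pi;\Z) \cong \Z^2 \oplus \Z/z$, matching $H^2(\pi;\Z)$ as required.
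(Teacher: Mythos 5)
Your proof is correct and is essentially the paper's argument in different packaging: the paper runs the Gysin sequence of the circle bundle $S^1 \to B\pi \to T^2$ with Euler class $z$, which is exactly the long exact sequence obtained by unrolling your two-row LHS/Serre spectral sequence, and the cup product with the extension class that you use to identify $d_2$ is the same map $-\cup z$ appearing in the Gysin sequence. Both arguments hinge on the same two facts --- injectivity of multiplication by $z \neq 0$ and the splitting of the resulting extension of $\Z^2$ by $\Z/z$ --- so the proposal is fine.
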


\begin{proof}
We consider the Gysin sequence associated to the fibration
\[\xymatrix{S^1 \ar[r] &  B\pi \ar[r]^-{p} & T^2}\]
which reads as
\[\xymatrix{0 \ar[r] & H^1(T^2;\Z) \ar[r]^-{p^*} & H^1(B\pi;\Z) \ar[r] & H^0(T^2;\Z) \ar[r]^-{-\cup z} & H^2(T^2;\Z) }\]
because the Euler class of the underlying oriented bundle of a complex line bundle is given by the first Chern class. In particular it follows that
\[p^*\colon H^1(B\pi;\Z) \xrightarrow{\cong} H^1(T^2;\Z) \]
is an isomorphism.
Therefore the action of $\Aut(\pi)$ on $H^1(B\pi;\Z)$ is given through the map $\Aut(\pi) \to \GL_2(\Z)$.
The sequence continues as follows
\[\xymatrix{H^0(T^2;\Z) \ar[r]^-{-\cup z} & H^2(T^2;\Z) \ar[r] & H^2(B\pi;\Z) \ar[r] & H^1(T^2;\Z) \ar[r] & 0}\]
which implies that there is a short exact sequence
\[\xymatrix{0 \ar[r] & \Z/z \ar[r] & H^2(B\pi;\Z) \ar[r] & H^1(T^2;\Z) \ar[r] & 0 }\]
which implies that
\[ H^2(B\pi;\Z) \cong \Z^2\oplus \Z/z.\]
We have already argued that there is a model for $B\pi$ which is an orientable closed $3$-manifold, hence also $H^3(B\pi;\Z) \cong \Z$
follows and the lemma is proven.
\end{proof}

\smallskip \begin{prop}
Let $\pi$ be a central extension of $\Z^2$ by $\Z$ with $0 \neq z \in H^2(\Z^2;\Z)$. Then we have that
\smallskip \begin{enumerate}
\item If $z$ is odd there are three stable diffeomorphism classes of spin manifolds with fundamental group $\pi$ and fixed signature.
\item If $z$ is even there are four stable diffeomorphism classes of spin manifolds with fundamental group $\pi$ and fixed signature.
\end{enumerate}
\end{prop}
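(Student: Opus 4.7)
The plan is to apply \cref{cor:stable-class-spin}, which establishes a bijection between the set of stable diffeomorphism classes of spin 4-manifolds with COAT fundamental group $\pi$ and $\Z \times \left(H_2(B\pi;\Z/2)/\Out(\pi) \cup \{*\}\right)$. Since the $\Z$-factor is detected by the signature, fixing the signature reduces the claim to computing $|H_2(B\pi;\Z/2)/\Out(\pi)|+1$. I would compute the group $H_2(B\pi;\Z/2)$ first using Poincar\'e duality and universal coefficients: $H_2(B\pi;\Z/2) \cong H^1(B\pi;\Z/2) \cong \Hom(\pi^{ab},\Z/2)$. Reading off the abelianization from the presentation in \cref{lem:center}, we find $\pi^{ab} \cong \Z^2 \oplus \Z/z$ for $z\neq 0$, so $H_2(B\pi;\Z/2) \cong (\Z/2)^2$ if $z$ is odd and $(\Z/2)^3$ if $z$ is even. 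Let $\hat x, \hat y$ denote duals of $x, y$, and (only when $z$ is even) let $\hat a$ denote the dual of $a$.

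Next I would describe the $\Out(\pi)$-action. Since $Z(\pi) = \langle a\rangle$ is characteristic and $\pi/Z(\pi) \cong \Z^2$, every automorphism preserves the subspace $\Hom(\pi/Z(\pi),\Z/2) = \langle \hat x,\hat y\rangle$; the induced action on this subspace factors through the surjection $\Aut(\pi) \twoheadrightarrow \GL_2(\Z) \twoheadrightarrow \GL_2(\Z/2)$. Inner automorphisms descend to the identity on $\Z^2$, so $\Out(\pi) \to \GL_2(\Z/2)$ is still surjective. On $\hat a$, any automorphism of $\pi$ acts as $a \mapsto a^{\pm 1}$ on the centre, which is trivial modulo $2$, so the only nontrivial effect on $\hat a$ comes from the kernel $N \cong \Z^2$ of $\Aut(\pi) \to \GL_2(\Z) \times \{\pm 1\}$ consisting of automorphisms $x \mapsto xa^p, y \mapsto ya^q, a \mapsto a$. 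A direct check (dualising $\phi(x) = xa^p$) shows $(p,q) \in N$ acts by $\hat a \mapsto \hat a + p\hat x + q\hat y$ while fixing $\hat x, \hat y$.

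The key subtlety, and the main obstacle, is to verify which of the shifts in $N$ are inner. Using $[x,y] = a^z$ one computes $yxy^{-1} = a^{-z}x$ and $xyx^{-1} = a^z y$, so conjugation by $y$ and $x$ yields the elements $(-z,0)$ and $(0,z)$ of $N$, respectively; conjugation by $a$ is trivial. When $z$ is even these are $0 \pmod 2$, so the full group $N/2N \cong (\Z/2)^2$ survives in $\Out(\pi)$ and acts on $\hat a$ as described. When $z$ is odd the shifts on $\hat a$ are irrelevant (since $\hat a = 0$).

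Finally I would count orbits. For $z$ odd, $H_2(B\pi;\Z/2) = \langle \hat x, \hat y\rangle$ with $\GL_2(\Z/2) \cong S_3$ acting, giving exactly two orbits ($\{0\}$ and the three nonzero elements); adjoining $\{*\}$ yields three classes. For $z$ even, an element $\epsilon\hat a + \phi$ is acted on by $\GL_2(\Z/2)$ on $\phi$ and, by the $N$-shifts, by $\phi \mapsto \phi + \epsilon v$ for arbitrary $v \in \langle \hat x,\hat y\rangle$. If $\epsilon = 0$ the shifts are trivial and the two $\GL_2(\Z/2)$-orbits persist; if $\epsilon = 1$ the shifts act transitively on $\phi$, collapsing to a single orbit. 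This gives three orbits, and adjoining $\{*\}$ yields four classes, proving the proposition.
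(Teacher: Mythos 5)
Your proposal is correct and follows essentially the same route as the paper: reduce to counting $|H_2(B\pi;\Z/2)/\Out(\pi)|+1$, compute $H_2(B\pi;\Z/2)$ from $\pi^{ab}\cong\Z^2\oplus\Z/z$, use surjectivity onto $\GL_2(\Z/2)$ on the $\langle\hat x,\hat y\rangle$ part, and use the shift automorphisms $x\mapsto xa^p$, $y\mapsto ya^q$ (the paper's $\Theta(u)=wu$) to collapse the $\hat a$-coset to a single orbit when $z$ is even. Your extra care about which shifts are inner is a sound consistency check but not strictly needed, since inner automorphisms act trivially on $\Hom(\pi^{ab},\Z/2)$ in any case.
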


We already saw in \cref{example:Z-hoch-drei} that if $z=0$ there are three stable diffeomorphism classes with fixed signature.

\begin{proof}
Recall (\cref{cor:stablediffeoclasses}) that we need to show that \[\wt{\Omega}_4^{Spin}(B\pi)/\left(\Out(\pi)\times H^1(\pi;\Z/2) \right)\] has three (respectively four) elements. We have that
\[\wt{\Omega}_4^{Spin}(B\pi) \cong H_2(\pi;\Z/2)\oplus H_3(\pi;\Z/2)\]
and thus
\[\wt{\Omega}_4^{Spin}(B\pi) \cong
\begin{cases}
(\Z/2)^2 \oplus \Z/2 & \text{ if } z \text{ is odd } \\
\left( (\Z/2)^2\oplus \Z/2\right) \oplus \Z/2 & \text{ if } z \text{ is even.}
\end{cases}\]

According to \cref{thm:action-spin-case}, given any two classes $x,y \in H_2(B\pi;\Z/2)$ we see that
\[ (x,1) \sim (y,1) \]
and furthermore
\[ (x,1) \not\sim (y,0).\]
Now assume that $z$ is odd.
To show that there are exactly three orbits of the action it suffices to see that $(x,0) \sim (y,0)$ if and only if $x = 0 = y$ or $x \neq 0 \neq y$. But this follows easily since
\[ H_2(B\pi;\Z/2) \cong H_2(B\pi;\Z)\otimes \Z/2 \cong (\Z/2)^2 \]
by the universal coefficient theorem, the action is given by the morphism
\[\Aut(\pi) \to \GL_2(\Z) \to \GL_2(\Z/2),\]
and the map $\Aut(\pi) \to \GL_2(\Z) \to \GL_2(\Z/2)$ is surjective.

For the case that $z$ is even we want to show that the action of $\Aut(\pi)$ on $H_2(B\pi;\Z/2)$ has exactly three orbits.
We write
\begin{align*}
  H_2(B\pi;\Z/2) \cong & H_2(B\pi;\Z)\otimes \Z/2 \oplus \Tor_1^{\Z}(H_1(B\pi;\Z),\Z/2)   \\
       \cong & H_2(B\pi;\Z)\otimes \Z/2 \oplus \Z/2 \\
      & (\Z/2)^2 \oplus \Z/2
\end{align*}
and elements as pairs $(x,\rho)$.
It follows from our previous arguments that
\[ (x,0) \sim (y,0) \]
if and only if $x = 0 = y$ or $x \neq 0 \neq y$ and that $(x,1) \not\sim (y,0)$ for any choice of $x,y$ because any automorphism acts trivially on the extra $\Z/2$-factor.
It remains to show that $(x,1) \sim (y,1)$ for all $x,y \in H_2(B\pi;\bbZ)\otimes \Z/2$. For this we interpret
\[ H_2(B\pi;\Z/2) \cong \Hom(H_1(B\pi;\Z),\Z/2) \cong (\Z/2)^3.\]
where the last isomorphism sends a morphism $\varphi$ to the triple $(\varphi(u),\varphi(v),\varphi(w))$, where $u=(1,0,0)$, $v=(0,1,0)$ and $w=(0,0,1)$ under a choice of identification of $H_1(B\pi;\Z)$ with $\Z \oplus \Z \oplus \Z/z$.
The statement that $(x,1) \sim (y,1)$ for all such $x,y$ then translates to the statement that for any two functions $\varphi, \psi : H_1(B\pi;\Z) \cong \Z^2\oplus \Z/z \to \Z/2$ with $\varphi(w) = 1 = \psi(w)$, there exists an automorphism $\Theta\colon \pi \to \pi$ such that $\varphi = \psi \circ \Theta$. This automorphism is defined as follows.  First, define $\Theta(w) = w$. Next, if $\varphi(u) = \psi(u)$, define $\Theta(u) = u$, and similarly for $\varphi(v) = \psi(v)$. Finally if $\varphi(u) \neq \psi(u)$, define $\Theta(u) = wu$. We obtain
\[ \psi(\Theta(u)) = \psi(wu) = \psi(w)+\psi(u) = 1 + \psi(u) = \varphi(u).\]
Again from the presentation of \cref{lem:center}, it follows that $\Theta$ is a well-defined automorphism of $\pi$. This concludes the proof of the proposition.
\end{proof}

\section{Parity of equivariant intersection forms}
\label{sec:exmanifolds}

Now we move on to giving the proof of \cref{thm:B}.  \cref{sec:exmanifolds} proves part~(\ref{item:B2}) of that theorem and \cref{sec:tau} proves part~(\ref{item:B3}).

In this section, as before, $X$ denotes a closed, oriented, aspherical $3$-manifold and~$\pi$ denotes its fundamental group. We want to construct representatives for all the stable diffeomorphism classes of spin $4$-manifolds with fundamental group $\pi$ and zero signature, and compute their   intersection forms.  To realise nonzero signatures just take connected sums with the $K3$ surface, whose spin bordism class generates $\Omega_4^{Spin}$.

The purpose of performing such detailed computation with models for each stable diffeomorphism class is to prove that the last $\Z/2$ summand of
 $\Omega_4^{Spin}(B\pi) \cong \Z \oplus H_2(B\pi;\Z/2) \oplus \Z/2$ is determined by the parity of the   intersection form on $\pi_2$; see~\cref{sec:parity}.  In the stable diffeomorphism classification of \cref{cor:stable-class-spin}, this $\Z/2$ corresponds to the extra $\{odd\}$.
The model $4$-manifolds will also be used in \cref{sec:tau}.

\subsection{Algebra of even forms}

We consider the group ring $\Z\pi$ as a ring with involution, where the involution is given on group elements by $g \mapsto \ol{g} := g^{-1}$.  For a left $\Z\pi$-module $N$ define $N^* := \Hom_{\Z\pi}(N,\Z\pi)$.  We consider $N^*$ as a left $\Z\pi$-module via the involution: $(a \cdot f)(n):= f(n)\cdot \ol{a}$.

There is an involution on $\Hom_{\Z\pi}(N,N^*)$ which sends a map $f$ to its {\em adjoint} $f^*$. By definition, this is the dual of $f$, a map $N^{**} \to N^*$, precomposed with the $\Z\pi$-module homomorphism $e \colon N \to N^{**}, n \mapsto (f \mapsto \ol{f(n)})$.

A map $f \colon N\to N^*$ gives a pairing $\lambda \colon  N \otimes N \to \Z\pi$ via $\lambda(m,n):=f(n)(m)$. This slightly awkward assignment has the property that   $f$ is $\Z\pi$-linear if and only if $\lambda$ satisfies the usual sesquilinearity conditions
\[
\lambda(a\cdot m, n ) = a\cdot \lambda(m,n)  \quad \text{ and } \quad \lambda(m, a\cdot n ) = \lambda(m,n) \cdot \bar a.
\]
One can also check that $f^*$ leads to the form $\lambda^*(m,n) = \ol{\lambda(n,m)}$. In particular, the condition $f=f^*$ translates into
\[
\lambda(m,n) = \ol{\lambda(n,m)}
\]
In the future, we shall not distinguish between $f$ and its associated form $\lambda$ and we will call $\lambda$ {\em hermitian} if it satisfies the last condition.
\smallskip

\begin{definition}
Let $N$ be a left $\Z\pi$-module.  A hermitian form $\lambda \in \Hom_{\Z\pi}(N,N^*)$ is \emph{even} if there exists $q \in \Hom_{\Z\pi}(N,N^*)$ such that $\lambda = q+q^*$.
 If $\lambda$ is not even, we sometimes also call it \emph{odd}.  This dichotomy is the \emph{parity} of $\lambda$. The parity of a $4$-manifold $M$ is the parity of its intersection form $\lambda \colon \pi_2(M) \times \pi_2(M) \to \Z\pi$.
\end{definition}
\smallskip

\begin{lemma}\label{lem:parity-stable-diff-invariant}
  The parity of a $4$-manifold is a stable homotopy invariant.
\end{lemma}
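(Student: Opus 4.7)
The plan is to split the claim into two invariance properties: parity is preserved by (i) orientation-preserving homotopy equivalences of closed oriented $4$-manifolds, and (ii) connected sum with $S^2\times S^2$. Together these give the stated stable homotopy invariance, since two $4$-manifolds have the same stable homotopy type precisely when they are related by a chain of such operations (after sufficient stabilisation).

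For (i), the equivariant intersection form $\lambda_M \colon \pi_2(M) \otimes \pi_2(M) \to \Z\pi$ can be defined purely homotopy-theoretically via the cup-product pairing on $H^2(M;\Z\pi)$ together with the (orientation-dependent) Poincar\'e duality isomorphism. Both of these are natural under orientation-preserving homotopy equivalences, so a homotopy equivalence $f \colon M \to N$ induces a $\Z\pi$-linear isometry $\phi \colon (\pi_2(M),\lambda_M) \xrightarrow{\cong} (\pi_2(N),\lambda_N)$, where the $\Z\pi$-module structure on the target is pulled back through $f_*\colon \pi_1(M) \to \pi_1(N)$. Parity is preserved under such isometries: if $\lambda_M = q + q^*$ for some $q \colon \pi_2(M) \to \pi_2(M)^*$, then transporting $q$ via $\phi$ yields a $\Z\pi$-map $Q \colon \pi_2(N) \to \pi_2(N)^*$ with $Q + Q^* = \lambda_N$.

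For (ii), the standard Mayer--Vietoris / van Kampen computation gives $\pi_2(M \# (S^2 \times S^2)) \cong \pi_2(M) \oplus \Z\pi \oplus \Z\pi$ as $\Z\pi$-modules, with equivariant intersection form $\lambda_M \oplus H$, where $H$ is the hyperbolic form on $\Z\pi \oplus \Z\pi$ represented by $\bigl(\begin{smallmatrix} 0 & 1 \\ 1 & 0 \end{smallmatrix}\bigr)$. This form is manifestly even, with refinement $q_H = \bigl(\begin{smallmatrix} 0 & 1 \\ 0 & 0 \end{smallmatrix}\bigr)$. The remaining content is an elementary algebraic lemma: for hermitian forms $\lambda$ and $\lambda'$ on $\Z\pi$-modules $N$ and $N'$, if $\lambda'$ is even, then $\lambda \oplus \lambda'$ is even if and only if $\lambda$ is even. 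The ``if'' direction is immediate. For ``only if'', given $Q \colon N \oplus N' \to (N \oplus N')^*$ with $Q + Q^* = \lambda \oplus \lambda'$, write $Q$ in block form $\bigl(\begin{smallmatrix} a & b \\ c & d \end{smallmatrix}\bigr)$; since $Q^* = \bigl(\begin{smallmatrix} a^* & c^* \\ b^* & d^* \end{smallmatrix}\bigr)$, comparing the $(N,N)$-block gives $a + a^* = \lambda$.

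No step presents a real obstacle. The most substantive point is the identification $\pi_2(M \# (S^2\times S^2)) \cong \pi_2(M) \oplus \Z\pi^2$ together with the hyperbolic summand in the intersection form, but this is standard; the rest is the block-matrix manipulation above and the naturality of the cup-product pairing. The result that parity is actually a stable \emph{diffeomorphism} invariant is an immediate consequence, since stable diffeomorphism implies stable homotopy equivalence.
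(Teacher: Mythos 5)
Your proposal is correct and follows essentially the same route as the paper: reduce to homotopy invariance of the form plus invariance under connected sum with $S^2\times S^2$, identify the new summand as a hyperbolic (hence even) form, and invoke the algebraic fact that adding an even summand does not change parity. The only difference is that you spell out the block-matrix argument for that last fact, which the paper states as a one-line remark.
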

\smallskip

\begin{proof}
The parity of the intersection forms of homotopy equivalent $4$-manifolds are the same. Thus it suffices to show that the parities of $M$ and $M\# (S^2\times S^2)$ agree.

We remark that the direct sum of two forms is even if and only if both forms are individually even. Moreover we have that
\[\lambda_{M\# (S^2\times S^2)} \cong \lambda_M \oplus \left({\bbZ\pi} \otimes_\bbZ \lambda_{S^2\times S^2}\right)\]
Since $\lambda_{S^2\times S^2}$ is hyperbolic and thus even, the lemma follows.
\end{proof}

\cref{lem:parity-stable-diff-invariant} immediately implies that parity is a stable diffeomorphism invariant.

\smallskip

\begin{definition}[(Quadratic refinement)~{\cite[Theorem~5.2]{Wall}}]\label{defn:quadratic-refinement}
  A quadratic refinement of a sesquilinear hermitian form $\lambda \colon N \times N \to \Z\pi$ on a left $\Z\pi$-module~$N$ is a group homomorphism
  $\mu \colon N \to \Z\pi/\{g-\ol{g}\}$ such that
  \smallskip \begin{enumerate}[(i)]
 \item $\lambda(x,x) = \mu(x) + \ol{\mu(x)}$ for all $x \in N$.
 \item $\mu(x+y) = \mu(x) + \mu(y) + \lambda(x,y) \in \Z\pi/\{g-\ol{g}\}$ for all $x,y \in N$.
\item $\mu(ax) = a\mu(x)\ol{a}$ for all $x \in N$ and for all $a \in \Z\pi$.
  \end{enumerate}
A {\em quadratic form} is a triple $(N,\lambda,\mu)$ as above. It is called {\em even} if the underlying hermitian form~$\lambda$ is even, i.e.\ if there exists a $q \in \Hom_{\Z\pi}(N,N^*)$ such that $\lambda=q+q^*$.
\end{definition}
\smallskip

Note that, since we are working in the oriented case, that is with the involution on $\Z\pi$ given by $\overline{g}=g^{-1}$, for a quadratic form $(N,\lambda,\mu)$, the quadratic refinement~$\mu$ is uniquely determined by the hermitian form~$\lambda$.

The existence of a quadratic refinement is a necessary condition for a hermitian form to be even. More precisely, if $\lambda=q+q^*$ then $\mu(x):= q(x,x)$ has all properties above. We will see that the converse is not true, even for intersection forms of spin 4-manifolds with COAT fundamental groups. The first such examples were given in the last author's PhD thesis \cite{teichnerthesis} for 4-manifolds with quaternion fundamental groups.

Note that the intersection form on $\pi_2(M)$ of an (almost) spin $4$-manifold $M$ admits a quadratic refinement, as follows.  Represent a class in $\pi_2(M)$ by an immersed sphere, and add cusps to arrange that the normal bundle is trivial, and then count self intersections with sign and $\pi_1(M)$ elements, as in Wall \cite[Chapter~5]{Wall}. Adding a local cusp changes the Euler number of the normal bundle of an immersed 2-sphere by $\pm 2$. We use the (almost) spin condition, which implies that the Euler numbers of the normal bundles of all immersed 2-spheres are even, to guarantee that all Euler numbers can be killed by cusps, and hence the normal bundles can be made trivial.

\smallskip
\begin{lemma}\label{lem:quadratic=even-in-free} A hermitian form on a free $\Z\pi$-module $F$ has a quadratic refinement if and only if it is even. Moreover, a quadratic form $(\lambda,\mu)$ on $N\oplus F$ is even if and only if the restriction of $\lambda$ to $N$ is even.
\end{lemma}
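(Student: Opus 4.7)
For the first claim, I will use a basis. Pick a $\Z\pi$-basis $\{e_i\}_{i \in I}$ of the free module $F$ together with a total order on $I$. Given a quadratic refinement $\mu$ of $\lambda$, for each $i$ choose a lift $\tilde\mu(e_i) \in \Z\pi$ of $\mu(e_i) \in \Z\pi / \{g - \bar g\}$, and define $q \colon F \to F^*$ as the unique sesquilinear form whose value on basis elements is
\[
q(e_i,e_j) = \begin{cases} \lambda(e_i,e_j) & i < j, \\ \tilde\mu(e_i) & i = j, \\ 0 & i > j. \end{cases}
\]
Using that $\lambda$ is hermitian and that $\tilde\mu(e_i) + \overline{\tilde\mu(e_i)} = \lambda(e_i,e_i)$ by property (i) of \cref{defn:quadratic-refinement} (this is independent of the chosen lift since $g - \bar g + \overline{g-\bar g}=0$), a direct check on pairs of basis elements shows $\lambda = q + q^*$. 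The converse, that an even form admits the quadratic refinement $\mu(x) := q(x,x)$, is essentially Wall's remark recalled above.

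For the second claim, one direction is trivial: if $\lambda = q + q^*$ on $N \oplus F$, then restricting $q$ to $N$ exhibits $\lambda|_N$ as even. For the reverse direction, write $\lambda$ in block form
\[
\lambda = \begin{pmatrix} \lambda_N & \alpha \\ \alpha^* & \lambda_F \end{pmatrix}
\]
with respect to the decomposition $N \oplus F$. Since $(N \oplus F, \lambda, \mu)$ is a quadratic form, restricting $\mu$ to $F$ gives a quadratic refinement of $\lambda_F$, so by the first part of the lemma there exists $q_F$ with $\lambda_F = q_F + q_F^*$. By hypothesis there is $q_N$ with $\lambda_N = q_N + q_N^*$. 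Now set
\[
q := \begin{pmatrix} q_N & \alpha \\ 0 & q_F \end{pmatrix};
\]
then $q^* = \bigl(\begin{smallmatrix} q_N^* & 0 \\ \alpha^* & q_F^* \end{smallmatrix}\bigr)$ and $q + q^* = \lambda$, so $\lambda$ is even.

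The only mildly subtle point is ensuring the off-diagonal terms $q(e_i,e_j)$ in the first part may be chosen freely (i.e.\ that one may put all of the form "above the diagonal") — but this is exactly what the hermitian symmetry gives us, after which sesquilinear extension is automatic. I do not anticipate any real obstacle; the statement reduces combinatorially to Wall's classical observation once a basis has been chosen, and the block matrix calculation handles the mixed case.
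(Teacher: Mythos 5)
Your proof is correct and follows essentially the same route as the paper's: the first part is the identical upper-triangular construction on a basis (using property (i) on the diagonal and hermitian symmetry below it), and the second part is the paper's argument — that $\lambda$ differs from $\lambda|_N\oplus\lambda|_F$ by the even off-diagonal term and that $\lambda|_F$ is even by the first part — merely rewritten in block-matrix form, with the easy converse made explicit.
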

\smallskip

\begin{proof}
First we show that every quadratic form $(\lambda,\mu)$ on $F$ is even. Let $f_i$ be a basis of $F$ and $\mu_i \in\bbZ\pi$ be a lift of $\mu(f_i)$. We define \[
q(f_i,f_i):=\mu_i, \quad q(f_i,f_j):= \lambda(f_i,f_j)\text{ for }  i<j \text{ and }  q(f_i,f_j):= 0 \text{ for } i>j
\]
and extend linearly to get $q: F\to F^*$. Then one simply checks the relation $\lambda = q + q^*$ on the generators $f_i$.
As remarked above every even form has a quadratic refinement defined by $\mu(x):= q(x,x)$, so we have proven the first sentence of the lemma.

Now let a quadratic form $(\lambda,\mu)$ on $N\oplus F$ be given and set
\[
q((m,a),(n,b)):=\lambda((m,0),(0,b))
\]
We see that
\begin{align*} &\lambda((m,a),(n,b)) \\ =& \lambda((m,0),(n,0))+\lambda((0,a),(0,b))+\lambda((m,0),(0,b))+\ol{\lambda((n,0),(0,a))}\\
= &\lambda((m,0),(n,0))+\lambda((0,a),(0,b))+q((m,a),(n,b))+q^*((m,a),(n,b))\end{align*}
Since the form $(a,b)\mapsto \lambda((0,b),(0,a))$ extends via $\mu_{|F}$ to a quadratic form on $F$, it is even by the previous argument. This shows that $\lambda$ and its restriction to $N$ differ by an even form.
\end{proof}
\smallskip

\begin{lemma} \label{lem:ext}
For any group $\pi$, the boundary map
$\Ext^i_{\bbZ \pi}(I\pi,\bbZ\pi) \to H^{i+1}(\pi;\bbZ\pi)$ is an isomorphism for  $i \geq 1$.
Moreover, if $\pi$ is an infinite group with $H^{1}(\pi;\bbZ\pi)=0$ then the canonical map
\[\Hom_{\bbZ\pi}(\bbZ\pi,\bbZ\pi) \longrightarrow \Hom_{\bbZ\pi}(I\pi,\bbZ\pi)= I\pi^* \] is an isomorphism.
In particular, $I\pi^* \cong \bbZ\pi^* \cong \bbZ\pi$ is a free $\bbZ\pi$-module, where the latter isomorphism takes $\varphi \mapsto \varphi(1)$.
\end{lemma}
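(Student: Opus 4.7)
The statement (with $\Ext^i_{\Z\pi}$ in place of $\Ext^i_{\Z}$, which must be a typo) follows from applying $\Hom_{\Z\pi}(-,\Z\pi)$ to the fundamental short exact sequence of $\Z\pi$-modules
\[ 0 \to I\pi \to \Z\pi \to \Z \to 0, \]
where $\Z$ is the trivial module and the map $\Z\pi \to \Z$ is the augmentation. The plan is to read off both assertions from the resulting long exact sequence of $\Ext$-groups.

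First I would recall that since $\Z\pi$ is a free (hence projective) $\Z\pi$-module, $\Ext^i_{\Z\pi}(\Z\pi,\Z\pi)=0$ for all $i\geq 1$. Therefore the long exact sequence degenerates, for $i\geq 1$, into isomorphisms
\[ \Ext^i_{\Z\pi}(I\pi,\Z\pi) \xrightarrow{\cong} \Ext^{i+1}_{\Z\pi}(\Z,\Z\pi) = H^{i+1}(\pi;\Z\pi), \]
which are precisely the connecting homomorphisms. This gives the first assertion.

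For the second assertion, I would examine the low-degree end of the same long exact sequence:
\[ 0 \to H^0(\pi;\Z\pi) \to \Hom_{\Z\pi}(\Z\pi,\Z\pi) \to \Hom_{\Z\pi}(I\pi,\Z\pi) \to H^1(\pi;\Z\pi). \]
By hypothesis the rightmost term vanishes. The term $H^0(\pi;\Z\pi)$ equals the submodule of $\Z\pi$ fixed by left multiplication by all $g\in\pi$; a $\pi$-invariant element $\sum n_g g$ must have $n_g=n_{hg}$ for every $h$, and only the zero element has this property when $\pi$ is infinite (since the support is finite). Hence the middle arrow is an isomorphism, proving $\Hom_{\Z\pi}(\Z\pi,\Z\pi) \cong I\pi^*$. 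The final identification $\Hom_{\Z\pi}(\Z\pi,\Z\pi) \cong \Z\pi$ via evaluation at $1$ is standard.

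There is no real obstacle here; the argument is purely formal homological algebra once one observes the vanishing of $H^0(\pi;\Z\pi)$ for infinite $\pi$, which is the only step that uses the hypothesis on $\pi$ beyond the stated vanishing of $H^1(\pi;\Z\pi)$.
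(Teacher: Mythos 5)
Your proof is correct and follows essentially the same route as the paper: apply $\Hom_{\Z\pi}(-,\Z\pi)$ to the augmentation sequence $0 \to I\pi \to \Z\pi \to \Z \to 0$, use projectivity of $\Z\pi$ to collapse the long exact sequence for $i\geq 1$, and for the second part use the vanishing of $H^0(\pi;\Z\pi)=(\Z\pi)^\pi$ for infinite $\pi$ together with the hypothesis $H^1(\pi;\Z\pi)=0$. You are also right that the subscript $\Z$ in the statement is a typo for $\Z\pi$.
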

\smallskip

\begin{proof}
Consider the canonical short exact sequence
\[\xymatrix{0 \ar[r] & I\pi \ar[r]^-i & \Z\pi \ar[r]^-\varepsilon & \Z \ar[r] & 0}\]
where $\varepsilon\colon \Z\pi \to \Z$ denotes the augmentation. We apply the functor $\Hom_{\Z\pi}(-,\Z\pi)$ to this sequence to obtain a long exact sequence in Ext-groups.
For $i\geq 0$ we have
\[ \Ext_{\bbZ\pi}^{i+1}(\bbZ\pi,\bbZ\pi) = 0\quad\text{and}\quad\Ext^{i}_{\bbZ\pi}(\Z,\bbZ\pi) = H^{i}(\pi;\bbZ\pi)\]
by definition of group cohomology. The first part of the lemma follows.

The second part follows by the same long exact sequence of $\Ext$ groups because under our assumptions the two relevant terms around our groups vanish. Recall that $H^0(\pi;N) \cong N^\pi$ is the fixed point set of the $\pi$-action for any $\Z\pi$-module $N$. This fixed point set vanishes for free $\Z\pi$-modules if and only if $\pi$ has infinite order.
\end{proof}

\smallskip

\begin{cor}\label{End-Ipi}
If $\pi$ is an infinite group with $H^1(\pi;\Z\pi)=0$, then $\Z\pi\cong \End(I\pi)$, with the isomorphism given by sending $x \in \Z\pi$ to the endomorphism $b\mapsto bx$.
\end{cor}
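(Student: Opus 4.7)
The plan is to deduce the corollary directly from \cref{lem:ext} together with a short infiniteness argument. First I would check that the map $\Phi \colon \bbZ\pi \to \End(I\pi)$ sending $x$ to the endomorphism $b \mapsto bx$ is well defined: since $I\pi$ is a two-sided ideal in $\bbZ\pi$, right multiplication by any $x$ preserves $I\pi$, and it is a left $\bbZ\pi$-module homomorphism because left multiplication commutes with right multiplication.

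For surjectivity, given $f \in \End(I\pi)$ I would post-compose with the inclusion $\iota \colon I\pi \hookrightarrow \bbZ\pi$ to obtain an element $\iota \circ f$ of $I\pi^* = \Hom_{\bbZ\pi}(I\pi,\bbZ\pi)$. By \cref{lem:ext}, the restriction map $\Hom_{\bbZ\pi}(\bbZ\pi,\bbZ\pi) \to I\pi^*$ is an isomorphism, and every $\bbZ\pi$-linear endomorphism of the free module $\bbZ\pi$ has the form $a \mapsto ax$ with $x = \varphi(1)$. Hence $\iota \circ f$ is given by $b \mapsto bx$ for some $x \in \bbZ\pi$, and since $bx$ already lies in $I\pi$ this shows $f = \Phi(x)$.

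For injectivity, suppose $\Phi(x) = 0$, that is $bx = 0$ for all $b \in I\pi$. Applying this to $b = g-1$ for every $g \in \pi$ gives $gx = x$ for all $g \in \pi$, so $x$ lies in the fixed points of $\bbZ\pi$ under the left multiplication action of $\pi$. Writing $x = \sum_{h\in\pi} n_h h$, the invariance condition forces $n_{g^{-1}h} = n_h$ for all $g,h$, so the coefficients are constant on $\pi$-orbits. Since the left regular action has a single orbit and $\pi$ is infinite, while the support of $x$ is finite, we must have $x = 0$.

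There is no serious obstacle: the statement is essentially unpacking \cref{lem:ext}, and the infiniteness of $\pi$ is used only in the final coefficient-counting step for injectivity (equivalently, in the vanishing $(\bbZ\pi)^\pi = 0$). The remark that $\varphi \mapsto \varphi(1)$ realises the identification with $\bbZ\pi$ is already recorded in \cref{lem:ext}, so no further computation is needed.
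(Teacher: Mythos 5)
Your proof is correct and follows essentially the same route as the paper: both reduce the statement to the isomorphism $\Hom_{\Z\pi}(\Z\pi,\Z\pi)\toiso\Hom_{\Z\pi}(I\pi,\Z\pi)$ of \cref{lem:ext} after composing an endomorphism of $I\pi$ with the inclusion $I\pi\hookrightarrow\Z\pi$. Your separate injectivity argument is fine but already contained in that lemma, since the kernel of the restriction map is $H^0(\pi;\Z\pi)=(\Z\pi)^\pi=0$ for $\pi$ infinite, which is exactly your coefficient-counting step.
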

\smallskip

\begin{proof}
Every endomorphism $I\pi\to I\pi$ can be extended to $I\pi\to \Z\pi$ and thus can be uniquely described by an element in $\Z\pi$ by \cref{lem:ext}.
\end{proof}

If $\pi$ is a Poincar\'{e} duality group of dimension $n \geq 2$, we observe that it is infinite and satisfies the assumption on first cohomology in \cref{lem:ext}:
\[H^1(\pi;\bbZ\pi) \cong H_{n-1}(\pi;\Z\pi) = 0.\]
\smallskip

\begin{lemma}
\label{lem:intersec}
The involution $a \mapsto \bar a$ on $\Z\pi$ is taken to $f\mapsto f^*$ under the maps
\[
\Z\pi \cong \Hom_{\Z\pi}(\Z\pi,\bbZ\pi^*) \to \Hom_{\Z\pi}(I\pi,I\pi^*)
\]
If $\pi$ is infinite and $H^{1}(\pi;\bbZ\pi)=0$ the second map is an isomorphism, so any pairing on $I\pi$ extends uniquely to a pairing on $\Z\pi$.
\end{lemma}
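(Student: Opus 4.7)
The plan is to establish the lemma in two independent parts: first verify by direct computation that the involution on $\Z\pi$ corresponds to the adjunction $f\mapsto f^*$, and second use the long exact $\Ext$ sequence coming from the augmentation sequence to prove that the restriction map is an isomorphism.

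For the compatibility of involutions, I would make the identification $\Hom_{\Z\pi}(\Z\pi,\Z\pi^*) \cong \Z\pi$ explicit via $f \mapsto c := f(1)(1)$. Tracing through the left $\Z\pi$-action $(a\cdot \varphi)(x) = \varphi(x)\ol{a}$ on $\Z\pi^*$, one computes that the form associated to $c$ is
\[
\lambda_c(m,n) = f(n)(m) = mc\ol{n}.
\]
Then $\lambda_c^*(m,n) = \ol{\lambda_c(n,m)} = \ol{nc\ol{m}} = m\ol{c}\,\ol{n} = \lambda_{\ol{c}}(m,n)$, which proves that $c \mapsto \ol{c}$ corresponds to $f \mapsto f^*$. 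Restriction of a form to $I\pi \subset \Z\pi$ manifestly commutes with taking adjoints, so the statement passes to the composite $\Z\pi \to \Hom_{\Z\pi}(I\pi,I\pi^*)$.

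For the second assertion, I would factor the restriction map as
\[
\Hom_{\Z\pi}(\Z\pi,\Z\pi^*) \xrightarrow{\,-\,\circ\, i\,} \Hom_{\Z\pi}(I\pi,\Z\pi^*) \xrightarrow{\,i^*\circ\, -\,} \Hom_{\Z\pi}(I\pi,I\pi^*),
\]
where $i \colon I\pi \hookrightarrow \Z\pi$ is the inclusion. The second arrow is an isomorphism by the second part of \cref{lem:ext}. For the first arrow, apply $\Hom_{\Z\pi}(-,\Z\pi^*)$ to $0 \to I\pi \to \Z\pi \to \Z \to 0$; since $\Z\pi$ is projective the resulting sequence collapses to
\[
0 \to H^0(\pi;\Z\pi^*) \to \Hom_{\Z\pi}(\Z\pi,\Z\pi^*) \to \Hom_{\Z\pi}(I\pi,\Z\pi^*) \to H^1(\pi;\Z\pi^*) \to 0.
\]
A short check shows $\Z\pi^* \cong \Z\pi$ as left $\Z\pi$-modules (via $\varphi \mapsto \ol{\varphi(1)}$), so the outer terms become $H^i(\pi;\Z\pi)$ for $i=0,1$. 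These vanish, respectively, because $\pi$ is infinite and by hypothesis.

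The main obstacle I anticipate is simply keeping track of left versus right module structures together with the bar involution; nothing here is conceptually hard, but the formula $\lambda_c(m,n) = mc\ol{n}$ and the identification $\Z\pi^* \cong \Z\pi$ as left modules both depend on applying the involution at the right moment, so one has to write things out carefully to avoid a misplaced bar.
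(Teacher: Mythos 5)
Your proposal is correct and follows essentially the same route as the paper: both identify the middle group with pairings on $\Z\pi$ via $c=\lambda(1,1)$, verify compatibility from $\lambda^*(1,1)=\ol{\lambda(1,1)}$ (your computation $\lambda_c^*=\lambda_{\ol c}$ is just the pointwise version), note that restriction to $I\pi$ commutes with adjoints, and deduce the isomorphism from \cref{lem:ext} together with the vanishing of $H^0(\pi;\Z\pi)$ and $H^1(\pi;\Z\pi)$. Your write-up is somewhat more explicit than the paper's (the factorisation through $\Hom_{\Z\pi}(I\pi,\Z\pi^*)$ and the left-module identification $\Z\pi^*\cong\Z\pi$ via $\varphi\mapsto\ol{\varphi(1)}$ are spelled out rather than left implicit), but the underlying argument is the same.
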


\begin{proof}
The isomorphism $\bbZ\pi^* \to I\pi^*$ from \cref{lem:ext} is sufficient to obtain the isomorphism claimed under the assumptions made. The compatability of the two involutions works as follows: The group in the middle consists of pairings on $\Z\pi$ and the map to the right just restricts the pairing to $I\pi$. This restriction preserves the involution $f\to f^*$. Given a pairing $\lambda$ on $\Z\pi$, the map to the left just takes the value $\lambda(1,1)\in\Z\pi$. Our claim follows from the fact that $\lambda^*(1,1) = \ol{\lambda(1,1)}$.
\end{proof}

\subsection{Surgery on \texorpdfstring{$X \times S^1$}{X times S}}
\label{subsec:ex1}

Now we proceed to construct the promised representatives for the stable diffeomorphism classes.
Let $\wt{\nu}_{X\times S^1}\colon X\times S^1\to BSpin$ be a choice of lift of $\nu_{X\times S^1}$. Then
\[\xymatrix{X\times S^1\ar[rr]^-{pr_1\times \wt\nu_{X\times S^1}} & &  X\times BSpin}\]
defines an element of $\Omega_4^{Spin}(X)$. In \cref{sec:stablediffeoclasses} we computed that there is an isomorphism
\[\Theta \colon \Omega_4^{Spin}(X) \toiso \bbZ\oplus H_2(X;\bbZ/2)\oplus\bbZ/2.\]
Given $x_0\in X$, the composition
\[\xymatrix{S^1\ar[r]^-{x_0\times \id}&X\times S^1\ar[rr]^-{pr_1\times \wt\nu_{X\times S^1}}&&X\times BSpin\ar[r]^-{pr_2}&BSpin}\]
defines an element $\sigma$ of $\Omega_1^{Spin}\cong \bbZ/2$ which by \cref{lem:arf} agrees with the image of $X \times S^1$ under $\Theta$ followed by projection onto the third factor.

\smallskip
\begin{lemma}\label{lem:sigma}
If $\sigma=0$, then $X\times S^1$ also goes to zero under $\Theta$ followed by the projection onto the second factor. If $\sigma=1$ then any element of $H_2(X;\bbZ/2)$ can be realised by different choices of the lift $\wt\nu_{X\times S^1}$.
\end{lemma}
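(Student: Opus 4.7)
The plan is to reduce the lemma to two ingredients: (i) an explicit direct computation for the product spin structures on $X\times S^1$ using the splitting $\Phi$ of \cref{lemma:split}, and (ii) the action description of \cref{thm:action-spin-case}(2) to propagate the computation to every $\wt\nu$.

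To set this up, choose a spin structure $\sigma_X$ on $X$ (needed to define $\Phi$ via $\psi_{c,\sigma_X}$, cf.\ \cref{lem:split-homomorphism}) and consider the product spin structure $\wt\nu_\epsilon=(\sigma_X,\tau_\epsilon)$, where $\tau_\epsilon$ is the bounding spin structure on $S^1$ for $\epsilon=0$ and the non-bounding one for $\epsilon=1$. Then $\sigma=\epsilon$. For $x\in H_1(X;\bbZ)$ represented by an embedding $S=\coprod S^1\hookrightarrow X$ equipped with the null-bordant spin structure, the regular preimage of $S$ under $pr_1$ is $F=S\times S^1$. Unwinding the prescription in \cref{lem:split-homomorphism}, the induced spin structure on $TF=pr_1^*TS\oplus pr_2^*TS^1$ is the product of the chosen bounding structure on $S$ with $\tau_\epsilon$ on $S^1$: the contribution of $\sigma_X|_S$ appears both in $\wt\nu_\epsilon|_F$ and in the spin structure on the normal bundle $pr_1^*\nu_{S/X}$, and cancels. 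Applying the classical formula $\Arf(T^2)=\mu(a)\mu(b)$ for the symplectic basis of $H_1$ dual to the two factor circles, each torus component of $F$ has Arf invariant $[\tau_S]\cdot[\tau_\epsilon]=0$, since $\tau_S$ is bounding. Hence $\psi_{pr_1,\sigma_X}\equiv 0$ and $\Theta_2([X\times S^1,\wt\nu_\epsilon])=0$ for both $\epsilon=0$ and $\epsilon=1$.

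For part~(1), let $\wt\nu$ be any spin structure on $X\times S^1$ with $\sigma=0$. Then $\wt\nu$ and $\wt\nu_0$ differ by a class $\alpha\in H^1(X\times S^1;\bbZ/2)$ whose $H^1(S^1;\bbZ/2)$-component must vanish (else $\sigma$ would change), so $\alpha\in H^1(X;\bbZ/2)\subseteq H^1(X\times S^1;\bbZ/2)$. Unpacking \cref{lem:normal-spin-case}, this is exactly how the $H^1(B\pi;\bbZ/2)$-factor of $\Aut(\xi)$ acts on lifts by postcomposition, so \cref{thm:action-spin-case}(2) with $\epsilon=0$ gives $\Theta_2([X\times S^1,\wt\nu])=\Theta_2([X\times S^1,\wt\nu_0])=0$.

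For part~(2), starting from $\wt\nu_1$ (with $\sigma=1$ and $\Theta_2=0$) and varying by $\alpha\in H^1(X;\bbZ/2)$, \cref{thm:action-spin-case}(2) with $\epsilon=1$ yields $\Theta_2([X\times S^1,\wt\nu_1+\alpha])=\alpha\in H^1(B\pi;\bbZ/2)$. Under the Poincar\'e-duality identification used throughout (and appearing in the statement of \cref{thm:bordism-group-spin-case}) this corresponds to $PD(\alpha)\in H_2(X;\bbZ/2)$, and as $\alpha$ ranges over all of $H^1(X;\bbZ/2)$ we realise every element. The main obstacle is the bookkeeping in the first paragraph—namely verifying that the pulled-back normal-bundle spin structure $pr_1^*(\sigma_X|_S-\tau_S)$ really does combine with $\wt\nu_\epsilon|_F$ to produce the product spin structure on $TF$; this is a short but slippery calculation from the definitions, and one could alternatively bypass it by reproducing the ingredients of \cref{lem:spinaction} directly on $S\times S^1$.
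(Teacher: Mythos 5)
Your proof is correct. For the $\sigma=1$ statement you argue exactly as the paper does: fix one lift and move it around using the action of $H^1(B\pi;\Z/2)\subset\Aut(\xi)$ computed in \cref{thm:action-spin-case}(2), which with $\epsilon=1$ sweeps out all of $H_2(X;\Z/2)$. For the $\sigma=0$ statement your route is genuinely different. The paper simply observes that a spin structure on $X\times S^1$ with bounding restriction to $\{x_0\}\times S^1$ extends over $X\times D^2$, so the class is null bordant over $B\pi\times BSpin$ and \emph{all} components of $\Theta$ vanish at once — one line, no computation. You instead evaluate the splitting $\Phi$ directly: the regular preimage of $S\subset X$ under $pr_1$ is $S\times S^1$ with the product spin structure (your ``cancellation'' of $\sigma_X|_S$ between $\wt\nu_\epsilon|_F$ and the normal-bundle structure is right), and $\Arf(T^2,\tau_S\times\tau_\epsilon)=q(a)q(b)=0$ because $\tau_S$ bounds; you then propagate to arbitrary lifts with $\sigma=0$ via the Künneth decomposition of $H^1(X\times S^1;\Z/2)$ and the $\epsilon=0$ case of the action formula. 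This costs you the spin-structure bookkeeping you flag at the end (which does check out), but it buys a small bonus: your first paragraph also shows that the product structure $\wt\nu_1$ itself has vanishing second component with respect to the splitting $\Phi_{\sigma_X}$, pinning down the base point of the orbit in part (2) rather than just its full sweep. Note also that since every spin structure on $X\times S^1$ is a product structure (by Künneth on $H^1$), the paper's null-bordism argument already covers all lifts with $\sigma=0$, not just the ``standard'' one, so no generality is lost there either.
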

\begin{proof}
When $\sigma=0$, the original manifold $X \times S^1$ is null bordant over $B\pi \times BSpin$, with null bordism $X \times D^2$.

When $\sigma =1$, the action of the automorphisms of $B\pi \times BSpin$ from \cref{thm:action-spin-case} enables the choice of another $1$-smoothing so that any element is realised.
\end{proof}

We can do a surgery along ${x_0}\times S^1$ to produce a manifold with fundamental group $\pi$. This will be a surgery over $X \times BSpin$, to convert $pr_1\times \wt\nu_{X\times S^1}$ to a $2$-connected map.  Since the cobordism produced as the trace of the surgery will also be over $X \times BSpin$, the element of $\Omega_4^{Spin}(X)$ is unchanged by the surgery.   Therefore we realise the elements of $\Omega_4^{Spin}(X)$ allowed by \cref{lem:sigma} by $4$-manifolds with fundamental group $\pi$.  The remaining elements i.e.\ those not realised when $\sigma=0$, will be constructed by a more complicated procedure in the next subsection.

Let $D^3\subseteq X$ denote a small ball around $x_0$.  Fix an identification of $\partial \cl (X \setminus D^3)$ with $S^2$. Then define
\[M_{\sigma}:=(\cl(X\setminus D^3)\times S^1)\cup_{f} S^2\times D^2.\]
Here $f\colon S^2\times S^1\to S^2\times S^1$ is the identity if $\sigma=0$, whereas if $\sigma=1$, define the diffeomorphism $f$ as follows.  Give $S^2$ coordinates using the standard embedding in $\R^3$ as the boundary of the unit ball, and Euler angles:
\[(\phi,\psi) \mapsto (\cos(\phi),\sin(\phi)\cos(\psi),\sin(\phi)\sin(\psi)).\]
(For a fixed point in $S^2$, there are multiple choices for $(\phi, \psi)$.  The upcoming proscription of $f$ is independent of these choices.)
Then define $f$ by
\[((\phi,\psi),e^{i\theta})\mapsto ((\phi,\psi+\theta),e^{i\theta}).\]
The twist in the glueing map $f$ arranges that the spin structure extends across the cobordism $X \times S^1 \times I \cup_{f} D^3 \times D^2$. The spin structure can then be restricted to the new boundary, to give a spin structure on $M_{\sigma}$. By \cref{lem:sigma}, for $\sigma=1$ every element $(0,\gamma,1)\in \Omega_4^{Spin}(X)$ with $\gamma\in H_2(X;\Z/2)$ can be realised by $M_1$ with an appropriate spin structure. If we want to consider $M_1$ not just as a smooth manifold, but as a spin manifold realising $(0,\gamma,1)$, we denote it by $M_{1,\gamma}.$

We state the computation of $\pi_2$ as a lemma so that we can refer to it in subsequent similar computations.
\smallskip

\begin{lemma}\label{lem:pi2computation}
Let $X$ be an oriented aspherical 3-manifold (with possibly non-empty boundary) and fundamental group $\pi$.  Define $M_{\sigma}$ as above, for $\sigma=0,1$.  Then $\pi_2(M_{\sigma}) \cong \Z\pi \oplus I\pi$, where $I\pi$ is the augmentation ideal of $\Z\pi$ i.e.\ the kernel of the augmentation map $\Z\pi \to \Z$.
\end{lemma}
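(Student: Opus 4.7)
The plan is to compute $\pi_2(M_\sigma) = H_2(\widetilde{M_\sigma}; \Z)$ by Hurewicz and a Mayer--Vietoris decomposition of the universal cover. First, Seifert--van Kampen applied to $M_\sigma = (Y \times S^1) \cup_{S^2 \times S^1} (S^2 \times D^2)$, using that $\pi_1(Y) = \pi$ (which itself follows from van Kampen on $X = Y \cup D^3$), gives $\pi_1(Y \times S^1) = \pi \times \Z$, and the generator of the $\Z$-factor bounds a disk $\{\ast\} \times D^2$ in the glued piece; hence $\pi_1(M_\sigma) = \pi$. The twist $f$ used when $\sigma = 1$ is invisible to $\pi_1$.

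Consequently the universal cover decomposes as
\[
\widetilde{M_\sigma} = (\widetilde Y \times S^1) \cup \coprod_{g \in \pi}(S^2 \times D^2),
\]
glued along $\coprod_{g \in \pi}(S^2 \times S^1) = \partial \widetilde Y \times S^1$, where $\widetilde Y = \widetilde X \setminus \coprod_{g \in \pi} \mathrm{int}(D^3_g)$. Since $X$ is aspherical, $\widetilde X$ is contractible, and a preliminary Mayer--Vietoris on $\widetilde X = \widetilde Y \cup \coprod_\pi D^3_g$ gives $H_1(\widetilde Y) = 0$ and $H_2(\widetilde Y) \cong \Z\pi$, generated by the boundary spheres $S^2_g$. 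Applying Mayer--Vietoris to the decomposition of $\widetilde{M_\sigma}$ above, and K\"unneth to $\widetilde Y \times S^1$, the relevant portion for $H_2$ reads
\[
\Z\pi \xrightarrow{x \mapsto (x,-x)} \Z\pi \oplus \Z\pi \to H_2(\widetilde{M_\sigma}) \to \Z\pi \xrightarrow{\varepsilon} \Z,
\]
where the first map sends an $S^2$-factor to its boundary sphere on the $\widetilde Y$-side and to the core sphere on the $S^2 \times D^2$-side, and the last map is the augmentation (the $\pi$-many $S^1$-factors are all identified in $\widetilde Y \times S^1$). This yields a short exact sequence
\[
0 \to \Z\pi \to \pi_2(M_\sigma) \to I\pi \to 0.
\]

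To finish I would show the sequence splits. Its class lies in $\Ext^1_{\Z\pi}(I\pi,\Z\pi)$, which by \cref{lem:ext} is isomorphic to $H^2(\pi;\Z\pi)$. In the closed case of primary interest here, $\pi$ is a PD$_3$-group, so Poincar\'e duality gives $H^2(\pi;\Z\pi) \cong H_1(\widetilde X;\Z) = 0$ and the sequence splits automatically. For the stated generality allowing $\partial X \neq \emptyset$, one instead produces a splitting geometrically: for each $g \neq e$ choose a path $\gamma_g$ in $\widetilde Y$ from the basepoint boundary sphere to $S^2_g$, and form a 2-sphere $\Sigma_g := (\gamma_g \times S^1) \cup (\text{two caps } \{\ast\} \times D^2)$ whose Mayer--Vietoris boundary is $g - 1 \in I\pi$. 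The main obstacle is the Mayer--Vietoris bookkeeping, namely correctly identifying the diagonal image in $H_2$ and the augmentation in $H_1$; the splitting step itself is immediate from \cref{lem:ext} in the COAT setting that this paper uses.
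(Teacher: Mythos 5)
Your argument is correct and follows essentially the same route as the paper: two Mayer--Vietoris computations on the universal cover yielding the extension $0 \to \Z\pi \to \pi_2(M_\sigma) \to I\pi \to 0$, split via $\Ext^1_{\Z\pi}(I\pi,\Z\pi) \cong H^2(\pi;\Z\pi)$ from \cref{lem:ext}. Your extra care about the splitting when $\partial X \neq \emptyset$ is a good instinct (the paper's invocation of $\Ext^1 = 0$ silently uses $H^2(\pi;\Z\pi)=0$, which holds in all cases where the lemma is applied), and the geometric splitting you sketch is exactly the one the paper constructs immediately after its proof.
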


\begin{proof}
  Let $N\cong \pi\times D^3$ denote the preimage of $D^3\subseteq X$ in $\wt X$.
By assumption, $\wt{X}$ is contractible.
We will compute $\pi_2(M_{\sigma})$ by computing $H_2(\wt M_{\sigma})$, using the  Mayer-Vietoris sequences
\[0\to H_2(\partial N\times S^1)\to H_2(N\times S^1)\oplus H_2(\cl(\wt X\setminus N)\times S^1)\to H_2(\wt X\times S^1)=0\]
and
\begin{align*}
& H_2(\partial N\times S^1)\to H_2(\pi\times S^2\times D^2)\oplus H_2(\cl(\wt{X}\setminus N)\times S^1)\to H_2(\wt M_{\sigma}) \\
 \to & H_1(\partial N\times S^1)\to 0\oplus H_1(\cl(\wt{X}\setminus N)\times S^1)\to 0.
\end{align*}
The first sequence computes the effect of removing $D^3 \times S^1$ from $X \times S^1$ and the second sequence glues in $S^2 \times D^2$ in its stead.
Since $H_2(N\times S^1)=0$, from the first sequence we see that $H_2(\cl(\wt{X}\setminus N)\times S^1)\cong H_2(\partial N\times S^1)\cong \bbZ\pi$. As a $\Z[\pi]$-module, $H_2(\cl(\wt{X}\setminus D^3)\times S^1)$ is generated by $\partial D^3\times \{0\}$.

In the second sequence the maps from $H_2(\partial N\times S^1)$ to $H_2(\pi\times S^2\times D^2)$ and $H_2(\cl(\wt{X}\setminus N)\times S^1)$ are both isomorphisms. Furthermore, we have $H_1(\partial N\times S^1)\cong\bbZ \pi$, $H_1(\cl(\wt X\setminus N)\times S^1)\cong \bbZ$ and the map between them is the augmentation map. Thus, we obtain a short exact sequence
\[0\to \bbZ\pi\to H_2(\wt{M_{\sigma}})\to I\pi\to 0,\]
where $I\pi$ is the augmentation ideal $\ker(\Z\pi \to \Z)$.
\cref{lem:ext} says that
\[ \Ext_{\bbZ\pi}^1(I\pi,\bbZ\pi) = 0\]
so this sequence splits. This proves the lemma since $\pi_2(M_\sigma) \cong H_2(\wt{M_\sigma})$ by the Hurewicz theorem.
\end{proof}

Since we will need it later, we will also geometrically construct a splitting of the short exact sequence in the above proof.
Let $g_1,\ldots,g_m$ be generators of $\pi$ and let $\{s_i^j\}_{1\leq i\leq m, j \in \{0,1\}}$ be a set of disjoint points in $\partial D^3\subseteq \cl(X\setminus D^3)$. Choose $x_0\in \partial D^3$ and for every $1\leq i\leq m, 0\leq j\leq 1$ let $\omega_i^j$ be a path in $\partial D^3$ from $x_0$ to $s_i^j$ and let $w_i$ be a path in $X\setminus D^3$ from $s_i^0$ to $s_i^1$ such that $(\omega_i^1)^{-1}\circ w_i\circ\omega_i^0$ represents $g_i\in \pi\cong \pi_1(\cl(X\setminus D^3),x_0)$. We can assume that all paths $w_i$ are disjointly embedded.
If $\sigma=0$ we can define elements in $\pi_2(M_{\sigma})$ by
\[\alpha_i:=[(\{s_i^0\}\times D^2)\cup (w_i\times S^1)\cup(\{s_i^{1}\}\times D^2)].\]
Under the boundary map $H_2(\wt{M_{\sigma}})\to H_1(\partial N\times S^1)$, the element $\alpha_i$ is mapped to $[\{s_i^0\}\times S^1]-g_i[\{s_i^1\}\times S^1]$.
For $\sigma=1$, let $s_i^j$ be given by $(\phi_i^j,\psi_i^j)$ in Euler coordinates. The image of $\{s_i^j\}\times S^1\subseteq \cl(\wt{X}\setminus N)\times S^1$ in $S^2\times D^2$ is no longer $(\phi_i^j,\psi_i^j)\times S^1$, but gets rotated around the $S^2$, by definition of $f$. Therefore, to cap off the cylinder $w_i\times S^1$, we have to construct more complicated caps. We can define elements in $\pi_2(M_{\sigma})$ by
\[\alpha_i:=[C_i^0\cup(w_i\times S^1)\cup C_i^1],\]
where $C_i^j$ is the image of the map $D^2\to S^2\times D^2$ defined by
\[te^{i\theta}\mapsto ((t\phi_i^j,\psi_i^j+\theta),te^{i\theta}).\]
Note that the image of the point $\{t=0\}$ is $($north pole of $S^2$, centre of $D^2)$.
Under the boundary map $H_2(\wt{M_{\sigma}})\to H_1(\partial N\times S^1)$, the element $\alpha_i$ is again mapped to $[\{s_i^0\}\times S^1]-g_i[\{s_i^1\}\times S^1]$.
Thus $1-g_i \mapsto \alpha_i$ defines a splitting map $I\pi \to H_2(\wt{M_{\sigma}})$ as promised.

We can also compute the  intersection form. In the case $\sigma=0$ we see that the representatives for the $\alpha_i$ are disjointly embedded and that they intersect the generator $\beta:=\partial D^3$ of the free summand transversely in $\{s_i^j\}_{j=0,1}\times \{0\}$.  We therefore have
\[\lambda(\alpha_i,\beta)=1-g_i\in\bbZ\pi.\]
 When $\sigma=1$, the terms $\lambda(\alpha_i,\beta)$ are unchanged, but the representatives of the $\alpha_i$ have additional intersections amongst each other; they intersect transversely in the midpoints of the discs $C_i^j$,
so we have
\[\lambda(\alpha_i,\alpha_\ell)=(1-g_i)(1-g^{-1}_\ell) \in\bbZ\pi.\]
Make a small perturbation of the points $s^j_i$, for $j=0,1$, and the path $w_i$ between them.  Denote the new path by $w_i'$.  This can be done so that $w_i$ and $w_i'$ are disjoint.   It follows that the homological self intersections $\lambda(\alpha_i,\alpha_i)$ are also given by the formula above with $i=\ell$.

Use the identification from \cref{lem:intersec} to write the   intersection form on $\pi_2(M_{\sigma})$ as
\[\begin{blockarray}{ccc}
&I\pi&\bbZ\pi\\
\begin{block}{c(cc)}
I\pi&\sigma&1\\
\bbZ\pi&1&0\\\end{block}\end{blockarray}.\]
In particular, the intersection between $\alpha,\beta\in I\pi$ is zero if $\sigma=0$ and $\lambda((\alpha,0),(\beta,0))=\alpha\ol{\beta}$ if $\sigma=1$.

This completes the construction of elements in the bordism group representing $(0,0,0)$ and $(0,\gamma,1)$ in $\bbZ\oplus H_2(X;\bbZ/2)\oplus\bbZ/2\cong \Omega_4^{Spin}(X)$, and the computation of their   intersection forms.

\subsection{Surgery on a connected sum of two copies of \texorpdfstring{$M_1$}{M}}
\label{subsec:ex2}

So far we have constructed elements in the bordism group representing $(0,0,0)$ and $(0,\gamma,1)$ in $$\bbZ\oplus H_2(X;\bbZ/2)\oplus\bbZ/2\cong \Omega_4^{Spin}(X).$$
In this subsection we construct elements representing the remaining signature zero elements $(0,\gamma,0)$ (as noted above the signature can be changed by repeatedly connect summing with the $K3$ surface).

In this section, for a space $Z$, we use $\wt{Z}$ to denote the universal cover. If the fundamental group is not $\pi$, but we nevertheless have a map $Z \to B\pi$, we can construct the $\pi$-cover, which we denote by $\ol{Z}$.
If $Z$ has a handle decomposition, we will denote the union of the handles of index less than or equal to $k$ by $Z^{(k)}$, and call this the $k$-skeleton of $Z$ i.e.\ we use the same notation as for cell complexes.

 Let $M_{1,\gamma}\xrightarrow{p_\gamma}X\times BSpin$ denote the manifold representing $(0,\gamma,1)$ as above. To represent the elements $(0,\gamma,0)$ we can take the connected sum $M_{1,\gamma}\#M_{1,0}$. We have $\pi_1(M_{1,\gamma}\# M_{1,0})\cong \pi*\pi$, and we can do surgeries, over $B = B\pi \times BSpin$, along curves representing $(g_i,g_i^{-1})$ to obtain a $2$-connected map to $B$ i.e.\ a manifold $P$ with fundamental group $\pi$. Here the $g_i$ again denote generators of $\pi$ as above.
Since all surgeries are over $B$, the resulting manifold $P$ represents the desired element in $\Omega_4(\xi)$.

Next we will perform this construction in detail, and compute $\pi_2(P)$ and the intersection form $\lambda \colon \pi_2(P) \times \pi_2(P) \to \Z\pi$ of the output.  In what follows we often omit the subscript from $M_{1,\gamma}$, and denote both $M_{1,\gamma}$ and $M_{1,0}$ by $M$, where the distinction is not important. Until we glue in copies of $S^2 \times D^2$, the distinction is purely in the map to $BSpin$.  Only in ensuring that the map to $BSpin$ extends over these new parts does the difference between $M_{1,\gamma}$ and $M_{1,0}$ emerge.

Choose a handle decomposition of the $3$-manifold $X$ with one $0$- and one $3$-handle, $n$ 1-handles and $n$ 2-handles.
We will construct the manifold $P$ once again, incrementally, computing $\pi_2$ carefully as we go.
We begin, however, with a digression on the chain complex of $\wt{X}$, which we will need to refer to throughout the construction.  Let $g_1,\dots,g_n$ denote generators of $\pi$ corresponding to the $1$-handles of $X$, as before.  Let $h_1,\dots,h_n$ be generators of $\pi$ corresponding to the cocores of the $2$-handles of $X$; use a path from the centre of the 3-handle to the centre of the 0-handle, so that this latter centre is the basepoint for all loops.  Let $R_1,\dots,R_n$ be relations in a presentation of $\pi$ corresponding to the handle decomposition of $X$, namely the words in the $g_i$ which describe the attaching maps of the 2-handles.

Recall that given generators $g_1,\dots,g_n$ the Fox derivative~\cite{Fox-free-calculus}
 with respect to $g_i$ is a map $\frac{\partial}{\partial g_i} = D_i \colon F_n \to \Z F_n$ which is defined by the following:
$\frac{\partial e}{\partial g_i} = 0$,
$\frac{\partial g_i}{\partial g_j} = \delta_{ij}$ and $\frac{\partial{uv}}{\partial g_i} = \frac{\partial u}{\partial g_i} + u \frac{\partial v}{\partial g_i}$. Taking the quotient this defines a map $F_n \to \Z\pi$, which extends to a map $\Z F_n \to \Z\pi$ by linearity.

The chain complex $C_* = C_*(\wt{X}) \cong C_*(X;\Z\pi)$ of $\wt{X}$ comprises free $\Z\pi$-modules
\[\xymatrix{C_3 = \Z\pi \ar[r]^-{\partial_3^{\wt{X}}} & C_2 = (\Z\pi)^n \ar[r]^-{\partial_2^{\wt{X}}} & C_1 = (\Z\pi)^n \ar[r]^-{\partial_1^{\wt{X}}} & C_0 = \Z\pi }\]
with boundary maps given by
$\partial_1^{\wt{X}} = \begin{pmatrix} g_1-1 & \cdots & g_n-1 \end{pmatrix}^T$, $(\partial_2^{\wt{X}})_{ij} = \frac{\partial R_i}{\partial g_j}$ and $\partial_3^{\wt{X}} = \begin{pmatrix} h_1-1 & \cdots & h_n-1 \end{pmatrix}$.
Here we use the convention that elements of free modules are represented as row vectors and matrices act on the right.

Let $X^{2,3}$ denote $X$ with $0$- and $1$-handles removed; $X^{2,3} = X \setminus X^{(1)}$.
Take $S^1 \times X$, and surger the $S^1$.  That is, remove $S^1 \times D^3 \subset S^1 \times X$ where the $3$-ball lies in the interior of the 3-handle of $X$, and attach $D^2 \times S^2$ with \[f((\phi,\psi),e^{i\theta})=((\phi,\psi+\theta),e^{i\theta})\]
as in the previous subsection.  The rotation in the glueing map ensures, as before, that map to $BSpin$ extends to the outcome of surgery, which is $M$.

Let $Y^{2,3} = M \setminus S^1 \times X^{(1)}$ be the result of performing this surgery on $X^{2,3}$ only.  The surgery takes place in the interior of the 3-handle of $X$.  So $$Y^{2,3} = \big(S^1 \times X^{2,3} \setminus S^1 \times D^3 \big) \cup_f S^2 \times D^2.$$
\smallskip

\begin{lemma}\label{lemma:pi-2-Y-23}
We have $\pi_2(Y^{2,3})\cong  (\bbZ F_n)^{n+1}$ and $H_2(\ol{Y^{2,3}})\cong (\bbZ\pi)^{n+1}$.
\end{lemma}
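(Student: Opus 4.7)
The plan is to compute both groups by Mayer--Vietoris for the decomposition $Y^{2,3}=A'\cup B$, where $A'=S^1\times(X^{2,3}\setminus D^3)$ and $B=S^2\times D^2$ meet in $S^2\times S^1$. First I would verify that $\pi_1(Y^{2,3})\cong F_n=\langle h_1,\dots,h_n\rangle$: the handlebody $X^{2,3}$ is aspherical with free fundamental group on its cocores, so $\pi_1(S^1\times X^{2,3})=\Z\times F_n$, and the surgery along $\{x_0\}\times S^1$ kills the $\Z$-factor because the meridional disc coming from $B$ caps off the surgery circle.

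For $\pi_2(Y^{2,3})$ I would pass to the universal cover. Since $X^{2,3}\simeq\bigvee_n S^1$ is aspherical, $\wt{X^{2,3}}$ is contractible, so $\wt{X^{2,3}\setminus D^3}$ is contractible minus an $F_n$-orbit of $3$-balls and is homotopy equivalent to $\bigvee_{F_n}S^2$. The surgery circle is null-homotopic in $Y^{2,3}$, so $\pi_1(A'\cap B)=\Z$ dies in $F_n$, and the induced covers $\wt{B}$ and $\wt{A'\cap B}$ are disjoint unions of $|F_n|$ copies of the original pieces. Using K\"unneth on $\wt{A'}=S^1\times\wt{W'}$, where $W'=X^{2,3}\setminus D^3$, the Mayer--Vietoris sequence collapses to a short exact sequence
\[
0\to \Z F_n \to H_2(\wt{Y^{2,3}}) \to IF_n \to 0,
\]
in which the kernel is the cokernel of the diagonal map $H_2(\wt{A'\cap B})=\Z F_n\to H_2(\wt{A'})\oplus H_2(\wt{B})=\Z F_n\oplus\Z F_n$, and the quotient is the kernel of the augmentation map $H_1(\wt{A'\cap B})=\Z F_n\to H_1(\wt{A'})=\Z$. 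Because $F_n$ is free, the augmentation ideal $IF_n$ is itself free of rank $n$ over $\Z F_n$ with basis $\{h_i-1\}$, so the sequence splits and $\pi_2(Y^{2,3})=H_2(\wt{Y^{2,3}})\cong (\Z F_n)^{n+1}$ by Hurewicz.

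For the second assertion I would pass from $\pi_2(Y^{2,3})$ to $H_2(\ol{Y^{2,3}})$ via the Cartan--Leray spectral sequence of the intermediate cover $\wt{Y^{2,3}}\to\ol{Y^{2,3}}$, rather than redoing Mayer--Vietoris downstairs (where the resulting extension involves the relation module and is harder to split directly). The deck group is $N=\ker(F_n\to \pi)$; since subgroups of free groups are free, $N$ is free, and hence $H_p(N;M)=0$ for $p\ge 2$ and every $\Z N$-module $M$. Combined with $H_0(\wt{Y^{2,3}})=\Z$, $H_1(\wt{Y^{2,3}})=0$ and the first part of the lemma, the only nonzero term on the total-degree-two line of $E^2_{p,q}=H_p(N;H_q(\wt{Y^{2,3}}))$ is
\[
E^2_{0,2}=H_0\bigl(N;(\Z F_n)^{n+1}\bigr)=(\Z F_n)^{n+1}\otimes_{\Z N}\Z\cong (\Z\pi)^{n+1},
\]
using the identification $\Z F_n\otimes_{\Z N}\Z\cong\Z[F_n/N]=\Z\pi$. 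No differentials can hit or leave this bidegree because every neighbouring group in the spectral sequence vanishes, so $H_2(\ol{Y^{2,3}})\cong(\Z\pi)^{n+1}$.

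The main obstacle is the Mayer--Vietoris bookkeeping: one must identify the two nontrivial boundary maps in the right degrees with the diagonal inclusion and the augmentation, and track the $\Z F_n$-module structure that deck transformations induce on each piece. Once this is done, the freeness of $IF_n$ over $\Z F_n$ splits the extension cleanly, and the passage to the $\pi$-cover is then a one-step spectral-sequence argument.
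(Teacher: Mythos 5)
Your proof is correct and follows essentially the same route as the paper: the first half reproduces the Mayer--Vietoris computation that the paper delegates to \cref{lem:pi2computation} (your splitting via freeness of $IF_n$ over $\Z F_n$ amounts to the same thing as the $\Ext$-vanishing of \cref{lem:ext} for a free group), and your Cartan--Leray spectral sequence for $\wt{Y^{2,3}}\to\ol{Y^{2,3}}$ is, via Shapiro's lemma, the universal coefficient spectral sequence $\Tor_p^{\Z F_n}(H_q(Y^{2,3};\Z F_n),\Z\pi)$ that the paper uses, with the same vanishing input (free groups have homological dimension one). No gaps.
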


\begin{proof}
Removing the $0$- and $1$-handles from $X$ is the same as removing the $2$- and $3$-handles from the dual handle decomposition. Thus $\pi_1(X^{2,3})$ is a free group $F_n$ with generators represented by the cocores of the $2$-handles of $X$.
Since $X^{2,3}$ is aspherical, by \cref{lem:pi2computation} we have that
$\pi_2(Y^{2,3})\cong \bbZ F_n\oplus IF_n\cong (\bbZ F_n)^{n+1}$.
This proves the first part of the claim.
Recall that $\ol{Y^{2,3}}$ denotes the pullback of the covering $\wt M\to M$ to $Y^{2,3}$ i.e.\ the $\pi$-covering.
We identify $H_2(\ol{Y^{2,3}}) \cong H_2(Y^{2,3};\Z\pi)$.  We compute this using the universal coefficient spectral sequence:
\[E_2^{p,q}= \Tor_p^{\Z F_n}(H_q(Y^{2,3};\Z F_n),\Z\pi) \Rightarrow H_{p+q}(Y^{2,3};\Z\pi).\]
Here $H_1(Y^{2,3};\Z F_n) =0$ and $\Z F_n$ has homological dimension one, so that all $\Tor_q$ groups with $q \geq 2$ vanish.  Therefore
\[H_{2}(Y^{2,3};\Z\pi) \cong \Tor_0^{\Z F_n}(H_2(Y^{2,3};\Z F_n),\Z \pi) \cong \Z\pi \otimes_{\Z F_n} H_2(Y^{2,3};\Z F_n) \cong (\Z\pi)^{n+1}\]
which completes the proof of the lemma.
\end{proof}

For later use we describe and give names to generators of $H_2(\ol{Y^{2,3}}) \cong (\Z\pi)^{1+n}$.  The first $\Z\pi$ summand, arising from $\Z F_n \otimes \Z\pi$, is represented by $\Sigma_1 := \partial (\pt \times D^3)$, where $\pt \times D^3 \subset S^1 \times D^3$, the $S^1 \times D^3$ which was removed during the surgery.  Next, $IF_n \otimes \Z\pi \cong (\Z F_n)^n \otimes \Z\pi \cong (\Z\pi)^n$.  The basis element $e_i$ of $(\Z\pi)^n$ is represented by the sphere $\alpha_i$ corresponding to $1-h_i \in IF_n$; recall that $h_i$ is the generator corresponding to the cocore of the $i$th 2-handle, and $\alpha_i$ was constructed just after the proof of \cref{lem:pi2computation}.  Call these spheres $\Sigma_2,\dots,\Sigma_{n+1}$ respectively.

Write $S^1 = D^1 \cup_{S^0} D^1$, and take the product of this decomposition with $X^{(1)}$ to split $S^1 \times X^{(1)}$ into two copies of $D^1 \times X^{(1)}$.  Let
\[M^{2,3} =  Y^{2,3}\cup_{D^1 \times \partial X^{(1)}} (D^1 \times X^{(1)}) = M \setminus ((S^1 \setminus D^1) \times X^{(1)}) = M \setminus (D^1 \times X^{(1)}).\]
Let $\ol{X^{(1)}}$ denote the $\pi$-cover: the pullback of the universal cover $\wt{X}\to X$ along the inclusion $X^{(1)}\to X$. Similarly let $\ol{M^{2,3}}$ denote the pullback of $\wt{M}\to M$ along the inclusion $M^{2,3}\to M$.
\smallskip

\begin{lemma}\label{lemma:m23-y23}
We have an isomorphism $H_2(\ol{M^{2,3}}) \cong H_2(\ol{Y^{2,3}})$.
\end{lemma}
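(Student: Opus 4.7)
My plan is to apply Mayer--Vietoris with $\Z\pi$ coefficients to the decomposition $M^{2,3} = Y^{2,3} \cup (D^1 \times X^{(1)})$, glued along $D^1 \times \partial X^{(1)}$. First I would verify that $\pi_1(M^{2,3}) = \pi$, so that $\ol{M^{2,3}}$ is in fact the universal cover. By van Kampen, $\pi_1(M^{2,3})$ is the pushout $\pi_1(Y^{2,3}) *_{\pi_1(\Sigma_n)} \pi_1(X^{(1)})$, where I write $\Sigma_n := \partial X^{(1)}$. Using that the dual handlebody $X^{2,3}$ embeds as a $3$-dimensional slice of $Y^{2,3}$ inducing an isomorphism on $\pi_1$, one identifies $\pi_1(Y^{2,3}) = F_n = \pi_1(X^{2,3})$ so that the two amalgamating maps $\pi_1(\Sigma_n) \rightrightarrows F_n$ become the two Heegaard surjections arising from $X = X^{(1)} \cup_{\Sigma_n} X^{2,3}$. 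The pushout is therefore $\pi_1(X) = \pi$.

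Next, in the Mayer--Vietoris sequence for the $\pi$-covers, two of the relevant terms vanish: $H_2(\ol{D^1 \times X^{(1)}}) = 0$ because $X^{(1)}$ deformation retracts onto $\bigvee^n S^1$, a $1$-dimensional complex; and $H_2(\ol{D^1 \times \partial X^{(1)}}) = H_2(\ol{\Sigma_n}) = 0$, because the $\pi$-cover of the closed surface $\Sigma_n$ is non-compact (the map $\pi_1(\Sigma_n) \to \pi$ is surjective onto an infinite group). This yields a short exact sequence
\[0 \to H_2(\ol{Y^{2,3}}) \to H_2(\ol{M^{2,3}}) \to K \to 0,\]
where $K$ is the kernel of the Mayer--Vietoris difference map
\[\Delta \colon H_1(\ol{\Sigma_n}) \to H_1(\ol{X^{(1)}}) \oplus H_1(\ol{Y^{2,3}}).\]

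The main step will be showing $K = 0$. Since $H_1$ of a connected space is the abelianisation of $\pi_1$, and the inclusions $\Sigma_n \hookrightarrow Y^{2,3}$ and $\Sigma_n \hookrightarrow X^{2,3}$ induce the same map on $\pi_1$ (both factor through the ``$X^{2,3}$-side'' Heegaard surjection), there is a natural $\Z\pi$-module identification $H_1(\ol{Y^{2,3}}) \cong H_1(\ol{X^{2,3}})$ under which $\Delta$ becomes the Mayer--Vietoris map for the Heegaard splitting $X = X^{(1)} \cup_{\Sigma_n} X^{2,3}$. The key input is now that $X$ is \emph{aspherical}, so $\widetilde{X}$ is contractible; applying Mayer--Vietoris to $\widetilde{X} = \ol{X^{(1)}} \cup_{\ol{\Sigma_n}} \ol{X^{2,3}}$ gives an isomorphism
\[H_1(\ol{\Sigma_n}) \xrightarrow{\cong} H_1(\ol{X^{(1)}}) \oplus H_1(\ol{X^{2,3}}).\]
In particular $\Delta$ is injective, so $K = 0$ and the inclusion $\ol{Y^{2,3}} \hookrightarrow \ol{M^{2,3}}$ induces the claimed isomorphism on $H_2$.
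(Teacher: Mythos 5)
Your proposal is correct, and it follows the same overall strategy as the paper: the identical Mayer--Vietoris decomposition $\ol{M^{2,3}} = \ol{Y^{2,3}} \cup_{\ol{D^1\times \partial X^{(1)}}} \ol{D^1 \times X^{(1)}}$, the same two vanishing observations ($H_2$ of the non-compact connected surface $\ol{\partial X^{(1)}}$ and $H_2$ of the graph-like $\ol{X^{(1)}}$ both vanish), and the same reduction to injectivity of the $H_1$ difference map. Where you diverge is in how that injectivity is established. The paper works at the chain level: it identifies $\ker\bigl(H_1(\ol{\partial X^{(1)}}) \to H_1(\ol{X^{(1)}})\bigr)$ with $\im\bigl((\partial_2^{\wt{X}})^*\bigr) \cong C^1(\wt{X})/\im\bigl((\partial_1^{\wt{X}})^*\bigr)$, computes $H_1(\ol{Y^{2,3}}) \cong H_1(\ol{X^{2,3}}) \cong \coker\bigl((\partial_1^{\wt{X}})^*\bigr)$, and checks the restricted map is an isomorphism. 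You instead recognise the whole difference map as the Mayer--Vietoris map for the Heegaard splitting $\wt{X} = \ol{X^{(1)}} \cup_{\ol{\Sigma_n}} \ol{X^{2,3}}$ and invoke $H_2(\wt{X}) = 0$. Both arguments ultimately rest on contractibility of $\wt{X}$ (the paper's exactness of the dualised handle complex is the same input in disguise), but your route is more conceptual and avoids the Fox-calculus bookkeeping. The one step you should spell out with the same care the paper does is the compatible identification $H_1(\ol{Y^{2,3}}) \cong H_1(\ol{X^{2,3}})$: it holds because the inclusion of the slice $X^{2,3}\times\{\pt\}$ into $Y^{2,3}$ induces an isomorphism on $H_1$ of $\pi$-covers (the surgery kills exactly the $\Z$ coming from the $S^1$ direction and only adds to $H_2$), and because the boundary inclusion $D^1\times\partial X^{(1)} \hookrightarrow Y^{2,3}$ factors through that slice up to homotopy. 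With that verified, your argument is complete.
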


\begin{proof}
Note that $\partial\ol{X^{(1)}}$ is a connected, non-compact surface. Consider the sequence
\begin{align*} & H_2(D^1 \times \partial(\ol{X^{(1)}}))=0 \to H_2(\ol{Y^{2,3}})\oplus 0\to H_2(\ol{M^{2,3}}) \\
\to & H_1(D^1 \times \partial(\ol{X^{(1)}}))\to H_1(\ol{Y^{2,3}})\oplus H_1(D^1 \times \ol{X^{(1)}}).\end{align*}
The kernel of $H_1(D^1 \times \partial(\ol{X^{(1)}}))\to H_1(D^1 \times \ol{X^{(1)}})$ is generated by the cocore spheres of the 1-handles of $X$. These circles are the attaching spheres of the 2-handles in the dual handle decomposition.  Therefore we can identify this kernel with the image of $\big(\partial_2^{\wt{X}}\big)^* \colon C^1(\wt{X}) \to C^2(\wt{X})$, which is isomorphic to $C^1(\wt{X})/\im\big((\partial_1^{\wt{X}})^*\big)$.  On the other hand $H_1(\ol{X^{2,3}})$ is also given by $C^1(\wt{X})/\im\big((\partial_1^{\wt{X}})^*\big)$.
Crossing with $S^1$ yields $H_1(\ol{X^{2,3}} \times S^1) \cong H_1(\ol{X^{2,3}}) \oplus \Z$.  Then surgery on this $S^1$ to obtain $Y^{2,3}$ kills the $\Z$ summand, without changing the homology of the first summand.  Thus $H_1(\ol{Y^{2,3}}) \cong \coker\big(\big(\partial_1^{\wt{X}}\big)^*\big)$ and the map
\[ H_1(D^1 \times \partial(\ol{X^{(1)}}))\to H_1(\ol{Y^{2,3}})\]
induces an isomorphism when restricted to
\[\ker\big(H_1(D^1\times \partial (\ol{X^{(1)}})) \to H_1(\ol{X^{(1)}})\big) \to  H_1(\ol{Y^{2,3}}) \]
In particular, the map
\[H_1(D^1 \times \partial(\ol{X^{(1)}}))\to H_2(\ol{Y^{2,3}})\oplus H_1(D^1 \times \ol{X^{(1)}})\]
is injective, and so $H_2(\ol{Y^{2,3}})\toiso H_2(\ol{M^{2,3}})$.  This completes the proof of the lemma.
\end{proof}

Let $M^{0,2,3}$ denote $M\setminus (D^1 \times X^1)$, where $X^1 = X^{(1)} \setminus X^{(0)} \cong\coprod^n D^1\times D^2$ denotes the union of the $1$-handles of $X$. Note that $\pi_1(M^{0,2,3}) \cong \pi_1(M) \cong \pi$, therefore $\wt{M}^{0,2,3} = \ol{M}^{0,2,3}$.
\smallskip

\begin{lemma}\label{lem:H2-M-023}
We have that $H_2(\wt{M}^{0,2,3}) \cong \bbZ\pi \oplus I\pi$.
\end{lemma}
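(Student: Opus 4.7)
The plan is to relate $H_2(\widetilde{M}^{0,2,3})$ directly to $H_2(\widetilde{M})=\pi_2(M)$, which has already been identified as $\Z\pi\oplus I\pi$ in \cref{lem:pi2computation}. Since the excerpt records that $\pi_1(M^{0,2,3})\cong \pi$, the space $\widetilde{M}^{0,2,3}$ really is the preimage of $M^{0,2,3}$ under the universal covering $\widetilde{M}\to M$, so the comparison takes place inside $\widetilde{M}$ and $\Z\pi$-module structures are transparent.

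The key geometric observation is that the removed piece $D^1\times X^1$ is a disjoint union of $n$ copies of $D^4$, each contractible, and that its boundary $\partial(D^1\times X^1)$ is a disjoint union of $n$ copies of $S^3$. Because these components are simply connected, the inclusion into $M$ is trivial on $\pi_1$, so the preimages in $\widetilde{M}$ split as $|\pi|\cdot n$ disjoint copies of $D^4$, respectively $S^3$, permuted freely by $\pi$.

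From here I would invoke the long exact sequence of the pair $(\widetilde{M},\widetilde{M}^{0,2,3})$ together with excision. The quotient $\widetilde{M}/\widetilde{M}^{0,2,3}$ is identified with the preimage of $D^1\times X^1$ modulo its boundary, which by the previous paragraph is a wedge of $|\pi|\cdot n$ copies of $S^4$. Thus $H_*(\widetilde{M},\widetilde{M}^{0,2,3})$, as a $\Z\pi$-module, is free of rank $n$ concentrated in degree $4$. In particular both $H_3$ and $H_2$ of the pair vanish, and the long exact sequence collapses to an isomorphism
\[
H_2(\widetilde{M}^{0,2,3})\;\xrightarrow{\ \cong\ }\;H_2(\widetilde{M})\cong \Z\pi\oplus I\pi,
\]
proving the lemma.

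I do not expect any substantive obstacle here: the argument is an excision/Mayer--Vietoris calculation whose only nontrivial ingredient is the observation above that the removed $D^4$'s and their $S^3$-boundaries are $\pi_1$-trivial, so that the coefficient system on them is free. This is exactly the mechanism that forces the relative homology to be $\Z\pi$-free in one degree only, making the identification with $\pi_2(M)$ immediate.
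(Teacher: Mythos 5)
Your proof is correct and is essentially the paper's argument: both exploit that the removed piece is a disjoint union of $4$-balls with $S^3$ boundaries, whose $\pi$-preimages are free $\pi$-sets of contractible (resp.\ $2$-connected) pieces, so that deleting them cannot affect $H_2$ of the universal cover. The only cosmetic difference is that you run the long exact sequence of the pair $(\wt{M},\wt{M}^{0,2,3})$ with excision, while the paper uses the equivalent Mayer--Vietoris sequence for $\wt{M}=\wt{M}^{0,2,3}\cup N$.
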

\begin{proof}
Let $N$ be the preimage of $X^1\times D^1\cong \coprod^n D^4$ in $\wt M$. Then from the Mayer-Vietoris sequence associate to the decomposition $\wt{M} = \wt{M}^{0,2,3} \cup N$, namely
\[H_2(\partial N)=0\to H_2(\wt{M}^{0,2,3})\oplus0\to H_2(\wt{M})\to H_1(\partial N)=0,\]
we see that $H_2(\wt{M}^{0,2,3})\toiso H_2(\wt{M})\cong \bbZ\pi \oplus I\pi$; recall that the second isomorphism was shown in \cref{lem:pi2computation}. This proves the lemma.
\end{proof}

Note that $M^{0,2,3}$ can also be obtained from $M^{2,3}$ by glueing in the product $D^1 \times X^{(0)}$ of $D^1$ with the $0$-handle of~$X$.  In fact
\[M^{0,2,3} = M \setminus (D^1 \times X^1) = M \setminus (\coprod^n D^1 \times D^3) = M^{2,3} \cup (D^1 \times X^{(0)}) = M^{2,3} \cup D^4. \]
The final glueing is performed along $\partial D^4 \setminus (\coprod^{2n} S^0 \times D^3)$, where the removed $3$-balls correspond to the feet of the~$n$ 1-handles.
Let $M'$ denote two copies of $M^{2,3}$ glued together along this same $S^3\setminus \coprod^{2n}D^3$, and let $\ol{M'}$ denote the $\pi$-covering.
\smallskip

\begin{lemma}
We have $H_2(\ol{M'}) \cong (\Z\pi)^{n+2} \oplus I\pi$ and $H_1(\ol{M'})\cong I\pi$.
\end{lemma}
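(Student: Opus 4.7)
The natural approach is to apply the Mayer--Vietoris sequence to the decomposition $M' = M^{2,3}\cup_A M^{2,3}$, where $A = S^3\setminus\coprod^{2n}D^3$. Since $A$ is simply connected (it is homotopy equivalent to $\bigvee^{2n-1}S^2$), its $\pi$-cover is the trivial $\coprod_\pi A$, giving $H_0(\ol A)\cong \bbZ\pi$, $H_2(\ol A)\cong\bbZ\pi^{2n-1}$, and $H_k(\ol A)=0$ otherwise. For the pieces themselves, $H_2(\ol{M^{2,3}})\cong\bbZ\pi^{n+1}$ by \cref{lemma:pi-2-Y-23} and \cref{lemma:m23-y23}. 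A preliminary point is that both inclusions $M^{2,3}\hookrightarrow M'$ induce surjections on $\pi_1\to\pi$, so that each $\ol{M^{2,3}}$ sits inside $\ol{M'}$ as a connected subspace; this is seen by pushing a loop in $X$ that represents a given $g\in\pi$ to the complementary half of the $S^1$ factor in the description $M = \cl(X\setminus D^3)\times S^1\cup_f S^2\times D^2$.

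Before running the main sequence I would establish that $H_1(\ol{M^{2,3}})=0$. First, the Mayer--Vietoris for $M^{0,2,3}=M^{2,3}\cup_A D^4$ shows $H_0(\ol A)=\bbZ\pi\to H_0(\ol{M^{2,3}})\oplus H_0(\ol{D^4}) = \bbZ\oplus \bbZ\pi$ is given by $a\mapsto(\epsilon(a),-a)$ and is therefore injective, so the connecting map $H_1(\ol{M^{0,2,3}})\to H_0(\ol A)$ vanishes; since $H_1(\ol A) = 0$ and $H_1(\ol{D^4})=0$, this yields $H_1(\ol{M^{2,3}})\cong H_1(\ol{M^{0,2,3}})$. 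The latter in turn vanishes by yet another Mayer--Vietoris: $\ol{M^{0,2,3}}$ is $\wt M$ with a $\pi$-equivariant family of open $4$-balls removed, and $H_1(\wt M)=0$ by \cref{lem:pi2computation}, with $H_k(\coprod S^3)=0$ for $k=1$.

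With these inputs in hand, the relevant part of the Mayer--Vietoris sequence for $M'$ reads
\[0 = H_1(\ol A) \to H_1(\ol{M^{2,3}})^2 = 0 \to H_1(\ol{M'}) \to H_0(\ol A) = \bbZ\pi \xrightarrow{(\epsilon,-\epsilon)} \bbZ\oplus\bbZ,\]
so $H_1(\ol{M'})$ is identified with $\ker(\epsilon)=I\pi$. For $H_2$, the sequence gives
\[\bbZ\pi^{2n-1}\xrightarrow{\alpha} \bbZ\pi^{n+1}\oplus\bbZ\pi^{n+1}\to H_2(\ol{M'})\to H_1(\ol A)=0,\]
so $H_2(\ol{M'})\cong \coker(\alpha)$. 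The key observation is that $\alpha = (\alpha_0,-\alpha_0)$, where $\alpha_0\colon H_2(\ol A)\to H_2(\ol{M^{2,3}})$ is the inclusion-induced map appearing in the Mayer--Vietoris for $M^{0,2,3} = M^{2,3}\cup_A D^4$. This holds because under the identification of both copies of $M^{2,3}$ inside $M'$ with $M^{2,3}$, the two inclusions $A\hookrightarrow M^{2,3}$ in $M'$ are literally the same as the single inclusion $A\hookrightarrow M^{2,3}$ in $M^{0,2,3}$. A change of basis $(a,b)\mapsto(a,a+b)$ on $\bbZ\pi^{n+1}\oplus\bbZ\pi^{n+1}$ identifies $\coker(\alpha)$ with $\coker(\alpha_0)\oplus\bbZ\pi^{n+1}$, and \cref{lem:H2-M-023} gives $\coker(\alpha_0)\cong H_2(\ol{M^{0,2,3}})\cong \bbZ\pi\oplus I\pi$. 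Altogether
\[H_2(\ol{M'}) \cong \bbZ\pi\oplus I\pi \oplus \bbZ\pi^{n+1} \cong \bbZ\pi^{n+2}\oplus I\pi,\]
as claimed.

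The main obstacle is bookkeeping: making sure that the inclusion-induced maps $H_2(\ol A)\to H_2(\ol{M^{2,3}})$ appearing in the two Mayer--Vietoris sequences (for $M^{0,2,3}$ and for $M'$) are literally the same $\bbZ\pi$-linear map $\alpha_0$, and that the $\pi$-cover structure on $M'$ is consistent with this identification. Once this naturality is set up, the computation reduces to formal cokernel manipulations with the already-known $H_2(\ol{M^{0,2,3}})$.
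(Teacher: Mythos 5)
Your computation of $H_2(\ol{M'})$ is correct and follows the paper's proof essentially verbatim: the same pair of Mayer--Vietoris sequences for $M^{0,2,3}=M^{2,3}\cup_A D^4$ and $M'=M^{2,3}\cup_A M^{2,3}$, combined with the observation that the cokernel of a diagonal-type map $A\to B\oplus B$ is $\coker(A\to B)\oplus B$, which is exactly the paper's ``general fact''. For $H_1(\ol{M'})$ the paper switches to a connected-sum-style decomposition along $\pi\times S^3$ whose pieces visibly have trivial $H_1$, whereas you stay with the decomposition along $A$ and therefore must first verify $H_1(\ol{M^{2,3}})=0$ via two auxiliary Mayer--Vietoris arguments; this extra work is carried out correctly, and both routes end at the same identification $H_1(\ol{M'})\cong\ker(\Z\pi\to\Z\oplus\Z)=I\pi$.
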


\begin{proof}
From the decomposition $M^{0,2,3} = M^{2,3} \cup D^4$ we obtain a Mayer-Vietoris sequence
\[H_2(\pi \times (S^3\setminus \coprod^{2n}D^3))\to H_2(\ol{M^{2,3}})\oplus 0\to H_2(\wt{M}^{0,2,3}) \to 0.\]
 Then we have
\[H_2(\pi\times (S^3\setminus \coprod^{2n}D^3))\to \bigoplus^2 H_2(\ol{M^{2,3}})\to H_2(\ol{M'})\to 0\]
and from the above we see that $H_2(\ol{M'})\cong H_2(\ol{M^{2,3}})\oplus H_2(\wt{M}^{0,2,3})$.  This follows from the general fact that given a homomorphism $A \to B$ of modules, and the corresponding diagonal morphism $\Delta \colon A \to B \times B$, the quotient $(B \times B) / \Delta(A)$ is isomorphic to $B \times (B/A)$; the map $(b,b')\Delta(A) \mapsto (b'-b,b'A)$ is an isomorphism with inverse $(b,b'A) \mapsto (b'-b,b')\Delta(A)$.
In our case $A = H_2(\pi \times (S^3\setminus \coprod^{2n}D^3))$ and $$B= H_2(\ol{M^{2,3}}) \cong H_2(\ol{Y^{2,3}}) \cong (\Z\pi)^{n+1}$$ generated by the spheres $\Sigma_1^1,\dots,\Sigma_{n+1}^1$, as can be seen by combining \cref{lemma:pi-2-Y-23,lemma:m23-y23}, where $\Sigma_i^j$ denotes the $i$th sphere $\Sigma_i$ in the $j$th copy of $M^{2,3}$.
\cref{lem:H2-M-023} therefore implies that $$B/A = H_2(\wt{M}^{0,2,3}) \cong \Z\pi \oplus I\pi$$
generated by the diagonal elements $\Sigma_i^1 \cup \Sigma_i^2$.  Here $\Sigma_1^1 \cup \Sigma_1^2$ represents $(1,0) \in \Z\pi \oplus I\pi$, while $\Sigma_{i+1}^1 \cup \Sigma_{i+1}^2$ represents $(0,1-h_i) \in \Z\pi \oplus I\pi$ (here a union of spheres can be replaced by the connected sum if desired).  This completes the proof of the first part of the lemma.

To see the second part of the lemma we need to compute $H_1(\ol{M'})$.
Since removing $D^3\times S^1$ from a $4$-manifold does not change the fundamental group, we have $\pi_1(M')\cong \pi_1(M_{1,\gamma}\#M_{1,0})\cong \pi*\pi$.
We can compute the homology $H_1(\ol{M'})$ using the Mayer-Vietoris sequence for $\ol{M'}= \wt{M} \sm (\pi \times D^4) \cup_{\pi \times S^3} \wt{M}\sm (\pi \times  D^4)$.  This yields
\[\xymatrix @C-0.3cm @R-0.7cm{0 = \bigoplus_2 H_1(\wt{M} \sm (\pi \times D^4)) \ar[r] & H_1(\ol{M'}) \ar[r] & H_0(\pi \times S^3)\cong \Z\pi \ar[r]^-{\mathrm{aug}^2} & \\ \bigoplus_2 H_0(\wt{M} \sm (\pi \times D^4)) \cong \Z^2 & & & }\]
which easily implies the second part of the lemma.
\end{proof}

The manifold $M'$ is homeomorphic to the manifold obtained from $M\#M$ (glued together by taking out $D^1 \times X^{(0)}$ from each copy and identifying the boundaries), by removing $D^1 \times X^1$ in each copy. The cores of these solid tori removed from $M\#M$ represent elements $g_i^{-1}\cdot g_i'$ of $\pi_1(M\#M)\cong \pi \ast \pi$, where $g_i'$ is the same generator as $g_i$ in the second copy of $M$.

To obtain the closed manifold $P$ we need to glue in $n$ copies of $S^2\times D^2$ to $M'$. Using the same identification of the boundary components $S^2\times D^1\subseteq M^{2,3}$ in both copies, we have a unique identification of the $n$~boundary components of~$M'$ with $S^2\times S^1$. Use Poincar\'{e} duality to view $\gamma$ as a homomorphism $H_1(X;\Z)\to \bbZ/2$. Use a point in the $0$-handle of~$X$ as a basepoint, so that every $1$-handle defines an element in $\pi_1(X)$. If this element is mapped to zero under $\pi_1(X)\xrightarrow{h} H_1(X;\Z)\xrightarrow{\gamma}\bbZ/2$, then glue $S^2\times D^2$ to the torus corresponding to the $1$-handle via the identity on $S^2\times S^1$. Otherwise, glue using
\[f((\phi,\psi),e^{i\theta})=((\phi,\psi+\theta),e^{i\theta}).\]
This completes our reconstruction of the 4-manifold $P$, whose   intersection form we are trying to compute.
\smallskip

\begin{lemma}
We have $\pi_2(P) \cong (\Z\pi)^{2n+1} \oplus I\pi$.
\end{lemma}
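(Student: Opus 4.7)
The plan is to compute $\pi_2(P) = H_2(\wt{P})$ by applying Mayer--Vietoris to the decomposition $\wt{P} = \ol{M'} \cup (\pi \times N)$, where $N = \coprod_{i=1}^n S^2 \times D^2$ is the union of the $n$ attached $2$-handles, glued along $\pi \times \coprod_i (S^2 \times S^1) \cong \partial \ol{M'}$. A key preliminary observation is that the preimages in $\wt P$ of both $N$ and $\partial N$ are trivial bundles of the form $\pi \times (-)$, because the $S^1$ factor of each boundary torus represents the surgery circle $g_i(g_i')^{-1}$, which is killed by the fold $\pi * \pi \to \pi$ and hence lies in $K = \pi_1(\ol{M'})$.

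Substituting the values previously established in the excerpt, namely $H_2(\ol{M'}) \cong (\Z\pi)^{n+2} \oplus I\pi$ and $H_1(\ol{M'}) \cong I\pi$, together with $H_2(\pi \times \partial N) = H_2(\pi \times N) = (\Z\pi)^n$ and $H_1(\pi \times \partial N) = (\Z\pi)^n$, the Mayer--Vietoris sequence becomes
\[(\Z\pi)^n \xrightarrow{\alpha} \bigl((\Z\pi)^{n+2} \oplus I\pi\bigr) \oplus (\Z\pi)^n \to H_2(\wt{P}) \to (\Z\pi)^n \xrightarrow{\beta} I\pi.\]
The map $\alpha$ is split injective because its projection to the second $(\Z\pi)^n$-summand is the identity: the level sphere $[S^2 \times \pt]_i$ in the $i$th boundary torus agrees with the level sphere sitting inside the $i$th attached $S^2 \times D^2$. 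Quotienting by the image of $\alpha$ reduces the computation to a short exact sequence
\[0 \to (\Z\pi)^{n+2} \oplus I\pi \to H_2(\wt{P}) \to \ker\beta \to 0.\]

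The connecting map $\beta$ sends each generator $[\pt \times S^1]_i$ to the class of the surgery circle $g_i(g_i')^{-1}$ in $K^{ab} \cong H_1(\ol{M'}) = I\pi$; under the canonical identification this is $g_i - 1$, so $\beta$ agrees with the standard augmentation surjection associated to the generating set $\{g_1,\ldots,g_n\}$ of $\pi$. The main obstacle is to identify $\ker\beta$ as $(\Z\pi)^{n-1}$ and to show that the short exact sequence above splits. I plan to handle both simultaneously by producing explicit lifts into $H_2(\wt{P})$: for each of the $n-1$ independent $\Z\pi$-relations among $\{g_i-1\}$, which come from the $2$-handle attaching words of the underlying $3$-manifold $X$, one constructs an immersed sphere in $\wt{P}$ whose image under the connecting map is that relation. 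Concretely, starting from a cocore disk of a $2$-handle of $X$ lifted into $\ol{M'}$ and capping it off along the now-nullhomotopic surgery circles using the disks $\{\pt\} \times D^2 \subset \pi \times N$ produces such spheres. These together split off a free summand of rank $n-1$, and combining with the $H_2(\ol{M'})$ contribution yields $\pi_2(P) \cong (\Z\pi)^{2n+1} \oplus I\pi$ as claimed.
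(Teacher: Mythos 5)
Your Mayer--Vietoris setup, the identification of $\coker\alpha\cong(\Z\pi)^{n+2}\oplus I\pi$, and the identification of the connecting map $\beta\colon(\Z\pi)^n\to I\pi$, $e_i\mapsto g_i-1$, all match the paper's argument. The gap is in the final step: $\ker\beta$ is \emph{not} isomorphic to $(\Z\pi)^{n-1}$. Since $\wt X$ is contractible, $\ker\beta=\ker\partial_1^{\wt{X}}=\im\partial_2^{\wt{X}}\cong(\Z\pi)^n/\im\partial_3^{\wt{X}}$, where $\partial_3^{\wt{X}}=(h_1-1,\dots,h_n-1)$ is injective. If this cokernel were projective, the complex $C_*(\wt X)$ would exhibit $\Z$ as having projective dimension at most $2$ over $\Z\pi$, contradicting $H^3(\pi;\Z\pi)\cong\Z\neq 0$ for a COAT group. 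Concretely, the relation module among the $g_i-1$ is generated by all $n$ Fox-derivative rows of the $2$-handle attaching words, and these rows satisfy the syzygy $\sum_i(h_i-1)\cdot(D_1R_i,\dots,D_nR_i)=0$ coming from the $3$-cell; you cannot discard one of them, and the quotient is not free. So even if your short exact sequence split, it would give $(\Z\pi)^{n+2}\oplus I\pi\oplus\ker\beta$, which is not $(\Z\pi)^{2n+1}\oplus I\pi$.

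The way to repair this is not to split off $\ker\beta$ at all. One constructs $n$ spheres $B_i$ from the \emph{cores} $b_i$ of the $2$-handles of $X$ (one copy in each copy of $M^{2,3}$, capped off with the surgery discs); note these are cores, not cocores --- the cocores already account for the spheres $\Sigma^j_{i+1}$ appearing in $H_2(\ol{M'})$. Under $\delta$ the $B_i$ hit the full Fox-derivative matrix $\partial_2^{\wt{X}}$, and they satisfy the relation $[\Sigma_1^1]+[\Sigma_1^2]=\sum_i(p_i\circ\partial_3^{\wt{X}}(1))[B_i]$ in $H_2(\wt P)$. Comparing the entire four-term sequence with the exact sequence $0\to E\oplus\Z\pi\to E\oplus(\Z\pi)^n\to(\Z\pi)^n\to I\pi\to 0$ assembled from $C_*(\wt X)$ and applying the five lemma gives $H_2(\wt P)\cong E\oplus(\Z\pi)^n\cong I\pi\oplus(\Z\pi)^{2n+1}$. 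The failure of $\ker\beta$ to be free is exactly compensated by the fact that the $\Z\pi$ summand of $H_2(\ol{M'})$ generated by $\Sigma_1^1\cup\Sigma_1^2$ is absorbed into the span of the $B_i$.
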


\begin{proof}
On the level of $\pi$-coverings we obtain the following Mayer-Vietoris sequence
\begin{align*} & H_2(\coprod_{i=1}^n\pi\times S^2\times S^1)\to H_2(\overline{M'})\oplus H_2(\coprod_{i=1}^n\pi\times S^2\times D^2)\to H_2(\wt P) \\
\to & H_1(\coprod_{i=1}^n\pi\times S^2\times S^1)\to H_1(\overline{M'})\oplus 0\to 0.\end{align*}
Using the computations above, the Mayer-Vietoris sequences yields the following exact sequence
\[0\to(\bbZ\pi)^{n+2}\oplus I\pi\to H_2(\wt P)\xrightarrow{\delta} (\bbZ\pi)^n\to I\pi\to 0.\]
The last map sends the generator of the $j$-th summand to $1-g_j$ where $g_j\in\pi$ denotes the element represented by the $j$-th $1$-handle.

Let $\{b_i\}_{1\leq i\leq n}$ denote the cores of the $2$-handles of~$X$. Take one copy of $b_i\times \{0\}$ in each copy of $M^{2,3}$ in~$M'$. Their boundaries coincide where they lie on the $0$-handle (which we used to connect the two copies of $M^{2,3}$). Thus, we obtain a two sphere with one $D^2$ removed for every time that the boundary of $b_i$ runs over a $1$-handle. We can fill these copies of $D^2$ in using $S^2\times D^2$.
This produces an element $B_i$ in $H_2(\wt{P})$ which is sent under the boundary map $\delta$ in the Mayer-Vietoris sequence to the Fox derivatives; i.e.\ if we identify the free submodule of $H_2(\wt{P})$ generated by each of the $B_i$ with $\Z\pi \subset C_2(\wt{X})$ then $\delta| = \partial_2^{\wt{X}} \colon \Z\pi \to (\Z\pi)^n$.  Also note that we have the relation
$$[\Sigma_1^1] + [\Sigma_1^2] = \sum_{i=1}^n (p_i \circ \partial_3^{\wt{X}}(1))[B_i] \in H_2(P;\Z\pi),$$
where $p_i \colon (\Z\pi)^n \to \Z\pi$ is projection onto the $i$-th summand, since the $\Sigma^j_1$ sphere represents the boundary of the 3-handle of $X$.  The use of the surgery discs in the $B_i$ cancel homologically.

In the following diagram let $E:= I\pi \oplus (\Z\pi)^{n+1}$ so that $H_2(\overline{M'}) \cong E \oplus \Z\pi$, with the $\Z\pi$ summand which has been separated out generated by $\Sigma_1^1 \cup \Sigma_1^2$.  The bottom row is the exact sequence computed from the Mayer-Vietoris sequence above.
\[\xymatrix @C+0.3cm{0 \ar[r] \ar[d]^-{=} & E \oplus \Z\pi \ar[r]^-{\begin{pmatrix} \Id & 0 \\ 0 & \partial_3^{\wt{X}} \end{pmatrix}} \ar[d]^-{=} & E \oplus (\Z\pi)^n \ar[r]^-{\begin{pmatrix} 0 & \partial_2^{\wt{X}}\end{pmatrix}} \ar[d] & (\Z\pi)^n \ar[r]^-{\partial_1^{\wt{X}}} \ar[d]^-{=} & I\pi \ar[r] \ar[d]^-{=} & 0  \\
0 \ar[r] & E \oplus \Z\pi \ar[r]  & H_2(\wt{P}) \ar[r]  & (\Z\pi)^n \ar[r] & I\pi \ar[r]  & 0
}\]
The central vertical map is defined as follows.  Send $E$ to $H_2(\wt{P})$ using the same map as in the bottom row.  Send the $i$-th basis vector $e_i \in (\Z\pi)^n$ to the class $[B_i]$.  We can see from the description of the $B_i$ above that the diagram commutes.  The five lemma then implies that the middle vertical map is an isomorphism, so that $\pi_2(P) \cong H_2(\wt{P}) \cong I\pi \oplus (\bbZ\pi)^{2n+1}$ as claimed.
\end{proof}

Next we describe the generators of $$\pi_2(P) \cong \Z\pi \oplus I\pi \oplus (\Z\pi)^n \oplus (\Z\pi)^n$$ explicitly.
The first $\Z\pi$ summand is represented by $\Sigma_1^1$.  In the $I\pi$ summand, $g_i-1$ is represented by $\Sigma_{i+1}^1 \cup \Sigma_{i+1}^2$.  The first $(\Z\pi)^n$ summand has $e_i$ represented by $\Sigma^2_{i+1}$.  Here we choose $\Sigma^2_{i+1}$ instead of $\Sigma^1_{i+1}$, as can be achieved by a basis change, in order to obtain a simpler matrix representing the intersection form in the next lemma.  The last $(\Z\pi)^n$ summand is generated by the $B_i$ spheres.  In each instance of $i$ we have $i=1,\dots,n$.

\smallskip

\begin{lemma}\label{lemma:int-form-on-P}
  The intersection form on $\pi_2(P)$ is given as follows:
\[\begin{blockarray}{ccccc}
&\bbZ\pi&I\pi&(\bbZ\pi)^n&(\bbZ\pi)^n\\
\begin{block}{c(cccc)}
\bbZ\pi&0&1&0&0\\
I\pi&1&2&(1-g_j^{-1})& 0\\
(\bbZ\pi)^n&0&(1-g_i)&(1-g_i)(1-g_j^{-1})& \delta_{ij}\\
(\bbZ\pi)^n&0& 0 & \delta_{ij} &\sum_k\gamma(g_k)(D_kR_i)T(D_kR_j)
\\\end{block}\end{blockarray}\]
where the precise meaning of the entries is explained in the next two paragraphs.
\end{lemma}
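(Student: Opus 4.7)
The plan is to compute each entry of the intersection matrix by exhibiting explicit transverse immersed $2$-sphere representatives for each generator of $\pi_2(P)$ and counting intersection points with signs and group-element decorations, as in Wall's treatment of equivariant intersection forms. The representatives are already fixed in the paragraph above the statement: $\Sigma_1^1$ in the first copy of $M_{1,\gamma}$ for the $\Z\pi$ generator; the combination $\Sigma_{i+1}^1\cup\Sigma_{i+1}^2$ for the $I\pi$ generator $g_i-1$; the single sphere $\Sigma_{i+1}^2$ in the second copy of $M_{1,0}$ for the first $(\Z\pi)^n$; and the capped-off 2-handle core $B_i$ for the second $(\Z\pi)^n$. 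I would first push these into transverse position and, throughout, use that intersections between representatives lying in different copies of $M^{2,3}$ vanish unless they meet in one of the glued-in $S^2\times D^2$ caps or in the connecting 0-handle region.

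The first three rows and columns of the matrix can be read off from the intersection-form computation for $M_\sigma$ in \cref{subsec:ex1}, applied separately to each copy and summed, together with the observation that the surgery locus can be chosen disjoint from the $\Sigma$-representatives. The only subtlety is that both $M_{1,\gamma}$ and $M_{1,0}$ have the twisted gluing $\sigma=1$ (they only differ in spin structure), so both copies contribute to self-pairings of $\Sigma_{i+1}^1\cup \Sigma_{i+1}^2$: this doubling produces the entry $2$ in the $(I\pi,I\pi)$ slot, while the entries $1$, $1-g_j^{-1}$, and $(1-g_i)(1-g_j^{-1})$ come from the single relevant copy. The $B_i$ pairings with $\Sigma_1^1$ vanish because $\Sigma_1^1$ lies inside the 3-handle region of $X$, away from any $2$-handle core; the pairings $\lambda(B_i,\Sigma_{j+1}^l)$ are computed by noting that the path $w_j$ used to construct $\Sigma_{j+1}$ is dual to the $j$-th $2$-handle cocore, so geometrically $b_i\cdot w_j=\delta_{ij}$, and the contributions from the two copies combine to yield either $0$ (for the tubed $I\pi$ representative, where they cancel after orientation choice) or $\delta_{ij}$ (for the first $(\Z\pi)^n$ representative).

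The key and most delicate entry is the self-pairing $\lambda(B_i,B_j)$, which I would compute as follows. Recall that $B_i$ is built from $b_i\times\{0\}$ in each of the two copies of $M^{2,3}$, joined where $\partial b_i$ lies on the 0-handle, with each gap over the $k$-th $1$-handle filled by a cap disc in the corresponding $S^2\times D^2$ that was glued in to form $P$. The number of such caps (counted with sign) is precisely the Fox derivative $D_kR_i$, by definition of Fox calculus applied to the attaching word $R_i$ of the $i$-th 2-handle. Inside each $S^2\times D^2$ fill, a cap of $B_i$ meets a cap of $B_j$ in zero points when the gluing is the identity ($\gamma(g_k)=0$) but in exactly one point when the gluing is twisted by the Euler-angle-rotation $f$ of \cref{subsec:ex1} ($\gamma(g_k)=1$); the transposed factor $T(D_kR_j)$ records the adjoint convention on the second variable of the hermitian form. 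Summing these contributions over all $2n$ caps in each of the $n$ $1$-handle fills gives the asserted formula $\sum_k \gamma(g_k)(D_kR_i)T(D_kR_j)$.

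The main obstacle is precisely this last step: one has to verify that the local contribution of a pair of cap discs inside a twisted $S^2\times D^2$ is a single transverse point, track all the signs and $\pi$-labels carefully, and identify the bookkeeping with the Fox derivatives $D_kR_i$ and the involution $T$ on $\Z\pi$. A secondary subtlety is that the cap discs must be constructed so as to be generically transverse to each other and to the $b_i^l\times\{0\}$ pieces, so that no ``extra'' intersections are accidentally introduced; this can be arranged by choosing the cap discs radially inside each $S^2\times D^2$, analogous to the construction of the discs $C_i^j$ in \cref{subsec:ex1}.
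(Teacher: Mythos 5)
Your proposal is correct and follows essentially the same route as the paper's proof: read off the $\Sigma$-sphere pairings from the computation in \cref{subsec:ex1}, get the $\delta_{ij}$ entries from the core--cocore duality between $b_i$ and the cocore spheres $\Sigma_{j+1}^l$, and obtain $\lambda(B_i,B_j)=\sum_k\gamma(g_k)(D_kR_i)\overline{(D_kR_j)}$ from the cap discs meeting in a single point only in the twisted $S^2\times D^2$ fills, with Fox derivatives doing the bookkeeping. Your elaborations (the doubling that yields the entry $2$, and the cancellation giving $\lambda(I\pi\text{-generator},B_j)=0$) are consistent with, and somewhat more explicit than, what the paper records.
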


The entries are interpreted as follows.  In the bottom right $2 \times 2$ block, each entry represents an $n \times n$ matrix over $\Z\pi$, and we have written the $(i,j)$ entry. The Kronecker delta symbols $\delta_{ij}$ correspond to $n \times n$ identity matrices.  Recall that $g_1,\dots,g_n$ are our chosen generators of $\pi$.
In the bottom right entry, recall that $\gamma \in \Hom(\pi,\Z/2)$, and then for each $g_k$ consider $\gamma(g_k)$ as an element of $\Z$ via the natural inclusion $\Z/2 = \{0,1\} \subset \Z$.

The $(2,3)$ and $(3,2)$ entries are row and column vectors respectively, and we have written the $j$th, $i$th entries.
Intersections involving the $I\pi$ summand should be interpreted via the inclusion $I\pi \subset \Z\pi$ and the identification of \cref{lem:intersec}.
For example for $\zeta\cdot e_j$, where $e_j$ is the $j$th basis vector in the first $\Z\pi^n$, $\zeta \in \Z\pi$, and for $\beta\in I\pi$, we have $\lambda(\beta,\zeta\cdot e_j)=\beta(1-g_j^{-1})\ol{\zeta}$.

\begin{proof}[of Lemma~\ref{lemma:int-form-on-P}]

The intersections between the $\Sigma_i^j$ spheres have already been computed in the previous subsection.

The spheres $\Sigma_i^j$ use the cocore of the $i$-th 2-handle of $X$ in what was the $j$-th copy of $M_1$ of the connected sum, before the final set of surgeries.  This cocore of course intersects the core $b_i$ of the $i$-th 2-handle, which forms part of $B_i$.  The entries in the last row and column, excluding the bottom right entry, follow.

It remains to compute the intersection form on the free summands corresponding to the cores $b_i$ of the $2$-handles. Let $B_i,B_j$ denote the generators of the $i$-th and $j$-th summand respectively. They intersect only in the caps constructed in those copies of  $S^2\times D^2$ which were attached using the non-trivial glueing i.e.\ we have $\lambda(B_i,B_j)=\sum_k\gamma(g_k)(D_kR_i)\overline{(D_kR_j)}$; recall that $D_kR_i$ denotes the $k$-th Fox derivative of the relation corresponding to the $i$-th $2$-handle.
\end{proof}

\subsection{Parity of intersection forms detects the \texorpdfstring{$\Z/2$}{Z/2} summand of \texorpdfstring{$\Omega_4^{Spin}(X)$}{the bordism group}}\label{sec:parity}

We emphasise that the parity does not depend on a choice of spin structure; like the signature it can be computed independently of the choice of normal 1-smoothing, from the intersection form.
\smallskip

\begin{thm}\label{thm:parity-detects-sec}
Let $[M\xrightarrow{c} X]\in\Omega^{Spin}_4(X)$ with $c_*\colon \pi_1(M)\toiso \pi_1(X)$ an isomorphism. Then $[M\xrightarrow{c} X]$ lies in the kernel of the projection to $H_3(X;\bbZ/2)\cong\bbZ/2$ if and only if the equivariant intersection of $M$ is even.
\end{thm}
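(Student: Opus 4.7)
The plan is to check the biconditional on the explicit models $M_0$, $M_{1,\gamma}$, and $P_\gamma$ constructed in \cref{subsec:ex1,subsec:ex2}. The parity of the equivariant intersection form is invariant under $S^2 \times S^2$-stabilisation by \cref{lem:parity-stable-diff-invariant}, and by the same argument also under connected sum with $K3$: since $K3$ is simply connected with even ordinary intersection form, $\pi_2(M \# K3) \cong \pi_2(M) \oplus \Z\pi \otimes_\Z H_2(K3;\Z)$, and the summand $\Z\pi \otimes \lambda_{K3}$ is even, so the parities agree. Since the three model families together with $\pm K3$-sums realise every bordism class in $\Omega_4^{Spin}(X) \cong \Z \oplus H_2(X;\Z/2) \oplus \Z/2$, and since the projection to $H_3(X;\Z/2)$ sends $M_0$ and $P_\gamma$ to $0$ and $M_{1,\gamma}$ to $1$, it suffices to determine the parity of each of these three families.

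The algebraic heart of the argument is \cref{lem:quadratic=even-in-free}: because our manifolds are spin, $\lambda_M$ admits a quadratic refinement, and $\pi_2$ of each model decomposes as $I\pi \oplus F$ with $F$ a free $\Z\pi$-module, so the parity of $\lambda_M$ equals the parity of its restriction to the $I\pi$ summand. By \cref{End-Ipi} and \cref{lem:intersec}, any hermitian form on $I\pi$ extends uniquely to one on $\Z\pi$, and under the resulting identification $\Hom_{\Z\pi}(I\pi, I\pi^*) \cong \Z\pi$ the involution $\lambda \mapsto \lambda^*$ corresponds to $a \mapsto \ol{a}$. Thus $\lambda$ is even if and only if its corresponding element $a \in \Z\pi$ can be written as $q + \ol{q}$ for some $q \in \Z\pi$; writing $q = \sum q_g g$, the coefficient of $1 \in \pi$ in $q + \ol{q}$ is $2 q_1$, so this happens if and only if the coefficient of the neutral element in $a$ is even.

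Applying this criterion I would read off the $I\pi$-restrictions from \cref{subsec:ex1,subsec:ex2}: for $M_0$ the restriction to $I\pi$ vanishes, giving $a = 0$ (even); for $M_{1,\gamma}$ the restriction is $\lambda(\alpha,\beta) = \alpha \ol{\beta}$, corresponding to $a = 1$ (odd); and for $P_\gamma$ the $(I\pi, I\pi)$-entry in the matrix of \cref{lemma:int-form-on-P} is $2$, corresponding to $a = 2 = 1 + \ol{1}$ (even). These values precisely match the $H_3(X;\Z/2)$-coordinate, which proves the theorem. The main obstacle will be the bookkeeping in identifying the non-free $I\pi$-summand and extracting the correct restriction from the intersection-form matrices; once the reduction via \cref{lem:quadratic=even-in-free} is in place, the rest is a direct computation.
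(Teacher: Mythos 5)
Your proposal is correct and follows essentially the same route as the paper: reduce to the explicit models via stable invariance of parity, use \cref{lem:quadratic=even-in-free} to restrict to the $I\pi$ summand, and apply \cref{lem:intersec} to read off evenness from the entries $0$, $1$, and $2$ (the paper phrases the oddness of $M_1$ as $1\notin\im(1+T)$ on $\Z\pi$, which is your neutral-coefficient criterion). Your explicit check that connected sum with $K3$ preserves parity is a detail the paper leaves implicit, but it is not a different method.
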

\smallskip

\begin{remark}
  The proof of the theorem is identical for topological 4-manifolds considered up to stable homeomorphism.
\end{remark}
\smallskip

\begin{proof}
The parity of the equivariant intersection form is a stable diffeomorphism invariant by \cref{lem:parity-stable-diff-invariant}, and thus it suffices to prove the statement of the theorem for the representatives constructed above. As seen in \cref{subsec:ex1}, for every element $[M\xrightarrow{c} X]$ as in the statement that does not lie in the kernel of the projection, $M$ is stably diffeomorphic to a manifold $M_1$ with $\pi_2(M_1)\cong \bbZ\pi\oplus I\pi$ and equivariant intersection form
\[\begin{blockarray}{ccc}
&I\pi&\bbZ\pi\\
\begin{block}{c(cc)}
I\pi& 1 &1\\
\bbZ\pi&1&0\\\end{block}\end{blockarray}\]
By \cref{lem:intersec} this form is odd, since $1$ does not lie in the image of $\bbZ\pi\xrightarrow{1+T}\bbZ\pi$.

If $[M\xrightarrow{c} X]$ lies in the kernel of the projection to $\Z/2$, then by \cref{subsec:ex2}, $M$ is stably diffeomorphic to a manifold $P$ with equivariant intersection form
\[\begin{blockarray}{ccccc}
&\bbZ\pi&I\pi&(\bbZ\pi)^n&(\bbZ\pi)^n\\
\begin{block}{c(cccc)}
\bbZ\pi & 0 & 1 & 0 & 0\\
I\pi & 1 & 2 & (1-g^{-1}_j) & 0\\
(\bbZ\pi)^n & 0 & (1-g_i) & (1-g_i)(1-g_j^{-1}) & \delta_{ij}\\
(\bbZ\pi)^n & 0 & 0 & \delta_{ij} & \sum_k\gamma(g_k)(D_kR_i)\overline{(D_kR_j)}
\\\end{block}\end{blockarray}.\]
See below the statement of \cref{lemma:int-form-on-P} for an explanation of the meaning of the entries.
By \cref{lem:quadratic=even-in-free} the form is even if and only if its restriction to $I\pi$ is even, and this latter statement is evident from the $2$ in the above matrix.
\end{proof}

\section{The \texorpdfstring{$\tau$}{tau}-invariant of spin $4$-manifolds}
\label{sec:tau}

Recall that we denote the map given by augmentation composed with reduction modulo 2 by $\phi \colon \Z\pi \xrightarrow{\epsilon} \Z \to \Z/2$.
\smallskip

\begin{definition}\label{defn:spherically-characteristic}
An element $\alpha\in\pi_2(M)$ is called \emph{spherically characteristic} if $\phi(\lambda(\alpha,\beta)) = \phi(\lambda(\beta,\beta)) \in \Z/2$ for all $\beta \in \pi_2(M)$.
Note that the right hand side vanishes identically if and only if $M$ has universal covering spin.
\end{definition}
\smallskip

Let $S:S^2 \looparrowright M$ be an immersed 2-sphere with vanishing self-intersection number $\mu_M(S)=0$.  Then the self-intersection points of $S$ can be paired up so that each pair consists of two points having oppositely signed group elements.  Therefore, one can choose a Whitney disc $W_i$ for each pair of self-intersections and arrange that all the boundary arcs are disjoint.  The normal bundle to $W_i$ has a unique framing and the Whitney framing of $W_i$ differs from this framing by an integer $n_i \in \Z$.

If $S$ is spherically characteristic, then the following expression is independent of the choice of Whitney discs:
\[
\tau(S) := \sum_i |W_i \cap S| + n_i \mod{2}.
\]
Moreover, $\tau(S)$ only depends on the regular homotopy class of the immersion.  Restricting to immersions with $\mu_M(S)=0$ fixes a regular homotopy class within a homotopy class: non-regular cusp homotopies change the self-intersection number by (even) multiples of the identity group element.

\smallskip

\begin{remark}
If $S$ is not spherically characteristic then $\tau(S)$ is not well-defined since adding a sphere that intersects $S$ in an odd number of points to one of the Whitney discs would change the sum by one.
\end{remark}
\smallskip

The invariant $\tau(S)$ first appeared in R.~Kirby and M.~Freedman~\cite[p.~93]{Kirby-Freedman} and Y.~Matsumoto~\cite{Matsumoto} and a similiar invariant was later used by M. Freedman and F.~Quinn~\cite[Definition~10.8]{Freedman-Quinn}.
In \cite{schneiderman-teichner-tau}, Schneiderman and the fourth author defined a generalisation $\tau_1(S)$ with values in a quotient of $\Z[\pi\times\pi]$. They considered primary and secondary group elements, in analogy with the passage from the ordinary to the equivariant intersection form.

\smallskip

\begin{lemma}
\label{lem:augtau}
Let $x,y\in\pi_2(M)$ be such that $\lambda(x,y)=0$, $\mu(x)$ and $\mu(y)$ are trivial and $x$ is spherically characteristic. Then for every element $\kappa\in \ker(\phi \colon \bbZ\pi\to \bbZ/2)$, we have $\tau(x)=\tau(x+\kappa y)\in\bbZ/2$.
\end{lemma}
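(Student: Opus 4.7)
My plan is to verify the identity by a direct geometric computation modulo $2$ in the universal cover, using the hypothesis $\phi(\kappa)=0$ to force all ``$\kappa y$-contributions'' to be even.

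First I would check that $\tau(x+\kappa y)$ is defined. By sesquilinearity,
\[
\mu(x+\kappa y)=\mu(x)+\kappa\mu(y)\bar\kappa+\lambda(x,y)\bar\kappa=0,
\]
and for any $\beta\in\pi_2(M)$ the spherical characteristic condition is preserved:
\[
\phi(\lambda(x+\kappa y,\beta))=\phi(\lambda(x,\beta))+\phi(\kappa)\phi(\lambda(y,\beta))=\phi(\lambda(\beta,\beta)),
\]
since $\phi(\kappa)=0$. Hence $x+\kappa y$ has vanishing self-intersection number and is spherically characteristic, so $\tau$ applies to it.

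Next I would build compatible geometric representatives. Pick an immersion $X$ representing $x$ with Whitney discs $W_i^X$ of framing defects $n_i$ pairing its self-intersections, and an immersion $Y$ representing $y$ with Whitney discs $W_j^Y$ and framings $m_j$. Using $\lambda(x,y)=0$ and Whitney moves, arrange $X$ and $Y$ to be disjoint. Writing $\kappa=\sum_\ell \epsilon_\ell g_\ell$ with $\epsilon_\ell\in\{\pm 1\}$, represent $\kappa y$ by a disjoint signed union $Z=\bigsqcup_\ell Y_\ell$ of pushoffs of $Y$ along loops representing the $g_\ell$, arranged to be pairwise disjoint and disjoint from $X$; this is possible because $\lambda(x,\kappa y)=0$ and $\lambda(gy,g'y)=g\lambda(y,y)\bar{g'}=0$ for all $g,g'\in\pi$. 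Tube $X$ with $Z$ to obtain an immersed sphere representing $x+\kappa y$; its self-intersections are paired by the $W_i^X$ together with parallel pushoffs $W_j^{Y_\ell}$ of the $W_j^Y$.

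The core of the argument is the chain-level intersection identity: for any compact immersed oriented surface-with-boundary $G\subset M$ equipped with a fixed lift $\tilde G\subset\tilde M$,
\[
\langle G,Z\rangle \;=\; \langle G,Y\rangle\cdot\bar\kappa \;\in\; \Z\pi,
\]
which follows from $\Z\pi$-sesquilinearity of the equivariant intersection pairing and the fact that $Z$ lifts to the chain $\sum n_h\, h\tilde Y$ in $\tilde M$. Applying the augmentation and reducing mod $2$ gives
\[
|G\cap Z|\;\equiv\; \epsilon(\langle G,Y\rangle)\cdot\phi(\kappa)\;\equiv\; 0 \pmod 2.
\]
Taking $G=W_i^X$ and $G=W_j^{Y_\ell}$ in turn shows that every intersection with $Z$ appearing in the formula for $\tau(x+\kappa y)$ contributes $0$ modulo $2$. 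The remaining new contributions are the self-intersection counts $|W_j^{Y_\ell}\cap Y_\ell|$ and framings $m_j^{Y_\ell}$: since the $Y_\ell$ are parallel pushoffs of a common $Y$, these contributions are independent of $\ell$, and their sums over $\ell$ equal $|\kappa|\cdot|W_j^Y\cap Y|$ and $|\kappa|\cdot m_j$ respectively, which vanish modulo $2$ because $|\kappa|\equiv\phi(\kappa)=0$. Combining, $\tau(x+\kappa y)\equiv \tau(x)\pmod 2$, as claimed.

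The main obstacle is justifying the displayed chain-level identity $\langle G,Z\rangle=\langle G,Y\rangle\bar\kappa$, since $Z$ is a specific geometric representative rather than just a homology class. I would choose the pushoffs $Y_\ell$ with care so that their lifts to $\tilde M$ really do sit in the prescribed deck translates of $\tilde Y$, and then check that the translate Whitney discs $W_j^{Y_\ell}$ inherit framings matching those of $W_j^Y$ in $M$. This geometric bookkeeping is delicate but routine, after which the vanishing statement is essentially automatic from $\phi(\kappa)=0$.
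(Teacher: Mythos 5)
Your cancellation mechanism (everything attached to $y$ occurs in parallel copies an even number of times because $\phi(\kappa)=0$) is the same core idea as the paper's proof, but your geometric setup contains a genuine gap that the paper's argument is specifically designed to avoid. You assert that, since $\lambda(x,y)=0$, $\lambda(x,\kappa y)=0$ and $\lambda(gy,g'y)=0$, the representatives $X$, $Y$ and the pushoffs $Y_\ell$ can be arranged to be pairwise disjoint by Whitney moves. In a $4$-manifold this is false: a Whitney move along an immersed Whitney disc $W$ removes one pair of intersection points but creates two new ones for every point of $\mathrm{int}(W)\cap Y$ (and new self-intersections from $\mathrm{int}(W)\cap X$, from the self-intersections of $W$, and from its framing defect), so algebraic vanishing of $\lambda$ does not yield geometric disjointness --- the failure of exactly this step is what makes $\tau$ a nontrivial secondary invariant in the first place. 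Moreover, even two ``parallel pushoffs'' $Y_\ell$, $Y_{\ell'}$ of a common $Y$ intersect each other in two points near every self-intersection of $Y$, so they are pairwise disjoint only if $Y$ is embedded, which you cannot assume (whether $y$ is stably represented by an embedding is itself governed by a $\tau$-type invariant).

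Consequently the immersed sphere representing $x+\kappa y$ has, in addition to the self-intersections you list, genuine double points coming from $X\cap Y_\ell$ and $Y_\ell\cap Y_{\ell'}$; these must be paired by further Whitney discs (which exist since $\mu(x+\kappa y)=0$), and the intersections of those discs with the sphere, together with their framing defects, enter the formula for $\tau(x+\kappa y)$ but are nowhere accounted for in your computation. The paper keeps these intersection points and instead chooses the extra Whitney discs to be parallel copies of fixed discs $W_{x,y}$ and $W_{y,y}$, so that their contributions also occur in cancelling pairs; it also reduces to the case $\kappa=1\pm g$ and iterates over a decomposition $\kappa=\sum_i(1\pm g_i)h_i$ rather than treating a general $\kappa$ in one step. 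Your argument can be repaired along these lines, but as written the disjointness claims are the missing, and essential, step.
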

\begin{proof}
First let $\kappa=1\pm g$. Choose immersed representatives for $x$ and $y$ and take a parallel copy of $y$ together with a loop representing $g$ as a representative for $\pm gy$. Choose framed Whitney discs with disjoint boundary arcs for the self-intersections of $x$, those of $y$ and the intersections between $x$ and $y$. We denote these by $W_{x,x}$, $W_{x,y}$ and $W_{y,y}$ respectively.

Whitney discs $W_{x,gy}$ for the intersection between $x$ and $\pm gy$ can be obtained by taking a parallel copy of each Whitney disc $W_{x,y}$ for the intersections between $x$ and $y$.

Take three parallel copies of the Whitney discs for the self-intersections of $y$ to produce Whitney discs $W_{gy,gy}$, $W_{y,gy}$ and $W_{gy,y}$  for the self-intersections of $\pm gy$ and for the intersections between $y$ and $\pm gy$.

Whenever a Whitney disc intersects $y$, it also intersects $\pm gy$, and therefore the total contribution to $\tau$ vanishes modulo two.  See \cref{figure:Whitney-discs}.

\begin{figure}[ht]
\begin{center}
\begin{tikzpicture}
\node[anchor=south west,inner sep=0] at (0,0){\includegraphics[scale=0.22]{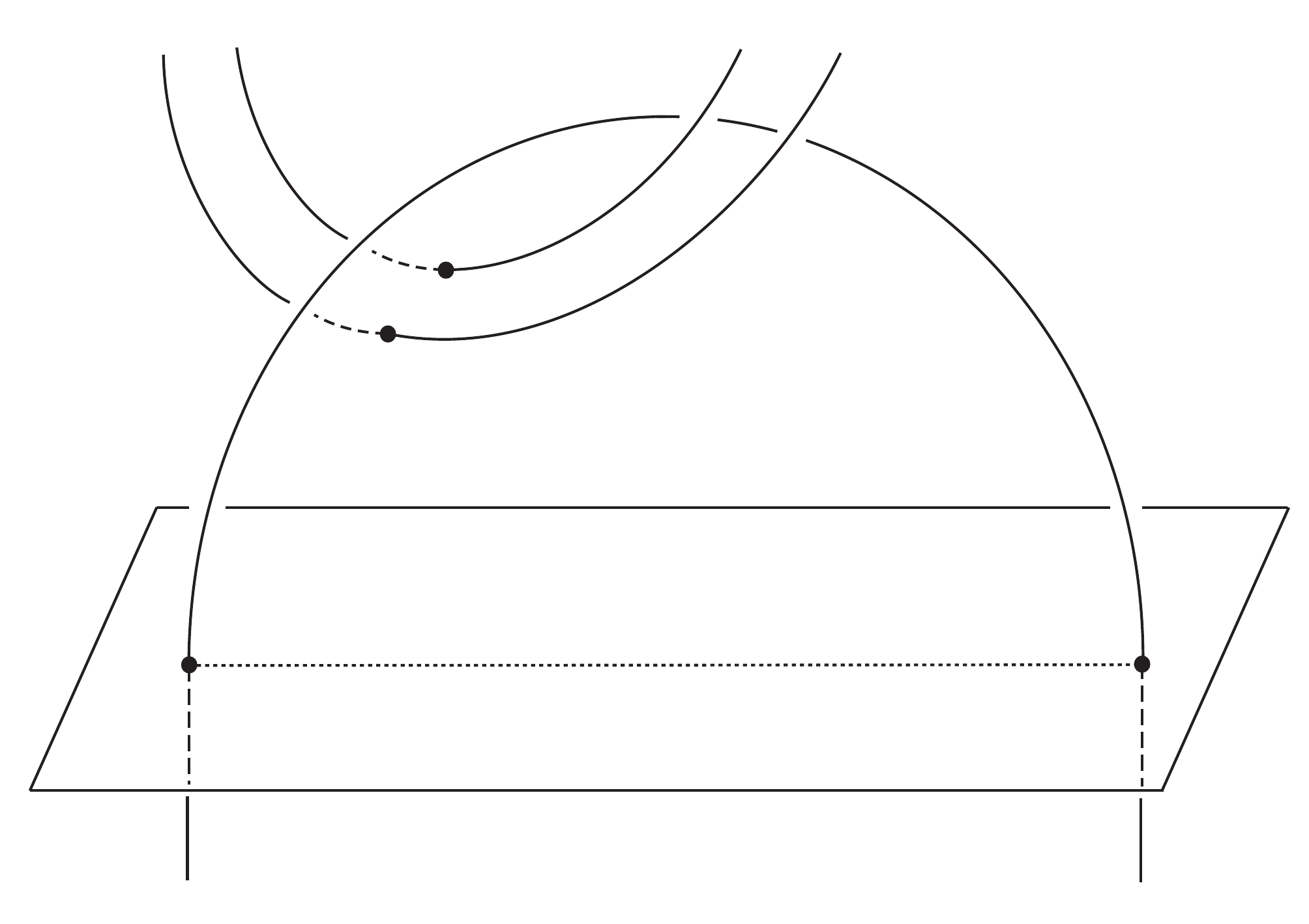}};
\node at (0.7,4.5)  {$y$};
\node at (1.8,4.5)  {$gy$};
\end{tikzpicture}
\end{center}
     \caption{Whenever $y$ intersects a Whitney disc, so does $gy$.  Following a standard convention for diagram in dimension~$4$, some surfaces are shown as arcs, which we imagine to propagate through time.  The part of the horizontal surface that we see only lives in the present.}
    \label{figure:Whitney-discs}
\end{figure}

Thus for the computation of $\tau(x+(1\pm g)y)$ we only need to count the intersections of all the Whitney discs with $x$. For every intersection of $x$ with a Whitney disc $W_{y,y}$  or $W_{x,y}$, there are three or one further intersections respectively, from the parallel copies. Therefore, these intersections also cancel modulo~$2$. See \cref{figure:Whitney-discs-2}.

\begin{figure}[ht]
\begin{center}
\begin{tikzpicture}
\node[anchor=south west,inner sep=0] at (0,0){\includegraphics[scale=0.22]{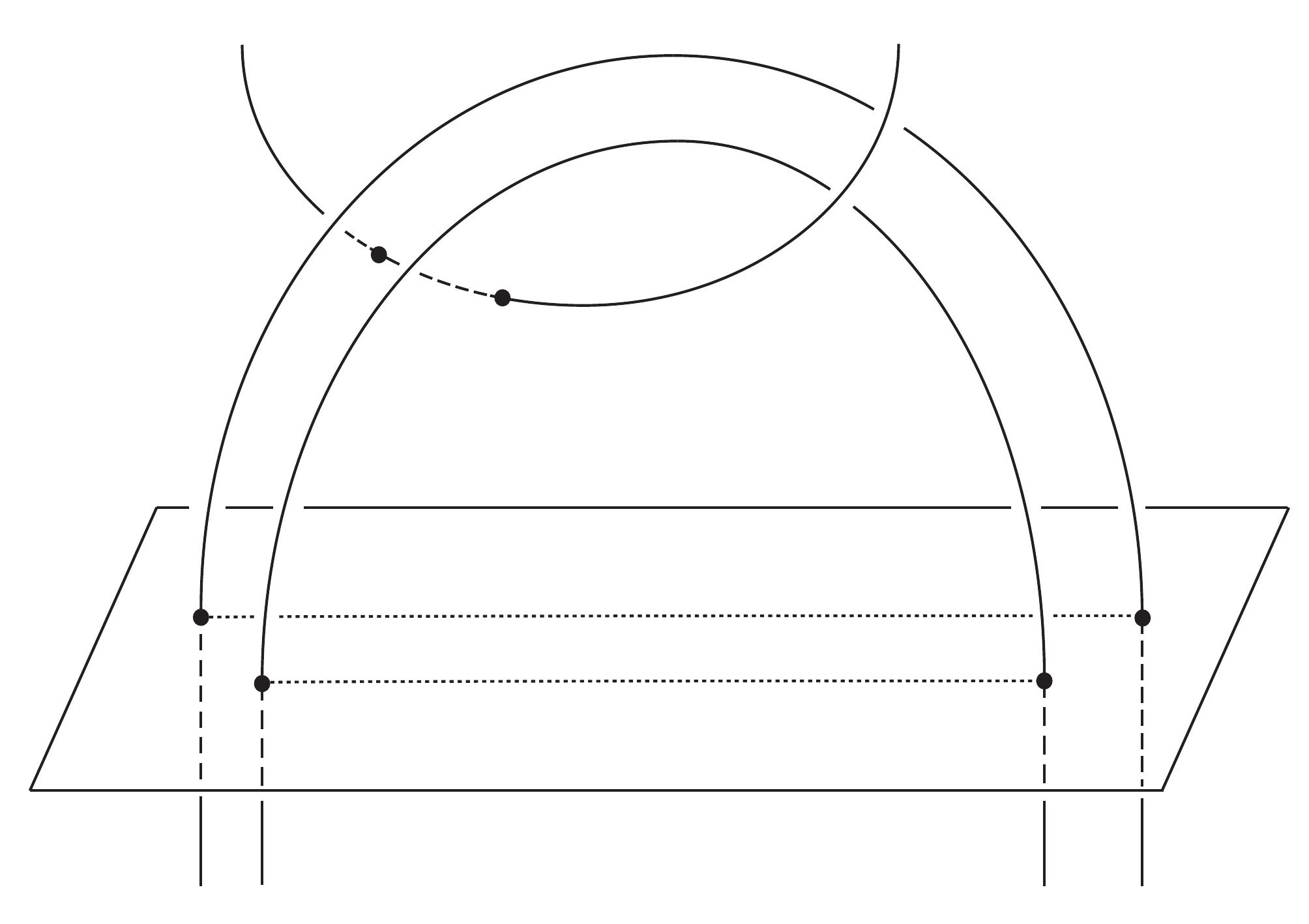}};
\node at (6.1,4.1)  {$W_{y,\ast}$};
\node at (5,3)  {$W_{gy, \ast}$};
\node at (1.1,4.8) {$x$};
\node at (1,0) {$y$};
\node at (1.5,0) {$gy$};
\node at (0.3,1.5) {$\ast$};
\end{tikzpicture}
\end{center}
     \caption{Whenever $x$ intersects a Whitney disc $W_{y,\ast}$, it also intersects the parallel copy $W_{gy,\ast}$.}
    \label{figure:Whitney-discs-2}
\end{figure}

The remaining intersections are those between $x$ and the Whitney discs $W_{x,x}$ for the self-intersections of $x$. This proves the lemma for $\kappa=1\pm g$.

For the general case, observe that every $\kappa\in \ker\phi$ can be written as a sum $\kappa=\sum_{i=1}^n(1\pm g_i)h_i$, and apply the first case successively with $(1 \pm g_i)(h_iy)$ as $(1\pm g)y$, $i=1,\dots,n$.
\end{proof}

Later we will need the following lemma.
\smallskip

\begin{lemma}
\label{lem:H2kernel}
Let $Y$ be a finite $2$-dimensional CW complex with fundamental group $\pi$. Every element in the kernel of $H^2(Y;\Z\pi)\to H^2(Y;\Z/2)$ can be written as $\sum_{i=1}^n\kappa_ix_i$ with $\kappa_i\in \ker \phi$ and $x_i\in H^2(Y;\Z\pi)$.
\end{lemma}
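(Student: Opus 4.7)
The plan is to exploit the fact that $Y$ being $2$-dimensional forces $H^2(Y;A)$ to be a quotient of $C^2(\widetilde Y;A)$ with no further cocycle condition, so every $2$-cochain represents a cohomology class automatically. Combined with the short exact sequence of coefficient modules
\[
0 \to \ker\phi \hookrightarrow \Z\pi \xrightarrow{\phi} \Z/2 \to 0,
\]
this should reduce the claim to a direct cochain calculation.

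First I would arrange for a convenient representative. Given $[c]\in\ker(H^2(Y;\Z\pi)\to H^2(Y;\Z/2))$, choose any representing cocycle $c\in C^2(\widetilde Y;\Z\pi)$. Then $\phi(c)\in C^2(\widetilde Y;\Z/2)$ is a coboundary, say $\phi(c)=d\bar\beta$. Because $C^1(\widetilde Y;\Z\pi)\to C^1(\widetilde Y;\Z/2)$ is surjective (it is just $\phi$ applied componentwise in any $\Z\pi$-basis of $C_1(\widetilde Y)$), one can lift $\bar\beta$ to $\beta\in C^1(\widetilde Y;\Z\pi)$ and replace $c$ by the cohomologous cocycle $c-d\beta$. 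By construction $\phi(c-d\beta)=0$, so after this modification we may assume $c\in C^2(\widetilde Y;\ker\phi)$.

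Next, let $\{e_1,\dots,e_m\}$ be a $\Z\pi$-basis of $C_2(\widetilde Y)$ (which exists because $Y$ is a finite CW complex), and let $\{e_1^*,\dots,e_m^*\}\subset C^2(\widetilde Y;\Z\pi)$ be the dual basis. Since $Y$ has no $3$-cells, $C^3(\widetilde Y;\Z\pi)=0$, so every $2$-cochain is a $2$-cocycle; in particular each $e_i^*$ represents a class $[e_i^*]\in H^2(Y;\Z\pi)$. Setting $\kappa_i:=c(e_i)\in\ker\phi$, one has $c=\sum_i \kappa_i\, e_i^*$ in $C^2$, and passing to $H^2$ gives $[c]=\sum_i \kappa_i\,[e_i^*]$, which is the desired expression.

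There is no real obstacle here: the only non-formal step is the lift from $C^1(\widetilde Y;\Z/2)$ to $C^1(\widetilde Y;\Z\pi)$, which is straightforward since the cochain groups are just direct sums indexed by the $1$-cells. The entire argument rests on the observation that the vanishing of $C^3$ makes the $\Z\pi$-module structure on $H^2(Y;\Z\pi)$ inherited directly from $C^2(\widetilde Y;\Z\pi)$, and that the kernel of $\phi$ is computed componentwise.
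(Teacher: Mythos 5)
Your proof is correct and is essentially the paper's argument: the paper disposes of the lemma with a diagram chase in the commutative ladder comparing the cochain complexes with coefficients in $\ker\phi$, $\Z\pi$ and $\Z/2$, and your three steps (lift the bounding $1$-cochain through the surjection $C^1(\wt Y;\Z\pi)\to C^1(\wt Y;\Z/2)$, adjust the representative so it takes values in $\ker\phi$, then use $C^3=0$ to read off the decomposition in cohomology via the dual basis) are exactly that chase written out explicitly.
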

\begin{proof}
Let $(C^*,\delta^*)$ be the $\Z\pi$-module cellular cochain complex associated to $Y$. The lemma follows from a diagram chase in the following diagram.
\[\xymatrix{
&H^2(Y;\Z\pi)\ar[r]&H^2(Y;\Z/2)\\
\ker\phi\otimes_{\Z\pi} C^2\ar[r]&C^2\ar[r]^-{\phi}\ar[u]&\Z/2\otimes_{\Z\pi} C^2\ar[u]\\
\ker\phi\otimes_{\Z\pi} C^1\ar[r]\ar[u]^{\delta^2}&C^1\ar[r]^-{\phi}\ar[u]^{\delta^2}&\Z/2\otimes_{\Z\pi} C^1\ar[u]^{\delta^2}}\]
\end{proof}
\smallskip

\begin{Conventions}
\label{conv:Sec7}From now on $\pi$ denotes a COAT group,~$X$ denotes a oriented, closed, connected aspherical $3$-manifold with fundamental group~$\pi$, $M$ denotes a spin $4$-manifold with fundamental group $\pi$ and $c \colon M \to X$ denotes a classifying map for the universal covering space of~$M$.
Assume that $M$ has signature zero and even equivariant intersection form.
\end{Conventions}
\smallskip

\begin{lemma}
\label{lem:tauiso}
There exist $n,k\in\bbN$ and an isomorphism \[\psi\colon I\pi\oplus(\Z\pi)^n\xrightarrow{\cong}\pi_2(M\#k(S^2\times S^2))\] such that $\lambda(\psi(I\pi),\psi(I\pi))=0$.
\end{lemma}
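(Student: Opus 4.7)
My plan is to realise the lemma as an algebraic refinement of the known stable structure of $\pi_2(M)$: start from any stable isomorphism with target $I\pi \oplus (\Z\pi)^{n_0}$, then kill the restriction of $\lambda$ to $I\pi$ at the cost of one further $S^2 \times S^2$ stabilisation.

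First I would fix a stable isomorphism $\psi_0 \colon I\pi \oplus (\Z\pi)^{n_0} \toiso \pi_2(M \# k_0(S^2\times S^2))$ for some $k_0, n_0$. This exists: by \cref{thm:parity-detects-sec}, the evenness hypothesis forces $M$ to be stably diffeomorphic to one of the model manifolds $P$ of \cref{subsec:ex2} (up to $K3$-summands to match the signature; since $\pi_2(K3)$ becomes a free $\Z\pi$-module after connected sum, this preserves the shape $I\pi\oplus\text{free}$), and the computation in that subsection gives exactly $\pi_2(P) \cong I\pi \oplus (\Z\pi)^{2n+1}$. Transport the intersection form through $\psi_0$. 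Evenness is stable under connected sum with $S^2 \times S^2$ by \cref{lem:parity-stable-diff-invariant}, and \cref{lem:quadratic=even-in-free} then forces the restriction $\lambda|_{I\pi}$ to be even. By \cref{lem:intersec}, this restriction is described by a Hermitian element $a \in \Z\pi$ via $\lambda(b,b') = bay\bar{b'}$, and evenness gives $a = y + \bar y$ for some $y \in \Z\pi$.

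Now I would stabilise once more, adjoining a hyperbolic summand $H = \Z\pi\langle e_1, e_2\rangle$ with $\lambda(e_1,e_2)=1$ and $\lambda(e_i,e_i)=0$, coming from the extra $S^2 \times S^2$. Consider the $\Z\pi$-linear endomorphism
\[
\Phi(b, v, w_1, w_2) := (b,\; v,\; w_1 - by,\; w_2 + b)
\]
of $I\pi \oplus (\Z\pi)^{n_0} \oplus H$. Here $b \mapsto by$ is a legitimate $\Z\pi$-linear map $I\pi \to \Z\pi$ by \cref{lem:ext}, and $\Phi$ is unipotent, hence invertible. The image of the new $I\pi$-summand is $\{(b, 0, -by, b) : b \in I\pi\}$, and a direct sesquilinearity computation using $\overline{b'y} = \bar y\,\bar{b'}$ and $a = y + \bar y$ gives
\[
\lambda\bigl((b,0,-by,b),(b',0,-b'y,b')\bigr) = b(y+\bar y)\bar{b'} - by\bar{b'} - b\bar y\bar{b'} = 0.
\]
Setting $\psi := (\psi_0 \oplus \id_H) \circ \Phi$ finishes the proof with $k = k_0+1$ and $n = n_0+2$.

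The only substantive point is the last observation, which is the algebraic core of the argument: any even element $y + \bar y \in \Z\pi$ on $I\pi$ is exactly cancelled by the hyperbolic pairing on the vector $(-by, b) \in H$. Apart from invoking the existence of the initial stable isomorphism $\psi_0$ (which is essentially guaranteed by the explicit models in \cref{sec:exmanifolds}), no step involves any real obstacle.
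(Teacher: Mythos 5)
Your proposal is correct and follows essentially the same route as the paper: obtain a stable isomorphism $I\pi\oplus(\Z\pi)^{n_0}\cong\pi_2(M\#k_0(S^2\times S^2))$ from the models of \cref{sec:exmanifolds}, use evenness to write the $\Z\pi$-element representing $\lambda|_{I\pi}$ as $y+\ol{y}$, and cancel it against one extra hyperbolic summand via the vector $(-by,b)$ (the paper uses $(\beta p,-\beta)$, the same up to sign). Your packaging of the change of basis as a unipotent automorphism $\Phi$ is a slightly cleaner way to see that the new map extends to an isomorphism, a point the paper only asserts.
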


\begin{proof}
Since $M$ is stably diffeomorphic to one of the examples constructed in \cref{sec:exmanifolds}, there exist $n_1,k_1\in\bbN$ such that there is an isomorphism $\psi'\colon I\pi\oplus (\Z\pi)^{n_1}\to \pi_2(M\#k_1(S^2\times S^2))$. By \cref{lem:intersec}, the intersection form on $\psi'(I\pi)$ is determined by an element $\alpha\in\Z\pi$. Since, by the assumptions above, the intersection form of $M$ is even, there exists $p\in \Z\pi$ with $\alpha=p+\ol{p}$. Let $k:=k_1+1$, $n:=n_1+2$ and define
\[\begin{array}{rcl} \psi_1 \colon I\pi &\to&  \pi_2(M\#k(S^2\times S^2))\cong \pi_2(M\#k_1(S^2\times S^2))\oplus (\Z\pi)^2\\
 \beta &\mapsto & (\psi'(\beta),\beta p,-\beta). \end{array}\]

 It is not hard to see that the map $\psi_1$ can be extended to an isomorphism
\[\psi \colon I\pi\oplus (\Z\pi)^n \toiso \pi_2(M\#k(S^2\times S^2)).\]
For $\beta,\beta'\in I\pi$ we have
\[\lambda(\psi(\beta),\psi(\beta'))=\lambda(\psi'(\beta),\psi'(\beta'))+\beta(-p-\ol{p})\ol{\beta'}=\beta\alpha\ol{\beta'}-\beta\alpha\ol{\beta'}=0.\]
Thus $\psi$ is a map satisfying the desired properties.
\end{proof}
\smallskip

\begin{lemma}
\label{lem:spherchar}
For every isomorphism $\psi$ as in \cref{lem:tauiso}, the elements in $\psi(I\pi)$ are spherically characteristic.
\end{lemma}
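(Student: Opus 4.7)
The plan is as follows. Since $M$ is spin by \cref{conv:Sec7}, so is $M\#k(S^2\times S^2)$, and hence its universal cover is spin. This means the right-hand side $\phi(\lambda(\beta,\beta))$ in the definition of spherically characteristic vanishes identically, and it suffices to prove $\phi(\lambda(\alpha,\beta))=0$ for every $\alpha\in\psi(I\pi)$ and every $\beta\in\pi_2(M\#k(S^2\times S^2))$.

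Using $\psi$, I would decompose an arbitrary $\beta$ as $\psi(\beta_1)+\psi(\beta_2)$ with $\beta_1\in I\pi$ and $\beta_2\in(\Z\pi)^n$. The contribution from $\beta_1$ is zero by the defining property of $\psi$ from \cref{lem:tauiso}, namely $\lambda(\psi(I\pi),\psi(I\pi))=0$. Thus the problem reduces to showing that $\phi(\lambda(\psi(\kappa),\psi(e_i)))=0$ for each $\kappa\in I\pi$ and each standard basis vector $e_i$ of $(\Z\pi)^n$.

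For fixed $e_i$, the assignment $y\mapsto \lambda(\psi(y),\psi(e_i))$ is a $\Z\pi$-linear homomorphism $I\pi\to\Z\pi$, using sesquilinearity of $\lambda$ in the first variable and $\Z\pi$-linearity of $\psi$. Since $\pi$ is infinite and satisfies $H^1(\pi;\Z\pi)=0$ (being a $3$-dimensional Poincar\'e duality group), \cref{lem:ext} identifies $\Hom_{\Z\pi}(I\pi,\Z\pi)$ with $\Z\pi$: every such homomorphism is the restriction of a $\Z\pi$-linear map $\Z\pi\to\Z\pi$, which in turn is right multiplication by a fixed element. So there exists $x_i\in\Z\pi$ with $\lambda(\psi(y),\psi(e_i))=y\cdot x_i$ for all $y\in I\pi$.

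Finally, since $\phi$ is a ring homomorphism and $I\pi=\ker(\mathrm{aug})\subseteq \ker\phi$, for $\kappa\in I\pi$ we obtain $\phi(\kappa\cdot x_i)=\phi(\kappa)\phi(x_i)=0$. Extending to arbitrary $\beta_2=\sum_i b_i e_i$ by sesquilinearity, $\lambda(\psi(\kappa),\psi(\beta_2))=\sum_i \kappa x_i \bar b_i$ still lies in $\ker\phi$ because $\ker\phi$ is an ideal in $\Z\pi$, and combining this with the vanishing on $\psi(I\pi)\times\psi(I\pi)$ completes the argument. The main obstacle is the use of \cref{lem:ext} to parametrise $\Z\pi$-linear maps out of $I\pi$; once that identification is in hand, the conclusion follows from the elementary fact that $I\pi\subset\ker\phi$.
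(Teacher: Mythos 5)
Your proposal is correct and follows essentially the same route as the paper: the paper invokes \cref{lem:intersec} (itself a consequence of \cref{lem:ext}) to write the pairing of $\psi(I\pi)$ against all of $\pi_2$ as $by\ol{b'}+\sum_i bx_i\ol{a_i}$ with $y,x_i\in\Z\pi$, and concludes that every term has a left factor $b\in I\pi\subset\ker\phi$. Your only cosmetic deviation is applying \cref{lem:ext} directly to each $e_i$ and handling the $I\pi\times I\pi$ block via the vanishing $\lambda(\psi(I\pi),\psi(I\pi))=0$ from \cref{lem:tauiso} rather than via the factor $b$; both are fine.
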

\smallskip

\begin{proof}
By \cref{lem:intersec} there exist $(y,x_1,\ldots,x_n)\in\Z\pi^{n+1}$, that determine how elements of $\psi(I\pi)$ pair with all of $\pi_2(M\# k (S^2 \times S^2))$, in the following sense: for all $b,b'\in I\pi, (a_1,\ldots, a_n)\in\Z\pi^n$ we have
\[\lambda(\psi(b,0,\ldots,0),\psi(b',a_1,\ldots, a_n))=by\overline{b'}+\sum_{i=1}^nbx_i\overline{a_i}\in \Z\pi.\]
In particular, every summand contains $b$ as a factor, so the sum lies in $I\pi \subset \Z\pi$ and we have that $\phi(\lambda((b,0,\ldots,0),(b',a_1,\ldots, a_n)))=0$.
\end{proof}

\begin{lemma}
\label{lem:tauquotient}
For every isomorphism $\psi$ as in \cref{lem:tauiso}, the $\tau$ invariant defines a map $I\pi\xrightarrow{\psi}\psi(I\pi)\xrightarrow{\tau}\Z/2$.
This map factors through the map
\[
I\pi\toiso H^2(X^{(2)};\Z\pi)\to H^2(X^{(2)};\Z/2).
\]
We denote the induced map $H^2(\pi;\Z/2)\cong H^2(X;\Z/2)\xrightarrow{i^*}H^2(X^{(2)};\Z/2)\to\Z/2$ by
$\tau_\psi$.
\end{lemma}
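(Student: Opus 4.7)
The plan is to establish three things in order: (i) that $\tau\circ\psi$ is well-defined as a function $I\pi\to\Z/2$; (ii) the identification $I\pi\cong H^2(X^{(2)};\Z\pi)$; and (iii) that $\tau\circ\psi$ is constant on fibres of the reduction mod~$2$. The precomposition with $i^*$ then defines $\tau_\psi$.

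For (i), Lemma~\ref{lem:spherchar} guarantees that each $\psi(x)$ with $x\in I\pi$ is spherically characteristic. By the choice of $\psi$ in Lemma~\ref{lem:tauiso}, the form $\lambda$ vanishes identically on $\psi(I\pi)$, so $\mu(\psi(x))+\overline{\mu(\psi(x))}=\lambda(\psi(x),\psi(x))=0$ in $\Z\pi$. Since every COAT group is torsion-free (having finite cohomological dimension), the symmetrization map $\Z\pi/\{a-\bar a\}\to\Z\pi$, $[a]\mapsto a+\bar a$, is injective, and hence $\mu(\psi(x))=0$; so $\tau(\psi(x))$ is defined. For (ii), since $\pi$ is a $3$-dimensional Poincar\'e duality group, $H^i(X;\Z\pi)\cong H^i(\pi;\Z\pi)$ vanishes for $i<3$ and equals $\Z$ in degree $3$. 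Choosing a handle decomposition of $X$ with a single $3$-handle, the cofibre sequence $X^{(2)}\hookrightarrow X\to X/X^{(2)}\simeq S^3$ gives the long exact sequence
\[
0\to H^2(X^{(2)};\Z\pi)\to H^3(X,X^{(2)};\Z\pi)\to H^3(X;\Z\pi)\to 0,
\]
in which the middle term is the relative cellular cochain group $\Z\pi$ and the right-hand map is (up to sign) the augmentation $\epsilon$. Its kernel is $I\pi$, yielding the desired isomorphism.

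For (iii), suppose $x,x'\in I\pi$ have the same image in $H^2(X^{(2)};\Z/2)$. Then by Lemma~\ref{lem:H2kernel} we may write $x-x'=\sum_{i=1}^n\kappa_i y_i$ with $\kappa_i\in\ker\phi$ and $y_i\in I\pi$. Setting $x_0':=x'$ and $x_i':=x_{i-1}'+\kappa_i y_i$, a successive application of Lemma~\ref{lem:augtau} to the pair $\psi(x_{i-1}'),\psi(y_i)$ with $\kappa=\kappa_i$ produces
\[
\tau(\psi(x'))=\tau(\psi(x_1'))=\dotsb=\tau(\psi(x_n'))=\tau(\psi(x)).
\]
Each step is legitimate: because $I\pi$ is a two-sided ideal of $\Z\pi$, all partial sums $x_i'$ remain in $I\pi$, so $\psi(x_{i-1}')\in\psi(I\pi)$ is spherically characteristic by Lemma~\ref{lem:spherchar} and has $\mu=0$ by the argument in~(i); similarly $\mu(\psi(y_i))=0$; and $\lambda(\psi(x_{i-1}'),\psi(y_i))=0$ because $\psi(I\pi)$ is totally isotropic. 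The resulting well-defined map $H^2(X^{(2)};\Z/2)\to\Z/2$ precomposed with $i^*\colon H^2(\pi;\Z/2)\cong H^2(X;\Z/2)\to H^2(X^{(2)};\Z/2)$ is $\tau_\psi$.

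The main bookkeeping obstacle is step (iii): one must verify that the iteration stays inside $\psi(I\pi)$ while the hypotheses of Lemma~\ref{lem:augtau} are preserved at every stage. This follows cleanly from two structural facts--namely that $\psi(I\pi)$ is a $\Z\pi$-submodule and that $\lambda$ vanishes identically on it--together with the torsion-freeness of $\pi$ used in~(i).
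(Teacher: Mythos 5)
Your proposal is correct and follows essentially the same route as the paper: spherical characteristicity via Lemma~\ref{lem:spherchar}, vanishing of $\mu$ from the vanishing of $\lambda$ on $\psi(I\pi)$ (the paper phrases this as $\mu$ being determined by $\lambda$ for a $(+1)$-hermitian form, which is the same injectivity of symmetrization you invoke), and factorization through $H^2(X^{(2)};\Z/2)$ by combining Lemma~\ref{lem:H2kernel} with an iterated application of Lemma~\ref{lem:augtau}. Your extra verifications --- the identification $I\pi\cong H^2(X^{(2)};\Z\pi)$ and the step-by-step check that the hypotheses of Lemma~\ref{lem:augtau} persist through the iteration --- are details the paper leaves implicit, and they check out.
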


\begin{proof}
Since the intersection form vanishes on $\psi(I\pi)$ by assumption, $\mu$ vanishes on these elements. It is not too hard to see that the self-intersection number $\mu$, for a $(+1)$-hermitian quadratic form, is determined by the intersection pairing $\lambda$.
 The elements of $\psi(I\pi)$ are spherically characteristic by \cref{lem:spherchar}, and thus the $\tau$ invariant gives a well-defined element in $\Z/2$.

An element in the kernel of $I\pi\to H^2(X^{(2)};\Z/2)$ is given by $\sum_i\kappa_i\beta_i$ with $\beta_i\in I\pi$ and $\kappa_i\in\ker\phi$ by \cref{lem:H2kernel}. For any $\beta\in I\pi$  it follows from \cref{lem:augtau} that
\[\tau(\psi(\beta+\sum_i\kappa_i\beta_i))=\tau(\psi(\beta)+\sum_i\kappa_i\psi(\beta_i))=\tau(\psi(\beta)).\]
Therefore $\tau \circ \psi$ factors through $H^2(X^{(2)};\Z/2)$ as claimed.
\end{proof}

\begin{lemma}\label{lem:tau-indep}
The map $\tau_\psi$ from \cref{lem:tauquotient} is independent of $\psi$, and is a stable diffeomorphism invariant.
\end{lemma}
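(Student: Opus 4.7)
The plan is to prove the stronger intrinsic statement that for any spherically characteristic $S \in \pi_2(M \# k(S^2 \times S^2))$ with $\mu(S) = 0$, the value $\tau(S)$ depends only on the cohomology class $[S]^{\sharp} \in H^2(\pi; \bbZ/2)$ obtained by taking the Poincar\'e dual of $c_*[S] \in H_2(B\pi; \bbZ)$ and reducing modulo $2$. Since $[S]^{\sharp}$ and $\tau(S)$ are defined without reference to any splitting, such a factorisation gives an intrinsic map $\tau_M \colon H^2(\pi; \bbZ/2) \to \bbZ/2$ with $\tau_\psi = \tau_M$ for every admissible $\psi$, proving the first assertion.

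First I would reduce the intrinsic factorisation to showing that $\tau(S) = 0$ whenever $[S]^{\sharp} = 0$: additivity of $\tau$ on the isotropic subspace $\psi(I\pi)$, which holds because $\lambda$ vanishes there and the universal-cover representatives of two classes with $\lambda = 0$ can be arranged to have pairwise intersections cancelled by Whitney discs, would then combine with the intrinsic factorisation to pin down $\tau$ by its restriction to $\psi(I\pi)$. To see vanishing on the kernel of $[-]^{\sharp}$, I would fix any $\psi$, write $S = \psi(\alpha, \xi)$ with $\alpha \in I\pi$ mapping to zero in $H^2(X^{(2)};\bbZ/2)$, and invoke \cref{lem:H2kernel} to express $\alpha = \sum_i \kappa_i \beta_i$ with $\kappa_i \in \ker \phi$. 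Iterating \cref{lem:augtau} then handles the $I\pi$-contribution. The auxiliary $\xi \in (\bbZ\pi)^n$-part would be absorbed by further stabilising so that each free $\bbZ\pi$-summand acquires a hyperbolic partner in an additional $S^2 \times S^2$ factor; the resulting hyperbolic pairs admit disjointly embedded representatives contributing zero to $\tau$.

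For stable diffeomorphism invariance, note that any orientation-preserving diffeomorphism $f \colon M \# k(S^2 \times S^2) \to M' \# k'(S^2 \times S^2)$ carries the intersection and Whitney-disc data defining $\tau$ bijectively and intertwines the classifying maps, hence the intrinsic maps $[-]^{\sharp}$, yielding $\tau_M = \tau_{M'}$ under the induced identification of fundamental groups. Further stabilisation by $S^2 \times S^2$ is absorbed by incrementing $k$, and the extra hyperbolic $\pi_2$-summand contributes trivially to $\tau_\psi$. The main obstacle will be controlling the auxiliary $(\bbZ\pi)^n$-component in the first step, i.e.\ showing that modifying a spherically characteristic class with vanishing $\mu$ by elements that may pair non-trivially with $\psi(I\pi)$ but map to zero in $H^2(\pi; \bbZ/2)$ does not change $\tau$; this is where the hyperbolic structure coming from the $S^2 \times S^2$-stabilisations, together with the freedom in \cref{lem:tauiso} to enlarge both $k$ and $n$, is essential.
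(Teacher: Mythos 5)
There is a genuine gap, and also a flaw in the setup of your ``stronger intrinsic statement.'' The invariant $[S]^{\sharp}$ you propose is identically zero: any $S\in\pi_2(M)$ lifts to the universal cover, and $c\circ p\colon \wt M\to M\to B\pi$ factors up to homotopy through $E\pi$, so $c_*[S]=0\in H_2(B\pi;\Z)$ for every spherical class (there is also a degree mismatch, since the Poincar\'e dual of a $2$-class in a $3$-manifold lies in $H^1$, not $H^2$). If $\tau(S)$ factored through this invariant, $\tau_M$ would vanish identically, contradicting \cref{lem:tauinv}. The class in $H^2(B\pi;\Z/2)$ attached to an element of $\psi(I\pi)$ is produced by the $\Z\pi$-module identification $I\pi\cong H^2(X^{(2)};\Z\pi)$ of \cref{lem:tauquotient} (or, intrinsically, via the Poincar\'e dual of $[S]$ in $H^2(M;\Z/2)$ and the exact sequence of \cref{lem:w}), not by pushing $[S]$ forward to $H_2(B\pi)$.

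Even after repairing the formulation, the step you yourself flag as ``the main obstacle'' is precisely the content of the lemma, and your proposed resolution does not work. Two splittings $\psi,\psi'$ differ on $I\pi$ by a tuple $(x,z_1,\dots,z_n)$ whose free part $(z_1,\dots,z_n)$ in general pairs nontrivially with $\psi(I\pi)$ (in the models, $\lambda(\alpha_i,\beta)=1-g_i$), so $\tau$ is not additive across this difference and ``hyperbolic partners with disjoint embedded representatives contribute zero'' is not a valid way to discard it; likewise the additivity of $\tau$ on $\psi(I\pi)$ is asserted without proof (in the paper it only emerges a posteriori from the model computation in \cref{lem:tauinv}). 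What is actually needed, and what the paper supplies, is: (i) a retraction/augmentation argument showing the $I\pi$-coordinate of $\psi^{-1}\circ\psi'|_{I\pi}$ is $\pm 1+y$ with $y\in I\pi$; and (ii) after two further stabilisations, the addition of a correction term $\beta(-y,-z_1,\dots,-z_n,0,-\ol{\Lambda},1,\ol{\Lambda})$, with $\Lambda$ chosen so that the mutual intersection with $\beta(\pm 1+y,z_1,\dots,z_n,1,0,0,0)$ vanishes, so that \cref{lem:augtau} applies (the modification being by $\beta\cdot(\cdot)$ with $\beta\in I\pi\subset\ker\phi$). This chain of equalities is the heart of the proof and is absent from your proposal. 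The stable diffeomorphism invariance part of your argument is fine and agrees with the paper.
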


\begin{proof}
For any two choices $\psi,\psi' \colon I\pi \oplus (\Z\pi)^n \to \pi_2(M \#k (S^2 \times S^2))$, the map
\[\xymatrix{f\colon I\pi\ar[r]^-{\inc} & I\pi\oplus\Z\pi^n\ar[r]^{\psi^{-1}\circ \psi'} & I\pi \oplus \Z\pi^n}\]
is uniquely determined by an $(n+1)$-tuple $(x,z_1,\dots,z_{n})\in (\Z\pi)^{n+1}$.
\smallskip

\begin{claim}
We can write $x=\pm 1+y$ for some $y\in I\pi$.
\end{claim}
\smallskip

Since $f$ is the inclusion of a direct summand, we can consider a map $g\colon I\pi\oplus(\Z\pi)^n \to I\pi$ with $g\circ f=\id_{I\pi}$.  This is also determined by an $(n+1)$-tuple $(w_1,\dots,w_{n+1}) \in (\Z\pi)^{n+1}$.  Here $w_i \in I\pi \subset \Z\pi$ for $i \geq 2$.  The composite $g \circ f$ is therefore determined by $U = w_1x+\sum_{i=1}^{n} w_{i+1}z_i$.  Since $g \circ f =\Id$ we have that $U=1$.  For $i \geq 2$, we have $\epsilon(w_i)=0$, where $\epsilon\colon \Z\pi\to\Z$ is the augmentation. So
\[1 = \epsilon\Big(w_1x+\sum_{i=1}^{n+1} w_{i+1}z_i\Big) = \epsilon(w_1)\epsilon(x).\]
 Thus $\epsilon(x)=\pm 1$ and $x\pm 1\in I\pi$ as claimed.

From now on we use the identification $\psi\colon I\pi \oplus (\Z\pi)^n \toiso \pi_2(M \#k (S^2 \times S^2)$.   Therefore, for any $\beta\in I\pi$, we write $\psi'(\beta,0,\ldots,0)\in I\pi\oplus(\Z\pi)^n$ as $\beta(\pm 1+y,z_1,\ldots,z_n)$ as above. Let \[\Lambda:=\lambda((\pm 1+y,z_1,\ldots,z_n),(-y,-z_1,\ldots,-z_n)) \in \Z\pi,\]
where we formally extend $\lambda$ to $(\Z\pi)^{n+1}$ using \cref{lem:intersec}.
Now we stabilise the manifold $M \#k (S^2 \times S^2)$ twice more and consider the following sequence of equations:
\begin{align*}
&\tau(\psi'(\beta,0,\ldots,0))  \\
=&\tau(\beta(\pm 1+y,z_1,\ldots,z_n,0,0,0,0))\\
=&\tau(\beta(\pm 1+y,z_1,\ldots,z_n,1,0,0,0))\\
=&\tau(\beta(\pm 1+y,z_1,\ldots,z_n,1,0,0,0)+\beta(-y,-z_1,\ldots,-z_n,0,-\ol{\Lambda},1,\ol{\Lambda}))\\
=&\tau(\beta(\pm 1,0,\ldots,0,1,-\ol{\Lambda},1,\ol{\Lambda}))\\
=&\tau(\beta(\pm 1,0,\ldots,0,0,0,0,0)) = \tau((\beta,0,\ldots,0,0,0,0,0)).
\end{align*}
The last equation uses the fact that $\tau(x)=\tau(-x)$ whenever~$\tau(x)$ is defined. The second, third and fifth equation follow from \cref{lem:augtau}. The application of \cref{lem:augtau} for the third equation requires some justification, since the hypotheses of that lemma require that various intersection and self-intersection numbers vanish.  We will work with the intersection form $\lambda$, formally extended to $(\Z\pi)^{n+5}$, similarly to above. We also extend the domain of $\psi'$ to $(\Z\pi)^{n+5}$.  The quantity $\Lambda$ is defined in such a way that the intersection between $(\pm 1+y,z_1,\ldots,z_n,1,0,0,0)$ and $(-y,-z_1,\ldots,-z_n,0,-\ol{\Lambda},1,\ol{\Lambda})$ is trivial. Using the key property of $\psi'$ that the intersection pairing vanishes on $\psi'(I\pi)$, and denoting $\lambda(x,x)=\lambda(x)$, we have
\[\lambda((\pm 1+y,z_1,\ldots,z_n,1,0,0,0))=\lambda\big(\psi'(1,0,\ldots,0,0,0,0,0)\big)=0.\]
We also use that the last $1$ in the first tuple represents an embedded sphere in the first extra copy of $S^2 \times S^2$, so does not change the intersection number.

We also have that $\lambda$ vanishes on the sum \begin{align*}
& \lambda\big((\pm 1+y,z_1,\ldots,z_n,1,0,0,0) + (-y,-z_1,\ldots,-z_n,0,-\ol{\Lambda},1,\ol{\Lambda})\big)  \\ = & \lambda\big((\pm 1, 0,\ldots,0,1,-\ol{\Lambda},1,\ol{\Lambda})\big) = 0.
\end{align*}
Therefore from the formula $$\lambda(a+b,a+b)=\lambda(a,a) + \lambda(b,b) + \lambda(a,b)+\ol{\lambda(a,b)},$$ we see that
 $\lambda\big((-y,-z_1,\ldots,-z_n,0,-\ol{\Lambda},1,\ol{\Lambda})\big)=0$.
 As observed in the proof of \cref{lem:tauquotient}, $\lambda(a,a)=0$ implies that $\mu(a)=0$ for any $a \in \pi_2(M\#(k+2) (S^2 \times S^2))$.  Also recall that $\mu(\beta a)=\beta\mu(a)\ol{\beta}$.
 This completes the justification of the application of \cref{lem:augtau} in the third equation above. The sequence of equalities above shows that $\tau_\psi$ is independent of $\psi$.

Thus, $\tau_\psi$ is invariant under stable diffeomorphism, since
\[\psi\colon I\pi\oplus \Z\pi^n\to \pi_2(M\#k(S^2\times S^2))\] can be extended to an isomorphism $I\pi\oplus \Z\pi^{n+2}\to \pi_2(M\#(k+1)(S^2\times S^2))$, and this does not change the computation of $\tau$, as we only compute on an $I\pi$ direct summand.
\end{proof}

\smallskip

\begin{definition}\label{def:tau-M}
Define $\tau_M:= \tau_{\psi} \colon H^2(B\pi;\Z/2) \to \Z/2$ for some choice of map $\psi$.  This is a well-defined stable diffeomorphism invariant by \cref{lem:tau-indep}.
\end{definition}
\smallskip

\begin{lemma}\label{lem:tauinv} Under \cref{conv:Sec7} the following holds.
\smallskip \begin{enumerate}
\item The map $\tau_{M} \colon H^2(B\pi;\Z/2) \to \Z/2$ of \cref{def:tau-M} is a homomorphism.
\item  Under the identification $\Hom_{\Z/2}(H^2(B\pi;\Z/2),\Z/2) \cong H_2(B\pi;\Z/2)$, the image of $\tau_M$ agrees with the image of $[M \xrightarrow{c} X] \in \Omega_4^{Spin}(B\pi)$ in $H_2(B\pi;\Z/2)$.
\end{enumerate}
\end{lemma}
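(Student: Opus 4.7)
The plan is to prove (1) directly from the definition of $\tau$ together with the vanishing of $\lambda$ on $\psi(I\pi)$, and then to establish (2) by reducing to the model manifolds constructed in \cref{sec:exmanifolds}.

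For (1), fix $\beta_1,\beta_2 \in I\pi$ and choose $\psi$ as in \cref{lem:tauiso}, so that $\lambda$ vanishes on $\psi(I\pi)$. I would realise $\psi(\beta_1)$ and $\psi(\beta_2)$ by disjoint framed immersed spheres $S_1,S_2$; the triviality of their equivariant intersection allows this after cancelling intersections by immersed Whitney discs and passing to a stabilisation, which does not affect $\tau_\psi$ by \cref{lem:tau-indep}. An immersed representative $S$ of $\psi(\beta_1+\beta_2)$ is then the internal connected sum of $S_1$ and $S_2$ through a thin tube, whose self-intersections are exactly those of $S_1$ together with those of $S_2$. Picking Whitney discs for the self-intersections of $S_1$ and $S_2$ separately, any extra intersections of a Whitney disc for $S_i$ with $S_j$ (for $i\neq j$) can be absorbed using \cref{lem:augtau}: the relevant correction lies in $\ker\phi$ because $\psi(\beta_i)$ is spherically characteristic by \cref{lem:spherchar}. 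This yields
\[
\tau(\psi(\beta_1+\beta_2)) \;=\; \tau(\psi(\beta_1)) + \tau(\psi(\beta_2)) \in \Z/2,
\]
and hence, combined with the factorisation through $H^2(B\pi;\Z/2)$ of \cref{lem:tauquotient}, $\tau_M$ is a homomorphism.

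For (2), both sides are stable diffeomorphism invariants of $M$: $\tau_M$ by \cref{lem:tau-indep}, and the image in $H_2(B\pi;\Z/2)$ because it arises from the bordism class in $\Omega_4^{Spin}(B\pi)$ via a natural transformation. Under \cref{conv:Sec7} ($M$ spin with zero signature and even equivariant intersection form), \cref{thm:parity-detects-sec} combined with the constructions of \cref{sec:exmanifolds} imply that $M$ is stably diffeomorphic to a manifold $P=P_\gamma$ from \cref{subsec:ex2}, where $\gamma \in H_2(B\pi;\Z/2)$ is precisely the $H_2$-image of $[M \to X]$. It therefore suffices to compute $\tau_{P_\gamma}$ and identify it with $\gamma$ under the Kronecker pairing.

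For this final computation I would use the explicit presentation of $\pi_2(P)$ as $\Z\pi \oplus I\pi \oplus (\Z\pi)^n \oplus (\Z\pi)^n$ and the intersection form of \cref{lemma:int-form-on-P}. A stabilisation and basis change along the lines of \cref{lem:tauiso} kills the $2$ appearing on the $I\pi$-summand and produces a $\psi$ with $\lambda|_{\psi(I\pi)}=0$. On the dual basis $\{[e_j^*]\}$ of $H^2(B\pi;\Z/2) \cong H^2(X;\Z/2)$ coming from the $2$-cells of $X$, I would compute $\tau_{P_\gamma}([e_j^*])$ by representing $\psi(1-g_j)$ geometrically, tracing through the construction of $P$ from $M_{1,\gamma}\#M_{1,0}$, and counting the framed Whitney intersections introduced by the twisted gluing map $f$. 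The hard part is this count: the twist in $f$, introduced precisely to realise $\gamma$ on the bordism level, must be shown to produce odd Whitney-disc intersection counts with $\psi(1-g_j)$ exactly when $\gamma(g_j)=1$. The resulting identity $\tau_{P_\gamma}([e_j^*]) = \gamma(g_j)$ then matches, under Poincar\'e duality and Kronecker pairing, the Arf-invariant splitting $\Phi$ of \cref{lemma:split}, completing the proof.
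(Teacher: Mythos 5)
Your strategy for part (2) — reduce to the models $P_\gamma$ of \cref{subsec:ex2} and trace the twisted gluing through a Whitney disc count — is the paper's strategy, but your part (1) takes a different route, and that is where the argument breaks. You claim that since $\lambda(\psi(\beta_1),\psi(\beta_2))=0$, the classes can be represented by \emph{disjoint} immersed spheres after stabilisation. This does not follow: vanishing of the equivariant intersection number only lets you pair up the intersection points with (possibly unframed, possibly non-embedded) Whitney discs, and actually performing the Whitney moves is obstructed by exactly the kind of secondary invariants this paper is about; stabilisation does not remove the obstruction (if it did, $\tau$ itself would vanish stably). Even granting disjointness, the additivity identity $\tau(\psi(\beta_1+\beta_2))=\tau(\psi(\beta_1))+\tau(\psi(\beta_2))$ requires controlling the cross-terms $\sum_i\lvert W_i^{(1)}\cap S_2\rvert+\sum_j\lvert W_j^{(2)}\cap S_1\rvert$, where $W^{(k)}_\ast$ are the Whitney discs for the self-intersections of $S_k$; these are counts of a sphere against discs-with-boundary and are not homological, and \cref{lem:augtau} does not apply to them — that lemma is about replacing a class $x$ by $x+\kappa y$ with $\kappa\in\ker\phi$, not about cross-intersections in a connected-sum decomposition. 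So your part (1) has a genuine gap.

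This gap then infects part (2): you compute $\tau_{P_\gamma}$ only on the basis elements $[e_j^*]$ corresponding to the generators $g_j$, which determines a linear map only once linearity is known — so your two parts are circularly dependent. The paper resolves this by proving (2) in a stronger form and deducing (1) from it: it computes $\tau_P$ on the elements $(g-1,1-g,0,0)$ for \emph{every} $g\in\pi$ (using the Fox-derivative representative $(g-1)\Sigma_1^1\#(\#_j\#_i\frac{\partial g}{\partial g_i}\Sigma_{i+1}^j)$), anchors the answer by the fact that $\tau_{M_0}\equiv 0$ for the null bordant model, and observes that the framing of the one Whitney disc that must pass over the surgery discs changes exactly according to $\gamma$. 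Since the classes $1-g$ surject onto $H^2(X;\Z/2)$ and the computed values agree with the homomorphism $\gamma$ everywhere, $\tau_P$ \emph{is} that homomorphism, which yields (1) with no separate additivity argument. To repair your proof you should either carry out the count for all $g\in\pi$ as the paper does, or supply an actual proof of additivity of $\tau$ (which is not in the toolkit of lemmas you cite).
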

\smallskip

\begin{proof}
We will prove both parts of the lemma by computing in the model 4-manifolds that we constructed in \cref{subsec:ex1,subsec:ex2}.

In \cref{subsec:ex1} we constructed a model $M_0$ for the null bordant element of $\Omega_4^{Spin}(X)=\Omega_4^{Spin}(B\pi)$ with $\pi_2(M_0)\cong I\pi\oplus \Z\pi$ such that there exists an embedded sphere representing each element of $I\pi \subset \pi_2(M_0)$. It follows that $\tau_{M_0}\equiv 0$, which in particular is a homomorphism that agrees with the image of $0=[M_0 \to B\pi] \in \Omega^{Spin}_4(X)$ in  $H_2(X;\Z/2)$.

In \cref{subsec:ex2} we constructed models $P$ for all stable diffeomorphism classes with signature zero and even intersection form described in \cref{lemma:int-form-on-P}. The intersection form vanishes on the image of the map
\[I\pi\to \pi_2(P)\cong \Z\pi\oplus I\pi\oplus \Z\pi^n\oplus \Z\pi^n\]
given by $\beta\mapsto (-\beta,\beta,0,0)$ (to see this, compute using the matrix in the proof of \cref{thm:parity-detects-sec}).

Recall that the manifold $M'$ was obtained in \cref{subsec:ex2} from $M_1\#M_1$ by removing $D^1 \times X^1 = D^1 \times X^{(1)}\sm X^{(0)}$ in each copy. The cores of these solid tori removed from $M_1\#M_1$ represent elements $g_i^{-1}\cdot g_i'$ of $\pi_1(M_1\#M_1)\cong \pi \ast \pi$, where $g_i'$ is the same generator as $g_i$ in the second copy of $M_1$.
The closed manifold $P$ was then obtained by glueing in copies of $S^2\times D^2$ to $M'$.

There exists an immersed sphere representing $(-\beta,\beta,0,0) \in \pi_2(P)$ that lives in $M' \subset P$.  However $\mu$ only vanishes on all such spheres after passing to $P$.  Thus the Whitney discs witnessing that $\mu$ vanishes make use of the surgery discs, and the framing on the surgeries determines whether the Whitney discs are framed.  The details follow.

Let $g_1,\ldots, g_n$ be the generators of $\pi$ on which the surgery on $M \# M$ was done. For $\beta =1-g_i$, the element $(-(1-g_i),1-g_i,0,0) \in \pi_2(P)$ is represented by the sphere $$\Sigma_{i+1}^1 \# \Sigma_{i+1}^2 \#(g_i-1)\Sigma_1^1 \subset M' \subset P,$$ where the summand spheres were defined in \cref{sec:exmanifolds}.  The self-intersection number of this sphere is $g_i -g'_i$.  Therefore, all but two self-intersection points of this representative of $(-(1-g_i),1-g_i,0,0) \in \pi_2(M')$ can be paired up by Whitney discs in $M'$.

The homotopy classes $g_i, g_i' \in \pi_1(M')$ have the same image in $\pi_1(P)$, so that the self-intersection number vanishes.  The Whitney disc that pairs up the corresponding self-intersections passes over the $i$th surgery disc $D^2\times\{\pt\}\subset D^2 \times S^2$ precisely once.

The framing of this Whitney disc changes when the twisted surgery is used instead of the untwisted one.
Since we already know that $\tau_P=\tau_{M_0}=0$ if $P$ is null bordant, and the twists for the surgery precisely depend on the image of $[P \xrightarrow{c} X]$ in $\Hom(H^2(X;\Z/2),\Z/2)$, this shows that $\tau_P$ changes in the same way, restricted to elements of the form $(g_i-1,1-g_i,0,0)$.

For elements of the form $(g-1,1-g,0,0)$, for general $g=g^{\epsilon_{i_1}}_{i_1}\cdots g^{\epsilon_{i_k}}_{i_k}\in\pi, \epsilon_{i_j}\in \{\pm 1\}$, we can argue in the same way.
Represent $(g-1,1-g,0,0)$ by
$$(g-1)\Sigma_1^1 \# \left(\#_{j=1}^2 \#_{i=1}^n \frac{\partial g}{\partial g_i} \Sigma_{i+1}^j\right)$$
and observe that all but one pair of self-intersections can be paired up by Whitney discs in $M'$.  The self-intersection number in $M'$ is $g-g'$, which becomes zero after passing to $P$.
The last Whitney disc can now be chosen to use each of the $i_j$th surgery discs exactly as they appear in the word for $g$.

Since the elements $1-g\in I\pi\cong H^2(X^{(2)};\Z\pi)$ map surjectively onto $H^2(X;\Z/2)$, this shows that $\tau_P$ agrees with the image of $[P \xrightarrow{c} X]$ in $\Hom(H^2(X;\Z/2),\Z/2)$, and in particular is a homomorphism.
\end{proof}

We are now in a position to prove \cref{thm:B}.

\begin{proof}[Proof of \cref{thm:B}]
By \cref{thm:bordism-group-spin-case}, the bordism group $\Omega_4(\xi)$ is isomorphic to $\Z\oplus H_2(X;\Z/2)\oplus \Z/2$, and the first summand is given by the signature. By \cref{thm:parity-detects-sec}, the $\Z/2$ summand is given by the parity of the equivariant intersection form. By \cref{thm:action-spin-case}, in the case where the invariant in the $\Z/2$ summand is $1$, $\Out(\xi)$ acts transitively on the second summand. Thus, two $4$-manifolds with odd equivariant intersection form are stably diffeomorphic if and only if they have the same signature.  In the case where the invariant in the $\Z/2$ summand is trivial, $\Out(\xi)$ acts by $\Out(\pi)$ on the second summand and in light of \cref{lem:tauinv}, the invariant there is given by $[\tau_M]\in H_2(X;\Z/2)/\Out(\pi)$.
\end{proof}

\section{Detecting the classification from equivariant intersection forms}\label{section-tau-versus-int-form}

In this section we prove \cref{thm:diffeo-iff-int-forms-intro} which says that the stable homeomorphism classification is determined by the stable isomorphism class of the equivariant intersection form.  We have already proven \cref{thm:diffeo-iff-int-forms-intro} in \cref{thm:3.3} for totally non-spin manifolds, so we only need to address the spin and almost spin cases, which is the content of \cref{thm:diffeo-iff-int-forms} below.

Let $\pi$ be a COAT group and let $H$ be the standard hyperbolic form on $(\Z\pi)^2$.
Let $w \in H^2(B\pi;\Z/2)$ and let $B_w$ be the resulting normal $1$-type of (almost) spin topological manifolds from \cref{prop:top-1-types}, with characteristic element $w$ as in \cref{lem:w}.  Recall that this can be defined via the pullback diagram
\[\xymatrix{B_w \ar[r] \ar[d] & B\pi \ar[d]^w \\ BSTOP \ar[r]_-{w_2} & K(\Z/2,2), }\]
as in \cite[Theorem~2.2.1]{teichnerthesis}.  Here $w=0$ corresponds to the spin case.

Recall that the hermitian augmented normal 1-type of a $4$-manifold $M$ is the quadruple
$$HAN_1(M) =(\pi_1(M),w_M,\pi_2(M), \lambda_M),$$ where $\pi_2(M)$ is considered as a module over the group ring $\Z[\pi_1(M)]=\Z\pi$ and $w_M \in H^2(B\pi;\Z/2) \cup \{\infty\}$ corresponds to the normal $1$-type.
Fix $\pi$.  We say that two $HAN_1$ types $(\pi,w,\pi_2,\lambda)$ and $(\pi,w',\pi_2',\lambda')$ are \emph{stably isomorphic} if there is an automorphism $\theta \in \Out(\pi)$ with $\theta^*(w') =w$, integers $k,k'$, and an isomorphism $\Upsilon \colon \pi_2 \oplus k H \toiso \pi_2' \oplus k' H$ of $\Z\pi$-modules, over $\theta$, that respects $\lambda$ and $\lambda'$.  That is, $\Upsilon(gq) = \theta(g)\Upsilon(q)$ and $\lambda'(\Upsilon(p),\Upsilon(q)) = \theta(\lambda(p,q))$ for all $g \in \pi$ and for all $p,q \in \pi_2 \oplus kH$.

\smallskip

\begin{theorem}\label{thm:diffeo-iff-int-forms}
Two closed 4-manifolds with COAT fundamental group and  universal covering spin are stably homeomorphic if and only if their $HAN_1$-types
\[
HAN_1 := (\pi_1,w, \pi_2, \lambda)
\]
are stably isomorphic.

\end{theorem}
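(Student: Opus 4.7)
The forward direction is immediate: a stable homeomorphism $\Phi\colon M\#kH \to M'\#k'H$ induces an outer automorphism $\theta = \Phi_* \in \Out(\pi)$ with $\theta^*(w_{M'}) = w_M$ (since $\Phi$ preserves Stiefel-Whitney classes), and $\Phi_*\colon \pi_2(M\#kH) \toiso \pi_2(M'\#k'H)$ is a $\Z\pi$-linear isomorphism over $\theta$ intertwining the equivariant intersection forms, using that connect sum with $S^2\times S^2$ adds a hyperbolic summand to $\pi_2$.

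For the converse, suppose $\Upsilon$ realises a stable isomorphism $HAN_1(M) \cong^s HAN_1(M')$ over $\theta \in \Out(\pi)$. By \cref{cor:stablediffeoclasses} (and its topological counterpart), it suffices to verify that $M$ and $M'$ determine the same class in $\Omega_4(\xi)/\Aut(\xi)$, with invariants catalogued in \cref{thm:A}. The signature is the signature of the non-equivariant form $\lambda_M \otimes_{\Z\pi}\Z$ on $H_2(M;\Z)$, hence $\sigma(M)=\sigma(M')$. In the spin case the parity of $\lambda$ is a stable isomorphism invariant by definition, so it agrees for $M$ and $M'$. When the parity is odd, \cref{thm:B} already reduces stable homeomorphism to signature equality. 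When the parity is even, one must additionally show $[\tau_M] = [\tau_{M'}] \in H_2(B\pi;\Z/2)/\Out(\pi)$, and this is the core of the proof.

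The plan for matching the $\tau$ invariants is to realise $M$ and $M'$ by explicit models and to extract $\tau$ from the intersection form of the model. By \cref{thm:B}, every spin 4-manifold with COAT fundamental group $\pi$, fixed signature, and even parity is stably homeomorphic to one of the models $P_\gamma$ constructed in \cref{subsec:ex2}, with $\tau_{P_\gamma} = \gamma$; the almost spin case uses analogous models. Because the forward direction of the theorem (already established) guarantees that stable homeomorphism preserves $HAN_1$ up to stable isomorphism, we may replace $M$ and $M'$ by $P_\gamma$ and $P_{\gamma'}$. The task becomes: if $\lambda_{P_\gamma}$ and $\lambda_{P_{\gamma'}}$ are stably isomorphic over $\theta$, then $\theta^*\gamma' = \gamma$.

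The hard part is extracting $\gamma$ from the intersection form in an $\Out(\pi)$-equivariant and stabilisation-invariant way. In the matrix of \cref{lemma:int-form-on-P} the $\gamma$-dependence is localised in the bottom-right block $\sum_k \gamma(g_k)(D_k R_i)\overline{D_k R_j}$, and a direct comparison of these blocks is obstructed by hyperbolic stabilisation and arbitrary change of basis in $(\Z\pi)^n$. The appropriate tool is \cref{lem:tau-indep}: after stabilising and selecting a splitting $\psi\colon I\pi \oplus (\Z\pi)^n \toiso \pi_2(M\#kH)$ on which $\lambda$ vanishes on $\psi(I\pi)$, the value $\tau_\psi$ is independent of $\psi$, which exhibits $\tau_M$ as determined by the stabilised module $(\pi_2, \lambda)$ together with a geometric enhancement that $\Upsilon$ preserves. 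Transporting a valid $\psi$ through $\Upsilon$ (accounting for the $\theta$-twist) yields a valid $\psi'$ for $M'\#k'H$, and then \cref{lem:tauinv}(2), which identifies $\tau_M$ with the image of $[M]$ in $H_2(B\pi;\Z/2)$, upgrades the equality $\tau_\psi = \tau_{\psi'}$ to the required $\tau_M = \theta^*\tau_{M'}$ in $H_2(B\pi;\Z/2)$. The almost spin case runs along the same lines, replacing \cref{thm:B} by \cref{thm:A}(\ref{item:A3}) and using the corresponding almost spin models; the main technical content remains the same, the identification of the finite invariant with a stable-isomorphism invariant of $\lambda$.
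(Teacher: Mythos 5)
There is a genuine gap at the crux of your argument. The step ``transporting a valid $\psi$ through $\Upsilon$ \dots{} yields a valid $\psi'$ for $M'\#k'H$, and then \dots{} upgrades the equality $\tau_\psi = \tau_{\psi'}$'' assumes exactly what needs to be proved. \cref{lem:tau-indep} shows that $\tau_\psi$ is independent of the choice of splitting $\psi$ \emph{within a single manifold}: both splittings land in $\pi_2$ of the same stabilised $M$, and the value $\tau(\psi(\beta))$ is computed geometrically there, by counting intersections of Whitney discs with an immersed sphere representing $\psi(\beta)$. An abstract $\Z\pi$-linear isometry $\Upsilon\colon \pi_2(M)\oplus kH \to \pi_2(M')\oplus k'H$ gives a valid splitting $\Upsilon\circ\psi$ for $M'$, but $\tau_{\Upsilon\circ\psi}$ is then computed by Whitney-disc counts \emph{in $M'$}, and nothing in \cref{lem:tau-indep} or \cref{lem:tauinv} relates that count to the one performed in $M$. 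Since $\tau$ is a secondary invariant not visibly expressible as an algebraic function of $(\pi_2,\lambda)$, the claimed equality $\tau_\psi=\tau_{\psi'}$ is precisely the ``extremely surprising'' content of the theorem (as the introduction emphasises), and your argument is circular at this point.

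The paper closes this gap with an input you do not use: the Farrell--Jones assembly isomorphism $\bbL\langle 1\rangle_4(B\pi)\toiso L_4(\Z\pi)$ for COAT groups (Bartels--Farrell--L\"uck). Concretely, it constructs a null-bordant model $N$ with $\pi_2(N)\cong I\pi\oplus\Z\pi$ whose form becomes hyperbolic after extension to $(\Z\pi)^2$ (\cref{lem:null-bordant}), realises every class in $F_{2,2}$ as the target of a degree one normal map $f\colon M\to N$, and uses the splitting $\lambda_M\cong\kappa(f)\oplus\lambda_N$ to define $\widehat{\Lambda}\colon F_{2,2}\to L_4(\Z\pi)$, $[M]\mapsto[\kappa(f)]$. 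Commutativity of the resulting diagram with the assembly map, surjectivity of $c_*$ on $\wt{\bbL\langle 1\rangle}_4$, and a counting argument using $\wt L_4(\Z\pi)\cong H_2(B\pi;\Z/2)$ show $\widehat{\Lambda}$ is injective, i.e.\ the Witt class of the extended intersection form detects the $H_2(B\pi;\Z/2)$ part of the bordism group. Some substitute for this $L$-theoretic (or an equivalently strong) input is unavoidable; the $\tau$-machinery of \cref{sec:tau} alone does not supply it.
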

\smallskip

In the totally non-spin case, the $HAN_1$-types are determined simply by the fundamental group and signature. One also needs the Kirby-Siebenmann invariants to coincide to deduce that two such manifolds are stably homeomorphic.
On the other hand, note that for manifolds with universal covering spin, the Kirby-Siebenmann invariants are determined by the (algebraic) $HAN_1$-types.
Since two smooth 4-manifolds are stably diffeomorphic if and only if they are stably homeomorphic, we obtain the corresponding result in the smooth category. It is easier to state due to the vanishing of the Kirby-Siebenmann invariant.

\smallskip

\begin{cor}
Two closed smooth 4-manifolds with COAT fundamental group are stably diffeomorphic if and only if their $HAN_1$-types $(\pi_1,w, \pi_2, \lambda)$ are stably isomorphic.
\end{cor}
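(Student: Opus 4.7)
My plan is to deduce this corollary formally from \cref{thm:diffeo-iff-int-forms} (the topological version), together with the observation, emphasised in the introduction, that within each normal $1$-type the smooth stable classification embeds into the topological stable classification as the kernel of the Kirby--Siebenmann invariant. For the forward direction, a stable diffeomorphism $M\#m(S^2\times S^2)\cong N\#n(S^2\times S^2)$ induces an isomorphism on fundamental groups, preserves the $w$-type (since $S^2\times S^2$ has trivial stable normal bundle the normal $1$-type is a stable diffeomorphism invariant), and restricts to an isometry of equivariant intersection forms after discarding the hyperbolic summands contributed by the $S^2\times S^2$ factors; in particular $HAN_1(M)\scong HAN_1(N)$.

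For the reverse direction in the totally non-spin case, \cref{thm:diffeo-iff-int-forms} does not apply as stated, so I would invoke \cref{cor:stable-class-non-spin}: the signature is a complete stable diffeomorphism invariant for totally non-spin $4$-manifolds with COAT fundamental group. Since the signature is determined by $\lambda$ (by taking the ordinary signature of the augmented form, noting that the orthogonal complement of the spherical classes in $H_2(M;\R)$ is metabolic because $B\pi$ is $3$-dimensional), a stable isomorphism of $HAN_1$-types forces equal signatures and hence stable diffeomorphism.

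In the spin and almost spin cases, \cref{thm:diffeo-iff-int-forms} yields immediately that $M$ and $N$ are stably \emph{homeomorphic}. To upgrade this to stable diffeomorphism, I would verify by direct comparison of \cref{thm:bordism-group-spin-case} and \cref{thm:almostspinbord} with \cref{top-bordism-groups} that the forgetful map
\[\Omega_4(\xi)/\Aut(\xi)\;\hookrightarrow\;\Omega_4(\xi^{TOP})/\Aut(\xi^{TOP})\]
is injective with image equal to the locus where the Kirby--Siebenmann invariant vanishes: it is the inclusion on the signature factor and the identity on the remaining summands, and the $\Aut$-actions match. Any smooth $4$-manifold has vanishing Kirby--Siebenmann invariant, so the classes of $M$ and $N$ both lie in this smooth subset, whence their coincident topological stable class has a unique smooth preimage and $M$, $N$ are stably diffeomorphic. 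The real content is \cref{thm:diffeo-iff-int-forms} itself; this corollary is then a formal distillation combining that theorem with the smooth--topological comparison and the easier totally non-spin case.
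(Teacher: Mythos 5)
Your proposal is correct, and it reaches the conclusion by a somewhat different route from the paper. The paper's own derivation is very short: it disposes of the totally non-spin case by noting that there the $HAN_1$-type reduces to the fundamental group and the signature (so \cref{thm:3.3} applies), observes that for manifolds with spin universal cover the Kirby--Siebenmann invariant is determined by the algebraic $HAN_1$-type, and then simply invokes the general fact that two smooth $4$-manifolds are stably diffeomorphic if and only if they are stably homeomorphic. You instead re-derive that last fact (for COAT groups) from the paper's own computations, by checking that the forgetful map $\Omega_4(\xi)/\Aut(\xi)\to\Omega_4(\xi^{TOP})/\Aut(\xi^{TOP})$ is injective with image the Kirby--Siebenmann kernel; this is more self-contained but also more work, and it is exactly the content of the comparison made at the end of \cref{stable-homeo-classification}. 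Two small points of care. First, in the almost spin case your description of the forgetful map as ``the identity on the remaining summands'' is not literally right: the smooth group is $\Z\oplus\ker\langle w,-\rangle$ with the $\Z$-generator of signature $8$ mapping to a class $x_0$ with $\langle w,x_0\rangle=1$, while the topological group is $8\Z\oplus H_2(B\pi;\Z/2)$, so the map is $(n,y)\mapsto(8n,\,nx_0+y)$; it is still injective with image the locus $\mathrm{sig}/8+\langle w,x\rangle=0$, which is what you need, but the summand decompositions do not match termwise. Second, your totally non-spin argument rests on the (true, and also implicitly used by the paper) fact that $\sigma(M)$ is recovered from $\lambda_M$ because the cup-product form pulled back from the $3$-dimensional $B\pi$ vanishes rationally; your parenthetical sketch of this is adequate but should say that the annihilator of the spherical classes is totally isotropic rather than merely metabolic.
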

\smallskip

The only if direction of \cref{thm:diffeo-iff-int-forms} is straightforward.  For the other direction, observe that the summand $H_3(B\pi;\Z/2)\subseteq \Omega_4(B_0)$ is detected by the parity of the equivariant intersection form by \cref{thm:parity-detects-sec}. For $w\neq 0$, we have $\Omega_4(B_w)=F_{2,2}$ by \cref{thm:almostspinbord}. Therefore, it only remains to show that elements in $F_{2,2} \subset \Omega_4(B_w)$ are detected by their equivariant intersection form.
We begin with the following important lemma.
\smallskip

\begin{lemma}\label{lem:null-bordant}
Let $N$ be a $4$-manifold with fundamental group $\pi$, representing the trivial element of $\Omega_4(B_w)$. Then $\pi_2(N)$ is stably isomorphic to $I\pi\oplus\Z\pi$ and the canonical extension of $\lambda_N$ to $(\Z\pi)^2$ is hyperbolic.
\end{lemma}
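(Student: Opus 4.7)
The plan is to reduce $N$ to an explicit null bordant model via stable diffeomorphism, then read off $\pi_2$ and the equivariant intersection form from the model. Since $N=0\in\Omega_4(B_w)$, \cref{cor:stablediffeoclasses} (or rather, the underlying stable diffeomorphism statement \cref{thm:stablediffeoclasses}) tells us that $N$ is stably diffeomorphic to any other closed $4$-manifold with normal $1$-type $B_w$ representing the zero bordism class. So it suffices to exhibit such a model with the claimed $\pi_2$ and intersection form, and then observe that $\pi_2(M)$ and $\lambda_M$ are stable invariants up to addition of hyperbolic summands $H\cong \Z\pi\oplus\Z\pi$ (one per $S^2\times S^2$ factor).

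In the spin case $w=0$, the model is exactly $M_0$ from \cref{subsec:ex1}, the untwisted ($\sigma=0$) surgery on $X\times S^1$. The Mayer--Vietoris calculation in \cref{lem:pi2computation} gives $\pi_2(M_0)\cong I\pi\oplus\Z\pi$, and the direct geometric count of intersections immediately after that lemma yields the form $\bigl(\begin{smallmatrix} 0 & 1 \\ 1 & 0 \end{smallmatrix}\bigr)$ on the ordered basis $(I\pi,\Z\pi)$. By \cref{lem:intersec}, this is precisely the restriction to $I\pi\oplus\Z\pi\subset \Z\pi\oplus\Z\pi$ of the standard hyperbolic form, so the canonical extension is hyperbolic as required. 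In the almost spin case, I would construct an analogous model by taking the same surgery construction but using a twist determined by the complex line bundle $E\to X$ with $w_2(E)=w$ (from \cref{lem:w}) to adjust the normal $1$-smoothing. Concretely, one modifies the glueing diffeomorphism in the definition of $M_\sigma$ using $E$ so that the resulting normal bundle factors through $B_w$, while preserving the fact that the attached $S^2\times D^2$ fills in the $S^1$-factor nullhomotopically. The Mayer--Vietoris argument of \cref{lem:pi2computation} is purely local and does not use the spin structure, so it again gives $\pi_2\cong I\pi\oplus\Z\pi$ with the same hyperbolic intersection form.

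Once these two models are in hand, stable diffeomorphism of $N$ to the appropriate model yields an isomorphism $\pi_2(N)\oplus kH\cong (I\pi\oplus\Z\pi)\oplus k'H$ compatible with intersection forms, which is the desired stable isomorphism; and since a hyperbolic form restricted along $I\pi\hookrightarrow\Z\pi$ and then re-extended remains hyperbolic, the canonical extension of $\lambda_N$ to $(\Z\pi)^2$ is hyperbolic.

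The main obstacle will be the almost spin case: verifying that the twisted surgery on $X\times S^1$ genuinely produces a $4$-manifold with normal $1$-type $B_w$ and trivial class in $\Omega_4(B_w)$ requires careful bookkeeping of how the line bundle $E$ interacts with the attaching map $f$ used in \cref{subsec:ex1}. The spin case is essentially already done by the work in \cref{sec:exmanifolds}.
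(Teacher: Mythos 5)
Your overall strategy --- reduce to one explicit null bordant model per normal $1$-type via stable diffeomorphism, then read off $\pi_2$ and $\lambda$ --- is exactly the paper's, and your treatment of the spin case via $M_0$ is correct and complete.

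The gap is in the almost spin case. You propose to keep the underlying manifold $(\cl(X\setminus D^3)\times S^1)\cup_f S^2\times D^2$ and only modify the glueing diffeomorphism $f$ ``using $E$'' so that $w_2$ of the result becomes $c^*(w)$. This cannot work for $w\neq 0$: changing $f$ only alters the manifold in a neighbourhood of the surgered circle, so the complement $\cl(X\setminus D^3)\times S^1$ retains its product (indeed trivial, since orientable $3$-manifolds are parallelisable) tangent bundle, and hence $w_2$ of the resulting manifold restricts to zero there. But $\cl(X\setminus D^3)\simeq X^{(2)}$ carries all of $H^2(X;\Z/2)$ (the restriction $H^2(X;\Z/2)\to H^2(X^{(2)};\Z/2)$ is injective), so $c^*(w)$ restricts nontrivially to this complement whenever $w\neq 0$. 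No local twist at the surgery region can produce the required normal $1$-type. The correct model --- which is what the paper uses --- replaces the product $X\times S^1$ globally by the unit circle bundle $N'=S(E)$ of the complex line bundle $E\to X$ with $w_2(E)=w$ from \cref{lem:w}, and surgers a fibre $S^1\subset N'$. This fixes both of your acknowledged difficulties at once: the stable tangent bundle of the result is $c^*(E)$, giving normal $1$-type $B_w$, and the associated disc bundle of $E$ together with the trace of the surgery exhibits null bordism over $B_w$. Your remaining observation then applies verbatim, because the $\pi$-cover $\ol{N'}$ is $\wt X\times S^1$ ($E$ trivialises over the contractible $\wt X$), so the Mayer--Vietoris computation of \cref{lem:pi2computation}, which takes place entirely on the $\pi$-cover, yields $\pi_2(N)\cong I\pi\oplus\Z\pi$ with hyperbolic extended form exactly as in the spin case.
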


\begin{proof}
Any two null bordant manifolds with the same normal 1-type are stably homeomorphic, thus it suffices to prove the lemma for one choice of null bordant element~$N,$ having the correct fundamental group, for each normal 1-type.

In the case $w=0$, that is in the spin case, choose $N$ to be $M_0$ as constructed in \cref{subsec:ex1}. It was calculated that $\pi_2(M_0) \cong I\pi\oplus \Z\pi$ and that the intersection form becomes hyperbolic when extended to $(\Z\pi)^2$.

To show the lemma in the almost spin case we construct $N$ as follows. Let $X$ be a 3-manifold model for $B\pi$, and choose an element of $H^2(X;\Z)$ whose reduction modulo 2 is equal to $w \in H^2(B\pi;\Z/2)$.  Let $E \to X$ be the complex line bundle over $X$ whose first Chern class is the given element of $H^2(X;\Z)$.  The sphere bundle of the associated 2-dimensional real vector bundle is a circle bundle over $X$, which is a $4$-manifold $N'$ whose stable tangent bundle fits into a pullback diagram of stable bundles
\[\xymatrix{\tau_{N'} \ar[r] \ar[d] & E \ar[d] \\ N' \ar[r] & X}\]
Using this bundle data, perform surgery on a fibre $S^1 \subset N'$ to obtain a new manifold $N \xrightarrow{c} X$.  The stable tangent bundle of $N$ is given by $c^*(E)$ and $c$ induces an isomorphism on fundamental groups. In particular, it follows from \cref{lem:w}, translated to the topological category, that $N$ has $B_w$ as normal $1$-type, because $w_2(N) = c^*(w_2(E)) = c^*(w)$.

The resulting 4-manifold $N$ is null bordant because the trace of the surgery is a bordism over the normal 1-type of $N$ and the disc bundle is a null bordism of the sphere bundle, also over the normal 1-type of $N$.  The computation of the intersection form of $N$ is similar to the computation of the intersection form of the null bordant element in the spin case. In the proof of \cref{lem:pi2computation}, replace $X\times S^1$ by $N'$. The $\pi$-covering $\overline{N'}$ defined by the pullback
\[\xymatrix{\overline{N'}\ar[r]\ar[d]&\wt X\ar[d]\\N'\ar[r]&X}\]
is homeomorphic to $\wt X\times S^1$ since $\wt{X}$ is contractible. Performing a surgery on an $S^1$ fibre corresponds to $\pi$-equivariant surgery on $\overline{N'}$. The computation of the second homotopy group and the intersection form of $M_0$ in the proof of \cref{lem:pi2computation} was entirely in terms of the $\pi$-cover. Thus the same computation yields $H_2(N;\Z\pi)\cong I\pi\oplus \Z\pi$ and
\[\lambda_N=
\begin{blockarray}{ccc}
& I\pi & \bbZ\pi\\
\begin{block}{c(cc)}
I\pi & 0 & 1\\
\bbZ\pi & 1 & 0\\\end{block}\end{blockarray}~.\]
The extended equivariant intersection form is therefore hyperbolic as claimed.
\end{proof}

Let $N$ be a null bordant (almost) spin $4$-manifold with fundamental group $\pi$ with normal 1-type $B_w$.  For definiteness, take $N$ to be the manifold constructed in \cref{lem:null-bordant}. Next consider the following diagram.

\begin{equation}\label{diagram:L-group}
\xymatrix{\bbL\langle 1 \rangle_4(B\pi) \ar[r]^-{\cong} & L_4(\Z\pi)  & \\
        \bbL\langle 1 \rangle_4(N) \ar@{-->}[r]_-{\Theta} \ar[u]^{c_*} & F_{2,2} \ar@{^{(}->}[r] \ar@{-->}[u]_-{\widehat{\Lambda}} &  \Omega_4(B_w)
}\end{equation}

We will proceed by first defining the sets in the diagram, then the maps in the diagram, before showing that the diagram commutes.  We only define the dashed arrows as maps of sets.  \cref{thm:diffeo-iff-int-forms} will follow from the commutativity of the diagram.

Here $\bbL = \bbL(\Z)$ is the quadratic $L$-theory spectrum of the integers~\cite[$\mathsection$~13]{Ranicki-blue-book}, whose homotopy groups coincide with the $L$-theory of the integers; that is \mbox{$\pi_n(\bbL(\Z)) \cong L_n(\Z)$}.
The notation $\bbL\langle 1\rangle$ refers to the $1$-connected quadratic $\bbL$-spectrum, obtained from $\bbL$ by killing the non-positive homotopy groups.

The group $L_4(\Z\pi)$ is defined to be the Witt group of nonsingular quadratic forms (on finitely generated free $\Z\pi$-modules), considered up to stable isometry~\cite[Chapter~5]{Wall}.

The classifying map $c\colon N \to B\pi$ induces a map $c_*$ on $\bbL\langle 1\rangle$-homology.

The top horizontal arrow arises from the assembly map in quadratic $L$-theory. Define this map to be the composite
\[\xymatrix{\bbL\langle 1 \rangle_4(B\pi) \ar[r]^{\cong} & \bbL_4(B\pi) \ar[r]_-{\mathcal{A}}^-{\cong} & L_4(\Z\pi), }\]
where the first map is induced by $\bbL\langle 1 \rangle \to \bbL$ and the second map is the assembly map~\cite{Ranicki-blue-book}, which has been proven to be an isomorphism for COAT groups by A.~Bartels, T.~Farrell and W.~L\"uck \cite[Corollary 1.3]{Bartels-Farrell-Luck}.  Furthermore, since COAT groups are $3$-dimensional, it follows that the first map is also an isomorphism.

If $Y$ is a closed oriented manifold, it satisfies Poincar\'e duality in $L$-theory; see for example\ A.~Ranicki \cite[B9~p.~324]{Ranicki-blue-book}. This is due to the Sullivan-Ranicki orientation $\MSTOP \to \bbL^{sym}$ which gives a fundamental class for $Y$ in the \emph{symmetric} theory~$\bbL^{sym}$. It follows that $Y$ has Poincar\'e duality in any module spectrum over $\bbL^{sym}$, such as $\bbL\langle 1 \rangle$. If $Y$ is $4$-dimensional this implies that
\[ \bbL\langle 1 \rangle_4(Y) \cong \bbL\langle 1 \rangle^0(Y)\]
Now $\bbL\langle 1 \rangle^0(Y) \cong [Y,\Omega^\infty \bbL\langle 1 \rangle]$.  But the infinite loop space $\Omega^\infty \bbL \langle 1 \rangle$ of the 1-connective $\bbL$-spectrum is $G/TOP$, by the Poincar\'e conjecture combined with the surgery exact sequence in the topological category.  Therefore we have that \[\bbL\langle 1 \rangle^0(Y) \cong [Y,G/TOP]. \]
In particular, elements of $\bbL\langle 1 \rangle_4(Y)$ can be identified with normal bordism classes of degree one normal maps $X \to Y$ for $X$ a closed topological manifold; see for example~\cite[Theorem~3.45]{Luck-basic-intro}.

After identifying $\bbL\langle 1 \rangle_4(N)$ with degree one normal maps, the up-then-right composition of diagram (\ref{diagram:L-group}) coincides with taking the surgery obstruction of a degree one normal map $f \colon M \to N$, again according to~\cite[B9,~p.~324]{Ranicki-blue-book}.  The operation of taking the surgery obstruction is defined as follows. Perform surgery below the middle dimension to make the normal map $1$-connected, then consider the intersection and self-intersection form on the surgery kernel $\ker(f_*\colon H_2(M;\Z\pi) \to H_2(N;\Z\pi))$.  This yields a nonsingular quadratic form $\kappa(f)$ on a finitely generated free $\Z\pi$-module~\cite[Lemma~2.2]{Wall}.
The equivariant intersection form of $M$ decomposes as
\[ \lambda_M \cong \kappa(f) \oplus \lambda_N \]
because the Umkehr map $f^{!}$ provides a splitting of the map $f_* \colon H_2(M;\Z\pi) \to H_2(N;\Z\pi)$, and the intersection form of $M$ respects the splitting; for example, see \cite[Proposition 10.21]{Ranicki-AGS-book}.

The identification of the surgery obstruction and assembly also involves the identification of the Wall $L$-groups with the Ranicki $L$-group of quadratic Poincar\'{e} chain complexes~\cite{Ranicki-ATS-I,Ranicki-ATS-II} via the process of algebraic surgery below the middle dimension.

For the definition of the map $\Theta\colon \bbL\langle 1 \rangle_4(N) \to F_{2,2}$ we need the following observation.

Note that the map $BSTOP \xrightarrow{w_2} K(\Z/2,2)$ factors through the canonical map $BSTOP \to BSG$, where $BSG$ denotes the classifying space for oriented stable spherical fibrations. Define $BSG_w$ by the following pullback diagram:
\[\xymatrix{BSG_w \ar[r] \ar[d] & B\pi \ar[d]^w \\ BSG \ar[r]_-{w_2} & K(\Z/2,2).}\]
Since the map $B\pi \to K(\Z/2,2)$ is $2$-coconnected, so is the map $BSG_w \to BSG$.

We say that an $n$-dimensional Poincar\'e complex $Y$ has \emph{Spivak normal $1$-type} $B$ if there is a $2$-coconnected fibration $B \to BSG$ such that the Spivak normal fibration $SF(Y)\colon Y \to BSG$ lifts to a 2-connected map $\wt{SF(Y)} \colon Y \to BSG_w$, called a \emph{Spivak normal $1$-smoothing}, such that
\[\xymatrix @C+0.3cm{ & BSG_w \ar[d] \\ Y \ar[r]^-{SF(Y)} \ar@/^1pc/[ur]^-{\wt{SF(Y)}} & BSG}\]
commutes.
\smallskip

\begin{lemma}\label{types-and-normal-invariants}
Let $Y \to BSG_w$ be a $n$-dimensional Poincar\'e complex, $n\geq 4$, with a normal $1$-smoothing to $BSG_w$, and let $f \colon M \to Y$ be a $2$-connected degree one normal map from a closed topological manifold $M$ to $Y$. Then there is an induced normal $1$-smoothing $M \to B_w$.
\end{lemma}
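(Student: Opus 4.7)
My plan is to first construct a lift $\wt{\xi}\colon Y \to B_w$ of the given Spivak $1$-smoothing $\wt{SF(Y)}\colon Y \to BSG_w$ by combining the bundle data of the normal map with the $BSG_w$-factorization of $SF(Y)$, and then to compose with $f$ to produce the required normal $1$-smoothing of $M$.

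Recall that $B_w$ is the homotopy pullback of $w_2\colon BSTOP \to K(\Z/2,2)$ along $w\colon B\pi \to K(\Z/2,2)$, and $BSG_w$ is the analogous pullback with $BSG$ in place of $BSTOP$. A degree one normal map $f\colon M \to Y$ includes as part of its data a topological bundle $\xi\colon Y \to BSTOP$ whose underlying spherical fibration is $SF(Y)$, together with a stable bundle isomorphism $\nu_M \cong f^*\xi$. The Spivak $1$-smoothing $\wt{SF(Y)}$ amounts to a classifying map $c\colon Y \to B\pi$ together with a null-homotopy of $w_2(SF(Y)) - c^*w$. Since $w_2(\xi) = w_2(SF(Y))$, this null-homotopy combines with $\xi$ and $c$ via the universal property of the homotopy pullback to produce the desired lift $\wt{\xi}\colon Y \to B_w$.

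Composing with $f$ yields $\wt{\xi}\circ f\colon M \to B_w$, whose underlying $BSTOP$-component is $\xi \circ f$, stably identified with $\nu_M$ through the bundle isomorphism from the normal map datum. The homotopy lifting property of the fibration $B_w \to BSTOP$ then lets me adjust the map within its homotopy class to an honest lift of $\nu_M\colon M \to BSTOP$.

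The remaining task is to verify $2$-connectedness. Since we are in the COAT setting, $B\pi$ admits a closed aspherical $3$-manifold model, so $\pi_j(B\pi) = 0$ for all $j \geq 2$. The homotopy fiber of $B_w \to B\pi$ is $BTOPSpin$, which is $2$-connected, so the long exact sequence of homotopy groups gives $\pi_1(B_w) \cong \pi$ and $\pi_2(B_w) = 0$. Surjectivity on $\pi_2$ is therefore automatic, and the isomorphism on $\pi_1$ follows from $f$ being $2$-connected combined with the fact that $Y \to BSG_w \to B\pi$ induces an isomorphism on $\pi_1$ (the first factor by hypothesis, and the second because $BSG_w \to B\pi$ has $2$-connected fiber). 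The main obstacle I anticipate is purely organizational: keeping track of the coherent homotopy data needed to specify the lift $\wt{\xi}$ through the homotopy pullback and to match it with the normal map datum on $M$; once this bookkeeping is handled, $2$-connectedness is essentially formal from the vanishing $\pi_2(B_w) = 0$.
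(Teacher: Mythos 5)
Your proposal is correct and follows essentially the same route as the paper's proof: you lift the Spivak $1$-smoothing of $Y$ to $B_w$ using the pullback description of $B_w$ together with the $BSTOP$-reduction $\xi$ supplied by the normal map datum, compose with $f$, and verify $2$-connectedness from the connectivity of $f$ and of the Spivak $1$-smoothing. The only cosmetic differences are that the paper obtains $\wt{\xi}$ via the pullback square $B_w = BSTOP\times_{BSG}BSG_w$ (deduced from the pullback lemma) rather than directly from $B_w = BSTOP\times_{K(\Z/2,2)}B\pi$, and checks connectivity by comparing $B_w$ with $BSG_w$ instead of computing $\pi_2(B_w)=0$ outright; both variants are sound.
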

\smallskip

\begin{proof}
The datum of a degree one normal map consists of a pullback diagram
\[\xymatrix{\nu_X \ar[r]^{\widehat{f}} \ar[d] & \xi \ar[d] \\ X \ar[r]_f & Y}\]
where $\xi$ is some vector bundle lift of the Spivak fibration $SF(Y)$ of $Y$.
Let $\wt{SF(Y)}$ be the Spivak $1$-smoothing. Then the following diagram commutes:
\[\xymatrix@R+0.3cm @C+0.3cm {Y \ar[r]^-{\wt{SF(Y)}} \ar[d]_\xi \ar[dr]^-{SF(Y)} & BSG_w \ar[d] \\ BSTOP \ar[r] & BSG.}\]
Furthermore, we can consider the diagram
\[\xymatrix{B_w \ar[r] \ar[d] & BSG_w \ar[r] \ar[d] & B\pi \ar[d]^w \\ BSTOP \ar[r] & BSG \ar[r]_-{w_2} & K(\Z/2,2)}\]
in which the outer rectangle and the right square are pullbacks by definition. Thus by the pullback lemma, the left square is also a pullback.
By the universal property of this pullback there is a unique map $\wt{\xi} \colon Y \to B_w$ that gives $\xi$ an induced $B_w$-structure.

Since $\widehat{f}^*(\xi) \cong \nu_X$, now we have the following commutative diagram:
\[\xymatrix@R+0.1cm @C+0.1cm {Y \ar[d]_{\wt{\xi}} \ar[dr]_{\xi} & X \ar[l]_-{f} \ar[d]^{\nu_X} \\ B_w \ar[r] & BSTOP.}\]
We claim that the composition $\wt{\xi}\circ f$ is a normal $1$-smoothing.
As $BSTOP \to BSG$ induces an isomorphism on $\pi_1$ and $\pi_2$, by considering the homotopy fibres in the left hand square of the above rectangular diagram, we see that the $B_w \to BSG_w$ also induces an isomorphism on $\pi_1$ and $\pi_2$; here we use that the map $B_w \to BSTOP$ is $2$-coconnected. The claim that $X \to B_w$ is a normal $1$-smoothing now follows from the fact that $Y \to BSG_w$ is a Spivak normal $1$-smoothing, the fact that $f$ is 2-connected, and applying the functors $\pi_1$ and $\pi_2$ to the following diagram:
\[\xymatrix{&  B_w \ar[r] & BSG_w \\
X \ar[ur]^{\wt{\nu}_X} \ar[r]^f & Y \ar[u] \ar[ur]_{\wt{SF(Y)}}}\]
\end{proof}

Now we are in the position of being able to define the map $\Theta\colon \bbL\langle 1 \rangle_4(N) \to F_{2,2}$. This is very similar to constructions by J.~Davis~\cite[Theorem 3.10 and 3.12]{Davis-05}.

Represent an element of $\bbL\langle 1 \rangle_4(N)$ by a degree one normal map $f \colon M \to N$.  We can assume that $f$ induces an isomorphism on fundamental groups by performing surgeries on $M$ within the normal bordism class. By \cref{types-and-normal-invariants}, the normal $1$-smoothing of $N$ induces a normal $1$-smoothing of $M$.  Moreover, we can apply \cref{types-and-normal-invariants} to a $2$-connected normal bordism of normal maps, to obtain a $B_w$ bordism of the resulting normal $1$-smoothings. Thus we obtain a well defined element of $\Omega_4(B_w)$.

For $w \neq 0$ we have shown that $F_{2,2} = \Omega_4(B_w)$ in \cref{thm:almostspinbord}. For $w = 0$, we have that $\Omega_4(B_w) = \Omega_4^{TOPSpin}(B\pi)$, and $F_{2,2}$ is given by elements whose reference maps to $B\pi$ stably factor through the $2$-skeleton $B\pi^{(2)}$ of $B\pi$. In particular, since $0 = [N] \in \Omega_4^{TOPSpin}(B\pi)$, there exists a representative of the null-bordant class such that the classifying map to $B\pi$ factors through the $2$-skeleton, and any two null bordant manifolds are stably homeomorphic, it follows that up to stable homeomorphism and up to homotopy, the map $c\colon N \to B\pi$ factors through the $2$-skeleton of $B\pi$.
Thus the composite $c \circ f  \colon M \to N \to B\pi$
also stably factors through the $2$-skeleton of $B\pi$, whence also $M$ lies in $F_{2,2}$.

Next we will define a map $\widehat{\Lambda} \colon \Im(\Theta) \to L_4(\Z\pi)$.  In the proof of \cref{thm:diffeo-iff-int-forms} below we will see that $\Im(\Theta)=F_{2,2}$, so that in fact we define the map $\widehat{\Lambda}$ claimed in Diagram \eqref{diagram:L-group}.  An element of $\Im(\Theta)$ can be represented by a $4$-manifold $M$ which has a degree one normal map $f \colon M \to N$ that induces an isomorphism on fundamental groups.  We saw above that $\lambda_M \cong \kappa(f) \oplus \lambda_N$.    By \cref{lem:null-bordant} the intersection form $\lambda_N$ on $I\pi \oplus \Z\pi$ extends to a hyperbolic form $\widehat{\lambda}_N$ on $\Z\pi^2$. Therefore, $\lambda_M$ can be extended to a nonsingular quadratic form \[\widehat{\lambda}_M = \kappa(f)+\widehat{\lambda}_N\] defined on a free $\Z\pi$-module and we define $\widehat{\Lambda}([M])=[\widehat{\lambda}_M]\in L_4(\Z\pi)$. In \cref{lem:lambda-well-defined} below we show that this is independent of the choice of $M$. Since $\widehat{\lambda}_N$ is hyperbolic, $\widehat{\Lambda}(M)=[\kappa(f)]\in L_4(\Z\pi)$ and it follows that Diagram \eqref{diagram:L-group} is commutative.
\smallskip

\begin{lemma}\label{lem:lambda-well-defined}
The definition above determines a well-defined map $\Im(\Theta) \to L_4(\Z\pi)$.
\end{lemma}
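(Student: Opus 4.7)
The plan is to show that $[\widehat{\lambda}_M] \in L_4(\Z\pi)$ depends only on the $B_w$-bordism class $[M] \in \Omega_4(B_w)$, not on the auxiliary choices of representative $M$ and of degree one normal map $f \colon M \to N$.

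First I would observe that since $\widehat{\lambda}_N$ is hyperbolic by \cref{lem:null-bordant}, one has $[\widehat{\lambda}_M] = [\kappa(f) \oplus \widehat{\lambda}_N] = [\kappa(f)] \in L_4(\Z\pi)$. Thus the problem reduces to showing $[\kappa(f)] = [\kappa(f')]$ whenever $f \colon M \to N$ and $f' \colon M' \to N$ represent classes satisfying $\Theta(f) = \Theta(f')$.

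Next, assuming $[M] = [M'] \in \Omega_4(B_w)$, I would apply \cref{cor:stablediffeoclasses} to obtain a homeomorphism $M \# k(S^2 \times S^2) \cong M' \# k'(S^2 \times S^2)$ over $B_w$ for some $k, k' \geq 0$, inducing an isometry of equivariant intersection forms $\lambda_M \oplus kH \cong \lambda_{M'} \oplus k'H$. Substituting the Umkehr-induced decompositions $\lambda_M \cong \kappa(f) \oplus \lambda_N$ and $\lambda_{M'} \cong \kappa(f') \oplus \lambda_N$ then produces
\[
\kappa(f) \oplus \lambda_N \oplus kH \cong \kappa(f') \oplus \lambda_N \oplus k'H
\]
as nonsingular hermitian forms on finitely generated projective $\Z\pi$-modules. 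Since $\lambda_N$ and each copy of $H$ are hyperbolic, passing to the projective Witt group $L_4^p(\Z\pi)$ yields $[\kappa(f)] = [\kappa(f')]$ in $L_4^p(\Z\pi)$.

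The main obstacle will be promoting this equality from $L_4^p(\Z\pi)$ to $L_4(\Z\pi)$. For this I plan to invoke the Rothenberg exact sequence, whose obstruction to injectivity of $L_4(\Z\pi) \to L_4^p(\Z\pi)$ is controlled by the Tate cohomology $\widehat{H}^*(\Z/2; \widetilde{K}_0(\Z\pi))$. Since COAT groups satisfy the Farrell--Jones conjecture by Bartels--Farrell--L\"uck (already invoked in diagram~\eqref{diagram:L-group} for the assembly map isomorphism) and are torsion-free, one has $\widetilde{K}_0(\Z\pi) = 0$, so $L_4(\Z\pi) \to L_4^p(\Z\pi)$ is injective. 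Hence $[\kappa(f)] = [\kappa(f')]$ already in $L_4(\Z\pi)$, giving $[\widehat{\lambda}_M] = [\widehat{\lambda}_{M'}]$ as required and proving that $\widehat{\Lambda}$ is well-defined.
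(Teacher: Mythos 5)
Your overall strategy (same class in $F_{2,2}$ $\Rightarrow$ stable homeomorphism $\Rightarrow$ stable isometry of equivariant intersection forms $\Rightarrow$ Witt equivalence of the surgery kernels) is exactly the paper's, and your first two paragraphs are fine. The problem is the cancellation step in the third paragraph. You assert that $\kappa(f)\oplus\lambda_N\oplus kH\cong\kappa(f')\oplus\lambda_N\oplus k'H$ is an isometry of nonsingular hermitian forms on finitely generated projective $\Z\pi$-modules and that $\lambda_N$ is hyperbolic, and you then cancel inside $L_4^p(\Z\pi)$. Neither premise holds. The underlying module of $\lambda_N$ is $I\pi\oplus\Z\pi$, and for a COAT group $I\pi$ has projective dimension $2$ (since $\mathrm{cd}(\pi)=3$), so it is not projective; moreover $\lambda_N$ itself is singular, because $(I\pi\oplus\Z\pi)^*\cong(\Z\pi)^2$ by \cref{lem:ext}, so its adjoint cannot be an isomorphism. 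It is only the canonical extension $\widehat{\lambda}_N$ to the free module $(\Z\pi)^2$ that is nonsingular and hyperbolic (\cref{lem:null-bordant}). As written, your displayed isometry therefore does not define an equation in $L_4^p(\Z\pi)$, and the Rothenberg/Farrell--Jones detour never gets off the ground.

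The repair is what the paper does: replace $\lambda_M\cong\kappa(f)\oplus\lambda_N$ by the canonical extensions $\widehat{\lambda}_M\cong\kappa(f)\oplus\widehat{\lambda}_N$. Since pairings on $I\pi$ extend uniquely to $\Z\pi$ (\cref{lem:intersec}, using \cref{lem:ext}), the stable isometry $\lambda_M\oplus kH\cong\lambda_{M'}\oplus k'H$ extends to a stable isometry of the $\widehat{\lambda}$'s, so all forms in sight become nonsingular forms on free modules. Then $\widehat{\lambda}_N$ and $H$ are hyperbolic and die directly in $L_4(\Z\pi)$, giving $[\kappa(f)]=[\kappa(f')]$ with no need for $L^p_4$ or the Rothenberg sequence. (Your observation that $\widetilde{K}_0(\Z\pi)=0$ for torsion-free groups satisfying Farrell--Jones is correct, but it is not needed once the forms are placed on free modules.)
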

\smallskip

\begin{proof}
We need to see that  $[\kappa(f) \oplus \widehat{\lambda}_N] = [\kappa(f')\oplus\widehat{\lambda}_N]$ if $\Theta[M \xrightarrow{f} N] = \Theta[M' \xrightarrow{f'} N].$ But being the same element in $F_{2,2}$ implies that $M$ and $M'$ are stably homeomorphic. In particular we see that $\lambda_M$ and $\lambda_{M'}$ are stably isomorphic. Thus we obtain
\[ \kappa(f) \oplus \widehat{\lambda}_N \cong \widehat{\lambda}_M \cong \widehat{\lambda}_{M'} \cong \kappa(f') \oplus \widehat{\lambda}_N. \]
It follows that $[\kappa(f)] = [\kappa(f')] \in L_4(\Z\pi)$.
\end{proof}

\begin{proof}[Proof of \cref{thm:diffeo-iff-int-forms}]
We will prove that the map $\Theta\colon \bbL\langle 1 \rangle_4(N) \to F_{2,2}$ is surjective and the map $\widehat{\Lambda}\colon F_{2,2}\to L_4(\Z\pi)$ is injective.
First we note that in the diagram
\[\xymatrix{\bbL\langle 1 \rangle_4(B\pi) \ar[r]^-{\cong} & L_4(\Z\pi)  \\
        \bbL\langle 1 \rangle_4(N) \ar@{-->}[r]_-{\Theta} \ar[u]^{c_*} & \Im(\Theta), \ar@{-->}[u]_-{\widehat{\Lambda}} }\]
by \cref{top-bordism-groups}, all groups contain a $8\Z \cong L_4(\Z)$ direct summand, which is detected by the signature.
The map $c_*$ respects this decomposition. Also the map $\Theta$ commutes with the projections onto the $8\Z$-summands, because in $\bbL\langle 1 \rangle$-homology the projection takes a normal invariant $[f \colon M \to N]$ to $\sigma(M) - \sigma(N) = \sigma(M)$. Here $\sigma(N)=0$ because $N$ is null bordant.
Now we can perform connected sums of $M$ with $E_8$-manifolds to see surjectivity for the $8\Z$-summands. Therefore we may consider the reduced version of the diagram:
\[\xymatrix{\wt{\bbL\langle 1 \rangle}_4(B\pi) \ar[r]^-{\cong} & \wt{L}_4(\Z\pi)  & \\
        \wt{\bbL\langle 1 \rangle}_4(N) \ar@{-->}[r]_-{\wt{\Theta}} \ar[u]^{c_*} & \Im(\wt{\Theta}) \ar@{-->}[u]_-{\widehat{\Lambda}}}\]
where now we view $\Im(\wt\Theta) \subset H_2(B\pi;\Z/2) \cong \wt{F}_{2,2}$.

It follows from the Atiyah-Hirzebruch spectral sequence that the map
\[
c_*\colon \wt{\bbL\langle 1 \rangle}_4(N) \to \wt{\bbL\langle 1 \rangle}_4(B\pi)
\]
is surjective if the map $c_*\colon H_2(N;\Z/2) \to H_2(B\pi;\Z/2)$
is surjective. This in turn follows from the Serre spectral sequence associated to the fibration
$ \wt{N} \to N \xrightarrow{c} B\pi, $
which, as in the proof of \cref{lem:w}, gives rise to an exact sequence
\[\xymatrix{H_0(\pi;H_2(\wt{N};\Z/2)) \ar[r] & H_2(N;\Z/2) \ar[r] & H_2(B\pi;\Z/2) \ar[r] & 0.}\]

In particular, it follows that the up-then-right composite in the diagram is surjective.
This implies that
$ \widehat{\Lambda}\colon \Im(\wt{\Theta}) \to \wt{L}_4(\Z\pi) $
is surjective.
Since $\Im(\wt{\Theta}) \subset H_2(B\pi;\Z/2)$ and $\wt{L}_4(\Z\pi) \cong H_2(B\pi;\Z/2)$, a counting argument shows that $\widehat{\Lambda}$ can only be surjective if $\Im(\wt{\Theta}) = \wt{F}_{2,2} \cong H_2(B\pi;\Z/2)$ and $\widehat{\Lambda}$ is bijective.
This, in particular the fact that $\widehat{\Lambda}$ is injective, completes the proof of \cref{thm:diffeo-iff-int-forms}.
\end{proof}

Finally, we describe exactly which stable isomorphism classes of intersection forms are realised by stable diffeomorphism classes of spin $4$-manifolds with COAT fundamental group.

Recall that for $\sigma=0,1$ we constructed, in \cref{subsec:ex1}, $4$-manifolds $M_0,M_1$ with fundamental group $\pi_1(M_{\sigma}) \cong \pi$, $\pi_2(M_\sigma)\cong I\pi \oplus \Z\pi$, and intersection form \[\lambda_{M_\sigma}=\begin{blockarray}{ccc}
&I\pi&\bbZ\pi\\
\begin{block}{c(cc)}
I\pi&\sigma&1\\
\bbZ\pi&1&0\\\end{block}\end{blockarray}~.\]
\smallskip

\begin{theorem}\label{theorem:realisation-of-forms}
Let $\pi$ be a COAT group.  The following constitutes a complete list of nonsingular hermitian forms on $I\pi \oplus (\Z\pi)^n$ that occur as the stable isomorphism class of the intersection form of some topological  $4$-manifold with fundamental group~$\pi$ and normal $1$-type~$w$.
\smallskip \begin{enumerate}
\item For $w=\infty$, \[\lambda_{M_0}\oplus\begin{pmatrix}\Id_m&0\\0&-\Id_n\end{pmatrix},\]
with identity matrices $\Id_n,\Id_m$ of size $m,n \geq 1$ and  signature $=m-n$.
\item\label{item:rof2} For $w\neq\infty$, $\lambda_{M_0}\oplus \lambda$,
where $\lambda$ is any form in $L_4(\Z\pi) \cong 8\cdot\Z \oplus H_2(\pi;\Z/2)$.
\item For $w=0$, in addition to part \ref{item:rof2}, $\lambda_{M_1}\oplus n\cdot E_8$,
where $n\in\Z$ is determined by the signature $8\cdot n$.
\end{enumerate}
\end{theorem}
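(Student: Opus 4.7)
The plan is to proceed in two steps: first realise each form on the list by a concrete model $4$-manifold, and then appeal to Theorem~\ref{thm:diffeo-iff-int-forms-intro} to conclude completeness. That theorem says a $4$-manifold's stable homeomorphism class is determined by its $HAN_1$-type, so once every stable class has a representative with an intersection form on the list, and once no two listed forms coincide, the list must be exhaustive.

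For the totally non-spin case $w=\infty$, I would take $M_0 \# m\CP^2 \# n\ol{\CP^2}$ with $m,n\geq 1$. A $\CP^2$ summand forces $w_2(\widetilde{\,\cdot\,})\neq 0$, so the result is totally non-spin. Its $\pi_2$ is $I\pi \oplus (\Z\pi)^{1+m+n}$ and the form is $\lambda_{M_0} \oplus \mathrm{diag}(\Id_m,-\Id_n)$ with signature $m-n$. Since the signature is a complete stable invariant in this case by Corollary~\ref{cor:stable-class-non-spin}, every stable class is hit.

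For the almost spin case and the spin case with even form, the key input is that the null-bordant model $N$ produced in Lemma~\ref{lem:null-bordant} has intersection form equal to $\lambda_{M_0}$. Given $\lambda \in L_4(\Z\pi)$, the diagram \eqref{diagram:L-group} together with the surjectivity of $\Theta$ and bijectivity of $\widehat{\Lambda}$ established in the proof of Theorem~\ref{thm:diffeo-iff-int-forms} produce a degree one normal map $f\colon M\to N$ with surgery kernel $\kappa(f)$ Witt-equivalent to $\lambda$. The splitting $\lambda_M \cong \lambda_N \oplus \kappa(f)$ (via the Umkehr map) then exhibits every form of the shape $\lambda_{M_0}\oplus\lambda$. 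Signatures are absorbed into $L_4(\Z\pi) \cong 8\Z \oplus H_2(\pi;\Z/2)$, using that topologically the $E_8$ manifold realises the $8\Z$ factor. For the spin odd case, Lemma~\ref{lem:parity-stable-diff-invariant} guarantees that parity is a stable invariant, so such manifolds cannot be stably isomorphic to anything built from $M_0$; instead I start from $M_1$, whose form $\lambda_{M_1}$ is odd (as computed in the proof of Theorem~\ref{thm:parity-detects-sec}), and use that Corollary~\ref{cor:stable-class-spin} allows exactly one odd stable class per signature, realised by $M_1 \# n\cdot E_8$ with form $\lambda_{M_1}\oplus n\cdot E_8$.

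Completeness and non-redundancy are then immediate: parity separates item~(3) from item~(2); within each item, the stable isomorphism class of the form records either the signature alone (totally non-spin) or the full class in $L_4(\Z\pi)$, both of which match the invariants of Theorem~\ref{thm:A} and Theorem~\ref{thm:top-classification}. The main obstacle is the surjectivity of $\widehat{\Lambda}$ in the almost spin case, but this is exactly what is shown in the proof of Theorem~\ref{thm:diffeo-iff-int-forms} via the assembly isomorphism of Bartels--Farrell--L\"uck and a counting argument using $\wt L_4(\Z\pi) \cong H_2(B\pi;\Z/2)$; everything else is bookkeeping with connected sums.
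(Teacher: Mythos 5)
Your proposal is correct and follows essentially the same route as the paper: the paper states this theorem without a separate proof precisely because it is assembled from the models $M_0$, $M_1$ and their computed forms, the splitting $\lambda_M\cong\lambda_N\oplus\kappa(f)$ together with the surjectivity of $\Theta$ and bijectivity of $\widehat{\Lambda}$ from the proof of \cref{thm:diffeo-iff-int-forms}, and the classification results of \cref{thm:top-classification}, which is exactly the bookkeeping you carry out. The only point worth making explicit is that in the totally non-spin case the Kirby--Siebenmann invariant gives two stable homeomorphism classes per signature (distinguished by $*\CP^2$ versus $\CP^2$) with identical intersection forms, but this does not change the list of realised forms.
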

\smallskip

Note that by \cref{thm:diffeo-iff-int-forms}, within each normal $1$-type, the equivariant intersection form determines the stable homeomorphism type of a manifold. By the above result, each fixed stable form $\lambda_{M_0}\oplus\lambda$ with $\lambda\in L_4(\Z\pi)$ is realised by multiple stable diffeomorphism classes.

More precisely, each such form appears $2^d$ times, $d=\dim H^2(B\pi;\Z/2)$, namely exactly once for each normal $1$-type $w\neq \infty$. Note that within our class of COAT groups, this number~$d$ can be arbitrarily large.

\bibliographystyle{alpha}
\bibliography{classification}

\begin{thebibliography}{Ran80b}

\bibitem[BFL14]{Bartels-Farrell-Luck}
Arthur Bartels, F.~Tom Farrell, and Wolfgang L{\"u}ck.
\newblock The {F}arrell-{J}ones conjecture for cocompact lattices in virtually
  connected {L}ie groups.
\newblock {\em J. Amer. Math. Soc.}, 27(2):339--388, 2014.

\bibitem[BtD70]{brocker-tom-dieck}
Theodor Broecker and Tammo tom Dieck.
\newblock {\em Kobordismentheorie}.
\newblock Lecture Notes in Mathematics, Vol. 178. Springer-Verlag, Berlin,
  1970.

\bibitem[CHR95]{CAR-95}
Alberto Cavicchioli, Friedrich Hegenbarth, and Du{\v{s}}an Repov{\v{s}}.
\newblock On the stable classification of certain {$4$}-manifolds.
\newblock {\em Bull. Austral. Math. Soc.}, 52(3):385--398, 1995.

\bibitem[CLM17]{luecksurgery}
Diarmuid Crowley, Wolfgang L\"uck, and Tibor Macko.
\newblock Surgery theory: Foundations.
\newblock http://www.math.uni-bonn.de/people/macko/sb-www.pdf, 2017.

\bibitem[CS11]{CS11}
Diarmuid Crowley and J\"org Sixt.
\newblock Stably diffeomorphic manifolds and {$l_{2q+1}(\Bbb Z[\pi])$}.
\newblock {\em Forum Math.}, 23(3):483--538, 2011.

\bibitem[Dav05]{Davis-05}
James~F. Davis.
\newblock The {B}orel/{N}ovikov conjectures and stable diffeomorphisms of
  4-manifolds.
\newblock In {\em Geometry and topology of manifolds}, volume~47 of {\em Fields
  Inst. Commun.}, pages 63--76. Amer. Math. Soc., Providence, RI, 2005.

\bibitem[FK78]{Kirby-Freedman}
Michael Freedman and Robion Kirby.
\newblock A geometric proof of {R}ochlin's theorem.
\newblock In {\em Algebraic and geometric topology ({P}roc. {S}ympos. {P}ure
  {M}ath., {S}tanford {U}niv., {S}tanford, {C}alif., 1976), {P}art 2}, Proc.
  Sympos. Pure Math., XXXII, pages 85--97. Amer. Math. Soc., Providence, R.I.,
  1978.

\bibitem[Fox53]{Fox-free-calculus}
Ralph~H. Fox.
\newblock Free differential calculus. {I}. {D}erivation in the free group ring.
\newblock {\em Ann. of Math. (2)}, 57:547--560, 1953.

\bibitem[FQ90]{Freedman-Quinn}
Michael~H. Freedman and Frank Quinn.
\newblock {\em Topology of 4-manifolds}, volume~39 of {\em Princeton
  Mathematical Series}.
\newblock Princeton University Press, Princeton, NJ, 1990.

\bibitem[HK88]{Ham-kreck-finite}
Ian Hambleton and Matthias Kreck.
\newblock On the classification of topological {$4$}-manifolds with finite
  fundamental group.
\newblock {\em Math. Ann.}, 280(1):85--104, 1988.

\bibitem[HKT09]{HKT}
Ian Hambleton, Matthias Kreck, and Peter Teichner.
\newblock Topological 4-manifolds with geometrically two-dimensional
  fundamental groups.
\newblock {\em J. Topol. Anal.}, 1(2):123--151, 2009.

\bibitem[Kir89]{Kirby-4-manifold-book}
Robion~C. Kirby.
\newblock {\em The topology of {$4$}-manifolds}, volume 1374 of {\em Lecture
  Notes in Mathematics}.
\newblock Springer-Verlag, Berlin, 1989.

\bibitem[Kre99]{surgeryandduality}
Matthias Kreck.
\newblock Surgery and duality.
\newblock {\em Ann. of Math. (2)}, 149(3):707--754, 1999.

\bibitem[KS77]{KirbySiebenmann}
Robion~C. Kirby and Laurence~C. Siebenmann.
\newblock {\em Foundational essays on topological manifolds, smoothings, and
  triangulations}.
\newblock Princeton University Press, Princeton, N.J.; University of Tokyo
  Press, Tokyo, 1977.
\newblock With notes by John Milnor and Michael Atiyah, Annals of Mathematics
  Studies, No. 88.

\bibitem[L{\"u}c02]{Luck-basic-intro}
Wolfgang L{\"u}ck.
\newblock A basic introduction to surgery theory.
\newblock In {\em Topology of high-dimensional manifolds, {N}o. 1, 2
  ({T}rieste, 2001)}, volume~9 of {\em ICTP Lect. Notes}, pages 1--224. Abdus
  Salam Int. Cent. Theoret. Phys., Trieste, 2002.

\bibitem[Mat78]{Matsumoto}
Yukio Matsumoto.
\newblock Secondary intersectional properties of {$4$}-manifolds and
  {W}hitney's trick.
\newblock In {\em Algebraic and geometric topology ({P}roc. {S}ympos. {P}ure
  {M}ath., {S}tanford {U}niv., {S}tanford, {C}alif., 1976), {P}art 2}, Proc.
  Sympos. Pure Math., XXXII, pages 99--107. Amer. Math. Soc., Providence, R.I.,
  1978.

\bibitem[Ran80a]{Ranicki-ATS-I}
Andrew Ranicki.
\newblock The algebraic theory of surgery. {I}. {F}oundations.
\newblock {\em Proc. London Math. Soc. (3)}, 40(1):87--192, 1980.

\bibitem[Ran80b]{Ranicki-ATS-II}
Andrew Ranicki.
\newblock The algebraic theory of surgery. {II}. {A}pplications to topology.
\newblock {\em Proc. London Math. Soc. (3)}, 40(2):193--283, 1980.

\bibitem[Ran92]{Ranicki-blue-book}
Andrew Ranicki.
\newblock {\em Algebraic {$L$}-theory and topological manifolds}, volume 102 of
  {\em Cambridge Tracts in Mathematics}.
\newblock Cambridge University Press, Cambridge, 1992.

\bibitem[Ran02]{Ranicki-AGS-book}
Andrew Ranicki.
\newblock {\em Algebraic and geometric surgery}.
\newblock Oxford Mathematical Monographs. The Clarendon Press, Oxford
  University Press, Oxford, 2002.
\newblock Oxford Science Publications.

\bibitem[Spa03]{Spaggiari-03}
Fulvia Spaggiari.
\newblock On the stable classification of {S}pin four-manifolds.
\newblock {\em Osaka J. Math.}, 40(4):835--843, 2003.

\bibitem[Spi67]{Spivak}
Michael Spivak.
\newblock Spaces satisfying {P}oincar\'e duality.
\newblock {\em Topology}, 6:77--101, 1967.

\bibitem[ST01]{schneiderman-teichner-tau}
Rob Schneiderman and Peter Teichner.
\newblock Higher order intersection numbers of 2-spheres in 4-manifolds.
\newblock {\em Algebr. Geom. Topol.}, 1:1--29 (electronic), 2001.

\bibitem[ST04]{schneiderman-teichner-ki}
Rob Schneiderman and Peter Teichner.
\newblock Whitney towers and the {K}ontsevich integral.
\newblock In {\em Proceedings of the {C}asson {F}est}, volume~7 of {\em Geom.
  Topol. Monogr.}, pages 101--134 (electronic). Geom. Topol. Publ., Coventry,
  2004.

\bibitem[Tei92]{teichnerthesis}
Peter Teichner.
\newblock {\em Topological 4-manifolds with finite fundamental group}.
\newblock PhD thesis, University of Mainz, Germany, 1992.
\newblock Shaker Verlag, ISBN 3-86111-182-9.

\bibitem[Tei93]{teichnersignatures}
Peter Teichner.
\newblock On the signature of four-manifolds with universal covering spin.
\newblock {\em Math. Ann.}, 295(4):745--759, 1993.

\bibitem[Wal99]{Wall}
C.~Terence~C. Wall.
\newblock {\em Surgery on compact manifolds}, volume~69 of {\em Mathematical
  Surveys and Monographs}.
\newblock American Mathematical Society, Providence, RI, second edition, 1999.
\newblock Edited and with a foreword by A. A. Ranicki.

\end{thebibliography}

\end{document}